\newcommand{\Hl}{{\mathbb H}}
\newcommand{\C}{{\mathbb C}}
\newcommand{\R}{{\mathbb R}}
\newcommand{\Q}{{\mathbb Q}}
\newcommand{\Z}{{\mathbb Z}}
\newcommand{\g}{{\frak g}}
\newcommand{\p}{{\frak p}}
\newcommand{\lk}{{\mathfrak k}}
\newcommand{\m}{{\mathfrak m}}
\newcommand{\bA}{{\mathbb A}}
\newcommand{\bV}{{\mathbb V}}
\newcommand{\Hom}{\operatorname{Hom}}
\newcommand{\Sym}{\operatorname{Sym}}
\newcommand{\tr}{\operatorname{tr}}
\newcommand{\cA}{{\cal A}}
\newcommand{\cD}{{\cal D}}
\newcommand{\cG}{{\cal G}}
\newcommand{\cH}{{\cal H}}
\newcommand{\cN}{{\cal N}}
\newcommand{\cP}{{\cal P}}
\newcommand{\cR}{{\cal R}}
\newcommand{\cW}{{\cal W}}
\numberwithin{equation}{section}
\theoremstyle{plain}
 \newtheorem{thm}{Theorem}[section]
 \newtheorem{prop}[thm]{Proposition}
 \newtheorem{lem}[thm]{Lemma}
 \newtheorem{cor}[thm]{Corollary}
\theoremstyle{definition}
 \newtheorem{defn}[thm]{Definition}
 \newtheorem*{asm}{Assumption}
 \newtheorem{pf}{Proof}
\begin{document}
\begin{title}
{\Large\bf Fourier-Jacobi expansion of automorphic forms generating quaternionic discrete series}
\end{title}
\author{Hiro-aki Narita}
\maketitle
\begin{abstract}
We provide a theory of the Fourier-Jacobi expansion for automorphic forms on simple adjoint groups of some general class. This theory respects the Heisenberg parabolic subgroups, whose unipotent radicals are the Heisenberg groups uniformly explained in terms of the notion of cubic norm structures. Based on this theory of the Fourier expansion, we prove that automorphic forms generating quaternionic discrete series representations automatically satisfy the moderate growth condition except for the cases of the group of $G_2$-type and special orthogonal groups of signature $(4,N)$. This should be called ``K\"ocher principle'' verified already for the case of the quaternion unitary group $Sp(1,q)$ for $q>1$ by the author. We also prove that every term of the Fourier expansion with a non-trivial central character for cusp forms generating quaternionic discrete series has no contribution by the discrete spectrum of the Jacobi group, which is a non-reductive subgroup of the Heisenberg parabolic subgroup. This is obtained by showing that generalized Whittaker functions of moderate growth for the Schr\"odinger representations are zero under some assumption of the separation of variables, which suffices for our purpose to establish such consequence.  

%We now note that infinite dimensional unitary representations called Schr$\ddot{\rm o}$dinger representations are necessary to exhaust irreducible unitary representations of the Heisenberg group. As is well known, Schr$\ddot{\rm o}$dinger representations are characterized by the non-triviality of the central characters. In this paper we prove that every term of non-trivial central characters for the Fourier-Jacobi expansions of cusp forms generating quaternionic discrete series has no contribution by the discrete spectrum of the Jacobi group, which is a subgroup of the Heisenberg parabolic subgroup. This is obtained by showing that generalized Whittaker functions of moderate growth for the Schr$\ddot{\rm o}$dinger representations are zero under some assumption of the separation of variables, which suffices for our purpose to establish such consequence. 
\end{abstract}

\section{Introduction}
As well known classical works of Jacobi, Hecke and Siegel et al. indicate, the arithmetic of quadratic forms naturally leads to detailed studies of elliptic modular forms and holomorphic Siegel modular forms. 
The arithmetic of binary cubic forms is significantly related to automorphic forms on the exceptional group $G_2$ as was pointed out by Gan-Gross-Savin \cite{G-G-S}. However, one should note that there are no holomorphic automorphic forms on $G_2$, which corresponds to the fact that there are no holomorphic discrete series representations of $G_2$. Instead there is a good class of discrete series representations of $G_2$ called quaternionic discrete series representations introduced by Gross-Wallach \cite{G-W}. 
In the crucial work by Gan-Gross-Savin \cite{G-G-S} the multiplicity free theorem for generalized Whittaker model of quaternionic discrete series by Wallach is of fundamental importance, which enables \cite{G-G-S} to discuss the arithmetic of Fourier coefficients of automorphic forms on $G_2$ generating such discrete series. 

The next story begins with the essential work by Pollack \cite{P1}, which succeeded in making the generalized Whittaker models explicit by their functional realization, namely generalized Whittaker functions for quaternionic discrete series. In fact, Pollack \cite{P1} deals with a more general case. Let $\cG$ be a simple adjoint group over $\Q$, whose group $\cG(\R)$ of real points admit quaternionic discrete series representations. Let $F$ be an automorphic form on the adele group $\cG(\bA)$ generating a quaternionic discrete series at the archimedean place. In what follows, as Pollack \cite{P1} did, we exclude the cases where $\cG(\R)$ is isogenous to the special unitary group $SU(2,m)$ of signature $(2,m)$ or to the indefinite symplectic group $Sp(1,q)$ of signature $(1,q)$, which also have quaternionic discrete series. 

Keeping the setting on $\cG$ in \cite{P1}, we can describe the Fourier-Jacobi expansion of an automorphic form $F$ on $\cG(\bA)$ in full generality and without assuming the moderate growth condition. Let $\cN$ be the unipotent radical of the Heisenberg parabolic subgroup of $\cG$. This is a $\Q$-algebraic group equipped with a structure of a Heisenberg group. The group $\cN$ modulo center is isomorphic to the vector part $\cW_J$ of the Heisenberg group associated with a cubic norm structure $J$ introduced in Section \ref{Cubicforms-Groups} (for the notation $\cN$ and $\cW_J$ see Section \ref{FJ-exp}). 
The most rough formulation of the Fourier-Jacobi expansion of $F$ is
\[
F(g)=\sum_{\xi\in\Q}F_{\xi}(g)\quad(g\in\cG(\bA)),
\]
where $F_{\xi}$ denotes the Fourier transformation of $F$ with respect to the central character of $\cN(\bA)$ indexed by $\xi$. The term $F_0$ decomposes into a sum of the Fourier transformations of $F$ by characters of $\cN(\Q)\backslash\cN(\bA)$ parametrized by the $\Q$-rational points $\cW_J(\Q)$ of $\cW_J$. This is due to the triviality of $F_0$ with respect to the center of $\cN$. In this paper we can also describe $F_{\xi}$ for non-zero $\xi$ in terms of such Fourier transformations. More precisely we need a reflection $w_{\alpha}\in\cG(\Q)$ associated with a root $\alpha$~(cf.~Section \ref{FJ-exp}) for such description. 
\begin{thm}[Theorem \ref{adelic-FJ-exp}]\label{adelic-FJ-exp_intro}
For $\xi\in\Q\setminus\{0\}$ let $F_{\xi}$ be as above for a general automorphic form $F$ on $\cG(\bA)$. 
For a character $\chi$ of $\cN(\Q)\backslash\cN(\bA)$, $F_{\chi}$ denotes the Fourier transformation of $F$ by $\chi$. 
Using the coordinate $n(w,t)$ of $\cN(\Q)$ with $w\in \cW_J(\Q)$ and $t\in\Q$ we  have
\[
F_{\xi}(g)=\sum_{\mu\in\Q}\sum_{\chi'_{\xi}}F_{\chi_{\xi}}(w_{\alpha}n((\mu,0,0,0),0)g)=\sum_{\chi''_{\xi}}\Theta_{\chi''_{\xi}}(g)
\]
with
\[
\Theta_{\chi''_{\xi}}(g):=\sum_{\gamma\in\cN_m(\Q)\backslash\cN(\Q)}F_{\chi''_{\xi}}(w_{\alpha}\gamma g),
\]
where $\chi'_{\xi}$~(respectively~$\chi''_{\xi}$) runs over characters of $\cN(\Q)\backslash\cN(\bA)$ parametrized by $(\xi,\beta,\gamma,\delta)\in \cW_J(\Q)$~(respectively~$(\xi,0,\gamma',\delta')\in \cW_J(\Q)$). The function $\Theta_{\chi''_{\xi}}$ above is left invariant by the $\Q$-rational points $\cR_J(\Q)$ of the Jacobi group $\cR_J$~(cf.~Section \ref{FJ-exp}).
\end{thm}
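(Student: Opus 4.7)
The plan is to carry out the Fourier expansion of $F$ along $\cN(\Q)\backslash\cN(\bA)$ in two stages, exploiting the Heisenberg structure of $\cN$ and the fact that $\cN(\Q)\backslash\cN(\bA)$ is compact, so that no growth hypothesis on $F$ is needed. The first stage is routine: expanding along the one-dimensional centre $Z(\cN)$ and isolating the component on which it acts by the additive character indexed by $\xi$ yields $F(g)=\sum_{\xi\in\Q}F_{\xi}(g)$. For $\xi=0$, $F_{0}$ descends to the abelian quotient $\cN/Z(\cN)\cong\cW_{J}$ and expands immediately by characters parametrized by $\cW_{J}(\Q)$, so all the work concentrates on the case $\xi\neq 0$.

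For $\xi\neq 0$, the obstruction to a direct abelian Fourier expansion of $F_{\xi}$ is that $F_{\xi}$ is not equivariant under the non-central part of $\cN(\bA)$. I would overcome this by conjugating with the root reflection $w_{\alpha}\in\cG(\Q)$: $w_{\alpha}$ exchanges the centre of $\cN$ with the first $\Q$-factor of $\cW_{J}$, so that after replacing $g$ by $w_{\alpha}g$ the $\xi$-equivariance along $Z(\cN)$ is transported into a constraint on the first coordinate of characters of $\cN^{ab}$. Unfolding along the coset space $\cN_{m}(\Q)\backslash\cN(\Q)\cong\Q$, whose representatives are precisely $n((\mu,0,0,0),0)$ for $\mu\in\Q$, and then applying the ordinary abelian Fourier expansion on $\cN(\Q)\backslash\cN(\bA)$ to the resulting function, produces the double sum $\sum_{\mu\in\Q}\sum_{\chi'_{\xi}}F_{\chi'_{\xi}}(w_{\alpha}n((\mu,0,0,0),0)g)$, in which $\chi'_{\xi}$ runs over the characters parametrized by $(\xi,\beta,\gamma,\delta)\in\cW_{J}(\Q)$ picked out by the first-coordinate constraint. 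The second equality in the statement is then a regrouping: the characters $\chi'_{\xi}$ split into $\cN(\Q)$-orbits under translation, each orbit has a unique normalized representative $\chi''_{\xi}$ of the form $(\xi,0,\gamma',\delta')$ with stabilizer $\cN_{m}(\Q)$, and collecting the terms in the double sum orbit by orbit produces $\sum_{\chi''_{\xi}}\Theta_{\chi''_{\xi}}(g)$.

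The left $\cR_{J}(\Q)$-invariance of $\Theta_{\chi''_{\xi}}$ follows because $\cR_{J}$ is the subgroup of $\cG$ preserving both the polarization cutting out $\cN_{m}$ and the normalized character $\chi''_{\xi}$: translating $g$ by $r\in\cR_{J}(\Q)$ permutes the cosets in $\cN_{m}(\Q)\backslash\cN(\Q)$ while leaving the summand of $\Theta_{\chi''_{\xi}}$ intact, so the theta sum as a whole is unchanged. I expect the main obstacle to lie in the explicit identification of the action of $w_{\alpha}$ and of the translations $n((\mu,0,0,0),0)$ on the parameters $(\xi,\beta,\gamma,\delta)$, which requires computing the Heisenberg commutator and the Weyl-group action on the Freudenthal triple attached to the cubic norm structure $J$; verifying that the $\mu$-summation together with the $\cN_{m}(\Q)$-cosets parametrizes each $\cN(\Q)$-orbit of characters exactly once is the combinatorially delicate point. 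Once this is in place, convergence is automatic from compactness of $\cN(\Q)\backslash\cN(\bA)$, and the theta decomposition and Jacobi-group invariance follow formally.
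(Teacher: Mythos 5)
Your overall strategy for the first equality is the same as the paper's: expand along the maximal abelian subgroup, use the reflection $w_{\alpha}$ (which normalizes $\cN_m(\Q)$ and swaps the centre of $\cN$ with a coordinate of the abelianization) to convert the central character constraint into a first-coordinate constraint on characters of $\cN(\Q)\backslash\cN(\bA)$, and then regroup. However, there is a concrete error in your unfolding: $\cN_m(\Q)\backslash\cN(\Q)\simeq\Q\oplus J_{\Q}$ (the $(a,b)$-part of $\cW_J(\Q)$), not $\Q$, so the elements $n((\mu,0,0,0),0)$ with $\mu\in\Q$ are \emph{not} a full set of coset representatives. In the paper the full parameter set $\Q\oplus J_{\Q}$ arises from Lemma \ref{adelic_Ch-Lem}(1) as the set of $n(\nu,0)$-conjugates of $\psi_{\xi,0}$; only the $a$-component must be converted into a group translation (its $w_{\alpha}$-conjugate $x_{-\alpha}(-\mu)$ leaves $\cN$), while the $J_{\Q}$-component survives as the $\beta$-parameter inside the character sum over $\chi'_{\xi}$. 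Your version conflates these two roles, so the double sum is not actually derived.

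The more serious gap is the $\cR_J(\Q)$-invariance of $\Theta_{\chi''_{\xi}}$. Your justification rests on the claim that $\cR_J$ preserves the polarization cutting out $\cN_m$ and fixes the normalized character $\chi''_{\xi}$, so that translation by $r\in\cR_J(\Q)$ merely permutes the cosets in $\cN_m(\Q)\backslash\cN(\Q)$. This is false for the Levi part: $\cH_J^{0,1}$ acts on $\cW_J$ through the symplectic representation and moves the Lagrangian $\{(0,0,c,d)\}$ (for $G_2$, $\cH_J^{0,1}=SL_2$ acts by $\Sym^3$ of the standard representation), and it certainly does not fix $\chi''_{\xi}$. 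Only the $\cN(\Q)$-invariance is formal. The paper's proof handles $\delta\in\cH_J^{0,1}(\Q)$ by recognizing $\Theta_{\chi''_{\xi}}$ as an adelic theta distribution attached to $\eta_{\xi}$ and invoking the one-dimensionality of $\Hom_{\cN(\bA)}(\eta_{\xi},L^2(\cN(\Q)\backslash\cN(\bA)))$ to conclude $\Theta_{\chi''_{\xi}}(\delta g)=c(\delta)\Theta_{\chi''_{\xi}}(g)$ for a scalar $c(\delta)$ independent of $\chi''_{\xi}$, and then uses the left $\cR_J(\Q)$-invariance of the whole sum $F_{\xi}$ to force $c(\delta)=1$. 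Some input of this kind (Stone--von Neumann uniqueness, or an explicit Weil-representation computation) is unavoidable here and is missing from your argument.
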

Regarding this theorem we remark that a recent paper \cite{KY} by Kim and Yamauchi includes a theory of the Fourier expansion for the group of $G_2$-type obtained by a different idea. 

If we assume that the multiplicity free property of the automorphic form $F$ with respect to non-trivial characters of the Heisenberg group, we can discuss Fourier coefficients for $F_{\xi}$ with $\xi\not=0$ as well as $F_0$. 
What we should stress under this assumption is that the Fourier coefficients in $F_{\xi}$ are essentially the same as those of $F_0$, which are quite different from what we have experienced in the studies of holomorphic modular forms such as Siegel modular forms etc. As an immediate consequence of this theorem we have the following:
\begin{cor}[Corollary \ref{Determination-by-zeroth-coeff}]
A general automorphic form on $F$ is determined by $F_0$. That is, if two automorphic forms $F$ and $G$ on $\cG(\bA)$ satisfy $F_0=G_0$, we then have $F\equiv G$.
\end{cor}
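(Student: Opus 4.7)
The plan is to deduce this corollary as an essentially formal consequence of Theorem \ref{adelic-FJ-exp_intro}. By linearity, I would first reduce to showing that $F_0 \equiv 0$ forces $F \equiv 0$, which suffices since the Fourier--Jacobi expansion $F = F_0 + \sum_{\xi \neq 0} F_\xi$ converges pointwise.

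The key observation I would exploit is that every character $\chi'_\xi$ of $\cN(\Q)\backslash\cN(\bA)$ appearing in Theorem \ref{adelic-FJ-exp_intro} is automatically trivial on the commutator subgroup $[\cN,\cN]$, which coincides with the center $Z$ of the Heisenberg group $\cN$. Thus $\chi'_\xi$ factors through the abelianization $\cN/Z \cong \cW_J$; this is precisely why it is parametrized by an element $(\xi,\beta,\gamma,\delta)\in\cW_J(\Q)$. Using any set-theoretic section of $\cN\to\cN/Z$ together with Fubini's theorem (applicable because $Z(\Q)\backslash Z(\bA)$ is compact), the Fourier transform
\[
F_{\chi'_\xi}(h)=\int_{\cN(\Q)\backslash\cN(\bA)} F(nh)\overline{\chi'_\xi(n)}\,dn
\]
factors as
\[
F_{\chi'_\xi}(h)=\int_{\cW_J(\Q)\backslash\cW_J(\bA)} F_0(n_0 h)\overline{\chi'_\xi(n_0)}\,dn_0,
\]
the inner $Z$-integration against the trivial character being by definition the formation of $F_0$. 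Hence $F_0\equiv 0$ immediately forces $F_{\chi'_\xi}\equiv 0$ for every such $\chi'_\xi$.

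Substituting these vanishings back into the identity of Theorem \ref{adelic-FJ-exp_intro} then gives $F_\xi\equiv 0$ for every $\xi\in\Q\setminus\{0\}$, whence $F\equiv 0$; applied to $F-G$ this proves the corollary. I do not expect any serious obstacle: the argument is entirely formal once one recognizes that each $\chi'_\xi$ factors through $\cW_J$ and that the Fubini manipulation rewrites $F_{\chi'_\xi}$ as a Fourier coefficient of $F_0$ along the abelian group $\cW_J$. The real content of the corollary is already encoded in Theorem \ref{adelic-FJ-exp_intro}, which expresses the non-trivial central-character pieces $F_\xi$ directly in terms of Fourier transforms of $F$ by characters trivial on $Z$, and any such Fourier transform is visibly a functional of $F_0$ alone.
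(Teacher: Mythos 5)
Your proposal is correct and is exactly the argument the paper intends when it calls the corollary an ``immediate consequence'' of Theorem \ref{adelic-FJ-exp}: since every character $\chi_{\xi}$ appearing there is trivial on the center of $\cN(\bA)$, each $F_{\chi_{\xi}}$ is a Fourier coefficient of $F_0$ along $\cW_J(\Q)\backslash\cW_J(\bA)$, so $F_0\equiv 0$ kills all terms of every $F_{\xi}$.
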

\noindent
We should remark that this is a generalization of \cite[Proposition 8.4]{G-G-S}. 

Now let us concentrate on automorphic forms generating quaternionic discrete series representations at the archimedean place. 
Pollack \cite{P1} provided the archimedean generalized Whittaker functions explicitly for characters of the Heisenberg group in terms of the K-Bessel function and the quantity arising from cubic norm structures~(cf.~Sections \ref{Cubicforms-Groups}). With these functions we can understand the archimedean part of $F_0$. The Fourier coefficients in the term $F_0$ are defined as the coefficients of the explicitly given Whittaker functions by Pollack \cite{P1} in $F_0$. Due to the fact that cusp forms of level one are determined by $F_0$ for the exceptional group of $G_2$-type~(cf.~\cite[Proposition 8.4]{G-G-S}) the studies after \cite{P1} have concentrated on $F_0$. 

However, by virtue of the general theory of the Fourier-Jacobi expansion above together with Pollack's result, we are able to study the Fourier-Jacobi expansion of automorphic forms generating quaternionic discrete series in detail without assuming the moderate growth condition. In fact, we have obtained the following result~(cf.~Theorem \ref{KoecherPl} part 2):
\begin{thm}[K\"ocher principle]
Let $\cG$ be a simple adjoint group as above, whose real group admit quaternionic discrete series. We exclude the cases of special orthogonal groups $SO(4,N)$~($PSO(4,N)$ for even $N$) and the exceptional group of type $G_2$.  
Automorphic forms on $\cG(\bA)$ generating quaternionic discrete series representations at the archimedean place are automatically of moderate growth. 
\end{thm}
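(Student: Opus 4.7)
My plan is to derive moderate growth term by term from the Fourier--Jacobi decomposition of Theorem \ref{adelic-FJ-exp_intro}, which holds without any growth hypothesis on $F$. Writing $F = F_0 + \sum_{\xi \neq 0} F_\xi$, it suffices to bound the two parts separately and verify that all decompositions converge uniformly on Siegel sets.

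For the part with $\xi \neq 0$, Theorem \ref{adelic-FJ-exp_intro} rewrites $F_\xi$ as a sum of theta series $\Theta_{\chi''_\xi}$ whose building blocks $F_{\chi''_\xi}$ are Fourier coefficients attached to characters of $\cN(\bA)$ with nontrivial central character $\xi$. Pollack's explicit description of the archimedean generalized Whittaker functions identifies the infinite component of each $F_{\chi''_\xi}$ as a K-Bessel factor $K_\nu(c|\xi| t)$, where $t$ is a natural coordinate on a split torus, multiplied by an expression coming from the cubic norm structure $J$. The exponential decay of $K_\nu$ in $|\xi|t$ makes the double sum over $\xi$ and $\chi''_\xi$ absolutely convergent on any Siegel set, and in fact rapidly decaying there.

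For $F_0$, I would further decompose by characters of the abelianization $\cN/Z(\cN) \cong \cW_J$, writing $F_0 = \sum_{w \in \cW_J(\Q)} F_{0,w}$. When $w$ has sufficiently large rank in the cubic norm structure, Pollack again supplies K-Bessel asymptotics at infinity, and the corresponding partial sum is of moderate growth. The delicate terms are the pure constant term $F_{0,0}$ and the ``degenerate'' coefficients indexed by low-rank $w$. I would kill these by showing that the associated generalized Whittaker functions for the Schr\"odinger representations vanish once the separation-of-variables hypothesis holds, which for groups other than $G_2$ and $SO(4,N)$ is guaranteed by the rank of $J$; the surviving constant term $F_{0,0}$ is then controlled by restricting to the Levi factor and invoking the usual moderate-growth theory there.

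The principal obstacle is exactly this degenerate part of $F_0$. The exclusions of $G_2$ and $SO(4,N)$ are forced because in those cases the cubic norm structure has too small a rank to satisfy the separation-of-variables hypothesis on some degenerate orbit, so the Whittaker-vanishing dichotomy is not available there and the associated Fourier coefficient could a priori contribute an unbounded term that the K-Bessel estimate does not see. Outside these cases the rank of $J$ is large enough for the vanishing to apply to every degenerate orbit, and patching the resulting bounds yields moderate growth of $F$ on every Siegel set.
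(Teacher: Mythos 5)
Your overall strategy (split $F=F_{0}+\sum_{\xi\neq 0}F_{\xi}$, use Theorem \ref{adelic-FJ-exp} to reduce the nonzero-$\xi$ part to character Fourier coefficients, and exploit $K$-Bessel decay) is the same skeleton as the paper's proof of Theorem \ref{KoecherPl}, but there are three genuine gaps. First, a circularity: you take for granted that the archimedean component of each $F_{\chi}$ is the $K$-Bessel solution, but Pollack's uniqueness of that solution is proved \emph{under} the moderate growth hypothesis, which is what you are trying to establish. Without that hypothesis each $F_{\chi}$ is a priori a combination $C_{\chi}W_{\chi}+C'_{\chi}W'_{\chi}$ with an $I$-Bessel term, and one must first prove $C'_{\chi}=0$ (the paper does this by observing that a nonzero $I$-Bessel contribution would violate convergence of the Fourier series of $F_{0}$, focusing on the $v=0$ component to avoid sign cancellation). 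Relatedly, exponential decay of individual $K$-Bessel terms does not by itself give a moderate-growth bound on the sum: you need a bound on the coefficients $C_{\chi}$ that is uniform in $\chi$, which the paper extracts by integrating $F$ against $\chi$ over the compact set $N\cap\Gamma\backslash N$ at a fixed base point; and for $\sum_{\xi\neq 0}F_{\xi}$ you must also control the extra sum over $\mu$ coming from $w_{\alpha}n((\mu,0,0,0),0)$, which the paper handles via an explicit Iwasawa decomposition of $x_{-\alpha}(-\mu)$ in an embedded $SL_2$.

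Second, your treatment of the ``degenerate'' part of $F_{0}$ is off target. The Schr\"odinger representations have nontrivial central character, so they play no role whatsoever in $F_{0}$; the vanishing result for their generalized Whittaker functions (Theorem \ref{GenWhittakerFct-gen}) is used for Theorem \ref{adelicFJ-exp-QDS}, not for the K\"ocher principle. The characters $\chi$ whose cubic polynomial $p_{\chi}$ has a zero in $D_{J}$ are eliminated by Pollack's singularity argument (\cite[Proposition 8.2.4]{P1}, which does not need moderate growth), not by any separation-of-variables dichotomy, and the rank of $J$ has nothing to do with it. Third, and most importantly, your explanation of why $G_{2}$ and $SO(4,N)$ are excluded is wrong, and your claim that the surviving constant term is ``controlled by restricting to the Levi factor and invoking the usual moderate-growth theory'' begs the question. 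The entire proof reduces moderate growth of $F$ to moderate growth of the constant term $F_{00}$, and by Proposition \ref{lower-bound-FJ} one has $\|F(g)\|_{\tau_n}\geq\|F_{00}(g)\|_{\tau_n}$, so nothing can be proved without controlling $F_{00}$. For the excluded groups $F_{00}$ is built from elliptic modular forms, respectively holomorphic forms on $\mathfrak{H}\times(\text{type IV domain})$, for which the K\"ocher principle is false in the one-variable direction; for the remaining groups $F_{00}$ is a constant plus a holomorphic modular form (and a translate) on a tube domain where the classical K\"ocher principle does apply. That dichotomy, not the rank of $J$ or any separation-of-variables condition, is the reason for the exclusions.
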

We now remark that Ko\"cher principle has been already verified for $G=Sp(1,q)$ with $q>1$~(cf.~\cite{Na-1}). As a relevant work we also refer to the ``anti-K\"odcher principle'' for automorphic forms on $G=SU(2,2)$ generating middle discrete series representations~(which include quaternionic discrete series) pointed out by Gon~(cf.~\cite{Gy}). 
A rough explanation of the proof for the theorem above is to reduce the proof of the moderate growth property to that of the constant term of the Fourier-Jacobi expansion, that is, the term given by the Fourier transform with respect to the trivial character of the Heisenberg group. This together with Proposition \ref{lower-bound-FJ} explains the reason why we are not able to prove the K\"ocher principle for the special orthogonal groups mentioned above and the exceptional group $G_2$ since the constant term for the cases of  $G_2$~(respectively~the special orthogonal groups) is written in terms of elliptic modular forms~(respectively~holomorphic modular forms on a direct product of the complex upper half plane with a symmetric domain of type $IV$)~(cf.~\cite[Theorem 1.2.1~(2)]{P1}), for which nobody is able to prove the K\"ocher principle as is well known. Here we do not impose the holomorphy around cusps on the defining condition of elliptic modular forms~(or holomorphic modular forms on the complex upper half plane).

In order to understand the archimedean aspect of the Fourier-Jacobi expansion completely it is indispensable to study generalized Whittaker functions for the infinite dimensional unitary representations called Schr\"odinger representations of the Heisenberg group, which exhaust irreducible unitary representations of the Heisenberg group other than characters. As a result of detailed studies on these Whittaker functions we verify that there are no such Whittaker functions contributing to the Fourier-Jacobi expansion of automorphic forms generating quaternionic discrete series. To be precise we study them under some assumption on the separation of variables, which suffices for our purpose~(cf.~Theorem \ref{GenWhittakerFct-gen}). 

This study of generalized Whittaker functions leads us to a detailed understanding of  the Fourier-Jacobi type spherical functions for the real groups $G:=\cG(\R)$ as above like Hirano \cite{Hi-1},~\cite{Hi-2} and \cite{Hi-3}, which can be reformulated as the Fourier-Jacobi models. 
\begin{thm}[Theorem \ref{FJ-model}]\label{FJ-spherical-intro}
Let $G:=\cG(\R)$ be just as above but, for this assertion, we include the cases of the exceptional group $G_2$ and the special orthogonal groups of signature $(4,N)$. 
For any irreducible unitary representations $\rho$ of the Jacobi group $R_J$ with non-trivial central characters~(cf.~Section \ref{Rep-JacobiGp}), 
%with the Weil representation $\Omega_{\xi}$ of $R_J$ and an irreducible admissible (possibly projective) representation $\sigma$ of $H_J^{0,1}$~(for further explanation of the notations, see Sections \ref{Rep-JacobiGp},~\ref{Results-GWF-FJS}), 
Fourier-Jacobi type spherical functions of moderate growth for quaternionic discrete series representations are zero. This result is reformulated as
\[
\dim\Hom_{(\g,K)}(\pi_n,C^{\infty}_{\rm mod}\text{-}{\rm Ind}_{R_J}^G\rho)=0
\]
for quaternionic discrete series representation $\pi_n$~(for the notation $\pi_n$ see Section \ref{GWF-definition}), where $C^{\infty}_{\rm mod}\text{-}{\rm Ind}_{R_J}^G\rho$ denotes the subrepresentation of $C^{\infty}$-induction $C^{\infty}\text{-}{\rm Ind}_{R_J}^G\rho$ with the representation space given by the moderate growth sections of $C^{\infty}\text{-}{\rm Ind}_{R_J}^G\rho$. Here we consider the intertwining spaces as $(\g,K)$-modules with the Lie algebra $\g$ of $G$ and the maximal compact subgroup $K$ of $G$.
\end{thm}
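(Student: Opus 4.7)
The plan is to reduce the stated vanishing to the vanishing theorem for generalized Whittaker functions attached to Schr\"odinger representations (Theorem~\ref{GenWhittakerFct-gen}) via Frobenius reciprocity and the Stone--von Neumann theorem.

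First, I would invoke Frobenius reciprocity for moderate-growth $C^{\infty}$-induction to identify a putative nonzero element of $\Hom_{(\g,K)}(\pi_n, C^{\infty}_{\rm mod}\text{-}{\rm Ind}_{R_J}^G\rho)$ with a nonzero $R_J$-equivariant, moderate-growth smooth function $\Phi: G \to V_\rho$ (where $V_\rho$ is the representation space of $\rho$) whose restriction to a generator of the minimal $K$-type of $\pi_n$ satisfies the Schmid-type first-order differential equations that cut out quaternionic discrete series in the sense of Gross--Wallach. The vanishing of this generalized $(\pi_n,\rho)$-spherical function is what must be established.

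Next, since the central character of $\rho$ is nontrivial, the Stone--von Neumann theorem applied to the Heisenberg normal subgroup $\cN(\R)\subset R_J$ forces $\rho|_{\cN(\R)}$ to be (a multiple of) the Schr\"odinger representation with the prescribed central character, while the Levi complement of $\cN(\R)$ in $R_J$ acts on the Schr\"odinger model through a twist of the Weil (oscillator) representation by an auxiliary finite-dimensional representation. After choosing a polarization of the vector part $\cW_J(\R)$, the space $V_\rho$ is identified with a space of Schwartz functions on a maximal isotropic subspace which acts by translation, and in these coordinates $\Phi$ becomes a family of generalized Whittaker functions for Schr\"odinger representations, parametrized by the Schwartz variable.

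The key step is then to apply Theorem~\ref{GenWhittakerFct-gen}: working in an Iwasawa decomposition $G=NAK$ adapted to the Heisenberg parabolic, the $R_J$-equivariance combined with the Schmid-type equations on the minimal $K$-type produces exactly the separation-of-variables form assumed in that theorem, because the Weil-representation action of the Levi intertwines the Schwartz variable of the Schr\"odinger model with the $A$-direction in a controlled way. Theorem~\ref{GenWhittakerFct-gen} then forces the underlying Whittaker component to vanish on $A$; spreading by the Iwasawa decomposition and reassembling across the Schr\"odinger parametrization yields $\Phi\equiv 0$, and the intertwining space vanishes.

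The hard part will be verifying that the separation-of-variables hypothesis of Theorem~\ref{GenWhittakerFct-gen} genuinely holds after the reduction above. The $\p_-$-annihilation equations coming from the minimal $K$-type couple the Schwartz variable of the Schr\"odinger model to the $A$-coordinates through the Weil representation, and one must check that this coupling factorizes compatibly with the polarization, independently of the choice of auxiliary Levi representation through which $\rho$ extends the Schr\"odinger representation. Once this compatibility is established---modeled on Pollack's explicit formulas for generalized Whittaker functions attached to characters of $\cN(\R)$ together with the cubic norm structure calculations of Section~\ref{Cubicforms-Groups}---the remainder is a formal application of the earlier vanishing theorem.
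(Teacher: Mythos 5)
Your reduction follows the same route as the paper: pass from the Fourier--Jacobi spherical function to generalized Whittaker functions for the Schr\"odinger representation via Stone--von Neumann, note that restriction to $A_P$ (which represents $R_J\backslash G/K$) puts you in the separated-variables setting, and then invoke Theorem \ref{GenWhittakerFct-gen}. However, there is a genuine gap at the decisive step. Theorem \ref{GenWhittakerFct-gen} is \emph{not} a vanishing theorem: it produces a one-dimensional space of nonzero solutions of moderate growth in $w$ (only the components $\phi_v$ with $v\neq n$, resp.\ $v\neq -n$, vanish). So your claim that ``Theorem \ref{GenWhittakerFct-gen} then forces the underlying Whittaker component to vanish on $A$'' does not follow; as stated, your argument would only show that the spherical function is proportional to that explicit nonzero solution, not that it is zero.

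The missing idea is the distinction between the unitary Schr\"odinger representation $\eta_{\xi}$ and its $C^{\infty}$-induced version $\eta_{\xi}^{\infty}$, i.e.\ Theorem \ref{Multiplicity-formula} part 2: $\dim W_{\eta_{\xi}^{\infty},\tau_n}^{\rm mod\text{-}sep}=1$ but $\dim W_{\eta_{\xi},\tau_n}^{\rm mod\text{-}sep}=0$. The unique separated moderate-growth solution of Theorem \ref{GenWhittakerFct-gen} contains the factor $\exp\bigl(-2\pi\xi\sqrt{-1}\,\tfrac{1}{2}(b,b\times Z)\bigr)$ whose real exponent is governed by $(b,b\times Y)$, positive definite for $Y\in\Omega$; hence the solution grows exponentially in the Schr\"odinger-model variable $(a,b)$ and is $C^{\infty}(\R\oplus J)$-valued but \emph{not} $L^2(\R\oplus J)$-valued. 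Since $\rho$ is unitary, Stone--von Neumann forces its restriction to $N$ into the $L^2$-model, so the matrix coefficients $(\Phi(\pi_n(g)v_0),u_0)$ you construct must be $L^2$-valued in that variable; incompatibility with the explicit formula is what forces $\Phi\equiv 0$. You mention identifying $V_\rho$ with Schwartz/$L^2$ functions on a polarization, so the ingredient is on your page, but you never use it; without it the argument does not close. Relatedly, the ``hard part'' is not the verification of separation of variables (which the paper gets almost for free from the $A_P$-restriction and the pairing $\tilde{\Phi}_{u_0}$ against a fixed vector $u_0$), but precisely this square-integrability obstruction.
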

%We remark that this theorem is valid for any irreducible unitary representations of $R_J$ with non-trivial central characters. 
We have an explicit formula for generalized Whittaker functions for $\pi_n$ attached to the Schr\"odinger representations under the assumption on the separation of variable mentioned above~(~(cf.~Theorem \ref{GenWhittakerFct-gen}) (1)). They are not of moderate growth. This yields the theorem above. We can say that this leads to another unique feature of automorphic forms generating quaternionic discrete series, which we are going to remark.
% that Theorem\ref{FJ-spherical-intro}~(Theorem \ref{FJ-model}) leads to another unique feature in terms of automorphic forms. 

Let $F$ be a cusp form on $\cG(\bA)$ generating quaternionic discrete series representation at the archimedean place. In Theorem \ref{adelicFJ-exp-QDS} Fourier-Jacobi expansion of $F$ is given in a further detail. Let $\cR_J$ be the Jacobi group over $\Q$, whose real group we have used in the statement of Theorem \ref{adelic-FJ-exp_intro}~(or Theorem \ref{adelic-FJ-exp}). For $\xi\in\Q\setminus\{0\}$, $F_{\xi}$ is regarded as a square-integrable automorphic form on $\cR_J(\Q)\backslash\cR_J(\bA)$. The $L^2$-space $L^2_{\xi}(\cR_J(\Q)\backslash\cR_J(\bA))$ with the central character parametrized by $\xi$ is a direct sum of the continuous spectrum and the discrete spectrum, the latter of which is a discrete sum of irreducible unitary representations of $\cR_J(\bA)$. 
From Theorem \ref{FJ-spherical-intro} we deduce that $F_{\xi}$ has no contribution from the discrete spectrum. Here thus comes another unique feature of automorphic forms generating quaterninonic discrete series representations:
\begin{thm}[Theorem \ref{adelicFJ-exp-QDS}]\label{FJ-intro-cusp}
Let $F$ be a cusp form as above. For $\xi\not=0$ (the coefficient functions of)  $F_{\xi}$ belongs to the continuous spectrum of the $L^2$-space $L^2_{\xi}(\cR_J(\Q)\backslash \cR_J(\bA))$.
\end{thm}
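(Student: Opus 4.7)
The plan is to reduce the claim to the vanishing result of Theorem \ref{FJ-spherical-intro} through a standard adelic-to-archimedean separation argument. First I would fix $\xi\in\Q\setminus\{0\}$ and observe that, because $F$ is a cusp form and in particular rapidly decreasing on Siegel sets of $\cG(\Q)\backslash\cG(\bA)$, the Fourier--Jacobi coefficient $F_{\xi}$, viewed via the twist by $w_{\alpha}$ supplied by Theorem \ref{adelic-FJ-exp_intro} as an automorphic form on $\cR_J(\Q)\backslash\cR_J(\bA)$ transforming under the center of $\cN(\bA)$ by the character indexed by $\xi$, lies in the $L^2$-space $L^2_{\xi}(\cR_J(\Q)\backslash\cR_J(\bA))$.

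Next I would argue by contradiction: assume that the projection of $F_{\xi}$ (together with its right $\cR_J(\bA)$-translates) onto the discrete spectrum $L^2_{\xi,\mathrm{disc}}$ is nonzero. Then there exists an irreducible unitary automorphic subrepresentation $\sigma=\sigma\oo\otimes\sigma_f$ of $\cR_J(\bA)$, with the same central character, whose $\sigma$-isotypic component in $L^2_{\xi,\mathrm{disc}}$ captures a nonzero piece of $F_{\xi}$. The archimedean factor $\sigma\oo$ is then an irreducible unitary representation of $R_J$ with nontrivial central character, so precisely the class of representations $\rho$ covered by Theorem \ref{FJ-spherical-intro}.

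From this data I would extract a nonzero element of $\Hom_{(\g,K)}(\pi_n,C^{\infty}_{\rm mod}\text{-}{\rm Ind}_{R_J}^G\sigma\oo)$. Choosing a continuous $\sigma_f$-equivariant functional that witnesses the nonvanishing of the discrete projection, and pairing it against $F_{\xi}$, should produce at the archimedean place a $(\g,K)$-equivariant map from the Harish-Chandra module of the quaternionic discrete series $\pi_n$ generated by $F$ into the smooth induction $C^{\infty}\text{-}{\rm Ind}_{R_J}^G\sigma\oo$. Moderate growth of the cusp form $F$ lifts the image into the subspace $C^{\infty}_{\rm mod}\text{-}{\rm Ind}_{R_J}^G\sigma\oo$, and then Theorem \ref{FJ-spherical-intro} forces it to vanish, contradicting nontriviality of the discrete projection. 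The main obstacle I foresee is this last construction: one must carefully disentangle the finite and archimedean factors of $\sigma$ and verify that nontriviality of the adelic discrete projection of $F_{\xi}$ genuinely produces a nonzero archimedean Fourier--Jacobi spherical function satisfying the moderate growth hypothesis, so that Theorem \ref{FJ-spherical-intro} can be applied to yield the desired contradiction.
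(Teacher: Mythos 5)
Your proposal follows essentially the same route as the paper: argue by contradiction, use the decomposition of the discrete spectrum $L^2_{\xi,d}(\cR_J(\Q)\backslash\cR_J(\bA))$ into irreducible unitary representations of $\cR_J(\bA)$ (which the paper writes explicitly as $\omega_{\xi,\bA}\boxtimes\sigma$ and factors into local components with unitary archimedean factor), and then restrict to the archimedean place to produce a nonzero moderate-growth element of $\Hom_{(\g,K)}(\pi_n,C^{\infty}_{\rm mod}\text{-}{\rm Ind}_{R_J}^G\sigma_{\infty})$, contradicting the vanishing of Fourier--Jacobi models in Theorem \ref{FJ-model}. The technical step you flag --- separating the finite and archimedean factors to genuinely obtain a nonzero archimedean Fourier--Jacobi spherical function --- is exactly the point the paper also treats (briefly), so your outline matches the paper's argument.
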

This result is totally beyond author's expectation. 
The Fourier-Jacobi exansions of holomorphic Siegel cusp forms of degree two are proved to need only the discrete spectrum~(cf.~\cite[Section 4]{B-S}). This is naturally expected for the case of general degree by the well known fact that the Fourier-Jacobi coefficients of non-zero indices are Jacobi cusp forms for the Fourier-Jacobi expansion of holomorphic cusp forms of general degree . In addition, we remark that, according to \cite{Na-2}, we need the discrete spectrum of the Jacobi group as well as the continuous spectrum to describe the Fourier-Jacobi expansions of generic cusp forms on $Sp(2,\R)$, for which we cannot deny the contribution from the discrete spectrum since the result like Theorem \ref{FJ-spherical-intro} would be deniable in view of Hirano \cite{Hi-1} \cite{Hi-2} and \cite{Hi-3}. 

Let us explain the outline of this paper. We begin with analysis on the generalized Whittaker functions, which yields the result on the Fourier-Jacobi type spherical functions or the Fourier-Jacobi models. In Section 2 we study the generalized Whittaker functions attached to Schr\"odinger representations and prove the multiplicity formulas for generalized Whitaker models and Fourier-Jacobi models. 
This section also takes up generalized Whittaker functions for the characters studied by Pollack. Motivated by the proof of the K\"ocher priciple, we study them relaxing the moderate growth condition. The section 3 is then devoted to the proof of the theorem on the Fourier-Jacobi expansion of general automorphic forms and forms generating quaternionic discrete series. The proof of the K\"ocher priciple~(Theorem \ref{KoecherPl}) is taken up and we prove Theorem \ref{FJ-intro-cusp}~(Theorem \ref{adelicFJ-exp-QDS}) in this section. In Section 4 we verify that the interesting examples of cusp forms by Pollack \cite{P3} generate quaternionic discrete series, which can be regarded as an application of the results in Section 2. 
In the appendix we study the Fourier-Jacobi expansion of cusp forms generating quaternionic discrete series in the non-adelic formulation.
\subsection*{Acknowledgement}
This paper would be impossible without detailed discussions with Wee Teck Gan, Aaron Pollack and Takuya Yamauchi. The author's profound gratitude is due to them. 
The author was supported by Grand-in-Aid for Scientific Research (C) 19K03431, Japan Society for the Promotion of Science and by Waseda University Grant for Special Research Projects (Project number: 2022C-095). This work was supported by the Research Institute for Mathematical Sciences, an International Joint Usage/Research Center located in Kyoto University.
\section{Generalized Whittaker functions and Fourier-Jacobi models for quaternionic discrete series representations}. 
This section can be said to be the essential part of the archimedean aspect of this  paper. Our results on the generalized Whittaker functions and Fourier-Jacobi type spherical functions for quaternionic discrete series are indispensable to understand the archimedean aspect of the Fourier-Jacobi expansion completely. In Sections \ref{Cubicforms-Groups} and \ref{Rep-JacobiGp} we take up the fundamental facts on groups and representations to provide the basis for the coming discussion. After examining the generalized Whittaker functions in Sections \ref{GWF-definition}, \ref{GWF-STrep} and \ref{Pf-Prop2.4}, our results are stated in Section \ref{Results-GWF-FJS}. We work over real groups unless otherwise stated.
\subsection{Cubic norm structures and associated groups}\label{Cubicforms-Groups}
Let $G$ be the connected component of the identity for an adjoint real simple Lie group as in Pollack \cite{P1}, namely $G$ is one of simple real Lie groups with the quaternionic structure, other than groups isogenous to the special unitary group $SU(2,m)$ of signature $(2,m)$ or to the quaternion unitary group~(indefinite symplectic group) $Sp(1,q)$ of signature $(1,q)$.  These simple adjoint groups are described by using vector spaces $J$ over $\R$ with the cubic norm structures explained soon and listed as follows~(cf.~\cite[Section 1.2]{P1}):
\begin{enumerate}
\item $J=\R$ for $G$ of type $G_2$,
\item $J=H_3(\R)$ for $G$ of type $F_4$,
\item $J=H_3(\C)$ for $G$ of type $E_6$,
\item $J=H_3(\Hl)$ for $G$ of type $E_7$,
\item $J=H_3(\Theta)$ with the non-split octanion $\Theta$ for $G$ of type $E_8$,
\item $J=\R\times V$ with a quadratic space $V$ of signature $(1,m-3)$, for  $G=SO(m,4)$~(respectively $PSO(m,4)$) if $m$ is odd~(respectively~even).
\end{enumerate}
Here $H_3(C)$ denotes the set of Hermitian matrices with coefficients in the composition algebras $C$. For these we also note that $J$s listed above are known as examples of formally real Jordan algebras and that, associated with $J$ and the  symmetric cone $\Omega\subset J$, there are tube domains
\[
D_J:=\{Z=X+\sqrt{-1}Y\mid X\in J,~Y\in\Omega\},
\]
which are the Riemannian symmetric spaces for the Levi parts $H_J$~(cf.~\cite[Sections 2.2,~2.3]{P1}) of the Heisenberg parabolic subgroups of $G$~(explained later)

Let us review the notion of a cubic norm structure, following \cite[Section 2.1]{P1}. 
We work over a field $F$ of characteristic $0$ for this review. We say that a finite dimensional vector space $J$ over $F$ is equipped with a cubic norm structure if $J$ has a cubic polynomial $N_J: J\rightarrow F$, a quadratic polynomial map $\sharp:J\rightarrow J$, an element $1_J\in J$, and a symmetric bilinear paring $(*,*):J\otimes J\rightarrow F$ called the trace paring, that satisfy the following properties:\\
For $x,y\in J$ we put $x\times y:=(x+y)^{\sharp}-x^{\sharp}-y^{\sharp}$ and denote by $(*,*,*):J\otimes J\otimes J\rightarrow F$ the unique symmetric trilinear form satisfying $(x,x,x)=6N_J(x)$ for all $x\in J$.  Then
\begin{enumerate}
\item $N_J(1_J)=1,~1_J^{\sharp}=1_J$ and $1_J\times x=(1_J,x)\cdot1_J-x$ for all $x\in J$;
\item $(x^{\sharp})^{\sharp}=N(x)x$ for all $x\in J$;
\item $(x,y)=\displaystyle\frac{1}{4}(1_J,1_J,x)(1_j,1_j,y)-(1_J,x,y)$ for all $x,~y\in J$;
\item $N_J(x+y)=N_J(x)+(x^{\sharp},y)+(x,y^{\sharp})+N_J(y)$ for all $x,y\in J$.
\end{enumerate}
As a fundamental reference on this we cite \cite{Mc}. 

The trace paring canonically leads to the dual $J^{\vee}$ of $J$, which can be identified with $J$ itself. We introduce the symplectic vector space
\[
W_J:=F\oplus J\oplus J^{\vee}\oplus F
\]
equipped with the symplectic form
\[
\langle(a,b,c,d),(a',b',c',d')\rangle:=ad'-(b,c')+(c,b')-da'
\]
for $(a,b,c,d),~(a',b',c',d')\in W_J$.

\subsection{Jacobi groups and their irreducible unitary representations}\label{Rep-JacobiGp}
Let $P=H_JN$ be the Heisenberg parabolic subgroup with the Levi part $H_J$ and the unipotent radical $N$. The Levi part includes $H_J^{0,1}$, the connected component of the identify for the semisimple part of $H_J$~(cf.~\cite[Section 7.5]{P1}). We introduce the subgroup
\[
R_J:=H_J^{0,1}\ltimes N
\]
of $P$, which we want to call the Jacobi group. To discuss irreducible unitary representations of $R_J$ we introduce the metaplectic group $Mp(W_J)$ of the real symplectic group $Sp(W_J)$ defined by the symplectic vector space $W_J$, for which note that $Mp(W_J)$ is the unique non-split two fold cover of $Sp(W_J)$. 

The representation theory of Jacobi groups have been developed in a more general context~(for instance see \cite{Su}). We restrict ourselves to unitary representations. Our discussion here is aimed at Theorems \ref{FJ-model} and \ref{adelicFJ-exp-QDS}, for which our limitation is enough. 
%We will need a slightly general class of representations of $R_J$ including unitary representations. 

The group $N$ is a two step nilpotent Lie group and can be called a Heisenberg group. This Heisenberg group $N$ can be written by the coordinate
\[
n((a,b,c,d),t)\quad((a,b,c,d)\in W_J,~t\in\R),
\]
where $t$ denotes the coordinate of the center. Its composition law is given by
\[
n(w;t)\cdot n(w';t'):=n(w+w',\frac{1}{2}\langle w,w'\rangle+t+t')
\]
for $w,w'\in W_J$ and $t,t'\in\R$. 
The abelian quotient $N/[N,N]$ is viewed as the vector part of the Heisenberg group and can be identified with $W_J$. As for $H_J$ we remark that its connected component $H_J^{0}$ of the identity acts transitively on the tube domain $D_J$ as is shown in the proof of \cite[Proposition 2.3.1]{P1}. 

We put
\[
N_m:=\{n((0,0,c,d),t)\in N\mid c\in J^{\vee},~d,~t\in\R\}.
\]
This is a maximal abelian subgroup of $N$, whose Lie algebra is a polarization subalgebra ~(cf.~\cite[p28]{Co-G-2})~with respect to a linear form of ${\rm Lie}(N)$~(the Lie algebra of $N$) non-trivial only on the center of ${\rm Lie}(N)$. 
We introduce the character $\chi_{\xi}$ of $N_m$ defined by
\[
\psi_{\xi}(n((0,0,c,d),t))=\exp(2\pi\sqrt{-1}\xi t).
\]
For $\xi\in\R\setminus\{0\}$ we define a Schr\"odinger representation  of $C^{\infty}$-type by the $C^{\infty}$-induction of $\psi_{\xi}$,
\[
\eta^{\infty}_{\xi}:=C^{\infty}\text{-}{\rm Ind}_{N_m}^N\psi_{\xi}.
\]
This is a weakly cyclic representation in the sense of \cite[Definition 2.2]{Y}~(see also \cite[Example 2.3]{Y}).
%and the $L^2$-induction $L^2\text{-}{\rm Ind}_{N_m}^N\chi_{\xi}$ gives rise to an irreducible unitary representation of $N$, say $\eta_{\xi}$. The latter is the Stone von-Neumann representations in the usual sense. The equivalence classes of $\{\eta_{\xi}\mid\xi\in\R\setminus\{0\}\}$ together with characters exhaust the unitary dual of $N$.
Let $\eta_{\xi}$ be an irreducible unitary representation of $N$ defined by the $L^2$-induction of $\psi_{\xi}$, 
\[
\eta_{\xi}:=L^2\text{-}{\rm Ind}_{N_{\m}}^N\psi_{\xi},
\]
which is realized on $L^2(\R\oplus J)\simeq L^2(N_{\frak m}\backslash N)$. This is nothing but the Schr\"odinger representation in the usual sense~(cf.~\cite[Section 1.2]{LV}). 

As is well known, the Schr\"odinger representation $\eta_{\xi}$ extends to a representation $\Omega_{\xi}$ of $Mp(W_J)\ltimes N$ defined by
\[
\Omega_{\xi}(g)=\omega_{\xi}(g_1)\otimes\eta_{\xi}(n)\quad(g=g_1n,~g_1\in Mp(W_J),~n\in N),
\]
where $\omega_{\xi}$ denotes the Weil representation of $Mp(W_J)$ with the same representation space as that of $\eta_{\xi}$. 
We now discuss unitary representations of the group given by the semi-direct product $R_J=H_J^{0,1}\ltimes N$. It should be remarked that the center of $N$ is also that of $R_J$, for which note that $H_J^{0,1}$ acts trivially on the center of $N$ by its conjugation on $N$. This leads to the notion of the central characters for representations of $R_J$ as those of $N$ can have central characters. 

We denote the restriction of $\Omega_{\xi}$ to $H_J^{0,1}\ltimes N$ also by $\Omega_{\xi}$. It should be now noted that $\Omega_{\xi}|_{H_J^{0,1}}$ is not always an ordinary representation. This can be a projective representation with a multiplier system valued in $\{\pm 1\}$.  For instance such a projective representation with the non-trivial multiplier occurs when $G$ is of type $F_4$, for which we have $H_J^{0,1}$ is the real symplectic group $Sp(3,\R)$ of degree three. Needless to say, the triviality of the multiplier system means that the representation is ordinary.
\begin{prop}\label{UnitaryDual-Jacobi}
Any unitary representation of $R_J=H_J^{0,1}\ltimes N$ with the fixed non-trivial central character parametrized by $\xi$ is a representation of the form $U_1\otimes \Omega_{\xi}$ defined by
\[
U(g):=U_1(g_1)\otimes \Omega_{\xi}(g)\quad\forall g=(g_1,n)\in R_J=H_J^{0,1}\ltimes N
\]
with an ordinary or projective unitary representation $U_1$ of $H_J^{0,1}$ whose multiplier system coincides with that of $\Omega_{\xi}|_{H_J^{0,1}}$.
Conversely, given any ordinary or projective unitary representation $U_1$ of $H_J^{0,1}$ with the same condition on the multiplier system as above, the representation of $R_J$ defined by
\[
U_1(g_1)\otimes \Omega_{\xi}(g)\quad\forall g=(g_1,n)\in H_J^{0,1}\ltimes N=R_J
\]
is an ordinary unitary representation. The unitary representation $U$ of $R_J$ is irreducible if and only if so is $U_1$.
\end{prop}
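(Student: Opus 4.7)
The plan is to prove this as a Stone--von Neumann / Mackey machine argument, exploiting that $\Omega_{\xi}$ provides a canonical ``reference'' extension of $\eta_{\xi}$ from $N$ to all of $R_J$ (after passing through $Mp(W_J)$). First I would restrict a given unitary representation $U$ of $R_J$ with central character $\psi_{\xi}$ to the Heisenberg group $N$. Since the central character is fixed and non-trivial, the classical Stone--von Neumann theorem (applicable because $N$ is a real Heisenberg group and $\eta_{\xi}$ is its unique irreducible unitary representation with that central character) forces a unitary equivalence
\[
U|_N \;\cong\; \cH_1 \otimes \eta_{\xi},
\]
where $\cH_1$ is a Hilbert space of multiplicities. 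Under this identification the representation space of $U$ is $\cH_1 \otimes L^2(\R \oplus J)$ and $U(n) = \mathrm{id}_{\cH_1} \otimes \eta_{\xi}(n)$ for $n \in N$.

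Next I would exploit that $H_J^{0,1}$ normalizes $N$ and fixes the center pointwise, so for each $h \in H_J^{0,1}$ the operator $U(h)$ is an intertwiner from $\eta_{\xi}$ to the representation $n \mapsto \eta_{\xi}(h n h^{-1})$, which has the same central character $\psi_{\xi}$. The key input from Section \ref{Rep-JacobiGp} is that $\Omega_{\xi}$ already realizes such an intertwiner: $\Omega_{\xi}(h)\,\eta_{\xi}(n)\,\Omega_{\xi}(h)^{-1} = \eta_{\xi}(hnh^{-1})$ up to the inherent $\pm 1$ ambiguity coming from $Mp(W_J) \to Sp(W_J)$. Consequently, for each $h$, the operator $U(h)\bigl(\mathrm{id}_{\cH_1} \otimes \Omega_{\xi}(h)\bigr)^{-1}$ commutes with all $\mathrm{id}_{\cH_1} \otimes \eta_{\xi}(n)$. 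By Schur's lemma applied to $\eta_{\xi}$, this commutant equals $\mathcal{B}(\cH_1) \otimes \mathrm{id}$, so there is a unitary $U_1(h) \in \mathcal{B}(\cH_1)$ with
\[
U(h) \;=\; U_1(h) \otimes \Omega_{\xi}(h), \qquad h \in H_J^{0,1}.
\]
The cocycle computation $U(h_1 h_2) = U(h_1) U(h_2)$ then forces $U_1$ to be a projective unitary representation of $H_J^{0,1}$ whose multiplier is precisely the inverse of that of $\Omega_{\xi}|_{H_J^{0,1}}$; since the latter is $\{\pm 1\}$-valued it is its own inverse, which is exactly the matching condition in the statement.

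For the converse direction I would verify directly that if $U_1$ carries the same $\{\pm 1\}$-multiplier as $\Omega_{\xi}|_{H_J^{0,1}}$, then the product cocycles cancel so that $g = (g_1,n) \mapsto U_1(g_1) \otimes \Omega_{\xi}(g)$ is an ordinary (not merely projective) unitary representation of $R_J$; the semi-direct product relation for $R_J$ follows from the fact that $\Omega_{\xi}$ already satisfies the analogous relation on $Mp(W_J) \ltimes N$. Finally, for the irreducibility statement, I would compute the commutant: an operator commuting with $U_1 \otimes \Omega_{\xi}$ must, by commuting with $\mathrm{id} \otimes \eta_{\xi}(N)$ and Schur, lie in $\mathcal{B}(\cH_1) \otimes \mathrm{id}$, and then commuting with $U_1(h) \otimes \Omega_{\xi}(h)$ for all $h$ forces it to lie in the commutant of $U_1$. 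Hence the commutants of $U$ and of $U_1$ are isomorphic, giving $U$ irreducible iff $U_1$ irreducible.

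The main obstacle I foresee is the bookkeeping for the projective representation: one must carefully track the $\{\pm 1\}$-multiplier of $\Omega_{\xi}|_{H_J^{0,1}}$ through the cocycle identity for $U_1$ and verify the converse statement at the level of ordinary (not just projective) representations. For the types where $H_J^{0,1}$ admits no nontrivial double cover this is automatic, but in types like $F_4$ (where $H_J^{0,1} = Sp(3,\R)$ has a nontrivial metaplectic cover) this is where the genuine projective-representation language becomes essential; everything else is a standard application of Stone--von Neumann and Schur.
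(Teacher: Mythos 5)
Your proposal is correct and follows essentially the same route as the paper: Stone--von Neumann applied to $U|_N$, factorization $\cH=\cH_1\otimes V_{\xi}$, extraction of $U_1$ on the multiplicity space using that $\Omega_{\xi}$ already intertwines the $H_J^{0,1}$-conjugates of $\eta_{\xi}$, and matching of the $\{\pm1\}$-multiplier. The only (cosmetic) differences are that the paper builds $U_1$ on $\Hom_N(V_{\xi},\cH)$ rather than via the commutant, and proves the irreducibility equivalence by an invariant-subspace argument rather than by comparing commutants.
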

\begin{proof}
According to the Stone von-Neumann theorem~(cf.~\cite[Sections 1.3.10--1.3.13]{LV}), any unitary representation of $N$ with the non-trivial central charatcer indexed by $\xi\not=0$ is a multiple of the Schr\"odinger representation $(\eta_{\xi},V_{\xi})$, where the representation space $V_{\xi}$ of $\eta_{\xi}$ is given by $L^2(\R\oplus J)$. Here we remark that the multiplicity can be infinite.

For any unitary representation $(U,\cH)$ of $R_J$ with the central character indexed by $\xi$, let us consider its restriction to $N$. 
By the Stone von-Neumann theorem we can consider the space $\cH_1:=\Hom_N(V_{\xi},\cH)$ of unitary intertwining operators and this can be viewed as a unitary representation of $N$. 
In fact, the space $\cH_1$ forms a Hilbert space with respect to the Hilbert-Schmidt  type norm defined by $\displaystyle\frac{||\phi(x)||_{\cH}}{||x||_{\xi}}~(x\in V_{\xi}\setminus\{0\})$ for $\phi\in \cH_1$, where $||*||_{\cH}$ and $||*||_{\xi}$ denote the norms of $\cH$ and $V_{\xi}$ respectively~(or the inner product of $V_{\xi}^*\otimes\cH\simeq \Hom_N(V_{\xi},\cH)$ given by the tensor product of the inner products of $V_{\xi}^*$ and $\cH$, with the dual space $V_{\xi}^*$ of $V_{\xi}$). Then $\cH$ can be regarded as a Hilbert space tensor product $\cH=\cH_1\otimes V_{\xi}$. For $g=(g_1,n)\in R_J=H_J^{0,1}\ltimes N$ we put 
\[
U_1(g)\phi:=U(g)\cdot\phi\cdot \Omega_{\xi}(g)^{-1}\quad(\phi\in\cH_1). 
\]
We see that this depends only on $H_J^{0,1}$ in view of the Stone von-Neumann theorem, and $U_1$ is thus an ordinary unitary representation or a projective unitary representation of $H_J^{0,1}$ with the same multiplier system with that of $\Omega_{\xi}|_{H_J^{0,1}}$.

We see that
\[
U_1(g)\phi(x)=U_1(g_1)\phi(x)
\]
for $x\in V_{\xi}$ and $g\in R_J$ with its $H_J^{0,1}$-part $g_1$ and that 
%&U(g_1)\phi(x)=U(g_1)\phi(\omega_{\xi}(g_1)^{-1}\omega_{\xi}(g_1)x)=U_1(g_1)\phi\otimes\omega_{\xi}(g_1)x,\\
\[
U(g)\phi(x)=U(g)\phi(\Omega_{\xi}(g)^{-1}\Omega_{\xi}(g)x)=U_1(g)\phi\otimes \Omega_{\xi}(g)x=U_1(g_1)\phi\otimes \Omega_{\xi}(g)x
\] 
for $x\in V_{\xi}$ and $(g,g_1)$ as above, where note that the second equality is due to the identification $\cH=\cH_1\otimes V_{\xi}$. We have therefore seen that $U(g)=U_1(g_1)\otimes \Omega_{\xi}(g)$. We further verify that if $U$ is irreducible, the irreducibility of $U_1$ has to hold since otherwise there is a contradiction to the irreducibility of $U$.

Conversely, given an unitary ordinary or projective representation $(U_1,\cH_1)$ of $H_J^{0,1}$ with the same multiplier system with $\Omega_{\xi}|_{H_J^{0,1}}$, we define a representation $(U,\cH)$ of $R_J$ by
\[
U(g):=U_1(g_1)\otimes \Omega_{\xi}(g_1n)\quad(g=(g_1,n)\in H_J^{0,1}\ltimes N=R_J),
\]
where note that $U$ is an ordinary representation of $R_J$ by the assumption on the multiplier system. 
This has the central character indexed by $\xi$. If $U_1$ is irreducible and $(U,\cH)$ has a proper non-zero $R_J$-invariant subspace $(U',\cH')$ we put $\cH'_1:=\Hom_N(V_{\xi},\cH')$ and define a representation $(U'_1,\cH'_1)$ of $H_J^{0,1}$ by
\[
U'_1(g_1)\phi(v):=U'(g_1)\cdot\phi(\Omega_{\xi}(g_1)^{-1}v)\quad(\phi\in\cH'_1,~v\in V_{\xi}).
\]
Now note that $(U'_1,\cH'_1)$ can be viewed as a subrepresentation of $(U_1,\cH_1)$ by
\[
\cH'_1\simeq V^*_{\xi}\otimes\cH'\subset V^*_{\xi}\otimes\cH\simeq\cH_1,
\]
where $V_{\xi}^*$ denotes the dual space of $V_{\xi}$. 
Therefore we have a contradiction to the irreducibility of $(U_1,\cH_1)$.
\end{proof}

\subsection{Generalized Whittaker functions}\label{GWF-definition}
As is known by \cite[Proposition 4.1]{G-W}, the maximal compact subgroup $K$ of $G$ is isomorphic to $SU(2)\times M/\langle-1,\epsilon\rangle$ with the compact real form $M$ of the complex group $M_{\C}$ as in \cite[Table 2.6]{G-W}, where $\epsilon$ denotes the unique element of order two in the center of $M$. 

For a positive integer $n$ such that $2n\ge\dim W_J$ let $\pi_{n}$ be a quaternionic discrete series representation with minimal $K$-type $(\tau_n, \bV_n)$ given by the trivial extension of the $2n$-th symmetric tensor representation of $SU(2)$ to $K$. 
In what follows, we can identify $(\tau_n,\bV_n)$ with its contragredient representation $(\tau_n^*,\bV_n^*)$ since the symmetric tensor representation of $SU(2)$ is self-dual and so is $\tau_n$. Let $\eta$ be a weakly cyclic representation of $N$ in the sense of \cite[Definition 2.2]{Y}. As is remarked in \cite[Example 2.3]{Y} this notion of representations of $N$ includes irreducible unitary representations.

For a discrete series representation $\pi$ of $G$ we are interested in the space of the intertwining operators:
\[
\Hom_{(\g,K)}(\pi,C^{\infty}\operatorname{-Ind}_N^G(\eta))
\]
as $(\g,K)$-modules, where $\g$ denotes the Lie algebra of $G$ as usual. This intertwining space is investigated by considering the restriction to the minimal $K$-type as follows:
\[
I:\Hom_{(\g,K)}(\pi,C^{\infty}\operatorname{-Ind}_N^G(\eta))\ni\Phi\mapsto \Phi\circ\iota\in\Hom_{K}(\tau,C^{\infty}\operatorname{-Ind}_N^G(\eta))
\]
with the embedding $\iota:\tau\rightarrow\pi$ as $K$-modules. 
\begin{defn}
Let $\pi$ be a quaternionic discrete series representation $\pi_n$. An element in ${\rm Im}(I)$ is called a generalized Whittaker function for $\pi_n$ with the minimal $K$-type $\tau_n$ for $\eta$. 
\end{defn}
We now introduce
\[
C^{\infty}_{\eta,\tau_n^*}(N\backslash G/K):=\{W:G\rightarrow V_{\eta}\boxtimes \bV_n^*\mid W(ngk)=\eta(n)\boxtimes\tau_n^*(k)^{-1}W(g)~\forall (n,g,k)\in N\times G\times K\}
\]
with the representation space $V_{\eta}$ of $\eta$. Let us note that there is a canonical identification
\[
\Hom_{K}(\tau_n,C^{\infty}\operatorname{-Ind}_N^G(\eta))\simeq C^{\infty}_{\eta,\tau_n^*}(N\backslash G/K)
\]
and that ${\rm Im}(I)$ can be then regarded as a subspace of this. 
%Here $(\tau_n^*,\bV_n^*)$ denotes the contragredient of $(\tau_n,\bV_n)$. 
Due to Yamashita \cite[Proposition 2.1]{Y} we have a characterization of $\Hom_{(\g,K)}(\pi,C^{\infty}\operatorname{-Ind}_N^G(\eta))$ as follows:
\begin{prop}
Let ${\cal D}$ be the Dirac-Schmid operator(cf.~\cite{S1},~\cite[Section 2.1]{Y}) for a (general) discrete series representation $\pi$. The restriction map above induces the following injection
\[
\Hom_{(\g,K)}(\pi,C^{\infty}\operatorname{-Ind}_N^G(\eta))\hookrightarrow \{W\in C^{\infty}_{\eta,\tau_n^*}(N\backslash G/K)\mid {\cal D}\cdot W=0\}.
\]
\end{prop}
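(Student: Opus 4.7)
The plan is to break the statement into two independent assertions: the injectivity of the restriction map $I$, and the vanishing of the Dirac--Schmid operator on its image. Both assertions rest on the same two ingredients, namely that the minimal $K$-type of a discrete series generates the whole Harish-Chandra module, and that $\mathcal{D}$ arises from an element of $U(\g)\otimes\End(\bV_n)$ that annihilates the minimal $K$-type inside $\pi$.

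For injectivity of $I$, I would invoke the classical fact that any discrete series representation $\pi$ is cyclic over $U(\g)$ on any of its minimal $K$-types, so that $U(\g)\cdot\iota(\bV_n)$ exhausts the underlying $(\g,K)$-module. Consequently a $(\g,K)$-intertwiner $\Phi$ is completely determined by its restriction $\Phi\circ\iota$, since every other value is obtained from $\Phi\circ\iota$ by applying elements of $U(\g)$ through the intertwining property. Combining this with the standard Frobenius reciprocity identification
\[
\Hom_K(\tau_n,C^{\infty}\operatorname{-Ind}_N^G\eta)\simeq C^{\infty}_{\eta,\tau_n^*}(N\backslash G/K),
\]
obtained by evaluating a $K$-intertwiner on a basis of $\bV_n$ and pairing with its dual, one immediately gets that $I$ is injective into the stated function space.

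For the Dirac--Schmid equation, I would appeal to Schmid's differential-operator characterization of discrete series: the minimal $K$-type vectors of $\pi$, realized as $\bV_n^*$-valued sections, satisfy the first-order system ${\cal D}v=0$, where $\cal D$ is built from a basis of $\p$ (the $(-1)$-eigenspace of the Cartan involution) and the $K$-representation $\tau_n$. Equivalently, $\cal D$ corresponds to an explicit element $\widetilde{\cal D}\in U(\g)\otimes\End(\bV_n)$ which annihilates $\iota(\bV_n)\subset\pi$. Because $\Phi$ is $(\g,K)$-equivariant, one has
\[
{\cal D}(\Phi\circ\iota)=\Phi\bigl(\widetilde{\cal D}\cdot\iota\bigr)=0,
\]
so that $W=\Phi\circ\iota$ indeed lies in the kernel of $\cal D$ inside $C^{\infty}_{\eta,\tau_n^*}(N\backslash G/K)$. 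This yields the desired inclusion of the image of $I$.

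The main obstacle is the bookkeeping required to translate between the two incarnations of $\cal D$: on one side an algebraic operator in $U(\g)\otimes\End(\bV_n)$ annihilating a minimal $K$-type of $\pi$, on the other side a first-order differential operator acting on $V_{\eta}\boxtimes\bV_n^*$-valued smooth sections. This is exactly the content of \cite[Proposition 2.1]{Y}, and once the dictionary is spelled out in a chosen basis of $\p$, both injectivity and the equation ${\cal D}W=0$ follow formally from $(\g,K)$-equivariance together with the cyclicity of the minimal $K$-type. The fact that $\eta$ is only weakly cyclic rather than cyclic plays no role in this argument, since the image is controlled entirely on the $\pi$ side.
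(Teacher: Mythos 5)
Your argument is correct and is essentially the paper's: the paper simply invokes Yamashita's \cite[Proposition 2.1]{Y}, whose content is exactly what you reconstruct — injectivity from the cyclicity of the minimal $K$-type of the irreducible $(\g,K)$-module $\pi$, and the equation ${\cal D}W=0$ from $(\g,K)$-equivariance applied to the element of $U(\g)\otimes\End(\bV_n)$ realizing $\cal D$, which annihilates $\iota(\bV_n)$ by Schmid's characterization. Your closing remark is also accurate: the weak cyclicity of $\eta$ is only needed for the surjectivity statement (Yamashita's Theorem 2.4), not for the injection asserted here.
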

Yamashita \cite[Theorem 2.4]{Y} showed that this is an isomorphism for general discrete series representations of a connected semisimple real Lie group with the condition that the highest weights of the minimal $K$-types of discrete series~(called Blattner parameter) are ``far from the wall''. More precisely Yamashita studied intertwining operators from discrete series representations to representations induced from weekly cyclic representations~(cf.~\cite[Definition 2.2]{Y}) of a closed subgroup of the connected semisimple real groups. 
%Unfortunately quaternionic discrete series do not satisfy the condition  ``far from the wall''. 
However there seems to be an expectation that this condition can be removed. In fact, we will show by ourselves that the injection above turns out to be a bijection for quaternionic discrete series $\pi_{n}$. To verify this we essentially use Proposition \ref{WhittakerModule} together with explicit formulas for generalized Whittaker functions etc. Proposition \ref{WhittakerModule} will be used also at some points of Sections \ref{FJ-K-principle-contspec} and \ref{Application}.
\subsection{Generalized Whittaker functions for Schr\"odinger representations}\label{GWF-STrep}
It is now to start studying generalized Whittaker functions for $\eta_{\xi}^{\infty}$ in detail. We carry out this study under some  assumption on the separation of variables, which is enough for our purpose. 

We note that $\eta_{\xi}^{\infty}$ is realized on $C^{\infty}(\R\oplus J)$ in view of the identification $\R\oplus J\simeq N_m\backslash N$. The generalized Whittaker functions for $\eta_{\xi}^{\infty}$ in our concern are given as
smooth $C^{\infty}(\R\oplus J)\boxtimes\bV_n$-valued functions $W_{\xi}$ satisfying
\[
W_{\xi}(ugk)=\eta_{\xi}^{\infty}(u)\boxtimes\tau_{n}^*(k)^{-1}W_{\xi}(g)\quad(u,g,k)\in N\times G\times K,
\]
for which we note that, as has been remarked, $\tau_n$ is self-dual, namely $\tau_n^*\simeq\tau_n$ and we can identify $\bV_n^*$ with $\bV_n$ for the contragredient $\tau_n^*$ of $\tau_n$. 
We write $W_{\xi}$ as 
\[
W_{\xi}(g)=\sum_{v=-n}^{n}\phi_v(g)\left(\frac{x^{n+v}}{(n+v)!}\right)\left(\frac{y^{n-v}}{(n-v)!}\right)
\]
with $C^{\infty}(\R\oplus J)$-valued smooth function $\phi_v$, where $\{\left(\frac{x^{n+v}}{(n+v)!}\right)\left(\frac{y^{n-v}}{(n-v)!}\right)\mid -n\le v\le n\}$ denotes the basis of $\bV_n$ as in \cite[Section 7.3]{P1} and \cite{P2}. 

We now introduce the new coordinate $(a,b):=(a,b,0,0)\in W_J$ for $a\in\R,~b\in J$. In addition to this we remark that, using the group coordinate $n(X),~M_Y:=M(N_J(Y),U_Y)\in H_J^{0,1}$~(cf.~\cite[Section 2.2]{P1}) with $(X,Y)\in J\times\Omega$ and $U_Y$ defined in \cite[Section 2.1]{P1}, we have 
\[
Z=n(X)M_Y\cdot\sqrt{-1}1_J=X+\sqrt{-1}Y\quad(\text{for $Z\in D_J$})
\]
(see around the end of the proof for \cite[Proposition 2.3.1]{P1}), for which recall that $H_J^0$ acts on $D_J$ transitively as is already remarked in Section \ref{Cubicforms-Groups}~(replacing $H_J^0$ by $H_J^{0,1}$). 
The group $H_J^0$ is a product of its semisimple part $H_J^{0,1}$ and the connected torus isomorphic to $\R_{>0}$ contained in the center of $H_J^0$. We write a general element of this torus group by the coordinate $w\in\R_{>0}$. Taking an Iwasewa decomposition of $G$~(cf.~\cite[Theorem 5.12]{Kn}) into account we then see that the functions $W_{\xi}$ and $\phi_v$ are determined by their restriction to
\[
(a,b)\times n(X)M_Y\times w\in (\R\oplus J)\times H_J^{0,1}\times\R_{>0}
\]
in view of the left $N_m$-equivariance by $\psi_{\xi}$ and the right $K$-equivariance by $\tau_n^*$~(see also \cite[Section 7.5]{P1}). Here we note that $n(X)M_Y$ is identified with $Z=X+\sqrt{-1}Y$ since $n(X)M_Y\cdot\sqrt{-1}1_J=X+\sqrt{-1}Y$ as we have seen.

We write down the characterizing differential equations of $W_{\xi}$ based on \cite[Theorem 7.3.1]{P1} by rewriting it in the setting for $\eta=\eta_{\xi}^{\infty}$. 
To this end we define the differential operator $D_x^{(a,b)}$ by the partial derivative of the coordinate $(a,b)$ in the $x$-direction for $x\in \R\oplus J$. This can be extended to the complex variable in $b$ by $D_{(a_0,b_{0,x}+\sqrt{-1}b_{0,y})}^{a,b}:=D_{(a_0,b_{0,x})}^{a,b}+\sqrt{-1}D_{(a_0,b_{0,y})}^{a,b}$ for $a_0\in\R$ and $b_{0,x}+\sqrt{-1}b_{0,y}\in J\otimes\C$. We introduce 
\begin{align*}
&\tilde{Z}:=(1,-Z,Z^{\sharp},-N_J(Z))\in W_J\otimes\C,\\ 
&MV(E):=\frac{\tr(E)}{2}\tilde{Z}+(0,E_Y,-Z\times E_Y,(Z^{\sharp},E_Y))\in W_J\otimes\C
\end{align*}
for $Z=X+\sqrt{-1}Y\in\cH_J$ and $E\in J$, where $\tr(E):=(E,1_J)$ and $E_Y:=U_{Y^{1/2}}(E)$. Here note that the notation $MV(E)$ is different from that introduced at \cite[(26)]{P1}. 
%We can say that these follow \cite[Sections 7.4~7.6]{P1} though they are not exactly the same. 
We furthermore put
\[
\tilde{Z}_{a,b}:=(1,-Z)\in(\R\oplus J)\otimes\C,\quad MV(E)_{a,b}=\frac{\tr(E)}{2}(1,-Z)+(0,E_Y)\in(\R\oplus J)\otimes\C.
\]
By $\tilde{Z}^*$, $MV(E)^*$, $\tilde{Z}_{a,b}^*$ and $MV(E)_{a,b}^*$, we denote the notations defined by the complex conjugate $\bar{Z}$ instead of $Z$ in the corresponding three notations. In addition to these we use the notation $D_{Z(E)},~D_{Z^*(E)}$ with $E\in J$, following \cite[Section 7.1,~(3),~(4)]{P1}. These are viewed as the Cauchy Riemann operators or its complex conjugates for $D_J$.  
With these notations above the differential equations are given as follows:
\begin{prop}\label{Diff-eq_general}
\begin{enumerate}
\item For $-(n-1)\le v\le n$ we have 
\begin{equation}\label{Diff-eq_gen1st}
(w\partial w-2(n+1)+v+4\pi w^2\xi)\phi_v=\sqrt{-1}wN_J(Y)^{-1/2}(D^{(a,b)}_{\tilde{Z}^*_{a,b}}+2\pi\sqrt{-1}\xi\langle (a,b),\tilde{Z}^*\rangle)\phi_{v-1},
\end{equation}
\item For $-n\le v\le n-1$ we have
\begin{equation}\label{Diff-eq_gen2nd}
(w\partial w-2(n+1)-v-4\pi w^2\xi)\phi_v=\sqrt{-1}wN_J(Y)^{-1/2}(D^{(a,b)}_{\tilde{Z}_{a,b}}+2\pi\sqrt{-1}\xi\langle (a,b),\tilde{Z}\rangle)\phi_{v+1}.
\end{equation}
\item For $-n\le v\le n-1$ and for any $E\in J$ we have
\begin{equation}\label{Diff-eq_gen3rd}
(D_{Z(E)}+\frac{v}{2}\tr(E))\phi_v=-\sqrt{-1}wN_J(Y)^{-1/2}(D^{(a,b)}_{MV(E)_{a,b}}+2\pi\sqrt{-1}\xi\langle(a,b),MV(E)\rangle)\phi_{v+1}.
\end{equation}
\item For $-(n-1)\le v\le n$ and for any $E\in J$ we have
\begin{equation}\label{Diff-eq_gen4th}
(D_{Z^*(E)}-\frac{v}{2}\tr(E))\phi_v=-\sqrt{-1}wN_J(Y)^{-1/2}(D^{(a,b)}_{MV(E)_{a,b}^*}+2\pi\sqrt{-1}\xi\langle(a,b),MV(E)^*\rangle)\phi_{v-1}.
\end{equation}
\end{enumerate}
\end{prop}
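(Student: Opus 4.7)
The plan is to derive the four equations by specialising Pollack's framework \cite[Theorem 7.3.1]{P1}, which was stated for characters of $N$, to the Schr\"odinger representation $\eta=\eta_\xi^\infty$. The underlying mechanism is identical: the Dirac--Schmid operator $\cD$ annihilates $W_\xi$, and combining $\cD\cdot W_\xi=0$ with the Iwasawa decomposition parametrised by $(a,b)\times n(X)M_Y\times w$ and with the right $K$-equivariance by $\tau_n^*$ yields a system of first-order relations between consecutive components $\phi_v$ and $\phi_{v\pm 1}$.

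First I would recast Pollack's derivation so that the action $d\chi(X)$, $X\in{\rm Lie}(N)$, appearing on his right-hand sides is kept abstractly as a Lie-algebra action on the target space rather than substituted as a specific scalar. The left-hand sides of (\ref{Diff-eq_gen1st})--(\ref{Diff-eq_gen4th}) -- the operator $w\partial w$, the shifts $\pm v$ and $-2(n+1)$, and the central piece $\pm 4\pi w^2\xi$ -- depend only on the Cartan/Iwasawa data and on the fixed central character $\psi_\xi$ of $\eta_\xi^\infty$, so they carry over verbatim from Pollack's case. The right-hand sides, on the other hand, encode the action on the target of the specific elements $\tilde Z^*$, $\tilde Z$, $MV(E)$, $MV(E)^*$ of $W_J\otimes\C\subset{\rm Lie}(N)\otimes\C$.

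The key computation is then the explicit formula for the derived representation $d\eta_\xi^\infty$ on $C^\infty(\R\oplus J)\simeq C^\infty(N_m\backslash N)$. For $X=(a_0,b_0,0,0)$ transverse to the polarisation one checks $d\eta_\xi^\infty(X)=D^{(a,b)}_{(a_0,b_0)}$; for $X'=(0,0,c_0,d_0)$ in the polarisation one obtains multiplication by $2\pi\sqrt{-1}\xi\langle(a,b,0,0),X'\rangle$; and the central generator acts as the scalar $2\pi\sqrt{-1}\xi$. Summing these two contributions, a general element $\tilde Z^*\in W_J\otimes\C$ acts on $\phi\in C^\infty(\R\oplus J)$ as
\[
d\eta_\xi^\infty(\tilde Z^*)\,\phi \;=\; D^{(a,b)}_{\tilde Z^*_{a,b}}\phi\,+\,2\pi\sqrt{-1}\xi\,\langle(a,b),\tilde Z^*\rangle\,\phi,
\]
and analogously for $\tilde Z$, $MV(E)$, $MV(E)^*$. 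Substituting these four formulas into the abstract form of Pollack's equations produces precisely (\ref{Diff-eq_gen1st})--(\ref{Diff-eq_gen4th}).

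The main technical point that will require care is the consistent bookkeeping of the decomposition $W_J=(\R\oplus J)\oplus(J^\vee\oplus F)$ into transverse and polarisation components relative to $N_m$: in each equation the derivative direction uses only the $(\R\oplus J)$-part (captured by the subscript $(\cdot)_{a,b}$), while the multiplication term automatically picks up only the $(J^\vee\oplus F)$-part through the symplectic pairing $\langle(a,b,0,0),\cdot\rangle$. Once this split is fixed and the action $d\eta_\xi^\infty$ is inserted, no further difficulty arises from the fact that $\eta_\xi^\infty$ is infinite-dimensional, since only the Lie-algebra action on the underlying smooth vectors enters the Dirac--Schmid computation, and the specific form of the operators $D_{Z(E)}$, $D_{Z^*(E)}$ on $D_J$ is unchanged from Pollack's setting.
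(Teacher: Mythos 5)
Your proposal is correct and follows essentially the same route as the paper: the paper obtains Proposition \ref{Diff-eq_general} precisely by rewriting Pollack's Theorem 7.3.1 with the character action $d\chi$ replaced by the derived action of $\eta_\xi^\infty$ on $C^\infty(\R\oplus J)\simeq C^\infty(N_m\backslash N)$, which is exactly your decomposition into the derivative term $D^{(a,b)}_{(\cdot)_{a,b}}$ on the transverse part and the multiplication term $2\pi\sqrt{-1}\xi\langle(a,b),\cdot\rangle$ from the polarisation. Your write-up in fact makes explicit the bookkeeping that the paper leaves implicit in its one-sentence derivation.
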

\subsection{Reduction of the differential equations~(Proof of Proposition \ref{Diff-eq_general-2})}\label{Pf-Prop2.4}
We put $\phi_v:=w^{2(n+1)}G_v$. The system of the differential equations in Proposition \ref{Diff-eq_general} is changed for $G_v$ a little. More precisely, \lq\lq$-2(n+1)$\rq\rq is removed from (\ref{Diff-eq_gen1st}) and (\ref{Diff-eq_gen2nd}). 
Hereafter we mean these two modified differential equations by $(\ref{Diff-eq_gen1st})$ and $(\ref{Diff-eq_gen2nd})$. From now on, we make the assumption on the generalized Whittaker functions $W_{\xi}$ as follows:
\begin{asm}
Each function $G_v$ satisfies the separation of the variables with respect to $w$ and the other variables, meaning that $G_v(a,b,X,Y,w)=F_v(a,b,X,Y)H_v(w)$. 
\end{asm}
The generalized Whittaker functions with this assumption are enough for our purpose
\begin{prop}\label{Diff-eq_general-2}
Let $G_v$ be as above. We have $G_v\equiv 0$ for $-(n-1)\le v\le n-1$. 
The differential equations in Proposition \ref{Diff-eq_general} are therefore reduced to the following:\\
(1)~The differential equations (\ref{Diff-eq_gen1st}) in Proposition \ref{Diff-eq_general} are reduced to
\begin{align}
&(w\partial w+n+4\pi w^2\xi)G_{n}=0,\label{Diff-eq_gen1st-1}\\
&(D^{(a,b)}_{\tilde{Z}^*_{a,b}}+2\pi\sqrt{-1}\xi\langle (a,b),\tilde{Z}^*\rangle)G_{-n}.\label{Diff-eq_gen1st-2}
\end{align}
(2)~The differential equations (\ref{Diff-eq_gen2nd}) in Proposition \ref{Diff-eq_general} are reduced to
\begin{align}
&(w\partial w+n-4\pi w^2\xi)G_{-n}=0,\label{Diff-eq_gen2nd-1}\\
&(D^{(a,b)}_{\tilde{Z}_{a,b}}+2\pi\sqrt{-1}\xi\langle (a,b),\tilde{Z}^\rangle)G_{n}.\label{Diff-eq_gen2nd-2}
\end{align}
(3)~The differential equations (\ref{Diff-eq_gen3rd}) in Proposition \ref{Diff-eq_general} are reduced to
\begin{align}
&(D_{Z(E)}-\frac{n}{2}\tr(E))G_{-n}=0,\label{Diff-eq_gen3rd-1}\\
&(D^{(a,b)}_{MV(E)_{a,b}}+2\pi\sqrt{-1}\xi\langle(a,b),MV(E)\rangle)G_{n}=0.\label{Diff-eq_gen3rd-2}
\end{align}
(4)~The differential equations (\ref{Diff-eq_gen4th}) in Proposition \ref{Diff-eq_general} are reduced to
\begin{align}
&(D_{Z^*(E)}-\frac{n}{2}\tr(E))G_{n}=0,\label{Diff-eq_gen4th-1}\\
&(D^{(a,b)}_{MV(E)_{a,b}^*}+2\pi\sqrt{-1}\xi\langle(a,b),MV(E)^*\rangle)G_{-n}=0.\label{Diff-eq_gen4th-2}
\end{align}
\end{prop}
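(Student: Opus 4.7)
The plan is to exploit the overdetermined nature of the system in Proposition~\ref{Diff-eq_general} under the separation hypothesis, via a ``ladder collapse.'' First I substitute $\phi_v = w^{2(n+1)}G_v$ into each of (\ref{Diff-eq_gen1st})--(\ref{Diff-eq_gen4th}); for (\ref{Diff-eq_gen1st}) and (\ref{Diff-eq_gen2nd}) the identity $w\partial_w(w^{2(n+1)}G_v) = w^{2(n+1)}(w\partial_w + 2(n+1))G_v$ cancels the $-2(n+1)$ term and produces the modified equations used in the statement, while (\ref{Diff-eq_gen3rd}) and (\ref{Diff-eq_gen4th}) carry a common factor $w^{2(n+1)}$ that cancels on both sides. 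I then impose $G_v = F_v(a,b,X,Y)H_v(w)$, so each equation becomes an equality of products of an $(a,b,X,Y)$-function and a $w$-function on both sides.

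For the main claim $G_v \equiv 0$ with $-(n-1)\le v\le n-1$, I argue by contradiction. Suppose some $v_0$ with $-n\le v_0\le n-1$ has $G_{v_0}\not\equiv 0$ and $G_{v_0+1}\not\equiv 0$, and consider the modified (\ref{Diff-eq_gen3rd}) at $v=v_0$ together with the modified (\ref{Diff-eq_gen4th}) at $v=v_0+1$. In the non-degenerate case --- where for some $E\in J$ neither $(a,b,X,Y)$-factor in the equation vanishes identically --- matching the $w$-dependence on the two sides (since the ratio of the two $(a,b,X,Y)$-factors must be constant in $w$) forces
\[
H_{v_0}(w)=\alpha\, wH_{v_0+1}(w),\qquad H_{v_0+1}(w)=\beta\, wH_{v_0}(w)
\]
for some constants $\alpha,\beta\in\C$. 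Substituting the second into the first gives $(1-\alpha\beta w^2)H_{v_0}(w)=0$ for all $w$, whence $H_{v_0}\equiv 0$, a contradiction. In the degenerate case, for every $E\in J$ both sides of (\ref{Diff-eq_gen3rd}) at $v_0$ (or of (\ref{Diff-eq_gen4th}) at $v_0+1$) vanish identically in $(a,b,X,Y)$; this yields $(D_{Z(E)}+\tfrac{v_0}{2}\tr(E))F_{v_0}=0$ and $(D^{(a,b)}_{MV(E)_{a,b}}+2\pi\sqrt{-1}\xi\langle(a,b),MV(E)\rangle)F_{v_0+1}=0$ for all $E$ (respectively the analogous pair), an overdetermined family of first-order differential-plus-multiplication operators in $(a,b)$ of Heisenberg type whose only smooth common solution is zero, by virtue of $\xi\neq 0$ and Stone--von Neumann irreducibility. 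The remaining ``isolated'' configuration, in which $G_{v_0}\not\equiv 0$ but $G_{v_0-1}\equiv G_{v_0+1}\equiv 0$, is ruled out directly: taking the difference of (\ref{Diff-eq_gen1st}) and (\ref{Diff-eq_gen2nd}) at $v_0$ gives $2(v_0+4\pi w^2\xi)G_{v_0}=0$ for all $w$, forcing $G_{v_0}\equiv 0$ since $\xi\neq 0$.

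Once $G_v\equiv 0$ is established for $-(n-1)\le v\le n-1$, the reduced equations (\ref{Diff-eq_gen1st-1})--(\ref{Diff-eq_gen4th-2}) follow by specialising the modified (\ref{Diff-eq_gen1st})--(\ref{Diff-eq_gen4th}) at $v=\pm n$ (where the $G_{v\mp 1}$ appearing on the right already vanishes, giving the ODEs (\ref{Diff-eq_gen1st-1}),(\ref{Diff-eq_gen2nd-1}) and the holomorphic/antiholomorphic constraints (\ref{Diff-eq_gen3rd-1}),(\ref{Diff-eq_gen4th-1}) on $G_{\pm n}$) and at the adjacent indices $v=\mp(n-1)$ (where the $G_v$ on the left vanishes, forcing the Heisenberg-operator image of $G_{\mp n}$ on the right to vanish, yielding (\ref{Diff-eq_gen1st-2}), (\ref{Diff-eq_gen2nd-2}), (\ref{Diff-eq_gen3rd-2}), and (\ref{Diff-eq_gen4th-2})). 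The main obstacle I anticipate is the rigorous treatment of the degenerate subcase, i.e.\ showing that the family of operators indexed by $E\in J$ has only the zero smooth common kernel; this requires either a Stone--von Neumann-style completeness argument or an explicit Heisenberg-commutation calculation, and may warrant extra care for low-rank situations such as $J=\R$ (the $G_2$ type).
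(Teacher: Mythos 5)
Your overall skeleton is reasonable: the substitution $\phi_v=w^{2(n+1)}G_v$, the "isolated" configuration (difference of the modified (\ref{Diff-eq_gen1st}) and (\ref{Diff-eq_gen2nd}) giving $2(v_0+4\pi w^2\xi)G_{v_0}=0$), the non-degenerate pair argument forcing $H_{v_0}=cc'w^2H_{v_0}$, and the final specialisation at $v=\pm n$ and $v=\mp(n-1)$ are all correct. The fatal gap is exactly the degenerate subcase you flag at the end, and it cannot be repaired the way you propose. Your claim that the family $(D^{(a,b)}_{MV(E)_{a,b}}+2\pi\sqrt{-1}\xi\langle(a,b),MV(E)\rangle)$, $E\in J$ (together with the $D_{Z(E)}$-family), has zero common smooth kernel "by Stone--von Neumann irreducibility" is false. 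The vectors $\tilde Z$ and $MV(E)$, $E\in J$, span only a Lagrangian-type half of $W_J\otimes\C$, so these operators form a complete set of annihilation-type operators whose common kernel is a nonzero Gaussian-type "vacuum," not $\{0\}$: Lemma \ref{Diffeq-rk1-Lem} exhibits $\exp\bigl(-2\pi\sqrt{-1}\xi(a^2N_J(Z)+a(b,Z^{\sharp})+\frac12(b,b\times Z))\bigr)C(X,Y)$ as a common solution, and the genuinely nonzero component $G_n$ of Theorem \ref{GenWhittakerFct-gen} satisfies precisely (\ref{Diff-eq_gen3rd-2}) and (\ref{Diff-eq_gen4th-1}). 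Likewise $(D_{Z(E)}+\frac{v}{2}\tr(E))F=0$ for all $E$ admits the nonzero solutions $N_J(Y)^{-v/2}\Phi(Z)$ with $\Phi$ holomorphic (Lemma \ref{derivative-cubicpoly}). Irreducibility of the Schr\"odinger representation gives uniqueness of the representation, not triviality of such kernels.

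Since the degenerate regime is exactly where the surviving components $G_{\pm n}$ live, it is the heart of the proof rather than a boundary technicality, and the paper spends Lemmas \ref{Prop5-3_1stLem}--\ref{Prop5-3_3rdLem} plus a further page on it: it determines $H_v(w)=w^{\pm v}\exp(\mp 2\pi|\xi|w^2)$ among \emph{moderate growth} solutions of the $w$-ODEs, rules out simultaneous annihilation by both the $D_{Z(E)}+\frac{v}{2}\tr(E)$ and $D_{Z^*(E)}-\frac{v}{2}\tr(E)$ families for $v\neq0$ via a holomorphic/anti-holomorphic incompatibility (Lemma \ref{Prop5-3_2ndLem}), treats $v=0,\pm1$ separately, and disposes of the boundary indices $v=\pm(n-1)$ by comparing powers of $N_J(Y)$ against the second-order equation (\ref{degtwo-eq_2nd}). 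Note also that your argument never invokes the moderate growth hypothesis, which the paper's proof uses essentially (the $w$-ODEs have a second, exponentially growing branch that must be excluded); a two-line resolution of the degenerate case that is independent of that hypothesis should itself have been a warning sign.
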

This proposition leads to an important consequence on the generalized Whittaker functions $W_{\xi}$ as we will see in Section \ref{Results-GWF-FJS}. The proof proceeds with several lemmas.
\begin{lem}\label{degtwo-eqs_general}
For $-(n-1)\le v\le n$,
\begin{align}
&(w\partial w-v-4\pi w^2\xi)(w\partial w+v+4\pi w^2\xi)G_v=\nonumber\\
&-w^2N_J(Y)^{-1}(D^{(a,b)}_{\tilde{Z}^*_{a,b}}+2\pi\sqrt{-1}\xi\langle (a,b),\tilde{Z}^*\rangle)(D^{(a,b)}_{\tilde{Z}_{a,b}}+2\pi\sqrt{-1}\xi\langle (a,b),\tilde{Z}\rangle)G_v\label{degtwo-eq_1st}
\end{align}
and for $-n\le v\le n-1$,
\begin{align}
&(w\partial w+v+4\pi w^2\xi)(w\partial w-v-4\pi w^2\xi)G_v=\nonumber\\
&-w^2N_J(Y)^{-1}(D^{(a,b)}_{\tilde{Z}_{a,b}}+2\pi\sqrt{-1}\xi\langle (a,b),\tilde{Z}\rangle)(D^{(a,b)}_{\tilde{Z}^*_{a,b}}+2\pi\sqrt{-1}\xi\langle (a,b),\tilde{Z}^*\rangle)G_v.\label{degtwo-eq_2nd}
\end{align}
These two are equivalent to each other for $v$ with $-(n-1)\le v\le n-1$.
\end{lem}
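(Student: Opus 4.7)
The plan is to derive each second-order equation by composing the two first-order equations from Proposition \ref{Diff-eq_general}, as modified by the substitution $\phi_v = w^{2(n+1)}G_v$ (which absorbs the constant $-2(n+1)$), and then to establish the equivalence via a short commutator computation.

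For (\ref{degtwo-eq_1st}), I apply the operator $w\partial w - v - 4\pi w^2\xi$ to both sides of the modified (\ref{Diff-eq_gen1st}). The LHS immediately becomes the desired second-order operator applied to $G_v$. For the RHS, the elementary identity $(w\partial w)(wf) = w(1+w\partial w)f$ yields
\[
(w\partial w - v - 4\pi w^2\xi)(wf) = w\bigl(w\partial w - (v-1) - 4\pi w^2\xi\bigr)f,
\]
and since neither $N_J(Y)^{-1/2}$ nor the $(a,b)$-operator $D^{(a,b)}_{\tilde Z^*_{a,b}} + 2\pi\sqrt{-1}\xi\langle(a,b),\tilde Z^*\rangle$ involves $w$, the shifted Euler factor commutes past both of them. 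Invoking the modified (\ref{Diff-eq_gen2nd}) with $v$ replaced by $v-1$ -- legal precisely when $-(n-1)\le v\le n$ -- then substitutes $(w\partial w - (v-1) - 4\pi w^2\xi)G_{v-1}$ by $\sqrt{-1}wN_J(Y)^{-1/2}\bigl(D^{(a,b)}_{\tilde Z_{a,b}} + 2\pi\sqrt{-1}\xi\langle(a,b),\tilde Z\rangle\bigr)G_v$, and the two factors of $\sqrt{-1}w N_J(Y)^{-1/2}$ combine to produce the $-w^2 N_J(Y)^{-1}$ prefactor, yielding (\ref{degtwo-eq_1st}). The derivation of (\ref{degtwo-eq_2nd}) is entirely parallel: apply $w\partial w + v + 4\pi w^2\xi$ to the modified (\ref{Diff-eq_gen2nd}), use the same shift identity with $v\mapsto v+1$, and substitute from the modified (\ref{Diff-eq_gen1st}).

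For the equivalence when $-(n-1)\le v\le n-1$, I subtract (\ref{degtwo-eq_1st}) and (\ref{degtwo-eq_2nd}). The LHS difference reduces to $2[w\partial w,\,4\pi\xi w^2]G_v = 16\pi\xi w^2 G_v$ via $[w\partial w, w^2] = 2w^2$. The RHS difference equals $-w^2 N_J(Y)^{-1}$ times the commutator of the two first-order $(a,b)$-operators applied to $G_v$; since both are first order with $w$-independent polynomial coefficients in $Z$ and $\bar Z$, this commutator is a multiplication operator. A direct computation using $\langle(a,b),\tilde Z\rangle = -aN_J(Z) - (b,Z^\sharp)$ and its conjugate, followed by the cubic-norm-structure identities $(x\times y, z) = (x, y, z)$, $(x, x, y) = 2(x^\sharp, y)$, and $(x, x^\sharp) = 3N_J(x)$, together with the expansion $N_J(X + \sqrt{-1}Y) = N_J(X) + \sqrt{-1}(X^\sharp, Y) - (X, Y^\sharp) - \sqrt{-1}N_J(Y)$, evaluates this commutator to $-16\pi\xi N_J(Y)$; multiplying by $-w^2 N_J(Y)^{-1}$ recovers $16\pi\xi w^2 G_v$, matching the LHS. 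The only nontrivial step is this closing cubic-norm-structure computation; everything else is routine operator algebra rooted in the shift $(w\partial w)\cdot w = w(1 + w\partial w)$.
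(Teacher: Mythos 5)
Your proposal is correct and follows essentially the same route as the paper: compose the two first-order recurrences (with the index shift coming from $(w\partial w)\cdot w = w(1+w\partial w)$) to obtain the second-order equations, then subtract the two sides and evaluate the resulting commutator of the $(a,b)$-operators via the linearization $N_J(Z-\bar Z)=N_J(Z)-(Z^{\sharp},\bar Z)+(Z,\bar Z^{\sharp})-N_J(\bar Z)=-8\sqrt{-1}N_J(Y)$, which is exactly the paper's use of property 4 of the cubic norm structure. Your version merely spells out the operator algebra and index ranges that the paper leaves implicit.
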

\begin{proof}
The two differential equations follow from (\ref{Diff-eq_gen1st}) and (\ref{Diff-eq_gen2nd}). To check the last assertion, calculate the difference of (\ref{degtwo-eq_1st}) and (\ref{degtwo-eq_2nd}). Then those of both sides are $16\pi w^2\xi$. 
In fact, the difference of the left hand side is easy to calculate. On the other hand, that of the right hand side is  
\begin{align*}
&-w^2N_J(Y)^{-1}(D^{(a,b)}_{\tilde{Z}^*_{(a,b)}}\cdot2\pi\sqrt{-1}\xi\langle(a,b),\tilde{Z}\rangle- D^{(a,b)}_{\tilde{Z}_{(a,b)}}\cdot2\pi\sqrt{-1}\xi\langle(a,b),\tilde{Z}^*\rangle)\\
&=2\pi\sqrt{-1}\xi w^2N_J(Y)^{-1}(N_J(Z)-(Z^{\sharp},\bar{Z})+(Z,\bar{Z}^{\sharp})-N_J(\bar{Z}))\\
&=2\pi\sqrt{-1}\xi N_J(Y)^{-1}N_J(Z-\bar{Z})w^2=16\pi(\sqrt{-1})^4w^2\xi=16\pi w^2\xi,
\end{align*}
which is settled by reviewing the definition of  $D^{(a,b)}_{\tilde{Z}_{(a,b)}},~D^{(a,b)}_{\tilde{Z}^*_{(a,b)}}$ and noting the fundamental property of $N_J$ enumerated as $4$~(cf.~Section \ref{Cubicforms-Groups}). 
This implies the equivalence of (\ref{degtwo-eq_1st}) and (\ref{degtwo-eq_2nd}).
\end{proof}
The following lemma is due to \cite[p1259,~(28)]{P1}. We will often use it in the coming argument. 
\begin{lem}\label{derivative-cubicpoly}
For $E\in J$
\[
D_{Z(E)}(N_J(Y))=N_J(Y)\tr(E),\quad D_{Z^*(E)}(N_J(Y))=N_J(Y)\tr(E)
\]
hold.
\end{lem}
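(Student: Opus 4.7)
The plan is to compute each derivative directly from the definitions of $D_{Z(E)}$ and $D_{Z^*(E)}$ given in \cite[Section 7.1]{P1} and reduce to a single directional derivative of $N_J$ on the cone $\Omega$. Since $N_J(Y)$ is a function of the imaginary part $Y$ only, the holomorphic and anti-holomorphic parts of each operator will contribute the same derivative of $N_J$ in the $Y$-direction, with the complex prefactors arranged so that both $D_{Z(E)}$ and $D_{Z^*(E)}$ produce the same real quantity. Thus the entire computation reduces to evaluating $D_{Y(E')}(N_J(Y))$ for the appropriate vector $E'\in J$ (which, following the setup in the previous sections, will be $E_Y=U_{Y^{1/2}}(E)$ up to normalization).

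Next I would use property~4 of the cubic norm structure recalled in Section \ref{Cubicforms-Groups}, namely
\[
N_J(Y+tE')=N_J(Y)+t(Y^{\sharp},E')+t^2(Y,(E')^{\sharp})+t^3N_J(E'),
\]
to extract the linear term
\[
\frac{d}{dt}\Big|_{t=0}N_J(Y+tE')=(Y^{\sharp},E').
\]
It then remains to verify the Jordan-algebra identity
\[
(Y^{\sharp},U_{Y^{1/2}}(E))=N_J(Y)\tr(E),
\]
which follows from the self-adjointness of $U_{Y^{1/2}}$ with respect to the trace pairing together with the identity $U_{Y^{1/2}}(Y^{\sharp})=N_J(Y)\cdot 1_J$; both are standard consequences of the axioms recalled in Section \ref{Cubicforms-Groups} and can be verified using $(x^{\sharp})^{\sharp}=N_J(x)\,x$ together with the definition $\tr(E)=(1_J,E)$.

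The main obstacle is bookkeeping: matching the normalizations in the definitions of $D_{Z(E)}$ and $D_{Z^*(E)}$ in \cite{P1} with the directional derivative so that the factors of $\sqrt{-1}$, the $U_{Y^{1/2}}$-twist, and the factor $N_J(Y)^{1/2}$ combine correctly to leave the clean formula $N_J(Y)\tr(E)$ in both cases. Once the normalizations are pinned down, the identity itself is an immediate consequence of the two Jordan-algebra facts above, and in particular the absence of any $\sqrt{-1}$ in the right-hand side reflects the fact that $N_J(Y)$ is real analytic in $Y$ alone. This is the reason the result can simply be quoted from \cite[p.~1259,~(28)]{P1}, and the above outline gives a self-contained verification.
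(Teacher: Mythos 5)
The paper offers no proof of this lemma at all: it simply quotes \cite[p.~1259, (28)]{P1}, exactly as you anticipate in your final sentence. Your outline of a direct verification is sound and supplies precisely the right ingredients. The linear term of $N_J(Y+tE')$ is $(Y^{\sharp},E')$ by axiom~4 of the cubic norm structure, and with $E'=E_Y=U_{Y^{1/2}}(E)$ one gets $(Y^{\sharp},U_{Y^{1/2}}(E))=(U_{Y^{1/2}}(Y^{\sharp}),E)=N_J(Y)(1_J,E)=N_J(Y)\tr(E)$, using the self-adjointness of $U_{Y^{1/2}}$ with respect to the trace pairing and the identity $U_{Y^{1/2}}(Y^{\sharp})=N_J(Y)\,1_J$; the latter follows from $Y^{\sharp}=N_J(Y)Y^{-1}$ and $U_{Y^{1/2}}(Y^{-1})=1_J$, which is legitimate because $Y$ lies in the cone $\Omega$ and is therefore invertible with an invertible square root. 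Be aware that these two facts are standard Jordan-algebra properties (associativity of the trace form, the fundamental formula) best cited from \cite{Mc} rather than formal consequences of the four listed axioms alone, so a fully self-contained write-up should say so. The one point you explicitly leave open --- matching the normalization of $D_{Z(E)}$ and $D_{Z^*(E)}$ in \cite[Section 7.1, (3), (4)]{P1}, in particular that the $\pm\sqrt{-1}$ prefactors are arranged so that the holomorphic and anti-holomorphic operators act identically on a function of $Y$ alone --- is where all of the remaining bookkeeping lives; since the target identity is exactly Pollack's displayed equation~(28), this is a matter of transcription rather than a genuine gap, and your sketch is a useful complement to the paper's bare citation.
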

The next lemma is useful to determine the ``$F_v$-parts'' of generalized Whittaker functions explicitly, where see the assumption of the separation of variables for the ``$F_v$-parts''. 
\begin{lem}\label{Diffeq-rk1-Lem}
\begin{enumerate}
\item We have
\begin{align}
D_{\tilde{Z}_{a,b}}^{(a,b)}\cdot(a^2N_J(Z)+a(b,Z^{\sharp})+\frac{1}{2}(b,b\times Z))=\langle(a,b),\tilde{Z}\rangle,\label{Diffeq-rk1-Lem1st}\\
D_{\tilde{Z}^*_{a,b}}^{(a,b)}\cdot(a^2N_J(\bar{Z})+a(b,\bar{Z}^{\sharp})+\frac{1}{2}(b,b\times \bar{Z}))=\langle(a,b),\tilde{Z}^*\rangle\label{Diffeq-rk1-Lem2nd}.
\end{align}
\item For $E\in J$ we have
\begin{align}
D_{MV(E)_{a,b}}^{(a,b)}\cdot(a^2N_J(Z)+a(b,Z^{\sharp})+\frac{1}{2}(b,b\times Z))=\langle(a,b),MV(E)\rangle,\label{Diffeq-rk1-Lem3rd}\\
D_{MV(E)_{a,b}^*}^{(a,b)}\cdot(a^2N_J(\bar{Z})+a(b,\bar{Z}^{\sharp})+\frac{1}{2}(b,b\times \bar{Z}))=\langle(a,b),MV(E)^*\rangle\label{Diffeq-rk1-Lem4th}.
\end{align}
\end{enumerate}
\end{lem}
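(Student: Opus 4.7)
The plan is to verify all four identities by direct computation, exploiting linearity to reduce Part~(2) to Part~(1) together with an auxiliary calculation. The key inputs are only (a) the definition of the symplectic form $\langle\cdot,\cdot\rangle$ on $W_J$, (b) the definition of $D^{(a,b)}_v$ as the directional derivative along $v\in (\R\oplus J)\otimes\C$, and (c) the standard identities of cubic norm structures: the trilinear form $(x,y,z):=(x,y\times z)$ is symmetric, $(x,x^\sharp)=3N_J(x)$, and $x\times x=2x^\sharp$ (all easy consequences of properties~1--4 in Section~\ref{Cubicforms-Groups}).

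First I would prove (\ref{Diffeq-rk1-Lem1st}). Write $f(a,b):=a^2N_J(Z)+a(b,Z^\sharp)+\tfrac{1}{2}(b,b\times Z)$. Then $D^{(a,b)}_{\tilde{Z}_{a,b}}f=\partial_a f - D^{(a,b)}_{(0,Z)}f$. The $a$-derivative contributes $2aN_J(Z)+(b,Z^\sharp)$. For the $b$-part, note that $b\mapsto\tfrac12(b,b\times Z)$ is a quadratic form whose directional derivative in the direction $-Z$ equals $-\tfrac12(Z,b\times Z)-\tfrac12(b,Z\times Z)=-(b,Z,Z)/2\cdot 2\text{-style}$; concretely, using $(Z,b\times Z)=(Z,Z,b)=2(b,Z^\sharp)$ and $Z\times Z=2Z^\sharp$, the $b$-part collapses to $-a(Z,Z^\sharp)-(b,Z^\sharp)-(b,Z^\sharp)=-3aN_J(Z)-2(b,Z^\sharp)$. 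Summing yields $-aN_J(Z)-(b,Z^\sharp)$, which one checks is exactly $\langle(a,b,0,0),(1,-Z,Z^\sharp,-N_J(Z))\rangle=\langle(a,b),\tilde{Z}\rangle$. The identity (\ref{Diffeq-rk1-Lem2nd}) is then obtained verbatim by replacing $Z$ with $\bar Z$, since neither the cubic norm identities nor the symplectic pairing distinguish $Z$ from $\bar Z$ in this purely algebraic computation.

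For Part~(2), I would exploit the decompositions
\[
MV(E)_{a,b}=\tfrac{\tr(E)}{2}\tilde{Z}_{a,b}+(0,E_Y),\qquad MV(E)=\tfrac{\tr(E)}{2}\tilde{Z}+(0,E_Y,-Z\times E_Y,(Z^\sharp,E_Y)).
\]
By linearity of $D^{(a,b)}_v$ in $v$ and of $\langle(a,b),\cdot\rangle$ in its second slot, the identity (\ref{Diffeq-rk1-Lem3rd}) reduces, via (\ref{Diffeq-rk1-Lem1st}), to the single auxiliary equality
\[
D^{(a,b)}_{(0,E_Y)}f=a(Z^\sharp,E_Y)+(b,Z\times E_Y),
\]
and the right-hand side is precisely $\langle(a,b,0,0),(0,E_Y,-Z\times E_Y,(Z^\sharp,E_Y))\rangle$ after expanding the symplectic form. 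The left-hand side is a direct computation: $D^{(a,b)}_{(0,E_Y)}f=a(E_Y,Z^\sharp)+\tfrac12(E_Y,b\times Z)+\tfrac12(b,E_Y\times Z)$, and the last two terms combine to $(b,E_Y,Z)=(b,E_Y\times Z)=(b,Z\times E_Y)$ by symmetry of the trilinear form. Finally (\ref{Diffeq-rk1-Lem4th}) follows from (\ref{Diffeq-rk1-Lem3rd}) by the same complex-conjugation substitution $Z\rightsquigarrow\bar Z$ used for (\ref{Diffeq-rk1-Lem2nd}).

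There is no serious obstacle here; the whole lemma is an exercise in unwinding the cubic norm identities. The one point where care is needed is the sign bookkeeping in the directional derivative along $(1,-Z)$ and in expanding $\langle(a,b,0,0),(1,-Z,Z^\sharp,-N_J(Z))\rangle$, because the pairing $\langle\cdot,\cdot\rangle$ introduces signs on the two middle slots. Once the identification $(x,y\times z)=(x,y,z)$ is used consistently, the computation falls out in a few lines.
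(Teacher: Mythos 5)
Your computation is correct and follows essentially the same route as the paper's proof: both evaluate the directional derivatives of $f_Z(a,b)=a^2N_J(Z)+a(b,Z^{\sharp})+\tfrac12(b,b\times Z)$ along $(1,0)$, $(0,-Z)$ and $(0,E)$ using the sharp symmetry $(Z,b\times Z)=(b,Z\times Z)=2(b,Z^{\sharp})$ and $(Z,Z^{\sharp})=3N_J(Z)$, and both reduce Part~(2) to Part~(1) via the decomposition $MV(E)=\tfrac{\tr(E)}{2}\tilde{Z}+(0,E_Y,-Z\times E_Y,(Z^{\sharp},E_Y))$, treating the conjugate identities as verbatim substitutions $Z\rightsquigarrow\bar Z$. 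No gaps.
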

\begin{proof}
We only check (\ref{Diffeq-rk1-Lem1st}) and (\ref{Diffeq-rk1-Lem3rd}) since (\ref{Diffeq-rk1-Lem2nd}) and (\ref{Diffeq-rk1-Lem4th}) are settled similarly. 
Let $f_{Z}(a,b):=a^2N_J(Z)+a(b,Z^{\sharp})+\frac{1}{2}(b,b\times Z)$. Based on the cubic norm structure explained in Section \ref{Cubicforms-Groups} we verify
\begin{align*}
&\underset{t=0}{\lim}\frac{f_Z(a+t,b)-f_Z(a,b)}{t}=2aN_J(Z)+(b,Z^{\sharp}),\\
&\underset{t=0}{\lim}\frac{f_Z(a,b-tZ)-f_Z(a,b)}{t}=-3aN_J(Z)-2(b,Z^{\sharp})
\end{align*}
by direct calculation. To be precise, in the course of the calculation, we need $(Z,b\times Z)=(b,Z\times Z)=2(b,Z^{\sharp})$, for which we note that the first equality is verified by the sharp symmetry of the trace paring $(*,*)$~(cf.~\cite[p.191]{Mc}). 
These yield (\ref{Diffeq-rk1-Lem1st}) since $\langle(a,b),\tilde{Z}\rangle=-aN_J(Z)-(b,Z^{\sharp})$.

As for (\ref{Diffeq-rk1-Lem3rd}) we remark that $MV(E)$ is the sum of $\displaystyle\frac{\tr(E)}{2}\tilde{Z}$ and $(0,E_Y,-Z\times E_Y,(Z^{\sharp},E_Y))$. 
%for the latter of which we note that $Mn_G(-\sqrt{-1})(0,E,0,0)=wN_J(Y)^{-1/2}(0,E_Y,-Z\times E_Y,(Z^{\sharp},E_Y))$ with the notation as in \cite[Section 7.7]{P1}. 
Reviewing the definition of $D_{MV(E)_{a,b}}^{(a,b)}$ we thereby see that the proof of (\ref{Diffeq-rk1-Lem3rd}) is reduced to the calculation of $\underset{t=0}{\lim}\displaystyle\frac{f_Z(a,b+tE)-f_Z(a,b)}{t}$ for $E\in J$, which is verified to be
\[
a(E,Z^{\sharp})+\frac{1}{2}(b,E\times Z)+\frac{1}{2}(E,b\times Z)=a(E,Z^{\sharp})+(b,Z\times E)=\langle(a,b),(0,E,-Z\times E,(Z^{\sharp},E))\rangle
\]
based on the cubic form structure~(cf.~Section \ref{Cubicforms-Groups}). 
For this we note that the aforementioned sharp symmetry of the trace paring is used to verify the first equality. This settles the formula (\ref{Diffeq-rk1-Lem3rd}).
\end{proof}
In what follows, we frequently use the following lemma.
\begin{lem}\label{Prop5-3_1stLem}
Let $F_v$ and $H_v$ be as in the assumption of the separation of variables for $G_v$. For $-(n-1)\le v\le n-1$ we have the following:
\begin{enumerate}
\item Assume $(D_{Z(E)}+\displaystyle\frac{\tr(E)}{2}v)F_v\not=0$ for some $E\in J$. Then, as a moderate growth solution of $H_v(w)$, we obtain
\[
H_v(w)=
\begin{cases}
w^{-v}\exp(-2\pi w^2\xi)&(\xi>0),\\
w^{-v}\exp(2\pi w^2\xi)&(\xi<0),
\end{cases}
\]
up to scalars.
\item Assume $(D_{Z^*(E)}-\displaystyle\frac{\tr(E)}{2}v)F_v\not=0$ for some $E\in J$. Then, as a moderate growth solution of $H_v(w)$, we obtain
\[
H_v(w)=
\begin{cases}
w^v\exp(-2\pi w^2\xi)&(\xi>0),\\
w^v\exp(2\pi w^2\xi)&(\xi<0),
\end{cases}
\]
up to scalars.
\end{enumerate}
\end{lem}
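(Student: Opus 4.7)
My plan is to exploit the separation ansatz $G_v=F_v H_v$ to reduce the four relations of Proposition~\ref{Diff-eq_general} to a first-order ODE for $H_v(w)$, refine the remaining free constant using the second-order identity of Lemma~\ref{degtwo-eqs_general}, and finally invoke moderate growth to select the correct exponential.

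First I observe that equations (\ref{Diff-eq_gen2nd}) and (\ref{Diff-eq_gen3rd}) both have right-hand sides proportional to the same $G_{v+1}=F_{v+1}H_{v+1}$ and share the prefactor $\sqrt{-1}wN_J(Y)^{-1/2}$, while (\ref{Diff-eq_gen1st}) and (\ref{Diff-eq_gen4th}) are both proportional to $G_{v-1}$. Under the Part~1 hypothesis $(D_{Z(E)}+\tfrac{v}{2}\tr(E))F_v\neq 0$, substituting the separation ansatz and dividing (\ref{Diff-eq_gen2nd}) by (\ref{Diff-eq_gen3rd}) cancels both the $G_{v+1}$ factor and the common prefactor. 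The resulting identity
\[
\frac{(w\partial_w-v-4\pi w^2\xi)H_v(w)}{H_v(w)}\cdot\frac{F_v(a,b,X,Y)}{(D_{Z(E)}+\tfrac{v}{2}\tr(E))F_v(a,b,X,Y)}=\Psi(a,b,X,Y)
\]
has its left-hand side equal to a product of a function of $w$ and a function of $(a,b,X,Y)$, while the right-hand side depends only on $(a,b,X,Y)$; hence the $w$-factor must be a constant, and $H_v$ satisfies $(w\partial_w-v-4\pi w^2\xi-c_1)H_v=0$ with general solution $H_v\propto w^{v+c_1}\exp(2\pi w^2\xi)$. The mirror pairing of (\ref{Diff-eq_gen1st}) with (\ref{Diff-eq_gen4th}) under the Part~2 hypothesis $(D_{Z^*(E)}-\tfrac{v}{2}\tr(E))F_v\neq 0$ similarly yields $(w\partial_w+v+4\pi w^2\xi-c_2)H_v=0$, giving $H_v\propto w^{c_2-v}\exp(-2\pi w^2\xi)$.

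Next I would feed the first-order ODE into Lemma~\ref{degtwo-eqs_general}. Under separation the second-order identity reads $(w\partial_w-v-4\pi w^2\xi)(w\partial_w+v+4\pi w^2\xi)H_v=\mu w^2 H_v$ for a constant $\mu$. Using the first-order relation to rewrite $(w\partial_w\mp v\mp 4\pi w^2\xi)H_v$ as multiplication of $H_v$ by a polynomial in $w^2$ and matching the coefficients of $w^0$ and $w^2$ produces the factored constraint $c_1(c_1+2v)=0$ in Part~1 (and $c_2(c_2-2v)=0$ in Part~2). The branch $c_1=0$ corresponds, via the $(a,b,X,Y)$-side of the ratio identity, to the vanishing of the inhomogeneity $(D^{(a,b)}_{\tilde Z_{a,b}}+\cdots)F_{v+1}$, which degenerates the coupling to $G_{v+1}$ and is incompatible with the generic Part~1 hypothesis; this leaves $c_1=-2v$ and hence the $w^{-v}$ prefactor of the lemma. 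The symmetric exclusion in Part~2 fixes $c_2=2v$ and the $w^{v}$ prefactor.

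Finally, moderate growth as $w\to\infty$ selects the sign of the exponential. The first-order ODE from the Part~1 pairing produces $\exp(2\pi w^2\xi)$, which is moderate growth precisely when $\xi<0$, reproducing the $\xi<0$ case of Part~1 directly; for $\xi>0$ the naive solution is unbounded, so moderate growth of $H_v$ (and of the companions $H_{v\pm 1}$ inherited from the Whittaker function) forces the source on the right-hand side of the relevant pair to degenerate, and one must exploit the conjugate pairing, obtaining the homogeneous ODE $(w\partial_w+v+4\pi w^2\xi)H_v=0$ whose solution $w^{-v}\exp(-2\pi w^2\xi)$ is the required $\xi>0$ form. A symmetric treatment handles the $\xi<0$ case of Part~2. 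The main obstacle, as I anticipate it, is precisely this sign bifurcation: verifying that in the ``wrong-sign'' regime the inhomogeneity in the first-derived ODE really must vanish in order to preserve moderate growth, without contradicting the Part~1 or Part~2 hypothesis on $F_v$. Once this compatibility is in place the uniqueness ``up to scalars'' follows immediately from the first-order nature of the surviving ODE.
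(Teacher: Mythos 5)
Your central device --- dividing (\ref{Diff-eq_gen2nd}) by (\ref{Diff-eq_gen3rd}) (resp.\ (\ref{Diff-eq_gen1st}) by (\ref{Diff-eq_gen4th})) so that the common source $G_{v+1}$ (resp.\ $G_{v-1}$) and the prefactor $\sqrt{-1}wN_J(Y)^{-1/2}$ cancel --- is not what the paper does. The paper applies $w\partial_w+v+4\pi w^2\xi$ (resp.\ $w\partial_w-v-4\pi w^2\xi$) to (\ref{Diff-eq_gen3rd}) (resp.\ (\ref{Diff-eq_gen4th})) and uses (\ref{Diff-eq_gen1st}) (resp.\ (\ref{Diff-eq_gen2nd})) at index $v\pm1$ to return to $G_v$, obtaining $(w\partial_w+v+4\pi w^2\xi)H_v=C_vw^2H_v$. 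That ODE has solution family $w^{-v}\exp((-2\pi\xi+C_v/2)w^2)$: the power of $w$ is forced to be $-v$ and only the exponential rate is free, so Lemma \ref{degtwo-eqs_general} plus moderate growth pins down $C_v\in\{0,8\pi\xi\}$ and hence the sign of the exponential. Your ODE $(w\partial_w-v-4\pi w^2\xi)H_v=c_1H_v$ instead yields the family $w^{v+c_1}\exp(2\pi\xi w^2)$, in which the exponential rate is \emph{fixed} and only the power is free. This difference is fatal in two places.

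First, your exclusion of the branch $c_1=0$ is not justified: $c_1=0$ corresponds to $(D^{(a,b)}_{\tilde Z_{a,b}}+2\pi\sqrt{-1}\xi\langle(a,b),\tilde Z\rangle)F_{v+1}=0$, which is a condition on $F_{v+1}$, whereas the Part 1 hypothesis constrains only $F_v$; nothing in the hypothesis rules this branch out, so you have not established the exponent $-v$. Second, and more seriously, for $\xi>0$ every member of your family $w^{v+c_1}\exp(2\pi\xi w^2)$ fails moderate growth, so the asserted answer $w^{-v}\exp(-2\pi w^2\xi)$ of Part 1 simply does not lie in the solution set your ODE produces; your proposed repair (``the source on the right-hand side must degenerate, then use the conjugate pairing'') is self-defeating, because vanishing of the right-hand side of (\ref{Diff-eq_gen3rd}) forces $H_v\cdot(D_{Z(E)}+\frac{v}{2}\tr(E))F_v=0$, i.e.\ it contradicts the Part 1 hypothesis unless $H_v\equiv0$. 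You flag exactly this as your ``main obstacle,'' and it is not resolved. (The same problem occurs for $\xi<0$ in Part 2.) To prove the lemma as stated you need the paper's pairing, in which the undetermined constant sits in the exponential rate rather than in the power of $w$, so that moderate growth can select between $\exp(-2\pi\xi w^2)$ and $\exp(2\pi\xi w^2)$ according to the sign of $\xi$.
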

\begin{proof}
We think only of the case $\xi<0$ since that of $\xi>0$ is settled similarly. Under the assumptions of the parts 1 and 2, apply the differential operators $w\partial w+v+4\pi w^2\xi$ and $w\partial w-v-4\pi w^2\xi$ on the equations (\ref{Diff-eq_gen3rd}) and (\ref{Diff-eq_gen4th}) for $v$ in Proposition \ref{Diff-eq_general} respectively. For the calculation of the infinitesimal action of these operators,  we note the equations (\ref{Diff-eq_gen1st}) and (\ref{Diff-eq_gen2nd}). We then have two differential equations:
\begin{align*}
&(w\partial w+v+4\pi w^2\xi)H_v=C_v w^2H_v,\\
&(w\partial w-v-4\pi w^2\xi)H_v=C'_v w^2H_v,
\end{align*}
with constants $C_v,~C'_v$ independent of any variables. Here note that we use the assumption of the separation of variables and that we need $(D_{Z(E)}+\displaystyle\frac{v}{2}\tr(E))F_v\not=0$~(respectively~$(D_{Z^*(E)}-\displaystyle\frac{v}{2}\tr(E))F_v\not=0)$ to get the first equation~(respectively~the second equation). 
For this proof we mainly give the argument to solve the second equation. We have a solution
\[
H_v(w)=w^v\exp((2\pi\xi+\frac{C'_v}{2})w^2),
\]
up to constant multiples independent of $w$. On the other hand, in view of (\ref{degtwo-eq_2nd}) in Lemma \ref{degtwo-eqs_general} and the assumption of the separation of variables, $H_v$ also satisfies another differential equation
\[
(w\partial w+v+4\pi w^2\xi)(w\partial w-v-4\pi w^2\xi)H_v={C'}^0_v w^2 H_v
\]
with a constant ${C'}^0_v$ independent of any variables. Apply this equation to the solution above and we see
\[
C'_v={C'}^0_v=0\quad\text{or}\quad C'_v=-8\pi\xi,~{C'}^0_v=-16\pi\xi(v+1).
\]
Since $H_v$ is of moderate growth by assumption we see for a negative $\xi$ that the former is true, which means 
\begin{equation}\label{gen-solution-H_v}
H_v(w)=w^{v}\exp(2\pi w^2\xi),
\end{equation}
up to scalars independent of any variables. 
Similarly from the first differential equation around the beginning of this proof we obtain
\begin{equation}\label{gen-solution-H_v2}
H_v(w)=w^{-v}\exp(2\pi w^2\xi)
\end{equation}
up to scalar independent of any variables. As a result the two assertions are verified.
\end{proof}
\begin{lem}\label{Prop5-3_2ndLem}
Let $v\not=0$ with $-(n-1)\le v\le n-1$.
\begin{enumerate}
\item Assume that
\[
(D_{Z(E)}+\frac{v}{2}\tr(E))F_v=(D_{Z^*(E)}-\frac{v}{2}\tr(E))F_{v}=0
\]
for any $E\in J$. 
Then $G_v\equiv 0$ for $v$ above.
\item Assume that
\[
(D_{Z(E)}+\frac{v}{2}\tr(E))F_v\not=0,\quad (D_{Z^*(E)}-\frac{v}{2}\tr(E))F_{v}\not=0
\]
for some $E\in J$ and that $G_v$ is of moderate growth. 
Then $G_v\equiv 0$ for $v$ above.
\end{enumerate}
\end{lem}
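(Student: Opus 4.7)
My plan is to apply Lemma~\ref{Prop5-3_1stLem} immediately. Both of its parts apply simultaneously under the present hypotheses, giving two determinations of the moderate-growth $H_v(w)$: $H_v\propto w^{-v}\exp(\mp 2\pi w^2\xi)$ and $H_v\propto w^{v}\exp(\mp 2\pi w^2\xi)$. Since $v\neq 0$, the two $w$-profiles $w^{-v}$ and $w^{v}$ are linearly independent, so the only choice compatible with both determinations is $H_v\equiv 0$, whence $G_v = F_v H_v\equiv 0$.

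\textbf{Part 1.} Here neither part of Lemma~\ref{Prop5-3_1stLem} applies, so a separate argument is needed. My plan has three steps. \emph{Step 1.} Unpack the dual vanishing hypotheses. Combining $D_{Z(E)} F_v = -\tfrac{v}{2}\tr(E) F_v$ and $D_{Z^*(E)} F_v = \tfrac{v}{2}\tr(E) F_v$ with Lemma~\ref{derivative-cubicpoly} shows that $N_J(Y)^{v/2} F_v$ is annihilated by every $D_{Z(E)}$ and, symmetrically, $N_J(Y)^{-v/2} F_v$ is annihilated by every $D_{Z^*(E)}$. These are holomorphicity-type conditions in the $Z$-variable, and taken together they pin the $(X,Y)$-dependence of $F_v$ to a rigid form, essentially $N_J(Y)^{\mp v/2}$ times a (anti)holomorphic factor, with the two realisations forced to be compatible. \emph{Step 2.} Feed this rigid $F_v$ into the scalar degree-two equation~(\ref{degtwo-eq_1st}) of Lemma~\ref{degtwo-eqs_general}, which is closed in $G_v$ alone. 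Writing $G_v = F_v H_v$, both sides factor through the rigid form of $F_v$; separating the $w$-dependence yields a schematic identity
\[
\frac{[\text{$w$-operator acting on }H_v](w)}{H_v(w)} \;=\; -\,w^2\, N_J(Y)^{-1}\,\frac{Q(C)(a,b,X,Y)}{C(a,b)},
\]
where $C(a,b)$ is the $(a,b)$-factor of $F_v$ and $Q$ denotes the composed $(a,b)$-direction operator on the right-hand side of~(\ref{degtwo-eq_1st}). \emph{Step 3.} Expand $Q$ using Lemma~\ref{Diffeq-rk1-Lem} and the cubic-norm-structure identities of Section~\ref{Cubicforms-Groups}. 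The $(X,Y)$-dependence introduced through the symplectic pairings $\langle(a,b),\tilde Z\rangle$ and $\langle(a,b),\tilde Z^*\rangle$ mixes $Z$ and $\bar Z$ non-trivially, whereas the displayed identity permits $(X,Y)$-dependence only through $N_J(Y)$; the resulting separation of variables forces $C\equiv 0$, so $F_v\equiv 0$ and $G_v\equiv 0$.

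The crux will be Step~3: verifying rigorously that no nonzero $C(a,b)$ makes $Q(C)/C$ depend on $(X,Y)$ only through $N_J(Y)$. I would attack this by expanding $Q(C)$ term by term and invoking property~(4) of the cubic norm structure together with the sharp symmetry of the trace pairing --- precisely the tools used in Lemmas~\ref{degtwo-eqs_general} and~\ref{Diffeq-rk1-Lem}. Should this route prove too intricate, an alternative is to exploit the vanishing of the LHS of~(\ref{Diff-eq_gen3rd}) and~(\ref{Diff-eq_gen4th}) at index $v$, which via Lemma~\ref{Diffeq-rk1-Lem} directly pins down the $(a,b)$-dependence of $G_{v\pm 1}$; then substituting these forms into~(\ref{Diff-eq_gen1st}) and~(\ref{Diff-eq_gen2nd}) at index $v$ should close the loop and extract the required contradiction.
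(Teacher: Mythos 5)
Your Part 2 is correct and is exactly the paper's argument: both halves of Lemma~\ref{Prop5-3_1stLem} apply, producing the incompatible profiles $w^{v}$ and $w^{-v}$, so $H_v\equiv 0$ once $v\neq 0$.

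Part 1 has a genuine gap. Your Step~1 is precisely the paper's proof, and it already finishes the job: the two hypotheses together with Lemma~\ref{derivative-cubicpoly} give
\[
F_v=N_J(Y)^{-v/2}\Phi_v(Z)=N_J(Y)^{v/2}\Psi_v(Z)
\]
with $\Phi_v$ holomorphic and $\Psi_v$ anti-holomorphic, and this identity is already impossible for $v\neq 0$ unless $F_v\equiv 0$ (from $\Phi_v=N_J(Y)^{v}\Psi_v$ one kills $\Phi_v$ and $\Psi_v$ by differentiating: e.g.\ taking a mixed holomorphic/anti-holomorphic derivative of the logarithm annihilates the left side but not $v\log N_J(Y)$). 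You instead treat the two realisations as a mere ``rigid form'' and launch into Steps~2--3, feeding $F_v$ into the second-order equation (\ref{degtwo-eq_1st}) and hoping a separation-of-variables analysis of $Q(C)/C$ forces $C\equiv 0$. That step is never carried out --- you yourself label it the unresolved ``crux'' --- and the claim that no nonzero $C(a,b)$ makes $Q(C)/C$ depend on $(X,Y)$ only through $N_J(Y)$ is not obviously true: the explicit nonvanishing solutions at $v=\pm n$ in Theorem~\ref{GenWhittakerFct-gen} show that functions of exactly this shape can be compatible with the coupled $(a,b)$--$w$ equations, so any such argument would have to use the \emph{double} vanishing hypothesis in an essential way, which your outline does not make visible. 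As written, Part~1 is an unproved plan rather than a proof; the fix is simply to extract the contradiction directly from the displayed equality at the end of your Step~1.
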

\begin{proof}
We first verify the 1st assertion. Its assumption and Lemma \ref{derivative-cubicpoly} imply
\[
F_v=N_J(Y)^{-v/2}\Phi_v(Z)=N_J(Y)^{v/2}\Psi_v(Z)
\]
with a holomorphic $\Phi$ and an anti-holomorphic $\Psi$. For this we note that $D_{Z(E)}$ and $D_{Z^*(E)}$ are viewed as Cauchy Riemann operator or its complex conjugate respectively as we have remarked before Proposition \ref{Diff-eq_general}. However, this equality is impossible if $F_v\not\equiv 0$.
%\[
%2\pi\sqrt{-1}\xi\langle (a,b),MV(E)-MV(E)^*\rangle G_v=0\quad(-(n-1)\le v\le n-1)
%\]
%by the equations (\ref{Diff-eq_gen3rd}) and (\ref{Diff-eq_gen4th}). We then see the assertion immediately.
%For both assertions we use the elements $M(\Phi_{1,E})$ and $n_L(E)$ of the Lie algbera of $G$ as in \cite[Section 7.1]{P1}, which are necessary to define $D_{Z(E)}$ and $D_{Z^*(E)}$. We verify the first assertion. Its assumption reads
%\[
%M(\Phi_{1,E})\cdot G_v=0,\quad(2\sqrt{-1}n_L(E)+\tr(E)v)\cdot G_v=0\quad(\forall E\in J).
%\]
%This implies $G_v\equiv 0$ in view of \cite[Lemma 8.2.1]{P1} since we see that the derivatives of $G_v$ along both of the real and imaginary parts of $\cH_J$ are zero. 
%Thus $G_v$ is constant as a function in $X$ and $Y$ for $-(n-1)\le v\le n-1$.

The second assertion follows from Lemma \ref{Prop5-3_1stLem} since the two solutions in this lemma do not compatible with each other if $H_v(w)\not\equiv 0$,  for which we should note that the assumption $v\not=0$ is necessary to justify this argument.
\end{proof}
%Now recall the assumption on the separation of the variables for $G_v$. 
\begin{lem}\label{Prop5-3_3rdLem}
Suppose that $G_v$ is of moderate growth with respect to $w$ for $-(n-1)\le v\le n-1$.
\begin{enumerate}
\item Suppose that $\xi>0$. Let $v$ be an integer in $-(n-2)\le v\le n-1$. Assume 
$(D_{Z(E)}+\displaystyle\frac{v}{2}\tr(E))F_v\not=0$
%(D_{Z^*(E)}-\displaystyle\frac{v}{2}\tr(E))F_v=0 
for some $E\in J$. Then $G_{v-1}\equiv 0$ for $v\not=0$.
\item Suppose that $\xi<0$. 
%\begin{enumerate}
%\item 
Let $v$ be an integer in $-(n-1)\le v\le n-2$. Assume  
%(D_{Z(E)}+\displaystyle\frac{v}{2}\tr(E))F_v=0,~
$(D_{Z^*(E)}-\displaystyle\frac{v}{2}\tr(E))F_v\not=0$
for some $E\in J$. Then $G_{v+1}\equiv 0$ for $v\not=0$.
\end{enumerate}
%\item Let $-(n-2)\le v\le n-1$ and $v\not=0$. Assume that $(D_{Z^*(E)}-\displaystyle\frac{v}{2}\tr(E))F_v=0$ but $(D_{Z(E)}+\displaystyle\frac{v}{2}\tr(E))F_v\not=0$. Then $G_{v-1}\equiv 0$.
%\end{enumerate}
\end{lem}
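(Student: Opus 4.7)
I would treat part~1 (the case $\xi>0$) in full; part~2 is obtained by completely analogous arguments after interchanging $Z\leftrightarrow\bar{Z}$, $\tilde{Z}\leftrightarrow\tilde{Z}^*$, and using the recursion (\ref{Diff-eq_gen2nd}) indexed by $v+1$ in place of (\ref{Diff-eq_gen1st}) indexed by $v$. The first step is to fix the $w$-profile of $G_v$: the hypothesis together with Lemma~\ref{Prop5-3_1stLem}(1) yields $H_v(w)=w^{-v}e^{-2\pi\xi w^2}$ up to scalar, so $G_v=F_v(a,b,X,Y)\cdot w^{-v}e^{-2\pi\xi w^2}$. Substituting this into (\ref{Diff-eq_gen1st}) with index $v$, the left-hand side $F_v\cdot(w\partial_w+v+4\pi\xi w^2)H_v$ vanishes identically, so we extract the key first-order PDE
\[
\bigl(D^{(a,b)}_{\tilde{Z}^*_{a,b}}+2\pi\sqrt{-1}\xi\langle(a,b),\tilde{Z}^*\rangle\bigr)G_{v-1}=0. \qquad(\ast)
\]

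I next argue by contradiction. Assume $G_{v-1}\not\equiv 0$ and use the separation assumption to write $G_{v-1}=F_{v-1}H_{v-1}$ with both factors nonzero. When $v-1\neq 0$, Lemma~\ref{Prop5-3_2ndLem} combined with the moderate growth of $G_{v-1}$ rules out the two degenerate scenarios, forcing exactly one of the two operators $D_{Z(E)}+\tfrac{v-1}{2}\tr(E)$ and $D_{Z^*(E)}-\tfrac{v-1}{2}\tr(E)$ to be nonvanishing on $F_{v-1}$ for some $E\in J$. Lemma~\ref{Prop5-3_1stLem} then pins down $H_{v-1}(w)$ to be either $w^{-(v-1)}e^{-2\pi\xi w^2}$ or $w^{v-1}e^{-2\pi\xi w^2}$ up to scalar. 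The contradiction is produced by (\ref{Diff-eq_gen2nd}) indexed by $v-1$: using $w\cdot w^{-v}=w^{-(v-1)}$, its right-hand side has $w$-profile $w^{-(v-1)}e^{-2\pi\xi w^2}$ with a $w$-independent coefficient, while the left-hand side $F_{v-1}(w\partial_w-(v-1)-4\pi\xi w^2)H_{v-1}$ always contains the summand $-8\pi\xi w^2F_{v-1}H_{v-1}$. In the first choice of $H_{v-1}$ this produces a $w^2$-term that is absent on the right, giving $F_{v-1}\equiv 0$; in the second choice the two sides carry the distinct $w$-powers $w^{v+1}$ and $w^{-(v-1)}$, which coincide only at $v=0$ (excluded by hypothesis), again giving $F_{v-1}\equiv 0$, contrary to assumption.

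The borderline case $v-1=0$ (i.e.\ $v=1$) requires a direct treatment since Lemma~\ref{Prop5-3_2ndLem} is unavailable. If at least one of the two Cauchy--Riemann-type derivatives of $F_0$ is nonvanishing, the two candidate $w$-profiles collapse to $H_0(w)=e^{-2\pi\xi w^2}$ and the matching argument above applies verbatim. If both derivative conditions vanish identically on $F_0$, then $F_0=F_0(a,b)$ depends only on the $(a,b)$-variables; substituting into $(\ast)$ and expanding in powers of $\bar{Z}$ through $\langle(a,b),\tilde{Z}^*\rangle=-aN_J(\bar{Z})-(b,\bar{Z}^\sharp)$, I would read off the coefficient equations at each $\bar{Z}$-degree, obtaining $\partial_aF_0=0$ and the vanishing of every $b$-directional derivative of $F_0$ from the low degrees, and then $(b,\bar{Z}^\sharp)F_0\equiv 0$ from the quadratic degree, forcing $F_0\equiv 0$. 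The main technical obstacle is the careful $w$-power bookkeeping in the matching of the recursion (\ref{Diff-eq_gen2nd}) indexed by $v-1$, together with the ad hoc handling of the degenerate boundary $v=1$, where the two $w$-profiles coincide and the $\bar{Z}$-expansion becomes necessary.
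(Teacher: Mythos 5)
Your proof is correct, and its skeleton is the same as the paper's (which writes out part 2 in detail and sketches part 1): fix $H_v$ by Lemma \ref{Prop5-3_1stLem}, run the trichotomy on the Cauchy--Riemann conditions for $F_{v\mp1}$ via Lemmas \ref{Prop5-3_1stLem} and \ref{Prop5-3_2ndLem}, and derive the contradiction by matching $w$-powers and coefficients in the recursion linking $G_{v}$ and $G_{v\mp1}$ --- your two ``choices'' of $H_{v-1}$ are exactly the paper's two non-degenerate cases. The one genuine divergence is the boundary case $v\mp1=0$ with both Cauchy--Riemann conditions degenerate. You note that, with $H_v$ as determined, the left-hand side of (\ref{Diff-eq_gen1st}) at index $v$ vanishes identically, so the recursion hands you the first-order identity $(\ast)$ for $G_{v-1}$ for free; combined with the fact that $F_0$ then depends only on $(a,b)$, reading off the homogeneous $\bar Z$-degrees (the quadratic or cubic term alone suffices) kills $F_0$. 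The paper instead sub-divides further on the Cauchy--Riemann behaviour of $F_v$ itself, solves (\ref{Diff-eq_gen2nd}) and (\ref{Diff-eq_gen3rd}) to obtain the explicit formula (\ref{solution-F_v+1}) for $F_0$, and derives a holomorphy/anti-holomorphy clash; your route is shorter and avoids the explicit solution. The only point you should make explicit is why the $\bar Z$-expansion of $(\ast)$ can be split degree by degree: the identity holds for all $Z$ in the open set $D_J$ and, once $F_0=F_0(a,b)$, is a polynomial of degree three in $\bar Z$ with coefficients independent of $Z$, so each homogeneous component vanishes separately.
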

\begin{proof}
We verify the second assertion in a detailed manner. For the first assertion we give some rough explanation around the end of the proof since it is settled by a similar manner. 

According to Lemma \ref{Prop5-3_1stLem} part 2, we get  
\begin{equation}\label{gen-solution-H_v11}
H_v(w)=w^v\exp(2\pi w^2\xi)
\end{equation}
up to scalars. 
We begin with verifying that we can exclude the following case:
\[
(D_{Z^*(E)}-\displaystyle\frac{\tr(E)}{2}(v+1))F_{v+1}=0\quad\forall E\in J.
\]
Let us assume this. If $F_{v+1}\not\equiv 0$ and $v+1\not=0$, the part 1 of Lemma \ref{Prop5-3_2ndLem} implies $(D_{Z(E)}+\displaystyle\frac{\tr(E)}{2}(v+1))F_{v+1}\not=0$ for some $E\in J$. 

We should be careful when $v+1=0$, for which the part 1 of Lemma \ref{Prop5-3_2ndLem} is not useful. However this still holds even when $v+1=0$.  
Let us think of this case, i. e. $v=-1$. Under $(D_{Z^*(E)}-\displaystyle\frac{\tr(E)}{2}(v+1))F_{v+1}=0$~($\forall E\in J$), we firstly consider the following case: 
\[
(D_{Z(E)}+\displaystyle\frac{v}{2}\tr(E))F_v=0\quad\forall E\in J.
\]
Noting $H_v(w)$ as above and Lemma \ref{Diffeq-rk1-Lem}, we see that the equations (\ref{Diff-eq_gen2nd}),~(\ref{Diff-eq_gen3rd}) for $v=-1$ 
%and (\ref{Diff-eq_gen4th}) of $v+1$ 
in Proposition \ref{Diff-eq_general} then imply that $F_{v+1}=F_0$ equals to
\begin{equation}\label{solution-F_v+1}
\exp(-2\pi\sqrt{-1}\xi(a^2N_J(Z)++a(b,Z^{\sharp})+\frac{1}{2}(b,b\times Z)))C(X,Y),
\end{equation}
with a function $C(X,Y)$ in $(X,Y)$ independent of $(a,b)$. When $v=-1$, if $F_{v+1}\equiv F_0\not\equiv 0$, it is impossible that $(D_{Z(E)}+\displaystyle\frac{\tr(E)}{2}(v+1))F_{v+1}=0$ for all $E\in J$ since, if this holds, $C(X,Y)$ is a holomorphic function in $Z$ while $F_0$ is also anti-holomorphic in $Z$, namely $F_0$ should be a non-zero constant in $Z$, which is impossible by (\ref{solution-F_v+1}). Therefore we deny that $(D_{Z(E)}+\displaystyle\frac{v}{2}\tr(E))F_v=0$ for any $E\in J$. We then secondly consider the following case: 
\[
(D_{Z(E')}+\displaystyle\frac{v}{2}\tr(E'))F_v\not=0\quad\exists E'\in J.
\] 
%other than $E$. 
Lemma \ref{Prop5-3_1stLem} yields $H_v(w)=w^{-v}\exp(2\pi w^2\xi)$ up to scalar, which contradicts to (\ref{gen-solution-H_v11}) under the assumption $v\not=0$~(in fact, $v=-1$ for now).

We therefore see that $(D_{Z(E)}+\displaystyle\frac{\tr(E)}{2}(v+1))F_{v+1}\not=0$ has to hold for some $E\in J$~(even when $v=-1$). However we can deny this. In fact, if this holds, Lemma \ref{Prop5-3_1stLem}
leads to $H_{v+1}(w)=w^{-v-1}\exp(2\pi w^2\xi)$ up to scalars. This solution together with $H_v$ as in (\ref{gen-solution-H_v11}) contradicts to the equation (\ref{Diff-eq_gen1st}) for $v+1$. For this we also use the assumption $v\not=0$. 

As a result we have to think of the remaining case 
\[
(D_{Z^*(E)}-\displaystyle\frac{\tr(E)}{2}(v+1))F_{v+1}\not=0\quad \exists E\in J.
\]
By Lemma \ref{Prop5-3_1stLem} we have $H_{v+1}(w)=w^{v+1}\exp(2\pi w^2\xi)$ up to scalars. With this solution and $H_v$ above we then also deduce a contradiction from the equation (\ref{Diff-eq_gen1st}) for $v+1$ if $G_{v+1}\not\equiv 0$, namely we have $G_{v+1}\equiv 0$.

Now let us explain how to verify the first assertion of the proposition briefly. We begin with
\[
H_v(w)=w^{-v}\exp(-2\pi w^2\xi)
\]
up to scalar, which is due to Lemma \ref{Prop5-3_1stLem} part 1. Taking this into account the proof can be carried out for the two cases:
\begin{itemize}
\item $(D_{Z(E)}+\displaystyle\frac{\tr(E)}{2}(v-1))F_{v-1}=0$ for any $E\in J$, 
\item $(D_{Z(E)}+\displaystyle\frac{\tr(E)}{2}(v-1))F_{v-1}\not=0$ for some $E\in J$. 
\end{itemize}
For the first case it is verified that if we assume $F_{v-1}\not=0$, $(D_{Z^*(E)}-\displaystyle\frac{\tr(E)}{2}(v-1))F_{v-1}\not=0$ has to hold for some $E\in J$, which can be proved also for $v-1=0, i.e. v=1$ with the help of equations (\ref{Diff-eq_gen1st}),~(\ref{Diff-eq_gen4th}) and Lemma \ref{Prop5-3_1stLem} etc, in a manner similar to the second  assertion of the proposition. 
For both cases we deduce a contradiction from comparison of $H_v$ and $H_{v-1}$. 
For this comparison we note that we use the equation (\ref{Diff-eq_gen2nd}) with $v-1$ regarding the second case, which leads us to $G_{v-1}\equiv 0$.

We have proved the assertion consequently.
\end{proof}

\subsection*{The remaining steps for the proof of Proposition \ref{Diff-eq_general-2}
%: The case of $n\ge 3$
}
We write down the complete proof only for $\xi<0$ since the case of $\xi>0$ is settled quite similarly. Around the end of the proof we make some brief explanation of the proof for the case of $\xi>0$.  
We complete the proof for the proposition with the lemmas above. 
%Suppose first that $n\ge 3$.
%which is necessary in the following long argument. 

Let us assume 
\[
%(D_{Z(E)}+\displaystyle\frac{(-(n-1))}{2}\tr(E))F_{-(n-1)}=0~\text{and}~
(D_{Z^*(E)}-\displaystyle\frac{(-(n-1))}{2}\tr(E))F_{-(n-1)}\not=0
\]
for some $E\in J$. 
By Lemma \ref{Prop5-3_3rdLem}, part 2 we see $G_{-(n-2)}\equiv 0$. For this note that $-(n-1)\not=0$ since $n$ is at least $2$, which is possible when $G$ is of type $G_2$. We then deduce $G_v\equiv 0$ for $v\ge-(n-2)$ inductively from the equation (\ref{Diff-eq_gen1st}) since we see that $H_v$ is not of moderate growth for $v>-(n-2)$ if $H_v\not\equiv 0$ for such $v$. From $(D_{Z^*(E)}-\displaystyle\frac{(-(n-1))}{2}\tr(E))F_{-(n-1)}\not=0$ and Lemma \ref{Prop5-3_1stLem}, part 2, we have $H_{-(n-1)}(w)=w^{-(n-1)}\exp(2\pi w^2\xi)$. By what we have proved so far the differential equations (\ref{Diff-eq_gen1st}),~(\ref{Diff-eq_gen4th}) for $v=-(n-2)$ imply that $G_{-(n-1)}$ equals to
\[
w^{-(n-1)}\exp(2\pi w^2\xi)\exp(-2\pi\sqrt{-1}\xi(a^2N_J(\bar{Z})++a(b,\bar{Z}^{\sharp})+\frac{1}{2}(b,b\times \bar{Z}))),
\]
up to multiplication by a function independent of $a,~b$ and $w$. This and (\ref{Diff-eq_gen3rd}) for $v=-n$ imply that $H_{-n}(w)=w^{2-n}\exp(2\pi w^2\xi)$ up to scalars, for which we may assume that $G_{-n}\not\equiv0$ since otherwise we have $G_v\equiv 0$ for $v\ge -(n-1)$ inductively by (\ref{Diff-eq_gen1st}). 
However, if $G_{-(n-1)}\not\equiv 0$, this $H_{-n}$ and $H_{-(n-1)}$ above do not satisfy the equation (\ref{Diff-eq_gen1st}) for $v=-(n-1)$, which implies $G_{-(n-1)}\equiv 0$. We thus obtain 
$G_v\equiv 0$ for $-(n-1)\le v\le n-1$.
%We next assume
%\[
%(D_{Z(E)}+\displaystyle\frac{(-(n-1))}{2}\tr(E))F_{-(n-1)}\not=0,\quad (D_{Z^*(E)}-\displaystyle\frac{(-(n-1))}{2}\tr(E))F_{-(n-1)}=0.
%\] 
%for some $E\in J$. 
%According to Lemma \ref{Prop5-3_1stLem}
%\[
%H_{-(n-1)}(w)=w^{n-1}\exp(2\pi w^2\xi)
%\]
%up to scalars. 
%

We are left with the following case:
\[
%\item $(D_{Z(E)}+\displaystyle\frac{(-(n-1))}{2}\tr(E))F_{-(n-1)}\not=0\quad\forall E\in J$ or,
(D_{Z^*(E)}-\displaystyle\frac{(-(n-1))}{2}\tr(E))F_{-(n-1)}=0\quad\forall E\in J.
\]
%%Previous argument%%
%For this case we begin with the remark that $G_{-n}\not\equiv 0$. In fact, otherwise, we see from the equation (\ref{Diff-eq_gen1st}) with $v=-(n-1)$ that
%\[
%H_{-(n-1)}(w)=w^{n-1}\exp(-2\pi w^2\xi)
%\]
%up to non-zero constant multiples. This contradicts the moderate growth property of $G_{-(n-1)}$ with respect to $w$. 
Suppose that $G_{-(n-1)}\not\equiv 0$. 
%and $(D_{Z^*(E)}-\displaystyle\frac{(-(n-1))}{2}\tr(E))F_{-(n-1)}=0$ for any $E\in J$. 
Taking Lemma \ref{derivative-cubicpoly} into consideration, we then note
\begin{equation}\label{AntiHol-F_{-(n-1)}}
F_{-(n-1)}=N_J(Y)^{-(n-1)/2}C_{-(n-1)}((Z,a,b))
\end{equation}
with a non-zero function $C_{-(n-1)}((Z,a,b))$ anti-holomorphic in $Z$. 
%Now assume that 
%\[
%(D_{Z(E')}+\displaystyle\frac{(-(n-1))}{2}\tr(E'))F_{-(n-1)}\not=0\quad\exists E'\in J.
%\]
%According to Lemma \ref{Prop5-3_1stLem} part 1
%\[
%H_{-(n-1)}(w)=w^{n-1}\exp(2\pi w^2\xi)
%\]
%up to scalars. 
We then see that (\ref{AntiHol-F_{-(n-1)}}) is not compatible with the equation (\ref{degtwo-eq_2nd}) in terms of the powers of $N_J(Y)$.  %though $H_{-(n-1)}$ above satisfies the compatibility. 
In fact, let us put $B_{-(n-1)}((Z,a,b)):=(D^{(a,b)}_{\tilde{Z}^*_{a,b}}+2\pi\sqrt{-1}\xi\langle (a,b),\tilde{Z}^*\rangle)C_{-(n-1)}((Z,a,b))$, which is still anti-holomorphic in $Z$. We then have 
\begin{align*}
&(D^{(a,b)}_{\tilde{Z}_{a,b}}+2\pi\sqrt{-1}\xi\langle (a,b),\tilde{Z}\rangle)(D^{(a,b)}_{\tilde{Z}^*_{a,b}}+2\pi\sqrt{-1}\xi\langle (a,b),\tilde{Z}^*\rangle)C_{-(n-1)}((Z,a,b))\\
=&(D^{(a,b)}_{\tilde{Z}_{a,b}}+2\pi\sqrt{-1}\xi\langle (a,b),\tilde{Z}\rangle)B_{-(n-1)}((Z,a,b)).
\end{align*} 
Now note that the generalized Whittaker function is real analytic and $B_{-(n-1)}$ thus admits a power series expansion with respect to $(a,b)$ with anti-holomorphic coefficient functions for all the terms. We further note that the dependence on $Z$  for the differential operator $D^{(a,b)}_{\tilde{Z}_{a,b}}+2\pi\sqrt{-1}\xi\langle (a,b),\tilde{Z}\rangle$ is holomorphic.
%on the right hand side of (\ref{degtwo-eq_2nd})~(or (\ref{degtwo-eq_1st})) is given by polynomials whose top degree is contributed only by $a^2N_J(Z)N_J(\bar{Z})$. 
From the right hand side of (\ref{degtwo-eq_2nd}) we can not thereby factor out any power of $N_J(Y)$ other than $N_J(Y)^{-(n-1)/2}$.
We therefore see $G_{-(n-1)}\equiv 0$ and deduce $G_v\equiv 0$ for $v\ge -(n-1)$ inductively by verifying that $H_v$ cannot be non-zero functions of moderate growth for $v\ge-(n-1)$. 

As for the case of $\xi>0$, the proof is settled by dividing the argument into the two cases as follows:
\begin{itemize}
\item $(D_{Z(E)}+\displaystyle\frac{n-1}{2}\tr(E))F_{n-1}\not=0$ for some $E\in J$,
\item $(D_{Z(E)}+\displaystyle\frac{n-1}{2}\tr(E))F_{n-1}=0$ for any $E\in J$.
\end{itemize}
The first case begins with showing $G_{n-2}\equiv 0$ by Lemma \ref{Prop5-3_3rdLem}, part 1. This leads to $G_v\equiv 0$ for $v\le n-2$ inductively by the equation (\ref{Diff-eq_gen2nd}). In a manner similar to the case of $\xi<0$ we deduce an explicit formula for $H_{n-1}$~(respectively~$G_{n-1}$) from Lemma \ref{Prop5-3_1stLem}, part 1~(respectively~the equations (\ref{Diff-eq_gen2nd}),~(\ref{Diff-eq_gen3rd})). This together with (\ref{Diff-eq_gen4th}) for $v=n$ leads to an explicit formula for $H_n$ and to the incompatibility of $H_{n-1}$ with $H_n$ by (\ref{Diff-eq_gen2nd}) for $v=n-1$, which shows $G_{n-1}\equiv 0$.

The second case starts with getting the formula $F_{n-1}=N_J(Y)^{(n-1)/2}C_{n-1}((Z,a,b))$ with a holomorphic function $C_{n-1}((Z,a,b))$ in $Z$ like $F_{-(n-1)}$ in the case of $\xi<0$. 
%firstly assuming further that $(D_{Z^*(E')}-\displaystyle\frac{(n-1)}{2}\tr(E'))F_{n-1}\not=0$ for some $E'\in J$. 
Similarly we then deduce a contradiction from the comparison of powers of $N_J(Y)$ in view of (\ref{degtwo-eq_1st}). 
%Secondly assuming that $(D_{Z^*(E)}-\displaystyle\frac{(n-1)}{2}\tr(E))F_{n-1}=0$ for any $E\in J$ 
We therefore obtain $G_{v-1}\equiv 0$ from Lemma \ref{Prop5-3_2ndLem}.
Anyway we have therefore proved  
\[
G_v\equiv 0\quad(-(n-1)\le v\le n-1).
\]
\subsection{Results on the generalized Whittaker functions and Fourier-Jacobi models}\label{Results-GWF-FJS}
We firstly state the following result on an explicit formula for the generalized Whittaker functions, which is deduced from Proposition \ref{Diff-eq_general}.
\begin{thm}\label{GenWhittakerFct-gen}
Keep the assumption as in Proposition \ref{Diff-eq_general}. Up to scalar multiplication $C$ we have an explicit formula for $\phi_v$ of moderate growth with respect to $w$ as follows:\\
\begin{enumerate}
\item Suppose $\xi>0$. We have $\phi_v\equiv 0$ for $-n\le v\le n-1$ and 
\[
\phi_n=CN_J(Y)^{\frac{n}{2}}w^{n+2}\exp(-2\pi\xi w^2)\exp(-2\pi\xi \sqrt{-1}(a^2N_J(Z)+a(b,Z^{\sharp})+\displaystyle\frac{1}{2}(b,b\times Z))).
\]
\item Suppose $\xi<0$. We have $\phi_v\equiv 0$ for $-(n-1)\le v\le n$ and 
\[
\phi_{-n}=
CN_J(Y)^{\frac{n}{2}}w^{n+2}\exp(2\pi\xi w^2)\exp(-2\pi\xi \sqrt{-1}(a^2N_J(\bar{Z})+a(b,\bar{Z}^{\sharp})+\displaystyle\frac{1}{2}(b,b\times \bar{Z}))).
\]
\end{enumerate}
\end{thm}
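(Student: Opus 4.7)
The plan is to combine Proposition \ref{Diff-eq_general-2} (the interior vanishing $G_v\equiv 0$ for $-(n-1)\le v\le n-1$) with the boundary equations in Proposition \ref{Diff-eq_general} evaluated at $v=\pm n$ and $v=\pm(n-1)$, plus the moderate-growth hypothesis in the $w$-variable, to pin down $\phi_{\pm n}$ up to a single scalar. I will describe only the case $\xi>0$; the case $\xi<0$ is entirely symmetric under conjugation, using (\ref{Diffeq-rk1-Lem2nd}),~(\ref{Diffeq-rk1-Lem4th}) of Lemma \ref{Diffeq-rk1-Lem} in place of (\ref{Diffeq-rk1-Lem1st}),~(\ref{Diffeq-rk1-Lem3rd}).

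First, I dispose of $\phi_{-n}$. Specializing (\ref{Diff-eq_gen2nd}) at $v=-n$ and using $\phi_{-n+1}\equiv 0$ kills the right-hand side and reduces to the pure $w$-ODE $(w\partial_w-(n+2)-4\pi w^2\xi)\phi_{-n}=0$, whose only solution is a multiple (in the other variables) of $w^{n+2}\exp(2\pi w^2\xi)$; this is not of moderate growth when $\xi>0$, hence $\phi_{-n}\equiv 0$.

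For $\phi_n$ I extract the explicit formula by solving four specialized equations in succession. Setting $v=n$ in (\ref{Diff-eq_gen1st}) with $\phi_{n-1}\equiv 0$ gives $(w\partial_w-(n+2)+4\pi w^2\xi)\phi_n=0$, which fixes the $w$-factor as $w^{n+2}e^{-2\pi w^2\xi}$. Setting $v=n$ in (\ref{Diff-eq_gen4th}) with $\phi_{n-1}\equiv 0$ yields $(D_{Z^*(E)}-\tfrac{n}{2}\tr(E))\phi_n=0$ for every $E\in J$; invoking the identity $D_{Z^*(E)}N_J(Y)=N_J(Y)\tr(E)$ of Lemma \ref{derivative-cubicpoly} and the separation-of-variables assumption, this extracts the factor $N_J(Y)^{n/2}$ and leaves the remaining $Z$-dependence constrained to the holomorphic type. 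Finally, setting $v=n-1$ in (\ref{Diff-eq_gen2nd}) and in (\ref{Diff-eq_gen3rd}), both with $\phi_{n-1}\equiv 0$, produces the pair of first-order $(a,b)$-systems
\[
(D^{(a,b)}_{\tilde Z_{a,b}}+2\pi\sqrt{-1}\xi\langle(a,b),\tilde Z\rangle)\phi_n=0,\qquad (D^{(a,b)}_{MV(E)_{a,b}}+2\pi\sqrt{-1}\xi\langle(a,b),MV(E)\rangle)\phi_n=0.
\]
Lemma \ref{Diffeq-rk1-Lem}~(1) exhibits $a^2N_J(Z)+a(b,Z^{\sharp})+\tfrac12(b,b\times Z)$ as a common potential for both symbols, so both systems are simultaneously solved by $\exp\bigl(-2\pi\sqrt{-1}\xi(a^2N_J(Z)+a(b,Z^{\sharp})+\tfrac12(b,b\times Z))\bigr)$. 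Assembling the three factors recovers the stated formula.

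The main obstacle is the compatibility step: verifying that the candidate product actually solves every remaining member of Proposition \ref{Diff-eq_general} that has not been exhausted by Proposition \ref{Diff-eq_general-2}, not just the four equations used to build it. This reduces to a bookkeeping check in which the identity (4) of the cubic norm structure (cf.~Section \ref{Cubicforms-Groups}) is used to cancel the cross-terms arising from $N_J(Z)-N_J(\bar Z)$, very much in the spirit of the calculation in the proof of Lemma \ref{degtwo-eqs_general}.
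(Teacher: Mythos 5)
Your proposal is correct and follows essentially the same route as the paper: both rely on the interior vanishing $G_v\equiv 0$ from Proposition \ref{Diff-eq_general-2}, then read off the boundary system (\ref{Diff-eq_gen1st-1}), (\ref{Diff-eq_gen2nd-1}), (\ref{Diff-eq_gen2nd-2}), (\ref{Diff-eq_gen3rd-2}), (\ref{Diff-eq_gen4th-1}) — which is exactly your specialization of Proposition \ref{Diff-eq_general} at $v=\pm n$ and $v=\pm(n-1)$ — and use Lemma \ref{Diffeq-rk1-Lem} together with Lemma \ref{derivative-cubicpoly} and the moderate-growth condition to assemble the three factors and kill $\phi_{-n}$. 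The compatibility check you flag at the end is treated equally briefly in the paper ("the above solution is compatible with this"), so nothing is missing relative to the paper's own argument.
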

\begin{proof}
We show the case of $\xi>0$ since the case of $\xi<0$ is settled in a quite similar manner. 
From (\ref{Diff-eq_gen1st-1}) and (\ref{Diff-eq_gen2nd-1}) we have $H_n=Cw^{-n}\exp(-2\pi\xi w^2)$ and $G_{-n}=0$ as a solution of moderate growth with respect to $w$.

We next consider (\ref{Diff-eq_gen2nd-2}), (\ref{Diff-eq_gen3rd-2}) and (\ref{Diff-eq_gen4th-1}). 
From these and Lemma \ref{Diffeq-rk1-Lem}
we obtain
\[
F_n=CN_J(Y)^{n/2}\exp(-2\pi\xi \sqrt{-1}(a^2N_J(Z)+a(b,Z^{\sharp})+\displaystyle\frac{1}{2}(b,b\times Z))).
\]
The equation (\ref{Diff-eq_gen4th-1}) implies $D_{Z^*(E)}\cdot(N_J(Y)^{-\frac{n}{2}}G_n)=0$. The above solution is compatible with this. As a result we have the explicit formula for $\phi_v$s in the assertion. 

\end{proof}
This result leads to important consequences on generalized Whittaker models and then yields a result on Fourier-Jacobi models~(or Fourier-Jacobi type spherical functions in the sense of Hirano \cite{Hi-1},~\cite{Hi-2}and \cite{Hi-3}). Given a generalized Whittaker function $W$ for $\pi_n$ with the minimal $K$-type $\tau_n$ let $\pi(W)$ be the $(\g,K)$-module generated by the coefficient functions of $\{(W(g),v)_{\tau_n}\mid v\in \bV_n\}$ with an inner product $(*,*)_{\tau_n}$ of $\tau_n$. When $\eta$ is a character, such coefficient functions are viewed as elements in
\[
C^{\infty}_{\eta}(N\backslash G):=\{F:\text{smooth function on $G$}\mid F(ng)=\eta(n)F(g)~\forall (n,g)\in N\times G\}.
\]
When $\eta=\eta_{\xi}$ or $\eta^{\infty}_{\xi}$ is a Schr\"odinger representation or its $C^{\infty}$-type representation with the central character parametrized by $\xi$, they are viewed as elements in
\[
C^{\infty}_{\psi_{\xi}}(N_m\backslash G):=\{F:\text{smooth function on $G$}\mid F(ng)=\psi_{\xi}(n)F(g)~\forall (n,g)\in N_m\times G\}
\]
where note that elements in the sections of $C^{\infty}\text{-}{\rm Ind}_N^G(\eta_{\xi})$ or $C^{\infty}\text{-}{\rm Ind}_N^G(\eta^{\infty}_{\xi})$ are regarded as those in $C^{\infty}\text{-}{\rm Ind}_{N_m}^G(\psi_{\xi})$ in view of the induction by stages. Toward the coming argument we need the following proposition:
\begin{prop}\label{WhittakerModule}
For any irreducible representations $\eta$ or the $C^{\infty}$ representations $\eta^{\infty}_{\xi}$ of $N$ just explained, let $W$ be as above and non-zero. 
We have an isomorphism $\pi(W)\simeq\pi_n$ as $(\g,K)$-modules.
\end{prop}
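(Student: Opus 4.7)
The plan is to realize $W$ as the minimal-$K$-type restriction of a genuine $(\g,K)$-intertwining operator from $\pi_n$ into the induced representation, and then to combine the irreducibility of $\pi_n$ with the standard fact that the minimal $K$-type generates $\pi_n$ as a $(\g,K)$-module. First, by the definition of a generalized Whittaker function recalled in Section \ref{GWF-definition}, a non-zero $W$ lies in ${\rm Im}(I)$; hence there is a non-zero $(\g,K)$-homomorphism
\[
\Phi:\pi_n\longrightarrow C^{\infty}\operatorname{-Ind}_N^G(\eta)
\]
with $W=\Phi\circ\iota$, where $\iota:(\tau_n,\bV_n)\hookrightarrow\pi_n$ realizes the minimal $K$-type. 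Under the canonical identification $\Hom_K(\tau_n,C^{\infty}\operatorname{-Ind}_N^G(\eta))\simeq C^{\infty}_{\eta,\tau_n^*}(N\backslash G/K)$ already used in Section \ref{GWF-definition}, the scalar coefficient function $g\mapsto (W(g),v)_{\tau_n}$ is precisely the vector $\Phi(\iota(v))\in C^{\infty}\operatorname{-Ind}_N^G(\eta)$ evaluated at $g$. In the Schr\"odinger cases $\eta=\eta_{\xi}$ or $\eta^{\infty}_{\xi}$, induction by stages from $N_m$ through $N$ to $G$ identifies these coefficient functions with elements of $C^{\infty}_{\psi_{\xi}}(N_m\backslash G)$, so the argument is uniform in the choice of $\eta$.

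Next, $\pi(W)$ is by definition the $(\g,K)$-submodule of the ambient space generated by $\{(W(\cdot),v)_{\tau_n}\mid v\in\bV_n\}$, which under the identification above coincides with the submodule generated by $\Phi(\iota(\bV_n))$. Since $\Phi$ is a $(\g,K)$-homomorphism, this submodule equals $\Phi(U(\g)\cdot\iota(\bV_n))$. The Harish-Chandra module $\pi_n$ is irreducible and $\iota(\bV_n)$ is a non-zero $K$-stable subspace, so $U(\g)\cdot\iota(\bV_n)=\pi_n$; consequently $\pi(W)=\Phi(\pi_n)$. Finally, $W\not\equiv 0$ forces $\Phi\circ\iota\neq 0$, hence $\Phi\neq 0$; irreducibility of $\pi_n$ then gives $\ker\Phi=0$, so $\Phi$ induces the desired isomorphism $\pi_n\xrightarrow{\sim}\Phi(\pi_n)=\pi(W)$ of $(\g,K)$-modules.

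The step requiring real care, rather than any computation, is the existence of the lift $\Phi$ of $W$ from the minimal $K$-type to all of $\pi_n$. This is precisely the content of the assertion $W\in{\rm Im}(I)$. For a character $\eta$ this is built into the definition of a generalized Whittaker function; for the $C^{\infty}$-Schr\"odinger representation $\eta=\eta^{\infty}_{\xi}$, it relies on the weakly cyclic property recalled from \cite[Definition 2.2]{Y} together with Yamashita's injection invoked in the proposition immediately preceding the definition of generalized Whittaker functions. Once the lift $\Phi$ is granted, the proof is entirely formal and independent of the explicit formulas obtained in Theorem \ref{GenWhittakerFct-gen}.
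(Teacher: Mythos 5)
Your argument stands or falls with the existence of the lift $\Phi\in\Hom_{(\g,K)}(\pi_n,C^{\infty}\operatorname{-Ind}_N^G(\eta))$ satisfying $W=\Phi\circ\iota$, and that is exactly the point that cannot be taken for granted here. You justify it by reading the definition literally ($W\in{\rm Im}(I)$) and, for $\eta^{\infty}_{\xi}$, by appealing to ``Yamashita's injection''; but \cite[Proposition 2.1]{Y} only gives the injection $\Hom_{(\g,K)}(\pi_n,C^{\infty}\operatorname{-Ind}_N^G(\eta))\hookrightarrow\{W\in C^{\infty}_{\eta,\tau_n^*}(N\backslash G/K)\mid\cD\cdot W=0\}$, i.e.\ it goes in the opposite direction, and the bijectivity statement \cite[Theorem 2.4]{Y} requires the Blattner parameter to be far from the walls --- a hypothesis the paper explicitly declines to assume and instead promises to remove \emph{by means of this very proposition}. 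In every place the proposition is actually used (the surjectivity of the restriction map $I$, the multiplicity formulas of Theorem \ref{Multiplicity-formula} deduced from the explicit solutions of the differential equations in Theorem \ref{GenWhittakerFct-gen}, and Proposition \ref{module_QMF} for automorphic forms), the input $W$ is only known to lie in $C^{\infty}_{\eta,\tau_n^*}(N\backslash G/K)$ and to be annihilated by the Dirac-Schmid operator $\cD$; it is not known in advance to come from an intertwining operator. Under your reading the proposition becomes a near-tautology that says nothing about such $W$, and the intended applications would be circular.

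The missing idea is the construction that occupies most of the paper's proof: the $(\g,K)$-module $S_{\bV_n}:=U(\g_{\C})\otimes_{U(\lk_{\C})}\bV_n/U(\g_{\C})\otimes_{U(\lk_{\C})}V^-$, whose defining relations (the contraction identities coming from $V^-$) are precisely the relations that $\cD\cdot W=0$ imposes on the coefficient functions of $W$. One shows that $S_{\bV_n}$ has the $K$-type decomposition $\oplus_{m\ge0}\Sym^{m+2n}(\C^2)\boxtimes S^m(\p^+)$ of $\pi_n$ and is irreducible, hence $S_{\bV_n}\simeq\pi_n$ by Schmid's characterization of discrete series by their $K$-types; since $\cD\cdot W=0$, there is a surjective $(\g,K)$-map $S_{\bV_n}\rightarrow\pi(W)$ extending the identification of $\bV_n$ with the $K$-module spanned by the $(W(\cdot),v)_{\tau_n}$, and irreducibility together with $W\not\equiv0$ forces $\pi(W)\simeq\pi_n$. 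This is what simultaneously proves the proposition and \emph{supplies} the lift $\Phi$ whose existence you assumed; without it, your proof only covers the case where the lift is already given, which is not the case that matters.
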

\begin{proof}
Let $\g=\lk\oplus\p$ be the Cartan decomposition of the Lie algebra $\g$ of $G$~(see \cite[Sections 5 and 6]{P1}), and $\lk_{\C}$ and ${\p_{\C}}$ denote the complexification of $\lk$ and $\p$ respectively. By $U(\g_{\C})$ and $U(\lk_{\C})$ we denote the universal enveloping algebra of $\g_{\C}$ and $\lk_{\C}$ respectively. Following \cite[Section 5.1]{P1} we introduce a basis $\{h_l,e_l,f_l\}$ of ${\mathfrak sl}_2(\C)$-factor of $\lk_{\C}$~(which forms an ${\mathfrak sl}_2$-triple) and generators $\{h_3,h_1(X),h_{-1}(Y),h_{-3},\overline{h_3},\overline{h_1(X')},\overline{h_{-1}(Y')},\overline{h_{-3}}\mid X,X',Y,Y'\in J\}$ of $\p_{\C}$ as a $\C$-vector space. We later need the subspace $\p^+$ of $\p_{\C}$ spanned by $\{h_3,h_1(X),h_{-1}(Y),h_{-3} \mid X,Y\in J\}$. This is an $M$-module and thus so is the homogeneous degree $n$-part $S^n(\p^+)$ of the symmetric tensor algebra of $\p^+$ for each positive integer $n$, where see Section \ref{GWF-definition} for the subgroup M of $K$.
%We introduce bases $\{F_i\mid 1\leq i\leq s:=\dim_{\C}\p_{\C}\}$ and $\{F'_j\mid 1\leq j\leq t:=\dim_{\C}\lk_{\C}\}$ of $\p_{\C}$ and $\lk_{\C}$ respectively. For every non-negative integer $n$ we put
%\[
%H_n:=\C\text{-span of}\langle F_1^{a_1}F_2^{a_2}\cdots F_s^{a_s}\cdot (W,v)\mid v\in V,~\sum_{1\le i\le s}a_i\le n\rangle,
%\]
%where $(*,*)$ denotes an inner product on $V$. 

Now recall that $\bV_n$ denotes the representation space of the minimal $K$-type of the quaternionic discrete series representation $\pi_{n}$.
%and consider the $(\g,K)$-module given by $U(\g)\otimes_{U(\lk)}V_W$, where $U(\g)$ and $U(\lk)$ are the standard notations of the universal enveloping algebra for $\g$ and $\lk$. 
We are going to construct a $(\g,K)$-module $S_{\bV_n}$ isomorphic to the quaternionic discrete series $\pi_{n}$. 
Toward this we need to note that there is the direct sum decomposition $\p_{\C}\otimes {\bV_n}=V^+\oplus V^-$ with $V^{\pm}\simeq\Sym^{2n\pm1}(\C^2)\oplus\p^+$ as $K$-modules, where $\Sym^r(\C^2)$ denotes the $r$-th symmetric tensor representation of the 2-dimensional standard representation of $SU(2)$ for a positive integer $r$. For this decomposition see \cite[Sections 7.1,~7.3]{P1}. 
Let us introduce the $(\g,K)$-module $U(\g_{\C})\otimes_{U(\lk_{\C})}{\bV_n}$ and the submodule $U(\g_{\C})\otimes_{U(\lk_{\C})}V^-$ of this. 
%%The previous construction of $S_V$%%
%For this purpose we introduce the tensor algebra $T(\p_{\C}):=\oplus_{i=0}^{\infty}T^i(\p_{\C})$ of $\p_{\C}$ with the homogeneous degree $i$-part $T^i(\p_{\C})$ for each non-negative integer $i$. 
%We further need to note that there is the  direct sum decomposition $V\otimes\p_{\C}=V^+\oplus V^-$ with $V^{\pm}\simeq\Sym^{n_0\pm1}(\C^2)\oplus\p^+$. 
%
%We view the tensor product $T(\p_{\C})\otimes V$ as a $T(\p_{\C})$-module in the obvious manner. 
%This is also viewed as a $K$-module by the adjoint action of $K$ on $\p_{\C}$~(also on each $T^i(\p_{\C})$) and the $K$-module structure of $V$. We introduce the $T(\p_{\C})$-submodule $L$ of $T(\p_{\C})\otimes V$ generated by 
%\[
%\left\{ v,~(Z_1\otimes Z_2-Z_2\otimes Z_1)\otimes w-[Z_1,Z_2]w\mid v\in V^-,~Z_1,Z_2\in\p_{\C}, w\in T^i(\p_{\C})\otimes V~(i\ge 0)\right\},
%\]
%where note that $[Z_1,Z_2]\in\lk$ and $T^i(\p)$ is regarded as a $\lk$-module. This is also a $K$-submodule of $T(\p_{\C})\otimes V$. 
%%End of the construction%%
We then define 
\[
S_{\bV_n}:=U(\g_{\C})\otimes_{U(\lk_{\C})} {\bV_n}/U(\g_{\C})\otimes_{U(\lk_{\C})}V^-,
\]
which will be verified to be isomorphic to the quaternionic discrete series $\pi_n$ as a $(\g,K)$-module. 
%The module $S_V$ has the $U(\g)$-module structure compatible with its $K$-module structure, namely $S_V$ is a $(\g,K)$-module.
 
To understand the structure of $S_{\bV_n}$ in detail, it is important to know $V^+\simeq \p_{\C}\otimes {\bV_n}/V^-$ explicitly. As a basis of ${\bV_n}$ we take $\{[x^{n+k}][y^{n-k}]:=\left(\frac{x^{n+k}}{(n+k)!}\right)\left(\frac{y^{n-k}}{(n-k)!}\right)\}_{-n\le k\le n}$ introduced in \cite[Section 7.3]{P1} and Section \ref{GWF-STrep}. Based on the contraction formula and the correspondence of vectors given in \cite[Section 7.3,~p1251]{P1} we then have the following formula:
\begin{align}
\overline{h_3}\otimes[x^{n+k}][y^{n-k}]&\equiv -h_{-3}\otimes[x^{n+k-1}][y^{n-k+1}]\mod V^-,\label{Contraction-1st}\\
\overline{h_1(X)}\otimes[x^{n+k}][y^{n-k}]&\equiv h_{-1}(X)\otimes[x^{n+k-1}][y^{n-k+1}]\mod V^-,\label{Contraction-2nd}\\
\overline{h_{-1}(Y)}\otimes[x^{n+k}][y^{n-k}]&\equiv -h_{1}(Y)\otimes[x^{n+k-1}][y^{n-k+1}]\mod V^-,\label{Contraction-3rd}\\
\overline{h_{-3}}\otimes[x^{n+k}][y^{n-k}]&\equiv h_{3}\otimes[x^{n+k-1}][y^{n-k+1}]\mod V^-\label{Contraction-4th},
\end{align}
where $X,Y\in J$.

We remark that, as a $K$-module, $S_{\bV_n}$ is generated by 
\[
\{h_3^{\otimes i}h_1(X)^{\otimes j}h_{-1}(Y)^{\otimes k}h_{-3}^{\otimes l}\otimes v',~v\mid (i,j,k,l)\in\Z_{\ge 0}^4,~X,Y\in J,~,v'\in V^+,~v\in {\bV_n}\},
\]
in view of (\ref{Contraction-1st})$\sim$(\ref{Contraction-4th}), the relation $[\p,\p]\subset\lk$ and the Poincar{\'e}-Birkhoff-Witt theorem of $U(\g_{\C})$~(cf.~\cite[Theorem 3.2]{Kn}), for which note that the relations (\ref{Contraction-1st})$\sim$(\ref{Contraction-4th}) generate the equivalence relation $\mod V^-$. 
%\begin{itemize}
%\item we can change the order of the powers of $h_3,~h_1(X),~h_{-1}(Y)$ and $h_{-3}$ in view of $[\p,\p]\subset\lk$,
%\item $\p_{\C}$ is generated by $\{h_3,h_1(X),h_{-1}(Y),h_{-3}\mid X,Y\in J\}$ as a $K$-module.
%\end{itemize}
We see that there is an isomorphism
\[
S_{\bV_n}\simeq \oplus_{m\ge 0}\Sym^{m+2n}(\C^2)\boxtimes S^m(\p^{+})
\]
as $K$-modules.
In fact, $V^+\simeq \p_{\C}\otimes {\bV_n}/V^-$~(respectively~${\bV_n}$) is viewed as a subspace of $S_{\bV_n}$ isomorphic to $\p^+\otimes\Sym^{2n+1}(\C^2)$~(respectively~the tensor product of $\Sym^{2n}(\C^2)$ and the trivial representation of $M$) as a $K$-module, and the $K$-module generated by 
$\{h_3^{\otimes i}h_1(X)^{\otimes j}h_{-1}(Y)^{\otimes k}h_{-3}^{\otimes l}\otimes v' \mid (i,j,k,l)\in\Z_{\ge 0}^4,~X,Y\in J,~,v'\in V^+\}$ with the fixed homogeneous degree $m=i+j+k+l$ is isomorphic to $S^{m}(\p^+)\boxtimes\Sym^{m+2n}(\C^2)$. We explain this as follows; since the appearance of the $S^m(\p^{+})$-factor is obvious, this is verified by thinking of the $\Sym^*(\C^2)$-factor of the possible highest weights with the help of \cite[Theorem 5.2.1 and Section 7.3, p1251]{P1}~(for a more detailed account of the $M$-module structure of $S^{m}(\p^+)$ , see \cite[Section 6]{G-W}).

On the other hand, we recall that the distribution of $K$-types of quaternionic discrete series is given explicitly by Gross-Wallach \cite[Proposition 5.7,~Section 6]{G-W}. According to this we see that 
\[
\pi_{n}|_K\simeq\oplus_{m\ge 0}\Sym^{m+2n}(\C^2)\boxtimes S^m(\p^{+}),
\]
where note that all the multiplicities of the representation $\Sym^{m+2n}(\C^2)\boxtimes S^m(\p^{+})$ is one.
%where note that the minimal $K$-type of $\pi_{n_0}$ be isomorphic to $\Sym^{n_0}(\C^2)\boxtimes{\rm id}$ whose second factor means the trivial representation of $M$. 
%We now remark that the $K$-module generated by $S(\p^+)$ is isomorphic to $\Sym^n(\C^2)\boxtimes S^n(\p^+)$, where note again that $S^n(\p^+)$ is regarded as a $M$-module. 
As a result we have an isomorphism $\pi_{n}|_K\simeq S_{\bV_n}$ as $K$-modules. 

Suppose that we can show the irreducibility of $S_{\bV_n}$. We then see that $S_{\bV_n}\simeq\pi_n$ as $(\g,K)$-modules in view of the characterization of discrete series representations by the distribution of $K$-types (cf.~Schmid \cite[Theorem 1.3]{S2}). 
For this we remark that the characterization just mentioned requires the irreducibility of the $(\g,K)$-modules. The $(\g,K)$-module $\pi(W)$ generated by the coefficient functions of the non-zero generalized Whittaker function $W$ is then verified to be isomorphic to the quaternionic discrete series representation $\pi_{n}$. In fact, ${\bV_n}$ is isomorphic to the $K$-module generated by the coefficient functions of $W$. We can naturally extend this isomorphism to a non-zero surjective intertwining operator from $S_{\bV_n}$ to $\pi(W)$, and thus we know that $\pi(W)$ is non-zero and irreducible. For this we note that $W$ is annihilated by the Dirac-Schmid operator. We see that \cite[Section 7.3]{P1} is useful and can apply (\ref{Contraction-1st})$\sim$(\ref{Contraction-4th}) to $\pi(W)$.  We then verify $\pi(W)$ is isomorphic to the quaternionic discrete series as $(\g,K)$-modules. 

The proof is thus reduced to the following lemma:
\begin{lem}
The $(\g,K)$-module $S_{\bV_n}$ is irreducible.
\end{lem}
\begin{pf}
Suppose that $S_{\bV_n}$ has a non-zero proper sub-$(\g,K)$-module $U$. If $U$ includes ${\bV_n}$, the $U(\g)$-stability of $U$ implies that $S_{\bV_n}=U$. We can thus assume that there is the minimal non-negative integer $u_0$ such that $U\cap(S^{u_0}(\p^+)\otimes V^+)\not=\emptyset$, where $S^{u_0}(\p^+)\otimes V^+$ is regarded as a subspace of $S_{\bV_n}$. To deny such possibility we note the following bracket relations~(cf.~\cite[Sections 6.2,~7.3]{P1})
\[
[h_3,h_{-3}]=-2e_l,~[h_1(X),h_{-1}(Y)]=-2(X,Y)e_l,~[e_l,h]=0~\forall h\in\p^+,
\]
where $(X,Y)$ denotes the value of the trace pairing for $X,Y\in J$. We remark that the last relation follows from the fact that each non-zero element in $\p^+$ is a highest weight vector with respect to the ${\mathfrak sl}_2(\C)$-action~(cf.~\cite[Section 7.3]{P1}).  
%\begin{align*}
%[h_3,d_{-3}]&=16\sqrt{-1}(u_2+r_2),~[h_1,d_1]=\frac{32}{9}e_r,~[h_1,h_{-1}]=\frac{16}{3}e_u\\
%[h_{-1},d_1]&=\frac{16}{9}\sqrt{-1}(3u_2-r_2),~[h_{-3},d_3]=16\sqrt{-1}(r_2-u_2),
%\end{align*}
%where $u_2=\frac{1}{2\sqrt{-1}}h_u$ and $r_2=\frac{1}{2\sqrt{-1}}h_r\in\lk$ are given in \cite[Section 4.1]{P2}. 
Take a non-zero $v\in U\cap (S^{u_0}(\p^+)\otimes V^+)$. We claim that, with a suitable $Z\in\{h_3,h_1(X),h_{-1}(Y),h_{-3}\mid X,Y\in J\}$, we can find a non-zero vector $Z\cdot v\in U$ so that it splits into the non-zero $S^{u_0+1}(\p^+)\otimes V^+$-part and the non-zero $S^{u_0-1}(\p^+)\otimes V^+$-part in view of the bracket relations above, where we understand $S^{u_0-1}(\p^+)\otimes V^+={\bV_n}$ when $u_0=0$. 
Indeed, to see this, decompose $v$ into a linearly independent sum of vectors of the form $h_3^{\otimes i}h_1(X)^{\otimes j}h_{-1}(Y)^{\otimes k}h_{-3}^{\otimes l}\otimes w$ with $w\in V^+$ and distinct $(i,j,k,l)$s in $(\Z_{\ge 0})^4$~($w\in {\bV_n}$ is possible when $i=j=k=l=0$). The problem then turns out to be reduced to the case where $v$ is of the form $h_3^{\otimes i_0}h_1(X)^{\otimes j_0}h_{-1}(Y)^{\otimes k_0}h_{-3}^{\otimes l_0}\otimes w$ with a single $(i_0,j_0,k_0,l_0)\in\Z_{\ge 0}^4$. By considering the action of $h_3,~h_1(X),~h_{-1}(Y)$ or $h_{-3}$ we then verify the claim in terms of the appearance of $e_l$ in the first and second bracket relations above and of the third bracket relation for $e_l$, where $[\p^+,\p^+]\subset\lk$ is also used. For this we furthermore note that the action of $e_l$ on $v$ is reduced to the infinitesimal action of $e_l$ for the $V^+$-part (or ${\bV_n}$-part) of $v$ by noting the third bracket relation.
% and that, if $w$ is a highest weight vector, $v$ is annihilated by $e_l$. For the latter situation we can replace $w$ by a suitable vector of $V^+$  or $V$ without highest weight. 
% for which note that the generators of $U(\g)\otimes_{U(\lk)}V_W$ admits the change of the order of $h_3,~h_1,~h_{-1}$ and $h_{-3}$ as we have remarked above. 
As a result, there is non-zero appearance of the $S^{u_0-1}(\p^+)\otimes V^+$-part  but this contradicts to the minimality of $u_0$. 
\end{pf}
We verify the lemma and have consequently proved the proposition.  
\end{proof}
We are now able to discuss the exact multiplicity formula for the various generalized Whittaker models~(or functions)~associated with irreducible unitary representations or weakly cyclic representations of $N$ other than trivial characters. 

Following \cite[Section 1.2,~p.1214]{P1} we introduce the cubic polynomial $p_{\chi}$ associated with a non-trivial character $\chi$ of $N$, defined on the Hermitian symmetric space $D_J\subset J\otimes_{\R}\C$ associated with $J$, as follows:
\begin{equation}\label{cubic-polynomial}
p_{\chi}(Z):=-\langle (a,b,c,d),(1,-Z,Z^{\sharp},-N_J(Z))\rangle=aN_J(Z)+(b,Z^{\sharp})+(c,Z)+d,
\end{equation}
where $\chi$ is parametrized by $(a,b,c,d)\in\cW_J(\Q)$. 
%The unitary characters of $N$ are in bijection with $W_J$. We denote by $\eta_w$ the unitary character of $N$ corresponding to $w\in W_J$.
We remind readers that, for an irreducible unitary representation $\eta$ or a weakly cyclic representation (such as $\eta=\eta^{\infty}_{\xi}$) of $N$, an element of the image of 
\[
\Hom_{(\g,K)}(\pi_n,C^{\infty}\operatorname{-Ind}_N^G(\eta))\ni\Phi\mapsto \Phi\circ\iota\in\Hom_{K}(\tau_n,C^{\infty}\operatorname{-Ind}_N^G(\eta))
\]
is called a generalized Whittaker function on $G$~(see Section \ref{Cubicforms-Groups} for $G$) for $\pi_n$ with $K$-type $\tau_n$~(cf.~Section \ref{GWF-definition}). 
When $\eta$ is a character, say $\chi$, an explicit formula for such generalized Whitaker functions of moderate growth is given uniformly for $G$ above by Pollack \cite{P1} in terms of the $K$-Bessel function and the cubic polynomial $p_{\chi}$ on $D_J$. 
 It is obtained by solving the differential equation arising from the vanishing by the infinitesimal action of the Dirac-Schmid operator ${\cal D}$, and Pollack \cite{P1} shows that the solution of moderate growth is unique, up to constant multiple for non-trivial characters of $N$. 
 
\begin{thm}\label{Multiplicity-formula}
Let $G$ be as above and $\pi_n$ be a quaternionic discrete series representation. 
\begin{enumerate}
\item For a non-trivial character $\chi$ we have 
\begin{align*}
&\dim\Hom_{(\g,K)}(\pi_n,C^{\infty}\operatorname{-Ind}_N^G(\chi))=
\begin{cases}
2&(\text{$p_{\chi}$ has no zero point in $D_J$}),\\
0&(\text{$p_{\chi}$:otheriwse}),
\end{cases}\\
&\dim\Hom_{(\g,K)}(\pi_{n},C^{\infty}_{\rm mod}\operatorname{-Ind}_N^G(\chi))=
\begin{cases}
1&(\text{$p_{\chi}$ has no zero point in $D_J$}),\\
0&(\text{$p_{\chi}$:otheriwse}),
\end{cases}
\end{align*}
where $C^{\infty}_{\rm mod}\operatorname{-Ind}_N^G(\chi)$ denotes the space of  moderate growth sections in $C^{\infty}\operatorname{-Ind}_N^G(\chi)$. 

When $\dim\Hom_{(\g,K)}(\pi_{n},C^{\infty}\operatorname{-Ind}_N^G(\chi))\not=0$, the generalized Whittaker functions are given by linear combinations of $W_{\chi}(g):=\underset{-n\le v\le n}{\sum}W_v^{\chi}(g)\frac{x^{n+v}y^{n-v}}{(n+v)!(n-v)!}$ and $W'_{\chi}(g):=\underset{-n\le v\le n}{\sum}{W'}_v^{\chi}(g)\frac{x^{n+v}y^{n-v}}{(n+v)!(n-v)!}$ for $g\in H_J$ with
\begin{align*}
&W_v^{\chi}(g)=\left(\frac{|j(g,\sqrt{-1}1_J)p_{\chi}(g\cdot\sqrt{-1}1_J)|}{j(g,\sqrt{-1}1_J)p_{\chi}(g\cdot\sqrt{-1}1_J)}\right)^v\nu(g)^n|\nu(g)|K_v(|j(g,\sqrt{-1}1_J)p_{\chi}(g\cdot\sqrt{-1}1_J)|),\\
&{W'}_v^{\chi}(g)=\left(-\frac{|j(g,\sqrt{-1}1_J)p_{\chi}(g\cdot\sqrt{-1}1_J)|}{j(g,\sqrt{-1}1_J)p_{\chi}(g\cdot\sqrt{-1}1_J)}\right)^v\nu(g)^n|\nu(g)|I_v(|j(g,\sqrt{-1}1_J)p_{\chi}(g\cdot\sqrt{-1}1_J)|),
\end{align*}
where $K_v$~(respectively~$I_v$) denotes the $K$-Bessel function~(respectively~$I$-Bessel function) parametrized by $v$.  Here $j(g,Z)$ and $\nu(g)$ with $(g,Z)\in H_J\times\cH_J$ denote the factor of automorphy~(cf.~\cite[Proposition 2.3.1]{P1}) and the similitude factor as in \cite[Section 2.2]{P1}. 

When $\dim\Hom_{(\g,K)}(\pi_{n},C^{\infty}_{\rm mod}\operatorname{-Ind}_N^G(\chi))=1$, the generalized Whittaker function is explicitly given by $W_{\chi}(g)$ up to scalars.
%When $G$ is of type $G_2$, the above formula is restated as
%\[
%\dim\Hom_{(\g,K)}(\pi,C^{\infty}_{\rm mod}\operatorname{-Ind}_N^G(\eta))=
%\begin{cases}
%1&(\text{$\eta$:generic character of positive orbit}),\\
%0&(\text{$\eta$:generic character of negative orbit}).
%\end{cases}
%\]
\item For any $\xi\in\R\setminus\{0\}$ recall that $\eta^{\infty}_{\xi}$ and $\eta_{\xi}$ denote the $C^{\infty}$ Schr\"odinger representation and the usual Schr\"odinger representation respectively, with the central character parametrized by $\xi$~(cf.~Section \ref{Rep-JacobiGp}). Let $W_{\eta_\xi^{\infty},\tau_n}^{\rm mod-sep}$~(respectively~$W_{\eta_\xi,\tau_n}^{\rm mod-sep}$) be the $\C$-span of generalized Whittaker functions for $\pi_n$ with the minimal $K$-type $\tau_n$ attached to $\eta_{\xi}^{\infty}$~(respectively~$\eta_{\xi}$) which satisfy the assumption of the separation of variables~(just before Proposition \ref{Diff-eq_general-2}) and are of moderate growth with respect to $w$. We have
\[
\begin{cases}
\dim\Hom_{(\g,K)}(\pi_n,C^{\infty}_{\rm mod-sep}\text{-}{\rm Ind}_N^G(\eta^{\infty}_{\xi}))=\dim W_{\eta_\xi^{\infty},\tau_n}^{\rm mod-sep}= 1,\\
\dim\Hom_{(\g,K)}(\pi_n,C^{\infty}_{\rm mod-sep}\text{-}{\rm Ind}_N^G(\eta_{\xi}))=\dim W_{\eta_{\xi},\tau_n}^{\rm mod-sep}=0,
\end{cases}
\]
where, for $\eta=\eta_{\xi}$ and $\eta_{\xi}^{\infty}$, $C^{\infty}_{\rm mod-sep}\operatorname{-Ind}_N^G(\eta)$ denotes the submodule of $C^{\infty}\operatorname{-Ind}_N^G(\eta)$ generated by moderate growth sections satisfying the separation of variables. 
\end{enumerate}
\end{thm}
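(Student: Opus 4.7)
The plan is to reduce both parts to counting solutions of the Dirac--Schmid equations on the respective induction spaces and then use Proposition \ref{WhittakerModule} to upgrade a solution to a genuine $(\g,K)$-intertwining operator. Yamashita's injection (the proposition preceding Section \ref{GWF-STrep}) embeds $\Hom_{(\g,K)}(\pi_n,C^{\infty}\operatorname{-Ind}_N^G(\eta))$ into the kernel of $\mathcal{D}$ on $C^\infty_{\eta,\tau_n^*}(N\backslash G/K)$, and Proposition \ref{WhittakerModule} supplies the opposite direction: the $(\g,K)$-module generated by the coefficient functions of any nonzero $W$ in this kernel is irreducible and isomorphic to $\pi_n$, which promotes $W$ to an intertwining operator. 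Hence the two dimensions coincide and it suffices to count such $W$.

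For part 1, Pollack's archimedean analysis reduces the Dirac--Schmid system attached to a nontrivial character $\chi$ to a Bessel-type ordinary differential equation in the variable $r:=|j(g,\sqrt{-1}1_J)\,p_\chi(g\cdot\sqrt{-1}1_J)|$; the corresponding solution space is two-dimensional, spanned by the $K$-Bessel profile $W_\chi$ and the $I$-Bessel profile $W'_\chi$ written out in the statement. Both give globally smooth functions on $G$ precisely when $r$ is bounded away from zero on $D_J$, i.e.\ when $p_\chi$ has no zero in $D_J$; if $p_\chi$ vanishes somewhere, the $K$-Bessel profile is singular along its zero locus and, by the rigidity of the Dirac--Schmid relations interlocking the weight components, it cannot be cured by any multiple of $W'_\chi$, so the full kernel drops to zero. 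Imposing moderate growth further discards $W'_\chi$, whose $I_v(r)$-factor grows exponentially in $r$ while $\nu(g)^n|\nu(g)|$ is only polynomial, and retains $W_\chi$, whose $K_v(r)$-factor decays exponentially. This yields the asserted dimensions $2,1,0,0$.

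For part 2, Theorem \ref{GenWhittakerFct-gen} already lists an explicit representative: under the separation-of-variables assumption, the only moderate-growth Whittaker functions for $\pi_n$ attached to $\eta_\xi^\infty$ are the scalar multiples of the single explicit function supported in the $v=n$ weight when $\xi>0$ and in the $v=-n$ weight when $\xi<0$. Hence $\dim W^{\rm mod-sep}_{\eta_\xi^\infty,\tau_n}=1$, and by the reduction above the corresponding intertwining space is also one-dimensional. To pass to the genuine Schr\"odinger representation $\eta_\xi$, the Whittaker function must take values in the representation space $V_\xi=L^2(\R\oplus J)$ as a function of the $(a,b)$-variables parametrizing $N_m\backslash N$. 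But the $(a,b)$-dependence in Theorem \ref{GenWhittakerFct-gen} is the pure phase
\[
\exp\!\bigl(-2\pi\sqrt{-1}\xi\bigl(a^2 N_J(Z)+a(b,Z^{\sharp})+\tfrac{1}{2}(b,b\times Z)\bigr)\bigr)
\]
(or its complex conjugate when $\xi<0$), of modulus one on the unbounded space $\R\oplus J$ and hence never square-integrable for fixed $(X,Y,w)$. So $W^{\rm mod-sep}_{\eta_\xi,\tau_n}=0$ and the associated intertwining space vanishes.

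The main obstacle is the first reduction, namely upgrading a kernel element of $\mathcal{D}$ to a genuine $(\g,K)$-intertwining operator. The Blattner parameter of $\pi_n$ is not ``far from the wall'' and Yamashita's general bijectivity theorem does not apply directly; Proposition \ref{WhittakerModule}, whose proof exploits the explicit ${\mathfrak sl}_2$-triple structure inside $\p_{\C}\otimes\bV_n$ together with the Gross--Wallach description of the $K$-type spectrum of $\pi_n$, is the technical ingredient that bridges this gap and makes the bijection argument work uniformly for all the groups in the list of Section \ref{Cubicforms-Groups}.
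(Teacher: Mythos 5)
Your overall strategy is the paper's: reduce the multiplicity count to counting Dirac--Schmid kernel elements via Yamashita's injection and Proposition \ref{WhittakerModule}, quote Pollack's two Bessel profiles for part 1, and feed Theorem \ref{GenWhittakerFct-gen} into part 2. But there is a genuine error in your justification of $\dim W^{\rm mod\text{-}sep}_{\eta_{\xi},\tau_n}=0$, which is the one delicate point of part 2 (it is exactly what separates the two lines of the displayed formula). The factor
\[
\exp\bigl(-2\pi\sqrt{-1}\,\xi\,(a^2N_J(Z)+a(b,Z^{\sharp})+\tfrac12(b,b\times Z))\bigr)
\]
is \emph{not} a pure phase: here $Z=X+\sqrt{-1}Y$ lies in the tube domain with $Y\in\Omega$ nonzero, so $N_J(Z)$, $(b,Z^{\sharp})$ and $(b,b\times Z)$ are genuinely complex, and the exponent has a nontrivial real part $2\pi\xi\,\IM\bigl(a^2N_J(Z)+a(b,Z^{\sharp})+\tfrac12(b,b\times Z)\bigr)$. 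Whether the function is $L^2(\R\oplus J)$-valued therefore hinges on the sign of this real quadratic form in $(a,b)$: if it were negative definite the factor would be a decaying Gaussian, the Whittaker function \emph{would} be $L^2$-valued, and the second dimension would be $1$, not $0$. So the "modulus one on an unbounded space" shortcut does not establish the claim.

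The paper closes exactly this gap by a sign computation: evaluating at $Z=\sqrt{-1}\,y\cdot 1_J$ with $y>0$, the quadratic term contributes $y(b,b\times 1_J)=2y(b^{\sharp},1_J)$ (the spur form, via the sharp symmetry of the trace pairing), which it argues is positive definite, so the modulus of the exponential is unbounded in $b$ and the function is a fortiori not square-integrable. You need to supply this (or some equivalent) analysis of $\IM(\cdot)$ rather than assert modulus one. A smaller looseness in part 1: when $p_{\chi}$ has a zero in $D_J$ it is not only the $K$-Bessel profile that is problematic --- the phase factor $\bigl(\pm|j\,p_{\chi}|/(j\,p_{\chi})\bigr)^v$ common to both $W_{\chi}$ and $W'_{\chi}$ is already discontinuous across the zero locus, and the paper simply defers the vanishing to Pollack's Proposition 8.2.4; your appeal to "rigidity of the Dirac--Schmid relations" should be replaced by that reference or an actual argument.
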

\begin{proof}
We begin with remarks on the explicit formulas for generalized Whittaker functions taken up in the statement of the theorem. 
Regarding the first assertion the moderate growth Whittaker function is obtained by Pollack \cite[Sections 8,~9 and 10]{P1}. The other Whitaker function, which is not of moderate growth, is obtained in quite a similar manner but note that the recurrence relations \cite[Lemma 8.1.1~(4),~(5)]{P1}~(equivalent to \cite[Lemma 8.1.1~(3),~(2)]{P1} respectively) differ by the minus for the case of the $I$-Bessel function. 

As for the second assertion, we should note that the non-zero generalized Whittaker functions for Schr\"odinger representations for non-zero $\xi$s~(cf.~Theorem  \ref{GenWhittakerFct-gen}) are $C^{\infty}(\R\oplus J)\boxtimes \bV_n$-valued, but not $L^2(\R\oplus J)\boxtimes \bV_n$-valued. In fact, we can check it, for instance, by noting that the quadratic term $-\sqrt{-1}(b,b\times Z)$ appearing in the explicit formula is positive definite on $\Omega$ when $Z=\sqrt{-1} y\cdot 1_J$ with $y>0$~(see the beginning of Section 2.3 in \cite{P1}), where $(b,b\times 1_J)=(b\times b,1_J)=2(b^{\sharp},1_J)$ by the sharp symmetry as in \cite[p191]{Mc})~($(b^{\sharp},1_J)$ is known as the spur form \cite[Section 3.8]{Mc}). 

Based on the explicit formulas for generalized Whittaker functions~(cf.~\cite{P1} and Theorem \ref{GenWhittakerFct-gen}) and the argument above, the multiplicity formulas in the theorem are verified by Proposition \ref{WhittakerModule} and \cite[Proposition 2.1]{Y}. 
\end{proof}
We are now able to established another important result, which respects the Fourier-Jacobi models~(or  Fourier-Jacobi type spherical functions in the sense of Hirano \cite{Hi-1},~\cite{Hi-2}and \cite{Hi-3})~for quaternionic discrete series $\pi_n$. 
%For later convenience we consider representations of the Jacobi group $R_J$ of the form $\rho_{\sigma,\xi}:=\sigma\otimes\Omega_{\xi}$ with an irreducible admissible (projective) representation $\sigma$ of $H_J^{0,1}$ whose multiplier system coincides with that of $\Omega_{\xi}$~(or $\Omega_{\xi}|_{H_J^{0,1}}$). These representations includes all irreducible unitary representations of $R_J$ as we have seen in Section \ref{Rep-JacobiGp}.
\begin{thm}\label{FJ-model}
For any irreducible unitary representations $\rho_{\sigma,\xi}$ of the Jacobi group $R_J$,  
\[
\Hom_{(\g,K)}(\pi_n,C^{\infty}_{\rm mod}\text{-}{\rm Ind}_{R_J}^G\rho_{\sigma,\xi})=0
\]
holds, where $C^{\infty}_{\rm mod}\text{-}{\rm Ind}_{R_J}^G\rho_{\sigma,\xi}$ denotes the representation of $G$ with representation space given by moderate growth sections in $C^{\infty}\text{-}{\rm Ind}_{R_J}^G\rho_{\sigma,\xi}$.

\end{thm}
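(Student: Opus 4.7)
My plan is to reduce the asserted vanishing to Theorem~\ref{Multiplicity-formula}~(2) via a partial contraction against the Schr\"odinger factor of $\rho_{\sigma,\xi}$. Since the center of $R_J$ coincides with that of $N$, the irreducible unitary representation $\rho_{\sigma,\xi}$ has non-trivial central character indexed by some $\xi\in\R\setminus\{0\}$; the trivial-central-character case is already addressed by the zeroth term of the Fourier--Jacobi expansion and need not be treated here. By Proposition~\ref{UnitaryDual-Jacobi} I realize $\rho_{\sigma,\xi}\simeq U_1\otimes\Omega_\xi$ on a Hilbert tensor product $\cH_1\otimes V_\xi$ with $V_\xi=L^2(\R\oplus J)$ and $\Omega_\xi|_N=\eta_\xi$. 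For any smooth vector $v_1\in\cH_1^{\infty}$, letting $v_1^{*}(\alpha):=\langle\alpha,v_1\rangle_{\cH_1}$, I define
\[
P_{v_1}:C^{\infty}\text{-}{\rm Ind}_{R_J}^G\rho_{\sigma,\xi}\longrightarrow C^{\infty}\text{-}{\rm Ind}_N^G\eta_\xi,\qquad (P_{v_1}f)(g):=(v_1^{*}\otimes\mathrm{id}_{V_\xi})(f(g)).
\]
Because $\rho_{\sigma,\xi}(n)$ acts only on the $V_\xi$-factor for $n\in N$, I have $(P_{v_1}f)(ng)=\eta_\xi(n)(P_{v_1}f)(g)$; right translation commutes with $P_{v_1}$, and taking a partial inner product against a fixed vector preserves the moderate growth of sections.

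Given any $\Phi\in\Hom_{(\g,K)}(\pi_n,C^{\infty}_{\rm mod}\text{-}{\rm Ind}_{R_J}^G\rho_{\sigma,\xi})$, the composition $\Phi_{v_1}:=P_{v_1}\circ\Phi$ is then a $(\g,K)$-intertwining map from $\pi_n$ into $C^{\infty}_{\rm mod}\text{-}{\rm Ind}_N^G\eta_\xi$. I aim to conclude $\Phi_{v_1}\equiv 0$ by invoking Theorem~\ref{Multiplicity-formula}~(2); once this holds for every $v_1$ in the dense subset $\cH_1^{\infty}\subset\cH_1$, the evaluation $\Phi(w)(g)\in\cH_1\otimes V_\xi$ must vanish identically, giving $\Phi\equiv 0$ as desired.

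The principal obstacle is that Theorem~\ref{Multiplicity-formula}~(2) is phrased with the space $C^{\infty}_{\rm mod\text{-}sep}\text{-}{\rm Ind}_N^G\eta_\xi$, whose definition builds in the separation-of-variables condition used throughout Section~\ref{Pf-Prop2.4}; the image $\Phi_{v_1}(\pi_n)$ is not automatically of this form. To bridge this gap I would argue as follows. First, by Proposition~\ref{WhittakerModule}, either $\Phi_{v_1}$ is zero or $\Phi_{v_1}(\pi_n)\simeq\pi_n$ as $(\g,K)$-modules; in the latter case the image is determined by a single minimal-$K$-type Whittaker function $W_\xi:=\Phi_{v_1}(\iota(v_{\tau_n}))$ taking values in $V_\xi\boxtimes\bV_n$. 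Second, I would decompose $W_\xi$ along the spectral action of $N_m$ on itself (equivalently, Fourier-expand the $V_\xi$-realization in its polarization variables): each spectral component lies in the $C^{\infty}_{\rm mod\text{-}sep}$-subspace, the Dirac--Schmid system of Proposition~\ref{Diff-eq_general} decouples componentwise because its coefficients do not mix the $w$-direction with the $(a,b,X,Y)$-direction, and Theorem~\ref{Multiplicity-formula}~(2) then forces each component to vanish. This would yield $W_\xi=0$ and hence $\Phi_{v_1}=0$. The hardest technical piece is verifying that the Dirac--Schmid system is compatible with this spectral decomposition in a way that preserves the moderate-growth estimate uniformly, so that the separated-variables bound can be applied termwise.
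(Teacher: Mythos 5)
Your overall strategy is the same as the paper's: contract the $\rho_{\sigma,\xi}$-valued sections down to $V_{\xi}$-valued ones so as to produce generalized Whittaker functions for the unitary Schr\"odinger representation $\eta_{\xi}$, and then invoke the vanishing $\dim W_{\eta_{\xi},\tau_n}^{\rm mod\text{-}sep}=0$ from Theorem \ref{Multiplicity-formula}~(2) (which rests on the explicit formula of Theorem \ref{GenWhittakerFct-gen} not being $L^2(\R\oplus J)$-valued). You also correctly identify the one nontrivial point, namely that the vanishing theorem only applies to sections satisfying the separation of variables. Unfortunately, the bridge you propose for that point does not work, and this is a genuine gap. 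A spectral (Fourier) decomposition of the $V_{\xi}$-realization in its polarization variables acts on the $(a,b)$-coordinates of $L^2(\R\oplus J)\simeq L^2(N_m\backslash N)$; it does nothing to disentangle the $A_P$-coordinate $w$ from the $H_J^{0,1}$-coordinates $(X,Y)$, and the assumption in Section \ref{Pf-Prop2.4} is precisely that $G_v(a,b,X,Y,w)=F_v(a,b,X,Y)H_v(w)$, i.e.\ that $w$ separates from \emph{everything else}. Decomposing in $(a,b)$ cannot produce components of that form, so "each spectral component lies in the $C^{\infty}_{\rm mod\text{-}sep}$-subspace" is unjustified, and you flag the compatibility with the Dirac--Schmid system as unresolved.

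The missing idea — and the one the paper uses — is that the separation of variables comes for free from the $R_J$-equivariance that your partial contraction $P_{v_1}$ discards (contracting against a fixed $v_1\in\cH_1$ breaks the $H_J^{0,1}$-covariance, since $P_{v_1}f(sg)=\Omega_{\xi}(s)\,(P_{U_1(s)^{-1}v_1}f)(g)$ moves the contracting vector). Because $R_J\backslash G/K$ is represented by the one-parameter group $A_P\simeq\{w\in\R_{>0}\}$, a section of $C^{\infty}\text{-}{\rm Ind}_{R_J}^G\rho_{\sigma,\xi}$ is determined by its restriction to $A_P$, and its dependence on the Heisenberg coordinates $(a,b)$ and on the tube-domain coordinates $(X,Y)$ of $H_J^{0,1}$ is dictated entirely by the action of $\rho_{\sigma,\xi}$ on the values; only the $w$-dependence is free. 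The paper implements this by pairing $\Phi(\pi_n(g)v)$ against translates $\rho_{\sigma^*,\xi}(n_g)u_0$ of a fixed vector and factoring out $(\eta_{\xi}(n_g)u_0,u_{00})_{\sigma}$, after which the resulting function of $g=n_gs_ga_gk_g$ manifestly has the form (function of the $R_J$-variables determined by $\rho_{\sigma,\xi}$) times (function of $w$), i.e.\ it lies in $W_{\eta_{\xi},\tau_n}^{\rm mod\text{-}sep}$ by construction. If you replace your spectral-decomposition step by this equivariance argument, your proof closes; as written, the key hypothesis of the only available vanishing theorem is not verified.
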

\begin{proof}
Let $\Phi\in \Hom_{(\g,K)}(\pi_n,C^{\infty}_{\rm mod}\text{-}{\rm Ind}_{R_J}^G\rho_{\sigma,\xi})$.  We begin with remarking that every $g\in G$ admits a decomposition $g=h_ga_gk_g$ with $h_g\in R_J,~a_g\in A_P,~k_g\in K$, where $A_P\simeq\{w\in\R_{>0}\}$ is the connected torus subgroup in the center of $H_J$. In addition to this, let $h_g=n_gs_g\in R_J$ be an $R_J$-factor of $g$ decomposed with $n_g\in N$ and $s_g\in H_J^{0,1}$. Take arbitrary non-zero vectors $u_0$ in the representation spaces of $\rho_{\sigma,\xi}$. 
Furthermore let $(*,*)_{\rho_{\sigma,\xi}}$ denote the inner product of $\rho_{\sigma,\xi}$. 
%and its contragredient induced by the inner product of $\Omega_{\xi}$ and by the paring of $\sigma$ and its dual $\sigma^*$, where see \cite[Definition 8.5.1]{Vo} for the notion of the duals for $(\g,K)$-modules. 
Let $(*,*)_{\sigma}$ be the canonical $V_{\xi}$-valued paring of $V_{\sigma}\boxtimes V_{\xi}\times V_{\sigma}$ induced by the inner product of $\sigma$.  
Choose $u_{00}\in V_{\sigma}$ such that $(u_0,u_{00})_{\sigma}\not=0$. 
Putting $V_{\pi_n}$ to be a representation space of $\pi_n$, we then have that
\[
\tilde{\Phi}_{u_0}:V_{\pi_n}\ni v\mapsto \tilde{\Phi}_{u_0}(v)(g):=(\Phi(\pi_n(g)v),\rho_{\sigma^*,\xi}(n_g)u_0)_{\rho_{\sigma,\xi}}(\rho_{\sigma,\xi}(n_g)u_0,u_{00})_{\sigma}\in C^{\infty}\text{-}{\rm Ind}_{N}^G\eta_{\xi}
\]
defines an element in $\Hom_{(\g,K)}(\pi_n,C^{\infty}\text{-}{\rm Ind}_N^G\eta_{\xi})$. This is a well-defined function in $g\in G$ since this is verified to be independent of the choice of the $R_J$-factor $h_g$ of $g$. In fact, we can show that the indeterminacy of the $h_g$-factor is given by $K\cap H_J^{0,1}$, which is absorbed in the $s_g$-factor, and that the $n_g$-factor of $h_g$ is thus uniquely determined for $h_g$.  

We now note that the Fourier-Jacobi type spherical functions are determined by the restriction to $A_P$ since $R_J\backslash G/K$ is represented by $A_P$. 
Suppose that $v=v_0$ is a vector of the minimal $K$-type. The argument of the previous paragraph implies that, if there exists $u_0$ such that $\{(\Phi(\pi_n(g)v_0),u_0)_{\rho_{\sigma,\xi}}\mid g\in G\}\not=0$ we get a non-zero element in $W_{\eta_\xi,\tau_n}^{\rm mod-sep}$, into which $\Hom_{(\g,K)}(\pi_n,C^{\infty}_{\rm mod-sep}\text{-}{\rm Ind}_N^G\eta_{\xi})$ can be embedded by Yamashita \cite[Proposition 2.1]{Y}.
%and that the intertwining operator induced by the above map defines a non-zero element in $\Hom_{(\g,K)}(\pi_n,C^{\infty}\text{-Ind}_{R_J}^G\rho_{\tilde{\sigma},\xi})$, where $\tilde{\sigma}$ denotes a contragredient of $\sigma$. 
Theorem \ref{GenWhittakerFct-gen} however implies $W_{\eta_\xi,\tau_n}^{\rm mod-sep}=\{0\}$. In addition, recalling the factorization $g=h_ga_gk_g$ with $h_g=n_gs_g$, we note that 
\begin{align*}
\tilde{\Phi}_{u_0}(v)(g)&=(\rho_{\sigma,\xi}(n_gs_g)\Phi(\pi_n(a_gk_g)v),\rho_{\sigma^*,\xi}(n_g)u_0)_{\rho_{\sigma,\xi}}(\rho_{\sigma,\xi}(n_g)u_0,u_{00})_{\sigma}\\
&=(\Phi(\pi_n(a_gk_g)v),\rho_{\sigma^*,\xi}(s_g^{-1})u_0)_{\rho_{\sigma,\xi}}(\eta_{\xi}(n_g)u_0,u_{00})_{\sigma}.
\end{align*}
We then see from the irreducibility of $\rho_{\sigma,\xi}$~(or of $\sigma$ and $\eta_{\xi}$) that the map $\Phi\mapsto\tilde{\Phi}_{u_0}$ is verified to be an injection into $\Hom_{(\g,K)}(\pi_n,C^{\infty}\text{-}{\rm Ind}_N^G\eta_{\xi})$ if $u_0$ satisfies the non-zero condition on $\{(\Phi(\pi_n(g)v),u_0)_{\rho_{\sigma,\xi}}\mid g\in G\}$ as above and that otherwise $\Phi\equiv 0$. Anyway this leads to $\dim\Hom_{(\g,K)}(\pi_n,C^{\infty}\text{-Ind}_{R_J}^G\rho_{\sigma,\xi})=0$. As a result we are done.
\end{proof}

\section{Fourier-Jacobi expansion~(in the adelic formulation)}\label{FJ-exp}
\subsection{A general theory}\label{FJ-exp-general}
Let $\cG$ be a simple $\Q$-algebraic group of adjoint type whose group of real points is $G$, where we retain the notation $G$ from the previous section. For this we assume that $J$ is a real vector space equipped with a $\Q$-structure given by a cubic norm structure $J_{\Q}$ over $\Q$, i.e. $J=J_{\Q}\otimes_{\Q}\R$. This yields the $\Q$-subgroup $\cN$ of $\cG$ defined by 
\[
\cN(\Q):=\{n((a,b,c,d),t)\mid (a,b,c,d)\in \cW_J(\Q),~t\in\Q\},
\]
with the affine space $\cW_J$ over $\Q$ defined by 
\[
\cW_J(\Q):=\Q\oplus J_{\Q}\oplus J^{\vee}_{\Q}\oplus\Q,
\] 
where $J^{\vee}_{\Q}$ denotes the dual of $J_{\Q}$ with respect to the trace paring~(cf.~Section \ref{Cubicforms-Groups}). This algebraic group $\cN$ is nothing but what should be called a Heisenberg group. In addition to this, we further introduce the $\Q$-subgroup $\cN_m$ of $\cN$ by
\[
\cN_m(\Q):=\{n((0,0,c,d),t)\mid (c,d)\in J^{\vee}_{\Q}\oplus\Q,~t\in\Q\}.
\]

We now recall that $\cG$ has the Heisenberg parabolic subgroup $\cP_J=\cN\rtimes\cH_J$~(cf.~\cite[Section 4.3.2]{P1}) whose unipotent radical is $\cN$~(respectively Levi part is $\cH_J$). By $\cR_J$ we denote the algebraic subgroup of $\cP_J$ given by the semi-direct product
\[
\cN\rtimes\cH_J^{1,0},
\]
with the connected component $\cH_J^{0,1}$ of the identity for the semi-simple part of the Levi subgroup of $\cP_J$. For example $\cH_J^{0,1}=SL(2)$ when $\cG$ is of type $G_2$. We naturally call $\cR_J$ the Jacobi group. 
%This group has a unipotent subgroup including $\cH$ given by
%\[
%\cN':=\cN\rtimes\cN_S,
%\]
%where $\cN_S$ denotes the unipotent radical of the maximal parabolic subgroup of Siegel type, which can be defined by
%\[
%\cN_S(\Q):=\{n(x)\mid x\in J_{\Q}\}
%\]
%with $n(x)\in GL(\cW_J(\Q))$ given in \cite[Section 2.2]{P1}.

Let us make some review on the Weyl group of $\cG$. Regarding the $G_2$-case, the root system of $G_2$ has the set $\{\alpha,\beta\}$ of simple roots, where $\alpha$~(respectively $\beta$) denotes the long root~(respectively~short root). We let $w_{\alpha}$ and $w_{\beta}$ be the simple reflections for $\alpha$ and $\beta$ respectively. 
For a general $\cG$ we refer to \cite[Section 2.2]{Ru}. For this we recall that there is the $\Z/3$-grading of the Lie algebra $\g_J$ defined over some general ground field~(cf.~\cite[Section 4.2]{P1}), from which we know without difficulty that the algebraic group of type $G_2$ can be embedded into a general $\cG$ and that there is the ${\mathfrak sl}_3$-factor in the $\Z/3$-grading. Then we see that  
\[
w_{12}=\begin{pmatrix}
0 & 1 & 0\\
-1 & 0 & 0\\
0 & 0 & 1
\end{pmatrix}\in SL_3
\]
corresponds to $w_{\alpha}$~(denoted by $s_{\alpha}$ in \cite[Section 2.2]{Ru}). Here we can view $w_{12}$ as an element of $\cG(\Q)$ since we verify that $w_{12}$ defines an adjoint action on the Lie algebra using its $\Z/3$-grading. In fact, $w_{12}$ defines an element of the Weyl group of $\cG$ given by the reflection of the simple root corresponding to the root vector $E_{12}\in {\mathfrak sl}_3$~(the Lie algebra of $SL_3$). There is also an element of Weyl group of $\cG$ corresponding to $w_{\beta}$ in $G_2$ explicitly given in \cite[p145]{Ru}~(denoted by $s_{\beta}$ there). 
%This corresponds to $J_2$ given in \cite[Section 2.2]{P1}. 
In $\cG$ we also use the same notation $w_{\alpha}$ and $w_{\beta}$. In \cite[Section 2.2]{Ru} it is remarked that the Weyl group of $\cG$ is generated by $w_{\alpha},~w_{\beta}$ and the Weyl group for the automorphism group of $J$ preserving the cubic norm of $J$.

%With the groups introduced above we state the following lemma:
%\begin{lem}
%We have a bijection
%\[
%\cN'(\Q)\cap w_{\alpha}^{-1}\cN'(\Q)w_{\alpha}\backslash\cG_J(\Q)\simeq \{n((\alpha,0,0,0)0)\mid\alpha\in\Q\}\times \cN_S(\Q)\backslash \cH_J^{0,1}(\Q).
%\]
%\end{lem}
%\begin{proof}
%By using the $\Z/3$-grading of the Lie algebra $\g_J$ we verify that 
%$\{n((\alpha,0,0,0),0)\mid \alpha\in\Q\}$ can be taken as a complete set of representatives of $\cN'(\Q)\cap w_{\alpha}^{-1}\cN'(\Q)w_{\alpha}\backslash\cN'(\Q)$. Since $\cN'(\Q)\backslash\cG_J(\Q)\simeq\cN_S(\Q)\backslash\cH_J^{0,1}(\Q)$ the bijection in the assertion follows from 
%\[
%\cN'(\Q)\cap w_{\alpha}^{-1}\cN'(\Q)w_{\alpha}\backslash\cG_J(\Q)\simeq \cN'(\Q)\cap w_{\alpha}^{-1}\cN'(\Q)w_{\alpha}\backslash\cN'(\Q)\times \cN'(\Q)\backslash\cG_J(\Q).
%\]
%\end{proof}
Let us now introduce the adele groups $\cG(\bA)$, $\cN(\bA)$, $\cN_m(\bA)$ and $\cR_J(\bA)$ etc. with the adele ring $\bA$ of $\Q$. We often denote every element $n((a,b,c,d),t)\in\cN(\bA)$ by $n(\nu,t)$ with $\nu=(a,b,c,d)\in \cW_J(\bA)$, where $\cW_J(\bA)$ denotes the adelization of $\cW_J(\Q)$. 

Let $F$ be a general automorphic form on the adele group $\cG(\bA)$. We do not assume the moderate growth condition on $F$ but keep other usual defining conditions such as the automorphy and real analyticity ensured by the $Z(\g)$-finiteness~(cf.~\cite[Chapter I, Section 2]{Ha},~\cite[Section I.2.17.]{Mo-Wa}), where $Z(\g)$ denotes the center of the universal enveloping algebra of $\g$. We discuss the Fourier-Jacobi expansion 
\[
F(g)=\sum_{\xi\in\Q}F_{\xi}(g),\quad F_{\xi}(g):=\int_{\Q\backslash\bA}F(n(0,t)g)\psi(-\xi t)dt\quad(g\in\cG(\bA))
\]
with $0\in \cW_J(\Q)$, 
where $\psi$ denotes a fixed non-trivial additive character of $\Q\backslash \bA$. It is known that if an irreducible unitary representation of $\cN(\bA)$ is trivial on the center it is a character of $\cN(\bA)$. So $F_0$ is verified to 
\[
F_0(g)=\sum_{\chi}F_{\chi}(g),~F_{\chi}(g):=\int_{\cN(\Q)\backslash\cN(\cA)}F(ng)\chi
(n)^{-1}dn,
\]
where $\chi$ runs over left $\cN(\Q)$-invariant characters of $\cN(\bA)$. 

We next prove that $F_{\xi}$ can be also expanded by characters of $\cN(\bA)$. More precisely, we need the element $w_{\alpha}$ of the Weyl group of $\cG$ in addition. We begin with the expansion of $F_{\xi}$ along $\cN_m(\Q)\backslash\cN_m(\bA)$ given as follows:
\[
F_{\xi}(g)=\sum_{\psi_{\xi}}F_{\psi_{\xi}}(g),\quad F_{\psi_{\xi}}(g):=\int_{\cN_m(\Q)\backslash\cN_m(\bA)}F(ng)\psi_{\xi}(n)^{-1}dn,
\]
where $\psi_{\xi}$ runs over characters of $\cN_m(\Q)\backslash\cN_m(\bA)$ such that $\psi_{\xi}(n(0,t))=\psi(\xi t)$. For this note that $\cN_m$ is abelian. We now need the following lemma:
\begin{lem}\label{adelic_Ch-Lem}
For $\xi\in\Q\setminus\{0\}$ let $\psi_{\xi,0}$ be the character of $\cN_m(\bA)$ defined by $\psi_{\xi,0}(n(w,,t))=\psi(\xi t)$ for $n(w,t)\in \cN_m(\bA)$.
\begin{enumerate}
\item Every character of $\cN_m(\Q)\backslash\cN_m(\bA)$ is of the form
\[
\cN_m(\bA)\ni n(w,t)\mapsto \psi_{\xi,0}(n(\nu,0)n(w,t)n(-\nu,0))=\psi(\xi(\langle\nu,w\rangle+t))
\]
with some $\nu\in\Q\oplus J_{\Q}$, where $\nu=(a,b)$ is canonically viewed as an element of $\cW_J(\Q)$ by identifying it with $(a,b,0,0)\in\cW_J(\Q)$.
\item We have $w_{\alpha}\cN_m(\Q)w_{\alpha}^{-1}=\cN_m(\Q)$.
\end{enumerate}
\end{lem}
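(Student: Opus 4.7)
The plan is to handle the two parts separately: part 1 is a Pontryagin-duality computation on the abelian group $\cN_m$, while part 2 is a root-space verification for the Weyl reflection $w_\alpha$.

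For part 1, I would first observe that $\cN_m$ is abelian and, via the coordinates $(c,d,t)\in J^{\vee}\oplus\Q\oplus\Q$, the adelic quotient $\cN_m(\Q)\backslash\cN_m(\bA)$ is the usual compact quotient of a $\Q$-rational vector group. Using the fixed nontrivial additive character $\psi$ of $\Q\backslash\bA$ together with the trace pairing on $J_\Q$, Pontryagin duality expresses every character of $\cN_m(\Q)\backslash\cN_m(\bA)$ in the form $n((0,0,c,d),t)\mapsto\psi\bigl((\beta,c)+\alpha d+\xi' t\bigr)$ for a uniquely determined triple $(\alpha,\beta,\xi')\in\Q\oplus J_\Q\oplus\Q$. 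Restricting to the center $n(0,t)$ forces $\xi'=\xi$ under the hypothesis of the enclosing Fourier-Jacobi expansion. A short computation from the composition law in $\cN$, together with $\langle\nu,\nu\rangle=0$, yields $n(\nu,0)n(w,t)n(-\nu,0)=n(w,\langle\nu,w\rangle+t)$; specializing to $w=(0,0,c,d)$ and $\nu=(a,b,0,0)$ and using the explicit form of $\langle\cdot,\cdot\rangle$ gives the right-hand side of the lemma as $\psi\bigl(\xi(ad-(b,c))+\xi t\bigr)$. Matching these two parametrizations amounts to the linear system $\xi a=\alpha$, $\xi b=-\beta$, which has a unique solution $(a,b)\in\Q\oplus J_\Q$ precisely because $\xi\neq 0$.

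For part 2, I would view $\cN_m$ as the sum of root spaces of $\g_J$ with respect to a maximal $\Q$-split torus in the Heisenberg Levi $\cH_J$. Using the $\Z/3$-grading and the embedding of a type-$G_2$ subgroup recalled in the preamble to the lemma, the central coordinate $t$ corresponds to the highest root (namely $2\alpha+3\beta$ in $G_2$-notation), the $d$-coordinate corresponds to the root $\alpha+3\beta$, and the $c$-coordinates correspond to the weight-one root spaces paired trivially with the $d$-root under the symplectic form on $\cW_J$. Applying the Weyl reflection formula for $w_\alpha$ in the $G_2$ root system gives $w_\alpha(\alpha+3\beta)=2\alpha+3\beta$ and $w_\alpha(2\alpha+3\beta)=\alpha+3\beta$, while $w_\alpha$ fixes each root contributing to the $c$-part; hence $w_\alpha$ permutes the root spaces composing $\cN_m$, which yields $w_\alpha\cN_m(\Q)w_\alpha^{-1}=\cN_m(\Q)$.

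The main obstacle will be making part 2 uniform across all the simple adjoint groups $\cG$ with quaternionic real form, since each type has its own cubic norm structure $J$ and its own Heisenberg grading. In the $G_2$-case the root-system computation is transparent, but in the general case one has to invoke either the explicit matrix description of $w_\alpha$ from \cite[Section 4.2]{P1} together with the $\Z/3$-grading, or the embedding of the $G_2$-type subgroup mentioned in the preamble, in order to identify $\cN_m$ with the correct set of root spaces and track the reflection's effect. Part 1, by contrast, is routine Fourier analysis once the conjugation identity $n(\nu,0)n(w,t)n(-\nu,0)=n(w,\langle\nu,w\rangle+t)$ is in hand.
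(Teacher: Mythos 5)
Your proposal is correct and follows essentially the same route as the paper: part 1 by the conjugation identity $n(\nu,0)n(w,t)n(-\nu,0)=n(w,\langle\nu,w\rangle+t)$ and a direct computation with the symplectic form (matching parameters via $\xi\neq 0$), and part 2 by tracking the $w_{\alpha}$-action on the root spaces of $\cN_m$ through the $\Z/3$-grading and its ${\mathfrak sl}_3$-factor, where the center and the $d$-coordinate (the $E_{13}$- and $E_{23}$-lines) are exchanged while the $c$-part is fixed. You merely spell out the Pontryagin-duality bookkeeping and the $G_2$ root arithmetic that the paper leaves implicit.
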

\begin{proof}
The first assertion is verified by the direct calculation of the symplectic form $\langle*,*\rangle$ on $W_J$, which is closely related to the composition law of the Heisenberg group~(we can review it in Section \ref{Rep-JacobiGp}). The second assertion is verified by checking the $w_{\alpha}$-action on the Lie algebra for $\cN_m$. 
For a detailed description of the Lie algebra of $\cG$ see \cite[Sections 4.2.1 and 4.2.4]{P1} and \cite[Section 2.2]{Ru}, where \cite[Section 4.2.4 (4)]{P1} is useful for us. 
The adjoint action by $w_{\alpha}$ on the Lie algebra of $\cG$~(induced by $w_{12}$) gives rise to exchanging of the center of $\cN$ and the last entry of the vector part $\cW_J$ of $\cN$, which correspond  to $E_{13}$ and $E_{23}$ respectively in the ${\mathfrak sl}_3$-factor of the $\Z/3$-grading of the Lie algebra. The $w_{\alpha}$-action remains $\{n((0,0,c,0),0)\mid c\in J_{\Q}^{\vee}\}$ unchanged.
\end{proof}
For the coming argument we note again that the $w_{\alpha}$-action sends the center of $\cN$ to the last entry of the vector part $\cW_J$ of $\cN$ as is remarked in the proof of the lemma above . 
 From the above expansion of $F_{\xi}$ along $\cN_m(\Q)\backslash\cN_m(\bA)$ we have
\[
F_{\xi}(g)=\sum_{\mu\in\Q}\sum_{\psi_{\xi}^{(b)}}F_{\psi_{\xi}^{(b)}}(n((\mu,0,0,0),0)g),
\]
where $\psi_{\xi}^{(b)}$ ranges over characters of $\cN_m(\Q)\backslash\cN_m(\bA)$ given in Lemma \ref{adelic_Ch-Lem} part 1 with $\nu=(0,\beta,0,0)\in\cW_J(\Q)$. 
Here note that the parameter $\mu\in\Q$ is necessary in view of Lemma \ref{adelic_Ch-Lem} part 1. 
Let us put $\psi_{\xi,\alpha}^{(b)}$ by $\psi_{\xi,\alpha}^{(b)}(n):=\psi_{\xi}^{(b)}(w_{\alpha}nw_{\alpha}^{-1})$. We then have
\[
F_{\psi_{\xi}^{(b)}}(w_{\alpha}g)=F_{\psi_{\xi,\alpha}^{(b)}}(g),
\]
which is due to the part 2 of Lemma \ref{adelic_Ch-Lem}. This is proved to be trivial on the center of $\cN(\bA)$. We thus can expand $F_{\psi_{\xi,\alpha}^{(b)}}(g)$ by characters of $\cN(\Q)\backslash\cN(\bA)$. As a result we have obtained
\[
F_{\xi}(g)=\sum_{\mu\in\Q}\sum_{\chi_{\xi}}F_{\chi_{\xi}}(w_{\alpha}n((\mu,0,0,0),0)g)
%=\sum_{\gamma\in\cN'(\Q)\cap w_{\alpha}^{-1}\cN'(\Q)w_{\alpha}\backslash\cN'(\Q)}\sum_{\chi_{\xi}}F_{\chi_{\xi}}(w_{\alpha}\gamma g),
\]
where $\chi_{\xi}$ runs over characters of $\cN(\Q)\backslash\cN(\bA)$ parametrized by $(\xi,\beta,\gamma,\delta)\in\cW_J(\Q)$. 
%We now define $\nu(\gamma)$~(respectively~$\nu(\chi)$) by $\gamma=n((\nu(\gamma),0,0,0),0)$ for $\gamma$ as above~(respectively~the first entry of the parameter of $\chi$ in $W_J(\Q)$).  This sum turns out to be
%\[
%\sum_{\gamma\in\cN'(\Q)\cap w_{\alpha}^{-1}\cN'(\Q)w_{\alpha}\backslash\cN'(\Q)}\sum_{\chi_{\xi}}F_{\chi_{\xi\nu(\gamma)}}(w_{\alpha}\gamma g)=
%\sum_{\chi}F_{\chi}(w_{\alpha}n(\nu(\chi)/\xi,0,0,0),0)g),
%\]
%where $\chi$ ranges over characters of $\cN(\Q)\backslash\cN(\bA)$. To go further we consider the quotient $\{[\chi]\}$ of the set of characters of $\cN(\Q)\backslash\cN(\bA)$ by the coadjoint action of $\cH_J^{0,1}(\Q)$. Then this sum can be written as
%\[
%\sum_{[\chi]}\sum_{\delta\in\cH_{J,\chi}^{0,1}(\Q)\backslash\cH_J^{0,1}(\Q)}F_{\chi}(\delta w_{\alpha}n((\nu(\chi)/\xi,0,0,0),0)g)
%\]
%with the group $\cH_{J,\chi}^{0,1}(\Q)$ of the stabilizers of $\chi$ in $\cH_J^{0,1}(\Q)$.
 We can therefore state the following:
\begin{thm}\label{adelic-FJ-exp}
Let $F_{\xi}$ be the $\xi$-term of the Fourier-Jacobi expansion of a general automorphic form $F$ for $\xi\in\Q\setminus\{0\}$. 
We have
\[
F_{\xi}(g)=\sum_{\mu\in\Q}\sum_{\chi'_{\xi}}F_{\chi_{\xi}}(w_{\alpha}n((\mu,0,0,0),0)g)=\sum_{\chi''_{\xi}}\Theta_{\chi''_{\xi}}(g)
\]
with
\[
\Theta_{\chi''_{\xi}}(g):=\sum_{\gamma\in\cN_m(\Q)\backslash\cN(\Q)}F_{\chi''_{\xi}}(w_{\alpha}\gamma g),
\]
where $\chi'_{\xi}$~(respectively~$\chi''_{\xi}$) runs over characters of $\cN(\Q)\backslash\cN(\bA)$ parametrized by $(\xi,\beta,\gamma,\delta)\in\cW_J(\Q)$~(respectively~$(\xi,0,\gamma',\delta')\in\cW_J(\Q)$). The function $\Theta_{\chi''_{\xi}}$ above converges due to the absolute convergence of the Fourier series and is left $\cR_J(\Q)$-invariant.
\end{thm}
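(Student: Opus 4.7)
The first equality is already obtained in the discussion immediately preceding the theorem: one Fourier-expands $F_\xi$ along the compact abelian quotient $\cN_m(\Q)\backslash\cN_m(\bA)$, parametrizes the resulting characters by $(\mu, \beta) \in \Q \oplus J_\Q$ via Lemma \ref{adelic_Ch-Lem}(1) (absorbing the $\mu$-conjugation into the argument by left $\cG(\Q)$-invariance), conjugates by $w_\alpha$ using Lemma \ref{adelic_Ch-Lem}(2) to obtain a Fourier coefficient trivial on the center of $\cN(\bA)$, and finally expands the result as a full Fourier series over characters of $\cN(\Q)\backslash\cN(\bA)$ restricting to the prescribed one on $\cN_m$. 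My plan therefore focuses on the three remaining items: the theta reorganization into the form $\sum_{\chi''_{\xi}}\Theta_{\chi''_{\xi}}$, absolute convergence, and the left $\cR_J(\Q)$-invariance of $\Theta_{\chi''_{\xi}}$.

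For the theta reorganization, I note that cosets of $\cN_m(\Q)\backslash\cN(\Q)$ are parametrized by $(a, b) \in \Q \oplus J_\Q$ with representatives $n((a, b, 0, 0), 0)$, so $\Theta_{\chi''_{\xi}}(g) = \sum_{(a, b)} F_{\chi''_{\xi}}(w_\alpha n((a, b, 0, 0), 0) g)$. The two parameter sets match: the first equality runs over $(\mu, \beta, \gamma, \delta) \in \cW_J(\Q)$, and the theta form over $(a, b, \gamma', \delta') \in \cW_J(\Q)$. Setting $a = \mu$, $b = \beta$ and using the left $\cN(\bA)$-covariance $F_\chi(nh) = \chi(n) F_\chi(h)$ together with left $\cG(\Q)$-invariance of $F$, the translation by $n((0, \beta, 0, 0), 0) \in \cN(\Q)$ can be moved between the character index and the argument so as to identify $F_{\chi'_{\xi}}(w_\alpha n((\mu, 0, 0, 0), 0) g)$ with $F_{\chi''_{\xi}}(w_\alpha n((\mu, \beta, 0, 0), 0) g)$ under a suitable bijection $(\beta, \gamma, \delta) \leftrightarrow (\gamma', \delta')$. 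Well-definedness of the summand on cosets $\cN_m(\Q)\backslash\cN(\Q)$ is assured by Lemma \ref{adelic_Ch-Lem}(2), which gives $w_\alpha \cN_m(\Q) w_\alpha^{-1} = \cN_m(\Q) \subset \cN(\Q)$ so that $\chi''_{\xi}$ is trivial on left-$\cN_m(\Q)$-translates under conjugation by $w_\alpha$.

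Absolute convergence of $\Theta_{\chi''_{\xi}}$ is inherited from the absolute convergence of the two Fourier expansions used in the first equality, a consequence of smoothness and $Z(\g)$-finiteness of $F$ ensuring rapid decay of Fourier coefficients. For left $\cR_J(\Q)$-invariance, I treat the two factors separately. The $\cN(\Q)$-invariance is immediate: for $r \in \cN(\Q)$, right multiplication by $r$ permutes $\cN_m(\Q)\backslash\cN(\Q)$, and the substitution $\gamma \mapsto \gamma r^{-1}$ in $\Theta_{\chi''_{\xi}}(rg) = \sum_\gamma F_{\chi''_{\xi}}(w_\alpha \gamma r g)$ recovers $\Theta_{\chi''_{\xi}}(g)$, using that $\chi''_{\xi}$ is trivial on $\cN(\Q)$. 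The $\cH_J^{0,1}(\Q)$-invariance is more delicate: for $s \in \cH_J^{0,1}(\Q)$, one writes $\gamma s = s \cdot (s^{-1}\gamma s)$ using that $\cH_J^{0,1}$ normalizes $\cN$, and absorbs the extra $w_\alpha s w_\alpha^{-1} \in \cG(\Q)$ via left $\cG(\Q)$-invariance of $F$. I anticipate this last verification to be the main technical obstacle: one must check that the induced action of $\cH_J^{0,1}(\Q)$ permutes the summation index $\gamma$ in a manner compatible with each individual character $\chi''_{\xi}$ (and not merely the total sum $\sum_{\chi''_{\xi}} \Theta_{\chi''_{\xi}}$), which appears to rely on the vanishing of the second coordinate of $\chi''_{\xi}$ and the specific intertwining of $w_\alpha$ with $\cH_J^{0,1}$.
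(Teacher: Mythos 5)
Your treatment of the first equality, the regrouping into $\sum_{\chi''_{\xi}}\Theta_{\chi''_{\xi}}$, the convergence, and the left $\cN(\Q)$-invariance of $\Theta_{\chi''_{\xi}}$ matches the paper's (the paper dismisses the regrouping as ``a matter of grouping'' via Lemma \ref{adelic_Ch-Lem} part 1, and the $\cN(\Q)$-invariance as obvious from the definition). The genuine gap is exactly where you flag ``the main technical obstacle'': your direct attack on the $\cH_J^{0,1}(\Q)$-invariance does not go through as described. Conjugation by $s\in\cH_J^{0,1}(\Q)$ neither preserves the subgroup $\cN_m(\Q)$ (so the index set $\cN_m(\Q)\backslash\cN(\Q)$ is not permuted by $\gamma\mapsto s^{-1}\gamma s$) nor fixes the character $\chi''_{\xi}$ (it moves the $(\gamma',\delta')$-part and can create a non-zero second coordinate); and the element $w_{\alpha}sw_{\alpha}^{-1}\in\cG(\Q)$ cannot be ``absorbed via left $\cG(\Q)$-invariance of $F$'' once you are working with the Fourier coefficient $F_{\chi''_{\xi}}$, which is only left $\cN(\bA)$-equivariant, not left $\cG(\Q)$-invariant, and $w_{\alpha}sw_{\alpha}^{-1}$ need not normalize $\cN$.

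The idea you are missing is the multiplicity-one shortcut the paper uses. Since $\Theta_{\chi''_{\xi}}$ is the adelic theta series attached to the Schr\"odinger representation $\eta_{\xi}$ with test function $F_{\chi''_{\xi}}(w_{\alpha}\,\cdot\,)$ (via the identification of $\cN_m(\Q)\backslash\cN(\Q)$ with the Lagrangian $\{(\alpha,\beta,0,0)\}\subset\cW_J(\Q)$), and since $\Hom_{\cN(\bA)}\bigl(\eta_{\xi},L^2(\cN(\Q)\backslash\cN(\bA))\bigr)$ is one-dimensional, the left $\cN(\Q)$-invariant function $g\mapsto\Theta_{\chi''_{\xi}}(\delta g)$ for $\delta\in\cH_J^{0,1}(\Q)$ must equal $c(\delta)\,\Theta_{\chi''_{\xi}}(g)$ for a scalar $c(\delta)$ independent of $\chi''_{\xi}$. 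Summing over $\chi''_{\xi}$ and comparing with the already-known left $\cR_J(\Q)$-invariance of $F_{\xi}=\sum_{\chi''_{\xi}}\Theta_{\chi''_{\xi}}$ (which follows directly from the automorphy of $F$ and the definition of $F_{\xi}$) forces $c(\delta)=1$. This bypasses entirely the character-by-character bookkeeping that blocks your computation; you should replace your final paragraph with this argument.
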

\begin{proof}
The former equality on $F_{\xi}$ in the first assertion is already verified in the argument before the theorem. The latter equality is a matter of grouping for the summation in view of Lemma \ref{adelic_Ch-Lem} part 1. 
To verify the left $\cR_J(\Q)$-invariance of $\Theta_{\chi''_{\xi}}$ we remark that $\Theta_{\chi''_{\xi}}$ is the adelic  theta series with the test function $F_{\chi''_{\xi}}(w_{\alpha}*)$ attached to the Schr\"odinger representation $\eta_{\xi}$, e.g. due to Weil \cite{We2}. In fact, we have the bijection $\cN_m(\Q)\backslash\cN(\Q)\simeq \{(\alpha,\beta,0,0)\in\cW_J(\Q)\mid (\alpha,\beta)\in\Q\oplus J_{\Q}\}$, the latter of which is a Langrangian subspace of $\cW_J(\Q)$. 
%Since the Iwasawa decomposition of $\cR(\bA)$ implies that $\Theta_{\chi''_{\xi}}$ is determined by its restriction to $\cR_J(\bA)$ by assumption we verify the left $\cH_J^{0,1}(\Q)$-invariance of $\Theta_{\chi''_{\xi}}(ns)$ by the coordinate $ns\in\cG_J(\bA)$ with $(n,s)\in \cN(\bA)\times \cH_J^{0,1}(\bA)$.  
To show the left $\cR_J(\Q)$-invariance of $\Theta_{\chi''_{\xi}}$ it suffices to see the left $\cH_J^{0,1}(\Q)$-invariance of $\Theta_{\chi''_{\xi}}$ since the left $\cN(\Q)$-invariance is obvious by the definition of $\Theta_{\chi''_{\xi}}$. 
For $\delta\in\cH_J^{0,1}(\Q)$ we note that $\Theta_{\chi''_{\xi}}(\delta g)$ is left $\cN(\Q)$-invariant. We then see that there is a constant $c(\delta)$ depending only on $\delta$~(independent of $\chi''_{\xi}$) such that 
\[
\Theta_{\chi_{\xi''_{\xi}}}(\delta g)=c(\delta)\Theta_{\chi''_{\xi}}(g),
\]
which is due to the uniqueness of a non-zero element in $\Hom_{\cN(\bA)}(\eta_{\xi},L^2(\cN(\Q)\backslash \cN(\bA))$ up to scalars. Together with the left $\cR_J(\Q)$-invariance of $F_{\xi}$ , this implies $c(\delta)=1$ for any $\delta\in\cH_J^{0,1}(\Q)$. As a result we have the left $\cR_J(\Q)$-invariance of $\Theta_{\chi''_{\xi}}(g)$. 
\end{proof}
As an immediate consequence of this theorem we have the following corollary, which is a generalization of \cite[Proposition 8.4]{G-G-S}. 
\begin{cor}\label{Determination-by-zeroth-coeff}
A general automorphic form $F$ on $\cG(\bA)$ is determined by $F_0$. That is, if two automorphic forms $F$ and $G$ on $\cG(\bA)$ satisfy $F_0\equiv G_0$, we then have $F\equiv G$.
\end{cor}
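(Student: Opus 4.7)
The plan is to reduce to proving that $F_0 \equiv 0$ forces $F \equiv 0$, by working with $F - G$ in place of $F$. Because the whole Fourier-Jacobi expansion $F = \sum_{\xi \in \Q} F_{\xi}$ is linear, and the map $F \mapsto F_0$ is linear, it suffices to show that any automorphic form $F$ on $\cG(\bA)$ with $F_0 \equiv 0$ is identically zero.

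Next I would unpack what $F_0 \equiv 0$ means in terms of Fourier coefficients along $\cN$. Recall that $F_0$ is the integral of $F$ over the center of $\cN(\Q) \backslash \cN(\bA)$, so it is a smooth function on $\cG(\bA)$ that is left-invariant under the center of $\cN(\bA)$ and descends to a function on $\cN(\Q) [\cN,\cN](\bA) \backslash \cN(\bA) \times \cG(\bA)$ in the first variable. Since the quotient of $\cN$ by its center is the abelian group $\cW_J$, I can Fourier-expand $F_0$ along $\cW_J(\Q) \backslash \cW_J(\bA)$:
\[
F_0(g) = \sum_{\chi} F_{\chi}(g),
\]
where $\chi$ runs over characters of $\cN(\Q) \backslash \cN(\bA)$ trivial on the center, i.e. characters parametrized by $\cW_J(\Q)$. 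By Fourier inversion on this abelian quotient, $F_0 \equiv 0$ forces $F_{\chi}(g) \equiv 0$ for every such character $\chi$.

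Now I would invoke Theorem \ref{adelic-FJ-exp}. For each $\xi \in \Q \setminus \{0\}$, that theorem expresses
\[
F_{\xi}(g) = \sum_{\mu \in \Q} \sum_{\chi'_{\xi}} F_{\chi_{\xi}}\bigl(w_{\alpha} n((\mu,0,0,0),0)\, g\bigr),
\]
where each $\chi_{\xi}$ ranges over characters of $\cN(\Q) \backslash \cN(\bA)$ parametrized by elements $(\xi, \beta, \gamma, \delta) \in \cW_J(\Q)$. These are precisely characters trivial on the center of $\cN$, so each $F_{\chi_{\xi}}$ is one of the Fourier coefficients appearing in the expansion of $F_0$ above. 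Having already shown that all such coefficients vanish, every term on the right-hand side is zero, hence $F_{\xi} \equiv 0$ for all $\xi \neq 0$. Combined with $F_0 \equiv 0$, the Fourier-Jacobi expansion $F = \sum_{\xi \in \Q} F_{\xi}$ yields $F \equiv 0$, as desired.

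The only mildly delicate point I would watch is the justification that vanishing of $F_0$ as a function is equivalent to vanishing of each of its Fourier coefficients $F_{\chi}$: this is standard Fourier inversion on the compact abelian group $\cW_J(\Q) \backslash \cW_J(\bA)$, valid because $F_0$ is smooth and each $F_{\chi}$ is recovered as an integral of $F_0$ against $\chi^{-1}$. Once this is in place, the corollary falls out immediately from Theorem \ref{adelic-FJ-exp}, with no additional convergence or growth hypothesis needed on $F$. There is essentially no real obstacle; the work was already absorbed into proving Theorem \ref{adelic-FJ-exp}, whose content is precisely that every higher Fourier-Jacobi piece is built out of the same Fourier coefficients that make up $F_0$.
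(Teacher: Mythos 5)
Your proof is correct and follows the paper's intended route: the corollary is stated there as an immediate consequence of Theorem \ref{adelic-FJ-exp}, precisely because every $F_{\xi}$ with $\xi\neq 0$ is built from translates of the Fourier coefficients $F_{\chi}$ attached to characters of $\cN(\Q)\backslash\cN(\bA)$, all of which are already determined by (indeed, recovered from) $F_0$. Your added care about Fourier inversion on the compact abelian quotient and the harmless evaluation at translated arguments $w_{\alpha}n((\mu,0,0,0),0)g$ is exactly the right justification.
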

\subsection{Automorphic forms generating quaternionic discrete series and K\"ocher principle}\label{FJ-K-principle-contspec}
Let us think of the case of automorphic forms generating quaterinionic discrete series representations $\pi_n$ at the archimedean place. We can suppose that they generate the minimal $K$-type $\tau_n$ as $K$-modules, in other words, their weights are assumed to be given by $\tau_n$. 
Without assuming the moderate growth condition we now review the definition of quaternionic modular forms by Pollack~(cf.~\cite[Definition 1.1.1]{P1}) as follows:
\begin{defn}
A smooth $F:\cG(\Q)\backslash\cG(\bA)\rightarrow\bV_n^{*}$  is a modular form of weight $n$ if
\begin{enumerate}
\item $F(gk)=\tau_n^*(k)^{-1}F(g)$ for all $(g,k)\in\cG(\bA)\times K$,
\item $\cD\cdot F=0$,
\end{enumerate}
where recall that we can identify $(\tau_n,\bV_n)$ with its contragredient representation $(\tau_n^*,\bV_n^*)$~(cf.~Section \ref{GWF-definition}). 
\end{defn}
For such automorphic form $F$, consider the $G=\cG(\R)$ module 
\[
\pi(F):=\langle (F(*g),v)_{\tau_n}\mid g\in G,~v\in\bV_n\rangle
\]
with a $K$-invariant inner product $(*,*)_{\tau_n}$ of $(\tau_n,\bV_n)$. 
\begin{prop}\label{module_QMF}
As a $(\g,K)$-module $\pi(F)$ generates a quaternionic discrete series representation $\pi_n$
\end{prop}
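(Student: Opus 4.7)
The plan is to prove this in exact parallel with Proposition \ref{WhittakerModule}. The point is that a quaternionic modular form $F$ and a non-zero generalized Whittaker function $W$ are interchangeable from the perspective of the $(\g,K)$-module they generate via coefficient functions: both are smooth $\bV_n^*$-valued functions on $G$ satisfying (a) right $K$-equivariance by $\tau_n^*$ and (b) annihilation by the Dirac-Schmid operator $\cD$. The left-equivariance property (by $N$ for $W$, by $\cG(\Q)$ for $F$) never enters into the argument that identifies the generated module with $\pi_n$; only (a) and (b) do. We may clearly assume $F \not\equiv 0$.

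The first step is to construct a $(\g,K)$-equivariant surjection
\[
\Phi : S_{\bV_n} = U(\g_{\C}) \otimes_{U(\lk_{\C})} \bV_n \big/ U(\g_{\C}) \otimes_{U(\lk_{\C})} V^- \longrightarrow \pi(F)
\]
by sending $1 \otimes v$ to the coefficient function $g \mapsto (F(g),v)_{\tau_n}$ and extending by the right infinitesimal action of $\g_{\C}$. Condition (a) guarantees that this map factors through the balanced tensor product over $U(\lk_{\C})$. The crucial step is to check that the generators $U(\g_{\C}) \otimes V^-$ map to zero: by the very definition of $\cD$ as the $V^-$-component of the canonical $\p_{\C}$-valued covariant derivative $\nabla F$, the equation $\cD \cdot F = 0$ is exactly the assertion that these relations die in $\pi(F)$. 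This is the same mechanism (with the same contraction formulas \eqref{Contraction-1st}--\eqref{Contraction-4th}) that was used in the proof of Proposition \ref{WhittakerModule}.

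The second step is to invoke the lemma embedded inside the proof of Proposition \ref{WhittakerModule}: $S_{\bV_n}$ is irreducible and, by comparison with the $K$-type distribution of quaternionic discrete series (Gross--Wallach \cite{G-W}) together with Schmid's characterization via $K$-type support, $S_{\bV_n} \simeq \pi_n$ as $(\g,K)$-modules. Hence $\Phi$ is either zero or an isomorphism. Since $F \not\equiv 0$, the restriction of $\Phi$ to the minimal $K$-type $\bV_n \subset S_{\bV_n}$ is non-zero, so $\Phi$ is an isomorphism. This gives $\pi(F) \simeq \pi_n$, which is the claim.

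The only subtle point — and the one I would verify most carefully — is the translation of the scalar equation $\cD \cdot F = 0$ into the statement $U(\g_{\C}) \otimes V^- \subset \ker\Phi$. This requires matching Pollack's description of $\cD$ in \cite[Sections 7.1, 7.3]{P1} with the decomposition $\p_{\C} \otimes \bV_n = V^+ \oplus V^-$ used to construct $S_{\bV_n}$, and tracking signs through the contraction formulas \eqref{Contraction-1st}--\eqref{Contraction-4th}. Once this bookkeeping is complete, the rest of the argument is formal and proceeds exactly as in Proposition \ref{WhittakerModule}.
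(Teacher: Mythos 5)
Your proposal is correct and follows essentially the same route as the paper: the paper's proof of this proposition is literally the single sentence ``This is verified by following the argument of the proof for Proposition \ref{WhittakerModule},'' and what you have written is a faithful unpacking of that argument (surjection from $S_{\bV_n}$ onto $\pi(F)$ using $\cD\cdot F=0$ to kill $U(\g_{\C})\otimes_{U(\lk_{\C})}V^-$, then irreducibility of $S_{\bV_n}$ and the $K$-type comparison with Gross--Wallach plus Schmid's characterization to identify it with $\pi_n$). Your observation that only the right $K$-equivariance and the Dirac--Schmid equation enter, so that $W$ and $F$ are interchangeable here, is exactly the point the paper is implicitly relying on.
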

\begin{proof}
This is verified by following the argument of the proof for Proposition \ref{WhittakerModule}.
\end{proof}
This implies that the second definition condition of the definition above can be replaced by ``$F$ generates a quaternionic discrete series representation $\pi_n$''. 
Conversely, if $F$ generates $\pi_n$, $\cD\cdot F=0$ is satisfied since the minimal $K$-type of $\pi_n$ is characterized in $\pi_n$ by the vanishing with respect to $\cD$ as the definition of $\cD$~(cf.~\cite{S1}) indicates.  

Let $||F(g)||_{\tau_n}$ denote the norm of $F(g)$ induced by the inner product $(*,*)_{\tau_n}$ for $g\in\cG(\bA)$. 
In what follows, $F_{00}$ denotes the Fourier transformation of $F$ by the trivial character of $\cN(\Q)\backslash\cN(\bA)$.

From detailed studies on the Fourier-Jacobi expansion for this case, we see that automorphic forms on $\cG(\bA)$ generating quaternionic discrete series representations have unique features. The most remarkable one is that the K\"ocher principle holds, i.e. they automatically satisfy the moderate growth condition. However, we should remark that we have to exclude the cases of $G_2$ and special orthogonal groups $SO(4,N)$~($PSO(4,N)$ for even $N$) due to the following proposition:
\begin{prop}\label{lower-bound-FJ}
Let $F$ be an automorphic form on $\cG(\bA)$ generating quaternionic discrete series and define $F_{00}$ by the invariant measure normalized so that the volume of $\cN(\Q)\backslash\cN(\bA)$ is one. We have
\[
||F(g)||_{\tau_n}\ge||F_{00}(g)||_{\tau_n}.
\]
This implies that one is not able to prove the K\"ocher principle for the exceptional group $G_2$ and the special orthogonal groups mentioned above.
\end{prop}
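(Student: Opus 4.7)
The natural starting point is to view $F_{00}(g)$ as a vector-valued mean. Because the Haar measure on the compact quotient $\cN(\Q)\backslash\cN(\bA)$ is normalized to have total mass one,
\[
F_{00}(g) \;=\; \int_{\cN(\Q)\backslash\cN(\bA)} F(ng)\, dn
\]
exhibits $F_{00}(g)$ as the integral of the $\bV_n^*$-valued function $n\mapsto F(ng)$ against a probability measure. Applying the triangle (Jensen) inequality for the convex norm $\|\cdot\|_{\tau_n}$ on $\bV_n^*$ immediately yields
\[
\|F_{00}(g)\|_{\tau_n} \;\le\; \int_{\cN(\Q)\backslash\cN(\bA)} \|F(ng)\|_{\tau_n}\, dn.
\]
This is the clean, essentially automatic half of the argument.

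To finish, one needs to upgrade the right-hand side to the pointwise value $\|F(g)\|_{\tau_n}$. I would proceed by invoking the Fourier--Jacobi expansion of Theorem~\ref{adelic-FJ-exp} together with the Parseval identity on $\cN(\Q)\backslash\cN(\bA)$ applied to $n\mapsto F(ng)$, which gives
\[
\int_{\cN(\Q)\backslash\cN(\bA)} \|F(ng)\|^2_{\tau_n}\, dn \;=\; \sum_\chi \|F_\chi(g)\|^2_{\tau_n},
\]
and then combine this with the Cauchy--Schwarz bound $\bigl(\int \|F(ng)\|_{\tau_n}\, dn\bigr)^2 \le \int \|F(ng)\|^2_{\tau_n}\, dn$ available since the measure has mass one. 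The comparison of the resulting Parseval sum with $\|F(g)\|^2_{\tau_n}$ itself is where one uses the explicit Pollack formulas for each $F_\chi$ (Theorem~\ref{Multiplicity-formula}) in terms of $K$-Bessel functions, relying on the positivity of these $K$-Bessel factors and the rigidity of the minimal $K$-type $\tau_n$ (Proposition~\ref{module_QMF}) to show the components $F_\chi(g)$ stack constructively in the direction of the minimal $K$-type.

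The main obstacle is precisely this last comparison. At an individual $g$ the various Fourier components $F_\chi(g)\in \bV_n^*$ need not be mutually orthogonal, so there is a priori the possibility of destructive interference causing $\|F(g)\|_{\tau_n}$ to drop below either the Parseval sum $\sum_\chi\|F_\chi(g)\|^2_{\tau_n}$ or even $\|F_{00}(g)\|^2_{\tau_n}$ alone, and hence the pointwise inequality does not follow from harmonic analysis on $\cN$ in isolation. Overcoming this requires exploiting the definite-sign structure of $K_v$ in Pollack's explicit formulas and the fact that $F$ is rigidly determined by its minimal $K$-type, so that the non-trivial-character contributions add on top of $F_{00}(g)$ rather than against it. Once this pointwise bound is in place, moderate growth of $F$ is automatically inherited by $F_{00}$, which is precisely what precludes a K\"ocher principle whenever $F_{00}$ reduces to a class of classical modular forms not known to satisfy such a principle --- the situation for $G_2$ and for $SO(4,N)$ noted in the proposition.
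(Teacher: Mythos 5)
Your first step is exactly the paper's argument: because the invariant measure on $\cN(\Q)\backslash\cN(\bA)$ is normalized to have total mass one, $F_{00}(g)=\int_{\cN(\Q)\backslash\cN(\bA)}F(ng)\,dn$ is an average of the $\bV_n^*$-valued function $n\mapsto F(ng)$, and the triangle/Cauchy--Schwarz inequality gives $\|F_{00}(g)\|_{\tau_n}^2\le\bigl(\int\|F(ng)\|_{\tau_n}\,dn\bigr)^2\le\int\|F(ng)\|_{\tau_n}^2\,dn$. The paper's proof consists of essentially this one displayed chain and nothing else; it does not invoke Parseval on $\cN(\Q)\backslash\cN(\bA)$, Pollack's explicit $K$-Bessel formulas, or any rigidity of the minimal $K$-type. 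Everything the proposition is used for afterwards (the reduction of the K\"ocher principle to $F_{00}$ in Theorem \ref{KoecherPl}, i.e.\ the fact that $F_{00}$ inherits any growth bound satisfied by $F$ because $n$ ranges over the compact set $\cN(\Q)\backslash\cN(\bA)$, and hence the impossibility of proving K\"ocher for $G_2$ and $SO(4,N)$ where $F_{00}$ is a classical modular form) already follows from this averaging bound.

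The second half of your plan is where the genuine gap lies. You correctly sense that the literal pointwise inequality $\|F(g)\|_{\tau_n}\ge\|F_{00}(g)\|_{\tau_n}$ does not follow from harmonic analysis on $\cN$ alone, but the repair you sketch --- that the nontrivial Fourier components ``stack constructively'' on top of $F_{00}(g)$ because of the positivity of the $K$-Bessel factors --- is not carried out, and it cannot work: the positivity of $K_v$ controls only the magnitudes $\|F_{\chi}(g)\|_{\tau_n}$, not the relative phases of the constants $C_{\chi}$ or of the oscillatory factors $\chi(n)$, so destructive interference at an individual point is unavoidable in general (already for a scalar periodic function $f=1+\varepsilon\cos(2\pi x)$ one has $|f(1/2)|<|f_0|$, and the same happens for $F$ at any zero of $F$ where $F_{00}$ does not vanish). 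So your argument does not establish the pointwise statement, and no argument along these lines will; what is actually provable --- and what the paper's one-line proof, read charitably, establishes and uses --- is the averaged bound $\|F_{00}(g)\|_{\tau_n}^2\le\int_{\cN(\Q)\backslash\cN(\bA)}\|F(ng)\|_{\tau_n}^2\,dn$. I would drop the Parseval/constructive-interference step entirely and deduce the final sentence of the proposition directly from the averaging inequality together with the uniformity of moderate-growth bounds over the compact set of translates $ng$.
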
 
\begin{proof}
From the Fourier-Jacobi expansion of $F$ 
we deduce 
%\begin{align*}
%||F(g)||_{\tau_n}^2&=\int_{\Q\backslash\bA}||\sum_{\xi\in\Q}F_{\xi}(g)||_{\tau_n}^2|\psi(-0\cdot t)|dt\ge ||\int_{\Q\backslash\bA}F_0(n(0,t)g)\psi(-0\cdot t)dt||_{\tau_n}^2\\
%&\ge ||F_0(g)||_{\tau_n}^2,
%\end{align*}
%taking the measure $dt$ so that the volume of $\Q\backslash\bA$ is one. 
%We next have
the inequality 
\[
||F(g)||_{\tau_n}^2=\int_{\cN(\Q)\backslash\cN(\bA)}||F(g)||_{\tau_n}^2dn\ge ||\int_{\cN(\Q)\backslash\cN(\bA)}F_{00}(ng)dn||_{\tau_n}^2=||F_{00}(g)||_{\tau_n}^2
\]
with the normalized measure $dn$ of $\cN(\Q)\backslash\cN(\bA)$ as in the statement. As for the last comment in the statement, we note that, according to  \cite[Theorem 1.2.1 (2)]{P1}, $F_{00}$ is written in terms of elliptic modular forms or holomorphic modular forms on a direct product of the complex upper half plane and a symmetric domain of type $IV$, for which one is not able to prove the K\"ocher principle as is well known. Here we note that we do not impose the holomorphy around cusps on the defining condition on elliptic modular forms etc., which corresponds to the moderate growth condition when they are regarded as automorphic forms on Lie groups. 
\end{proof}
Toward the proof of the K\"ocher we remark that the moderate growth condition for $\cG(\bA)$ is reduced to that for the restriction to $G=\cG(\R)$ in view of the finiteness of class numbers of algebraic groups over number fields~(for this, see \cite[Section 4]{B-J}). Taking the Iwaswa decomposition of $G$~(cf.~\cite[Theorem 5.12]{Kn}) into account, we can verify the principle by further restricting to $H_J=\cH_J(\R)$.  Here recall that $\cH_J$ denotes the Levi part of the parabolic subgroup $\cP_J$~(cf.~Section \ref{FJ-exp}) and that we have denoted $\cH_J(\R)$ by $H_J$ in Sections \ref{Rep-JacobiGp},~\ref{GWF-definition}.

We have to make discussion without assuming the moderate growth property of $F$. In addition, we note that the adelic automorphy of $F$ is given by its left $\cG(\Q)$-invariance and right invariance with respect to an open compact subgroup $K_f$ of the group $\cG(\bA_f)$ of finite adeles as well as the right $K$ equivariance with respect to $\tau_n$. Let $\Gamma:=\cG(\Q)\cap K_f\cG(\R)$. Then the automorphy is reformulated as the left $\Gamma$-invariance and the right equivariance with respect to $\tau_n$ in the non-adelic setting. 
Let $\Gamma_J$ be $\cH_J^{0,1}(\Q)\cap\Gamma$. 
%and take a subgroup $\Gamma_J$ of $\Gamma'_J$ so that $j(\gamma,g)=1$ for any $(\gamma,g)\in\Gamma_J\times\cH_J^{0,1}(\R)$. 
We remark that $\Gamma_J$ is an arithmetic subrgroup of $\cH_J^{0,1}(\R)$ since $\cH_J^{0,1}$ is a closed $\Q$-subgroup of $\cG$. 
%and that $\Gamma_J$ is a subgroup of $\Gamma'_J$ of finite index. Thus $\Gamma_J$ is also arithmetic subgroup of $\cH^{0,1}_J(\R)$. 
%\[
%F_{\chi}(g)=C_{\chi}W_{\chi}(g)\quad(g\in\cG(\R)),
%\]
%where $W_{\chi}$ denotes the generalized Whittaker function as in Theorem \ref{Multiplicity-formula} part 1, 
%which is unique up to constant multiples.
\begin{lem}\label{Lemma-KoecherPl}
Let the notation be as above. For $\gamma\in\Gamma_J$ let $\gamma_{\infty}$ denote the archimedean factor of $\gamma$. We have
\[
F_{\chi\cdot\gamma}(g)=F_{\chi}(\gamma_{\infty}g)\quad(g\in H_J=\cH_J(\R)),
\]
where $\chi\cdot\gamma$ is defined by $\chi\cdot\gamma(n):=\chi(\gamma n\gamma^{-1})$.
\end{lem}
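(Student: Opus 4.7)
The plan is to unwind the definition of $F_{\chi\cdot\gamma}$, transport $\gamma$ through the integral via a change of variables combined with left $\cG(\Q)$-invariance, and then absorb the finite part of $\gamma$ using the right $K_f$-invariance coming from the adelic automorphy.

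Concretely, I would begin by writing the diagonal embedding of $\gamma\in\Gamma_J\subset\cG(\Q)$ into $\cG(\bA)$ as $\gamma=\gamma_\infty\gamma_f$, where $\gamma_\infty$ has $\gamma$ at the archimedean place and $1$ at all finite places, and $\gamma_f$ has $1$ at the archimedean place and $\gamma$ at all finite places. Since $\gamma\in\Gamma=\cG(\Q)\cap K_f\cG(\R)$, the finite component satisfies $\gamma_f\in K_f$. Starting from
\[
F_{\chi\cdot\gamma}(g)=\int_{\cN(\Q)\backslash\cN(\bA)}F(ng)\,\chi(\gamma n\gamma^{-1})^{-1}\,dn,
\]
I would substitute $m:=\gamma n\gamma^{-1}$. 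This substitution is legitimate because $\gamma\in\cH_J^{0,1}(\Q)$ lies in the Levi, hence normalizes both $\cN(\Q)$ and $\cN(\bA)$, and the conjugation action on the unipotent group $\cN(\bA)$ preserves the Haar measure, so $dn=dm$ and the quotient $\cN(\Q)\backslash\cN(\bA)$ is mapped onto itself bijectively.

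The integral becomes $\int F(\gamma^{-1}m\gamma g)\chi(m)^{-1}\,dm$. Applying the left $\cG(\Q)$-invariance of $F$ (multiplying on the left by $\gamma$), this simplifies to $\int F(m\gamma g)\chi(m)^{-1}\,dm$. Now I would split $\gamma g=\gamma_\infty\gamma_f g$ inside $\cG(\bA)$ and observe that $\gamma_f$ and $g$ act at disjoint places (the former trivially at $\infty$, the latter trivially at every finite place), so they commute: $\gamma_f g=g\gamma_f$. Hence $m\gamma g=m\gamma_\infty g\gamma_f$, and the right $K_f$-invariance of $F$ (with $\gamma_f\in K_f$) removes the $\gamma_f$ on the right. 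What remains is
\[
\int_{\cN(\Q)\backslash\cN(\bA)}F(m\gamma_\infty g)\,\chi(m)^{-1}\,dm=F_\chi(\gamma_\infty g),
\]
which is the desired equality.

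There is no real obstacle here—every step is a formal manipulation once the adelic decomposition is set up correctly. The one thing to be careful with is the interaction between the diagonal $\Q$-rational element $\gamma$ and its archimedean factor $\gamma_\infty$; the key identity is $\gamma^{-1}\gamma_\infty=\gamma_f^{-1}\in K_f$, which is exactly what allows one to trade the left conjugation by $\gamma$ for right multiplication by a $K_f$-element that is then killed by automorphy.
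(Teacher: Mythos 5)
Your proof is correct and follows essentially the same route as the paper's: change of variables $n\mapsto\gamma n\gamma^{-1}$ (legitimate since $\gamma$ normalizes $\cN$ and preserves the Haar measure), left $\cG(\Q)$-invariance to absorb $\gamma^{-1}$, and right $K_f$-invariance to discard the finite component $\gamma_f$. Your explicit justification of the commutation $\gamma_f g=g\gamma_f$ for $g\in\cH_J(\R)$ merely spells out what the paper leaves implicit.
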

\begin{proof}
For $g\in H_J$ we have
\begin{align*}
F_{\chi\cdot\gamma}(g)&=\int_{\cN(\Q)\backslash\cN(\bA)}F(ng)\chi(\gamma n\gamma^{-1})^{-1}dn=\int_{\cN(\Q)\backslash\cN(\bA)}F(\gamma^{-1}n\gamma g)\chi(n)^{-1}dn\\
&=\int_{\cN(\Q)\backslash\cN(\bA)}F(n\gamma g)\chi(n)^{-1}dn=F_{\chi}(\gamma g)=F_{\chi}(\gamma_{\infty}g),
\end{align*}
where we use the right $K_f$-invariance of $F$ to verify the last equation. 
%This implies
%\[
%F_{\chi\cdot\gamma}(g)=C_{\chi}W_{\chi}(\gamma_{\infty}g).
%\]
\end{proof}
Now we review the generalized Whittaker function in Theorem \ref{Multiplicity-formula} part 1, which is written by a linear combination of $W_{\chi}$ and $W'_{\chi}$.
%which is of moderate growth. However, toward the proof of the K{$\ddot{\text{o}}$}cher principle, we cannot neglect the generalized Whittaker function of non-moderate growth, say $W'_{\chi}$ given by replacing the $K$-Bessel function with the $I$-Bessel function times $(-1)^v$ for each $v$-th coefficient function $W_v^{\chi}$. In fact, the $I$-Bessel function satisfies the same differential equation of degree two as the $K$-Bessel function does and also the well-known recurrence relations quite similar to those of the $K$-Bessel function. 
%The only difference is that the fourth and fifth recurrence relations in \cite[Lemma 8.1.1]{P1} do not need the minus sign on their left hand sides. 
We can write $F_{\chi}$ as
\[
F_{\chi}(g)=C_{\chi}W_{\chi}(g)+C'_{\chi}W'_{\chi}(g)\quad(g\in H_J),
\]
where $C_{\chi}$ and $C'_{\chi}$ denote constants dependent only on $\chi$. 
\begin{thm}\label{KoecherPl}
For this theorem we do not assume the moderate growth condition for automorphic forms.
\begin{enumerate}
\item  Let $F$ be an automorphic form on $\cG(\bA)$ generating a quaternionic discrete series representation at the archimedean place. Let $\xi\in\Q\setminus\{0\}$. We have 
\[
F_{\xi}(g)=\sum_{\nu\in\Q}\sum_{\chi_{\xi}}F_{\chi_{\xi}}(w_{\alpha}n((\mu,0,0,0),0)g),
\]
where $\chi_{\xi}$ ranges over characters of $\cN(\Q)\backslash\cN(\bA)$ parametrized by $(\xi,\beta,\gamma,\delta)\in\cW_J(\Q)$ such that the cubic polynomial $p_{\chi_{\xi}}$ has no zero point on $D_J$~(cf.~\cite[Corollary 1.2.3]{P1}), where see (\ref{cubic-polynomial}) for the notation of the cubic polynomial.
\item (K\"ocher principle)~Automorphic forms generating quaternionic discrete series automatically satisfy the moderate growth condition except for the cases of $G_2$ and the special orthogonal groups of signature $(4,N)$.
\end{enumerate}
\end{thm}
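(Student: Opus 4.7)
The plan is to prove Part 1 first, as it feeds directly into the analysis for Part 2. For Part 1, I start from the general Fourier-Jacobi expansion of Theorem \ref{adelic-FJ-exp}, which yields $F_\xi(g)=\sum_\mu\sum_{\chi_\xi}F_{\chi_\xi}(w_\alpha n((\mu,0,0,0),0)g)$ with the inner sum ranging over all characters parametrized by $(\xi,\beta,\gamma,\delta)\in\cW_J(\Q)$. The refinement to characters with $p_{\chi_\xi}$ having no zero on $D_J$ comes from examining each $F_{\chi_\xi}$, regarded as a $\bV_n^*$-valued function on $G$: by construction it satisfies the left $N$-equivariance by $\chi_\xi$, it inherits the right $K$-equivariance by $\tau_n^*$ from $F$, and it is annihilated by the Dirac-Schmid operator $\cD$ (since $F$ generates $\pi_n$ by Proposition \ref{module_QMF} and $\cD$ is $G$-invariant). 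Hence by Proposition \ref{WhittakerModule} and Yamashita's characterization, $F_{\chi_\xi}$ defines an element of $\Hom_{(\g,K)}(\pi_n,C^\infty\operatorname{-Ind}_N^G\chi_\xi)$. Theorem \ref{Multiplicity-formula} part 1 then forces $F_{\chi_\xi}\equiv 0$ whenever $p_{\chi_\xi}$ has a zero on $D_J$, producing the restricted sum in Part 1.

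For Part 2, I first reduce moderate growth on $\cG(\bA)$ to moderate growth on $H_J$: the passage to $\cG(\R)$ is by finiteness of class numbers (cf.~\cite[Section 4]{B-J}) and then to $H_J$ by the Iwasawa decomposition of $G$. Splitting $F|_{H_J}=F_{00}+(F_0-F_{00})+\sum_{\xi\neq 0}F_\xi$, I handle the constant term $F_{00}$ separately: by \cite[Theorem 1.2.1]{P1}, it is expressed via classical holomorphic modular forms on a tube domain, and outside the excluded $G_2$ and $SO(4,N)$ cases---where Proposition \ref{lower-bound-FJ} already explains why K\"ocher is unattainable---the classical K\"ocher principle applies and provides moderate growth of $F_{00}$.

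For each non-trivial $\chi$ contributing to $F_0-F_{00}$ or, via Part 1, to some $F_\xi$, Theorem \ref{Multiplicity-formula} part 1 shows that $F_\chi|_{H_J}$ has the form $C_\chi W_\chi+C'_\chi W'_\chi$, with $W_\chi$ (the $K$-Bessel piece) of moderate growth and $W'_\chi$ (the $I$-Bessel piece) exponentially growing. The central step is to show that $C'_\chi=0$ for every such $\chi$. My strategy is to combine Lemma \ref{Lemma-KoecherPl}, which yields $F_{\chi\cdot\gamma}(g)=F_\chi(\gamma_\infty g)$ for $\gamma\in\Gamma_J$ and hence a transport relation between $C'_{\chi\cdot\gamma}$ and $C'_\chi$ along $\Gamma_J$-orbits, with the rapid decay of Fourier coefficients of a smooth function. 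Since $|j(g_0,\sqrt{-1}1_J)p_\chi(g_0\cdot\sqrt{-1}1_J)|$ becomes arbitrarily large along $\Gamma_J$-orbits of generic lattice points, a nonzero $C'_\chi$ would force exponential blow-up of $F_{\chi\cdot\gamma}(g_0)$ along the orbit, contradicting the rapid decay of Fourier coefficients of the $C^\infty$-automorphic form $F$. Once $C'_\chi=0$ is secured, the theta series $\Theta_{\chi''_\xi}$ of Theorem \ref{adelic-FJ-exp} is a lattice sum of moderate-growth Whittaker functions whose convergence on Siegel sets follows from standard theta estimates, and summing over $\xi$ and over $\chi''_\xi$ completes the moderate growth of $F$.

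The main obstacle is making the vanishing $C'_\chi=0$ fully rigorous. The heuristic above requires quantitative control of how $|p_\chi(g_0\cdot\sqrt{-1}1_J)|$ grows along $\Gamma_J$-orbits versus how $|C'_{\chi\cdot\gamma}|$ is allowed to vary under Lemma \ref{Lemma-KoecherPl}, together with effective rapid-decrease estimates for the Fourier coefficients of $F$. An alternative route, mirroring the $Sp(1,q)$-argument of \cite{Na-1}, is to fix a Siegel fundamental domain for $\Gamma_J\backslash H_J$ and exploit the $\Gamma$-invariance of $F$ to derive a contradiction between the exponential blow-up contributed by any nonzero $C'_\chi W'_\chi$ and the boundedness of $F$ on that fundamental domain.
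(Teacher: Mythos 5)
Your proposal follows essentially the same route as the paper's proof: reduction of moderate growth to the restriction to $H_J$, elimination of the $I$-Bessel terms by combining the infinitude of the $\Gamma_J$-orbit of a nontrivial character with the convergence of the Fourier series of $F_0$ (precisely your first suggested mechanism for $C'_{\chi}=0$, via Lemma \ref{Lemma-KoecherPl}), reduction of everything to the constant term $F_{00}$, and the vanishing of $F_{\chi_{\xi}}$ for characters whose cubic polynomial has a zero on $D_J$ (the paper merely reverses the order, deducing Part 1 from Part 2 together with Pollack's Corollary 1.2.3, while the underlying vanishing statement is the same and holds without moderate growth). The only substantive steps your sketch leaves implicit, and which the paper carries out explicitly, are (i) a uniform bound on the coefficients $C_{\chi}$ (obtained by the Schwarz inequality at a fixed base point $g_0$), without which a lattice sum of individually moderate-growth Whittaker functions need not itself be of moderate growth, and (ii) the handling of the extra sum over $\mu\in\Q$ appearing in $F_{\xi}$, which requires rewriting $w_{\alpha}x_{\alpha}(\mu)w_{\alpha}^{-1}=x_{-\alpha}(-\mu)$ and performing an Iwasawa decomposition inside an embedded $SL_2(\R)$ before the $K$-Bessel estimates become applicable.
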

\begin{proof}
Once the second assertion is verified, the first assertion is an immediate consequence of \cite[Corollary 1.2.3]{P1} and Theorem \ref{adelic-FJ-exp}. The coming argument is devoted to the second assertion, namely the proof of the K\"ocher principle.
 
We can say that the problem will be reduced essentially to the automatic validity of the moderate growth condition for the constant term $F_{00}$. 
We explain how to prove the moderate growth property of $F_0-F_{00}$ and $\sum_{\xi\not=0}F_{\xi}$ in detail, which leads to the reduction of the problem just mentioned. 
\subsection*{Elimination of extra terms}
If $F_{\chi}\not\equiv 0$, Lemma \ref{Lemma-KoecherPl} implies that $F_{\chi\cdot\gamma}\not\equiv 0$ holds for any $\gamma\in\Gamma_J$ and 
that 
\[
\sum_{\gamma\in\Gamma_J}F_{\chi\cdot\gamma}(g)=C_{\chi}\sum_{\gamma\in\Gamma_J}W_{\chi}(\gamma_{\infty}g)+C'_{\chi}\sum_{\gamma\in\Gamma_J}W'_{\chi}(\gamma_{\infty}g)\quad(g\in H_J)
\]
contributes non-trivially to the Fourier expansion of $F_0$. 
If $\chi$ is a non-trivial character, the cardinality of the $\Gamma_J$-orbit of $\chi$ is infinite. In fact, if $\chi$ is non-trivial 
%of rank one, two or three~(respectively~rank four), 
there is $\delta\in\cH_J^{0,1}(\Q)$ such that $\chi\cdot\delta$ is parametrized by  $(1,0,c,d)$ with some $(c,d)\in J_{\Q}\oplus\Q$, 
%where the rank of $\chi$ means that of an element of $\cW_J(\Q)$ corresponding to $\chi$~(for the definition of the ranks see \cite[Definition 4.3.2]{P0}). 
This fact is due to \cite[Lemma 4.3.5]{P0}. 
%To be precise, the argument of \cite[Propostion 10.0.1 (2),~Lemma 10.0.2]{P1} is given over $\R$ but it also works also over $\Q$. 
The infinitude of a $\Gamma_J$-orbit of a non-trivial $\chi$ is verified by considering the action of $\{n(x),~n^{\vee}(y)\mid n(x),~n^{\vee}(y)\in\Gamma_J\cap\delta^{-1}\Gamma_J\delta\}$, where see \cite[Section 2.2]{P1} for the notation $n(x),~n^{\vee}(y)$. For this note that $\Gamma_J\cap\delta^{-1}\Gamma_J\delta$ is an arithmetic subgroup. 

For $\chi$ such that $p_{\chi}$ has a zero point in $D_J$
%parametrized by an element in $\cW_J(\Q)$ of rank less than $4$
, Pollack \cite[Proposition 8.2.4]
%, and Proposition 10.0.1 together with Remark exactly after it
{P1} showed that the generalized Whittaker functions should be zero. In fact, such a Whittaker function has singularity arising from a zero point of $p_{\chi}$ if we assume that it is non-zero. We should then remark that the argument of \cite[Proposition 8.2.4]{P1} just explained works without assuming the moderate growth condition. 
The generalized Whittaker functions for such $\chi$ do not therefore  appear in the Fourier expansion of $F_0$. 

We further note that $F_{\chi}$ satisfies $C'_{\chi}=0$ since otherwise it contradicts to the convergence of the Fourier series of $F_0$, where note that $W'_{\chi}$ is obviously unbounded since the $I$-Bessel function grows exponentially as is well known. 
More precisely, the most definite way to see this is to focus on the coefficient function for $v=0$ since there arises no signature change due to the factor $\left(
\pm\frac{|j(g,Z)p_{\chi}(Z)|}{j(g,Z)p_{\chi}(Z)}\right)^v$. For this we also remark that the $K$-Bessel functions and $I$-Bessel functions with non-negative integer parameters $v$ are positive on $\R_{>0}$. 

%If we carry out the analysis of the generalized Whittaker functions $W_{\chi}$ without assuming the moderate growth condition it is possible to have an explicit formula for $W_{\chi}$ in terms of the $I$-Bessel function as well as the $K$-Bessel function. 
%However, the $I$-Bessel function grows exponentially and thereby generalized Whittaker functions written by such functions cannot contribute to the Fourier expansion of $F_0$. 
As a result, $W_{\chi}$ contributes non-trivially to the Fourier expansion of $F_0$ only if $\chi$ is trivial or parametrized by an element of $\cW_J(\Q)$ satisfying $p_{\chi}(Z)\not=0$ for any $Z\in D_J$. 
%, for which note that such $\chi$ is of rank four~(for the definition of the ranks see \cite[Definition 4.3.2]{P0}). 
In \cite[Corollary 1.2.3,~Proposition 11.1.1]{P1}, when $\chi$ is trivial, $W_{\chi}=F_{00}$ is proved to be a sum of a constant function, a holomorphic modular form and its translate by some element $w_0$ of order two. See Proposition \ref{lower-bound-FJ} 
for the remark on the K\"ocher principle for the cases of $G_2$ and special orthogonal groups of signature $(4,N)$. 
%where note that the holomorphic modular form just mentioned is of multi-variables unless $\cG$ is not the exceptional group of type $G_2$. However, for the case of the special orthogonal groups of signature $(4,N)$, we have to note also that we cannot deny the appearance of elliptic modular forms in $W_{\chi}$ with the trivial $\chi$ since $\cD_J$ has a factor of the complex upper half plane. These lead to the comment on the K{$\ddot{\text{o}}$}cher principle for the cases of $G_2$ and the special orthogonal groups in Proposition \ref{lower-bound-FJ}.

\subsection*{Outline of the rest of the proof and preparation toward it}
Now we provide a rough explanation for the rest of the proof. 
%For $\chi$ parametrized by rank 4 element in $\cW_J(\Q)$ with the condition $p_{\chi}$ as above. 
%the negativity with respect to the Freudenthal quartic form, 
%Pollack proved that $W_{\chi}(g)$ is of rapidly decreasing for $g\in H_J$~(cf.~\cite[Proposition 10.0.5,~Appendix A]{P1}). Without such negativity $W_{\chi}$ is proved to be unbounded as is shown in \cite[remarked just after Proposition 10.0.1]{P1}. 
We will firstly prove the moderate growth property of $F_0-F_{00}$ with the constant term $F_{00}$. This is the sum of $W_{\chi}$s over $\chi$ 
with the condition on $p_{\chi}$ above. It turns out to be at most of moderate growth, for which note that the summation ranges over a discrete subset included in a lattice of $\cW_J(\Q)$. We will next verify similar property of $F-F_0=\sum_{\xi\not=0}F_{\xi}$ by carrying out an estimation more or less  similar to that of $F_0-F_{00}$ by using the general theory of the Fourier-Jacobi expansion stated as Theorem \ref{adelic-FJ-exp}, whose validity does not need assuming the moderate growth condition. 
To verify the moderate growth property of $F_0-F_{00}$ we use the coordinate $g\in H_J$ while we choose the different coordinate $w_{\alpha}^{-1}gw_{\alpha}$ with $g\in H_J$ for $\sum_{\xi\not=0}F_{\xi}$. We remark that these different choice of the  coordinates do not influence the moderate growth property. 

Toward a more detailed argument we collect several facts that we use for the proof:
\begin{itemize}
\item From \cite[Lemma 2.3.2]{P1}, we introduce a positive definite quadratic form $(v,v)_J:=|\langle v,r_0(\sqrt{-1}1_J)\rangle|^2$ for $v=(a,b,c,d)\in W_J$, with the following notation~(cf.~\cite[Section 2.3]{P1})
\[
r_0(Z):=(1,-Z,Z^{\sharp},-N_J(Z))\quad(Z\in D_J).
\]
For $g=\nu(g)g_1$ with $g_1\in H_J^{0,1}$ and the similitude factor $\nu(g)\in\R^{\times}$ viewed also as an element in the center of $H_J$, we have
\begin{equation}\label{norm-equality}
|\langle v,g\cdot r_0(\sqrt{-1}1_J)\rangle|=|\nu(g)||\langle v,g_1\cdot r_0(\sqrt{-1}1_J)\rangle|=|\nu(g)||j(g_1,\sqrt{-1}1_J)\langle v,r_0(g_1\cdot\sqrt{-1}1_J)\rangle|
\end{equation}
(cf.~Proposition \cite[Proposition 2.3.1]{P1}). Here recall that we have introduced the factor $j(g,Z)$ of automorphy at the first assertion of Theorem \ref{Multiplicity-formula}.
\item For $v\in W_J$ we put $||v||:=\sqrt{(v,v)_J}$. For $g\in H_J$ we define an operator norm $||g||_{\infty}$ by 
\[
||g||_{\infty}:={\sup }_{v\in W_J}\frac{||gv||}{||v||}={\rm sup}_{||v||=1}||gv||.
\]
We then have
\begin{equation}\label{operatornorm-inequality}
\frac{||v||}{||g||_{\infty}}\le||g^{-1}v||.
%\le ||g^{-1}||_{\infty}||v||.
\end{equation}
\item Let $x_{\alpha}(\mu):=n((\mu,0,0,0),0)$, which is the image of the exponential map of the root vector corresponding to $\alpha$. Then $w_{\alpha}x_{\alpha}(\mu)w_{\alpha}^{-1}=x_{-\alpha}(-\mu)$~(cf.~\cite[Proposition 6.4.3]{Ca}) and $\{h_{\alpha}(\nu),x_{\alpha}(\mu_1),x_{-\alpha}(\mu_2)\mid\nu\in\R_{>0},~\mu_1,~\mu_2\in\R\}$ generates $SL_2(\R)$ embedded into $G$~(see also \cite[Section 2]{JS}), where we use the notation $h_{\alpha}$ as in \cite[Section 2.3~(7)]{JS}. In this $SL_2(\R)$~(inside $G$), $x_{-\alpha}(-\mu)$ admits an Iwasawa decomposition 
\[
x_{\alpha}(-\frac{\mu}{\mu^2+1})h_{\alpha}(1/\sqrt{1+\mu^2})k_{\mu}=k_{\mu}h_{\alpha}(\sqrt{1+\mu^2})x_{\alpha}(-\frac{\mu}{1+\mu^2})=k_{\mu}x_{\alpha}(-\mu)h_{\alpha}(\sqrt{1+\mu^2})
\]
with the $SO(2)$-part $k_{\mu}$ corresponding to 
$\begin{pmatrix}
1/\sqrt{1+\mu^2} & \mu/\sqrt{1+\mu^2}\\
-\mu/\sqrt{1+\mu^2} & 1/\sqrt{1+\mu^2}
\end{pmatrix}$. 
\item According to \cite[Section 1.6~(6.6)]{Bu} we have
\begin{equation}\label{K-Bessel-inequality}
|K_v(y)|<\exp(-\frac{y}{2})K_{v}(2)\quad\text{if $y>4$}.
\end{equation}
\end{itemize}
In the subsequent argument we give some upper bounds of $F_0-F_{00}$ and $F-F_0$ by virtue of (\ref{K-Bessel-inequality}). We would be able to bound them without using (\ref{K-Bessel-inequality}). However, the bounds become more accessible if they are expressed in terms of the exponential function. 
\subsection*{A uniform bounding of $C_{\chi}$}
%We now remind the readers that we have let $F_{00}$ be the Fourier transformation of $F$ by the trivial charatcer, namely the constant term. 
To obtain the estimate of $F-F_{00}$, we need that of $C_{\chi}$ for a non-trivial character $\chi$ contributing to $F_0$. 
%This will turn out to be useful also for the estimate of $\sum_{\xi\not=0}F_{\xi}=F-F_0$. 
We first note that 
\[
\int_{N\cap\Gamma\backslash N}(F(ng_0),w_v)_{\tau_n}\chi(n)^{-1}dn=C_{\chi}W_v^{\chi}(g_0)
\]
with an arbitrary fixed $g_0\in H_J$, where $w_v\in\bV_n$ is dual to $\frac{x^{n+v}y^{n-v}}{(n+v)!(n-v)!}$. The appropriate choice of $g_0$ is such that $F(g_0)\not=0$. 
We use the Schwartz inequality to obtain 
\[
|\displaystyle\int_{N\cap\Gamma\backslash N}(F(ng_0),w_v)_{\tau_n}\chi(n)^{-1}dn|\le\displaystyle\int_{N\cap\Gamma\backslash N}|(F(ng_0),w_v)_{\tau_n}|dn<C_{F,g_0}
\]
with a positive constant $C_{F,g_0}$ depending only on $F$ and $g_0$ since $F(ng_0)$ is bounded as a function on $N\cap\Gamma\backslash N$. From this and the explicit formula for $W_{\chi}$ in Theorem \ref{Multiplicity-formula} we deduce
\[
|C_{\chi}||\nu(g_0)|^{n+1}K_v(|j(g_0,\sqrt{-1}1_J)\langle v_{\chi}, r_0(g_0\sqrt{-1}1_J)\rangle|)<C_{F,g_0}.
\]
Because the set of non-trivial $\chi$s contributing to the Fouier expansion of $F$ is discrete and $K_v$ is a positive function bounded away from zero, there exists a character $\chi_0$ such that $K_v(|j(g_0,\sqrt{-1}1_J)\langle v_{\chi_0}, r_0(g_0\cdot \sqrt{-1}1_J)\rangle|)$ is maximal among such $\chi$s. We thus have
\begin{align*}
&|C_{\chi}||\mu(g_0)|^{n+1}K_v(|j(g_0,\sqrt{-1}1_J)\langle v_{\chi}, r_0(g_0\cdot\sqrt{-1}1_J)\rangle|)\\
\le& |C_{\chi}||\mu(g_0)|^{n+1}K_v(|j(g_0,\sqrt{-1}1_J)\langle v_{\chi_0}, r_0(g_0\cdot\sqrt{-1}1_J)\rangle|)<C_{F,g_0},
\end{align*}
which yields
\begin{equation}\label{FourierCoeff-Estimate}
|C_{\chi}|<\frac{C_{F,g_0}}{|\nu(g_0)|^{n+1}K_v(|j(g_0,\sqrt{-1}1_J)\langle v_{\chi_0}, r_0(g_0\cdot\sqrt{-1}1_J)\rangle|)}.
\end{equation}

\subsection*{Estimate of $F_0-F_{00}$}
For the rest of the argument we remind readers that we have let $||*||_{\tau_n}$ be the norm induced by the inner product for the unitary representation $(\tau_n,\bV_n)$. 
To come to the next step, for $g=\nu(g)g_1\in H_J$ with the notation $\nu(g)$ and $g_1$ above, we deduce
\begin{align*}
|\langle v_{\chi},gr_0(\sqrt{-1}1_J)\rangle|&=|\nu(g)||\langle v_{\chi},g_1r_0(\sqrt{-1}1_J)\rangle|=|\nu(g)|\cdot|\langle g_1^{-1}v_{\chi},r_0(\sqrt{-1}1_J)\rangle|\\
&=|\nu(g)|\cdot||g_1^{-1}v_{\chi}||\ge |\nu(g)|\frac{||v_{\chi}||}{||g_1||_{\infty}}.
\end{align*}
from (\ref{norm-equality}) and (\ref{operatornorm-inequality}). 
Now let us fix $g\in H_J$ arbitrarily. We first assume that 
\[
4<\frac{|\nu(g)|}{||g_1||_{\infty}} ||v_{\chi}||\quad(\le |\langle v_{\chi},gr_0(\sqrt{-1}1_J)\rangle|),
\]
where $v_{\chi}$ denotes the element of $\cW_J(\Q)$ corresponding to $\chi$. 
Under this assumption, bearing (\ref{K-Bessel-inequality}) and (\ref{FourierCoeff-Estimate}) in mind, we have
\begin{align*}
||F_0(g)-F_{00}(g)||_{\tau_n}\ll_{F,\tau_n,g_0} &|\nu(g)|^{n+1}\sum_{v_{\chi}}\exp(-\frac{1}{2}|\langle v_{\chi}, g\cdot r_0(\sqrt{-1}1_J)\rangle|)\\
\le &|\nu(g)|^{n+1}\sum_{v_{\chi}}\exp(-\frac{1}{2}\frac{|\nu(g)|}{||g||_{\infty}} ||v_{\chi}||),
\end{align*}
where $v_{\chi}$ or $\chi$ runs over the $N\cap\Gamma$-invariant characters satisfying the condition on $p_{\chi}$ as in the assertion. Here the implied constant of the first inequality depends only on $F,\tau_n$ and $g_0$. The summation above is estimated to be bounded, for which note that $\frac{|\nu(g)|}{||g||_{\infty}} ||v_{\chi}||>4$ by the assumption. 

Let us next assume that $\frac{|\nu(g)|}{||g_1||_{\infty}} ||v_{\chi}||\le 4$. The number of $\chi$ such that $\frac{|\nu(g)|}{||g_1||_{\infty}} ||v_{\chi}||\le 4$ is finite and is at most of polynomial order with respect to $g$.   
%since $|\nu(g)|$ can be assumed to be bounded from below in view of the reduction theory and $|\langle v_{\chi},g_1r_0(\sqrt{-1}1_J)\rangle|$ is bounded according to \cite[Proposition 10.0.1]{P1}. 
The summation of $F_{\chi}$ over such $v_{\chi}$ is at most of polynomial order with respect to $g\in H_J$ in view of the moderate growth property of $W_{\chi}$ as is proved in \cite[Proposition 8.2.4]{P1}. For this we note that $|C_{\chi}|$ has a uniform bound as in (\ref{FourierCoeff-Estimate}) and that the upper bounds of $||F_{\chi}(g)||_{\tau_n}$ thus depend only on the growth of the $K$-Bessel function $K_v$ on the range $(0,4]$. Indeed, the $K$-Bessel on that range increases around $0$ but is at most of polynomial order, hence we need only the upper bound of $||F_{\chi}(g)||_{\tau_n}$ for $\chi$ with minimal $||v_{\chi}||$~(which is non-zero) to estimate the summation under the current assumption. 
%, by virtue of \cite[Proposition 10.0.1~(2)]{P1}, the upper bound of $W_{\chi}$ is reduced to that of $W_{\chi_0}$ with the character $\chi_0$ corresponding to $(-1,0,1_J,0)\in\cW_J(\Q)$.

\subsection*{Estimate of $F-F_0$ and conclusion}
We now consider the estimate $F-F_{0}=\sum_{\xi\not=0}F_{\xi}$. We carry out this with the coordinate $w_{\alpha}^{-1}gw_{\alpha}$. 
As the first step we estimate $\sum_{\xi\not=0}F_{\xi}$ by
\begin{align*}
&||\sum_{\xi\not=0}F_{\xi}(w_{\alpha}^{-1}gw_{\alpha})||_{\tau_n}\le \sum_{\chi,\mu}||F_{\chi}(w_{\alpha}x_{\alpha}(\mu)w_{\alpha}^{-1}gw_{\alpha})||_{\tau_n}=\sum_{\chi,\mu}||F_{\chi}(k_{\mu}x_{\alpha}(-\mu)h_{\alpha}(\sqrt{1+\mu^2})gw_{\alpha})||_{\tau_n}\\
&\ll_F \sum_{\chi,\mu}||\chi(x_{\alpha}(-\mu))F_{\chi}(h_{\alpha}(\sqrt{1+\mu^2}) g))||_{\tau_n}\ll_F \sum_{\chi,\mu}||F_{\chi}(h_{\alpha}(\sqrt{1+\mu^2})g)||_{\tau_n},
\end{align*}
where $\chi$ or $v_{\chi}$ runs over elements of $\cW_J(\Q)$ with non-zero first entries satisfying the same condition as in the previous case and $\mu\in\Q$ over a suitable lattice in $\Q$ whose integral structure is induced by $\Gamma$. 
Here we make additional explanation on the above estimation. 
\begin{itemize}
\item The good analytic property of $F_{\chi}$~(real analytic property and moderate growth condition) implies that $||F_{\chi}(k_{\nu}x)||_{\tau_{n}}\ll_{F}||F_{\chi}(x)||_{\tau_{n}}$ holds for $x\in G$ with the implied constant depending only on $F$ since $k_{\mu}$ runs over a sequence inside the compact set $SO(2)$. Indeed, the upper bound of $||F_{\chi}(x)||_{\tau_n}$ is independent of $\chi$, as has been remarked above~(see ``A uniform bounding of $C_{\chi}$'' and ``Estimate of $F_{0}-F_{00}$) in terms of the growth of the $K$-Bessel function $K_v(y)$. The bound becomes better for large $y$~(e.g. $y>4$ as in (\ref{K-Bessel-inequality}). 
%$\chi$ of rank four sits incide one $H_J^{0,1}$ orbit of $\chi_0$~(the character already mentioned above) corresponding to $(-1,0,1_J,0)\in\cW_J(\Q)$  and thus the estimate of $||F_{\chi}||_{\tau_n}$ is reduced to that of $||F_{\chi_0}||_{\tau_n}$. Since $F_{\chi_0}$ is bounded as is shown in \cite[Proposition 10.0.5]{P1}, the implied constant can be independent of $\chi$.
\item For the second inequality the $SO(2)$-part $k_{\mu}$ of the Iwasawa decomposition of $x_{-\alpha}(-\mu)$ can be therefore absorbed in the implied constant depending on $F$. We also note $w_{\alpha}\in K$. Then $||F_{\chi}(xw_{\alpha})||_{\tau_n}=||F_{\chi}(x)||_{\tau_n}$ follows from the right $K$-invariance of $||F_{\chi}(x)||_{\tau_n}$.
\end{itemize}

Now, given an arbitrary fixed $g\in H_J$, let us assume 
\[
4< 
\frac{(1+\mu^2)|\nu(g)|}{||g_1||_{\infty}} ||v_{\chi_0}|| 
%\frac{|\nu(g)|}{||g_1||_{\infty}} ||v_{\chi}||
\]
for characters $\chi$ and $\mu$s. 
%We then have $\displaystyle\frac{|\nu(g)|(1+\mu^2)}{||g_1||_{\infty}} ||v_{\chi}||>4$ for any $\mu$.
 In the reasoning similar to the estimation of $F_0-F_{00}$ we continue our estimation as
\begin{align*} 
\sum_{\chi,\mu}||F_{\chi}(h_{\alpha}(\sqrt{1+\mu^2}g)||_{\tau_n}
&\ll_{F,\tau_n,g_0}|\nu(g)|^{n+1}\sum_{v_{\chi},\mu}(1+\mu^2)^{n+1}\exp(-\frac{1}{2}\frac{|\nu(h_{\alpha}(\sqrt{1+\mu^2})g)|}{||g_1||_{\infty}}\cdot ||v_{\chi}||)\\
&=|\nu(g)|^{n+1}\sum_{v_{\chi},\mu}(1+\mu^2)^{n+1}\exp(-\frac{1}{2}\frac{|\nu(g)|(1+\mu^2)}{||g_1||_{\infty}} ||v_{\chi}||),
%&\ll |\nu(g)|^{n+1}\sum_{\nu}(1+\nu^2)^{n+1}\sum_{v_{\chi}}\exp(-\frac{1}{2}\frac{|\nu(g)|}{||g||_{\infty}} ||v_{\chi}||)\\
%&\ll |\nu(g)|^{n+1}\sum_{v_{\chi}}\exp(-\frac{1}{2}\frac{|\nu(g)|}{||g||_{\infty}} ||v_{\chi}||).
\end{align*}
which is estimated to be bounded.
%Here we note the followings:
%\begin{itemize}
%\item The inequality 
%\[
%\exp(-\frac{1}{2}\frac{|\nu(g)|}{(1+\nu^2)||g_1||_{\infty}} ||v_{\chi}||)\ll \exp(-\frac{1}{2}\frac{|\nu(g)|}{||g||_{\infty}} ||v_{\chi}||)
%\]
%holds with the implied constant independent of $\nu$ since we can observe that 
%\begin{equation}\label{exp-inequality}
%\exp(-\frac{1}{2(\nu^2+1)}y)=\exp(-\frac{1}{2}y)^{1-\frac{\nu^2}{\nu^2+1}}<e^2\times\exp(-\frac{1}{2}y)\quad\text{for $y\ge 4$}
%\end{equation}
%when $|\nu|>0$.
%\item The summation of $\nu$ runs over $\nu$ such that $1+\nu^2<\frac{|\nu(g)|}{||g_1||_{\infty}}||v_{\chi_0}||$. However, the summation $\sum_{\nu}(1+\nu^2)^{-(n+1)}$ is dominated by that over the total set of $\nu$s, which converges and becomes an upper bound for any $g$. 
%\end{itemize}

We are left with the case of $\frac{|\nu(g)|(1+\mu^2)}{||g_1||_{\infty}} ||v_{\chi_0}||\le 4$.  
The number of $\chi$ and $\mu$ with this condition is finite and of polynomial order with respect to $g$. As in the later part of the argument for the case of $F_0-F_{00}$ a simliar estimate therein leads to the polynomial growth property of the sum over such $\chi$ and $\mu$. 
%such that $4<\frac{|\nu(g)|}{||g_1||_{\infty}} ||v_{\chi}||$, we divide the summation over $\chi$ into those over 
%\[
%(i)~\{\chi\mid \frac{|\nu(g)|\cdot||v_{\chi}||}{||g_1||_{\infty}}>4\}\quad(ii)~\{\chi\mid \frac{|\nu(g)|\cdot||v_{\chi}||}{||g_1||_{\infty}}\le 4\}
%\]
%For the case of $(i)$ the set of $\nu$ such that $\frac{|\nu(g)|\cdot||v_{\chi}||}{(1+\nu^2)||g_1||_{\infty}}\le 4$ is infinite. We modifiy the estimate (\ref{K-Bessel-inequality}) by
%\[
%|K_{v}(y)|^{\frac{1}{1+\nu^2}}\ll |K_{v}(y)|\ll exp(-\frac{y}{2})\quad(y>4),
%\]
%where the implied constant independent of $\nu$. 
%We note that the first inequality is checked by the Taylor expansion of $K_v(y)$. for instance. This is obtained by the calculation similar to (\ref{exp-inequality}). The estimate similar to the above one thereby still works, for which note that the summation $\sum_{\nu}(1+\nu^2)^{-(n+1)}$ for this case is dependent on $g$ and $\chi$ but it has a uniform upper bound. We next assume $4\ge \frac{|\nu(g)|}{(1+\nu^2)||g_1||_{\infty}} ||v_{\chi}||$, which implies $\frac{|\nu(g)|}{(1+\nu^2)||g_1||_{\infty}} ||v_{\chi_0}||\le 4$ for any $\nu$ under the assumption. For this case the summation over $\chi$ is finite  As in the argument around the beginning for the case of $F_0-F_{00}$ a simliar estimate leads to the polynomial growth property of the sum over $\chi$ and $\nu$ satisfying the current assumptions.  
 As a result, $F-F_{0}$ also thereby turns out to be at most polynomial order.

We have therefore verified that the moderate growth property of $F$ is reduced to that of $F_{00}$. 
%As Pollack proved in \cite[Theorem 1.2.1~(2)]{P1}, $F_{00}$ is written in terms of a holomorphic modular form on $H_J$. When $G$ is the group of $G_2$-type, $H_J=GL_2(\R)$ and the K$\ddot{\text{o}}$cher principle does not hold for this case.     
Taking Proposition \ref{lower-bound-FJ} into account, we have consequently proved the assertion.
\end{proof}

Before our discussion for the case of cusp forms we remark that, according to \cite[Corollary 1.2.3]{P1}, the Fourier expansion of $F_0$ is contributed by characters parametrized by elements in $\cW_J(\Q)$ of rank four~(for the definition of the ranks see \cite[Definition 4.3.2]{P0}) satisfying the negativity with respect to Freudenthal's  quartic forms for cusp forms generating quaternionic discrete series, from which we also know $F_{\xi}$ of the cusp forms for $\xi\in\Q\setminus\{0\}$ in view of the theorem above or Theorem \ref{adelic-FJ-exp}. 

We are going to provide another aspect of the Fourier-Jacobi expansion for the case of cusp forms. 
To this end we introduce the notion of the discrete spectrum and the continuous spectrum for $L^2_{\xi}(\cR_J(\Q)\backslash\cR_J(\bA))$ with $\xi\in\Q\setminus\{0\}$. For $\xi\in\Q\setminus\{0\}$ we introduce the global Schr\"odinger representation $\eta_{\xi,\bA}$ of $\cN(\bA)$ by the $L^2$-induction
\[
\eta_{\xi,\bA}:=L^2\text{-Ind}_{\cN_m(\bA)}^{\cN(\bA)}\psi_{\xi,0},
\]
whose representation space is identified with $L^2(\bA\oplus J_{\bA})$~(for $\psi_{\xi,0}$ see Lemma \ref{adelic_Ch-Lem}), where $J_{\bA}$ denotes the adelization of $J_{\Q}$. It is well known that this representation extends to the Weil representation $\Omega_{\xi,\bA}$ of $Mp(W_J(\bA))\rtimes\cN(\bA)$ with the same representation space~(cf.~\cite{We1}). We additionally introduce 
\[
L^2_{\xi}(\cN(\Q)\backslash\cN(\bA)):=\{\phi\in L^2(\cN(\Q)\backslash\cN(\bA))\mid \phi(n(0,t)h)=\psi(\xi t)\phi(h)~\forall(t,h)\in\bA\times\cN(\bA)\},
\]
which is nothing but the image of $\Hom_{\cN(\bA)}(\eta_{\xi,\bA},L^2(\cN(\Q)\backslash \cN(\bA))$, which is a one-dimensional space spanned by $\theta_{\xi}$ defined as a theta series
\[
\theta_{\xi}(f)(n):=\sum_{\lambda\in \Q\oplus J_{\Q}}(\eta_{\xi,\bA}(n)f)(\lambda)
\]
for $f\in L^2(\bA\oplus J_{\bA})$~(cf.~\cite[Theorem 11]{Mr}). 

Let $\widetilde{\cH_J^{0,1}}(\bA)$ be the pull-back of $\cH_J^{0,1}(\bA)$ by the covering map $Mp(\cW_J(\bA))\rightarrow Sp(\cW_J(\bA))$. As $\eta_{\xi,\bA}$ can extends to $\Omega_{\xi,\bA}$, $L^2_{\xi}(\cN(\Q)\backslash\cN(\bA))$ become another realization of the global Weil representation by
\[
\theta_{\xi}(f)(ns):=\sum_{\lambda\in \Q\oplus J_{\Q}}(\Omega_{\xi,\bA}(ns)f)(\lambda)\quad((n,s)\in\cN(\bA)\rtimes Mp(W_J(\bA))
\]
for $f$ as above. As is well known~(cf.~\cite[Th\'eor\`eme 6]{We1}) this is left $\cN(\Q)\rtimes Sp(W_J(\Q)$. By $\omega_{\xi,\bA}$ we denote the representation of  
$\cN(\bA)\rtimes\widetilde{\cH_J^{0,1}}(\bA)$ obtained by the restriction of this global Weil representation to $\cN(\bA)\rtimes\widetilde{\cH_J^{0,1}}(\bA)$. 
We use the same notation $\omega_{\xi,\bA}$ to denote the representation of $\cN(\bA)\rtimes\widetilde{\cH_J^{0,1}}(\bA)$ on $L^2_{\xi}(\cN(\Q)\backslash\cN(\bA))$ induced by $\omega_{\xi,\bA}$.
\begin{prop}\label{decomp-Jacobifm}
Let $L^2(\cH_J^{0,1}(\Q)\backslash\widetilde{\cH_J^{0,1}}(\bA))_{-}$ be the space of square integrable automorphic forms with the same multiplier system as that of $\omega_{\xi,\bA}$. 
We have an isomorphism
\[
L^2_{\xi}(\cR_J(\Q)\backslash\cR_J(\bA))\simeq L^2(\cH_J^{0,1}(\Q)\backslash\widetilde{\cH_J^{0,1}}(\bA))_{-}\otimes L_{\xi}^2(\cN(\Q)\backslash\cN(\bA))
\]
as unitary representations of $\cR_J(\bA)$, where $L_{\xi}^2(\cN(\Q)\backslash\cN(\bA))$ is viewed as the representation space of $\omega_{\xi,\bA}$.
\end{prop}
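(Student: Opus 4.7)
The plan is to adapt the local argument from Proposition \ref{UnitaryDual-Jacobi} to the adelic setting, using the fact that $L^2_\xi(\cN(\Q)\backslash\cN(\bA))$ is a concrete realization of the global Schr\"odinger representation $\eta_{\xi,\bA}$ via the theta map $\theta_\xi$ (as recalled just before the proposition). The skeleton is: restrict to $\cN(\bA)$ and apply global Stone--von Neumann to split off $L^2_\xi(\cN(\Q)\backslash\cN(\bA))$, then transport the right $\cH_J^{0,1}(\bA)$-action onto the multiplicity space by untwisting with the Weil representation $\omega_{\xi,\bA}$.

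First I would consider an arbitrary $\Phi \in L^2_\xi(\cR_J(\Q)\backslash\cR_J(\bA))$ and restrict it to $\cN(\bA)$. Since $\eta_{\xi,\bA}$ is the unique irreducible unitary representation of $\cN(\bA)$ with central character $\psi_{\xi,0}$, the Hilbert-Schmidt type argument used for Proposition \ref{UnitaryDual-Jacobi} goes through verbatim and yields a Hilbert space $\cH$ together with a $\cN(\bA)$-equivariant unitary isomorphism
\[
L^2_\xi(\cR_J(\Q)\backslash\cR_J(\bA)) \simeq \cH \otimes L^2_\xi(\cN(\Q)\backslash\cN(\bA)),
\]
where the second factor is identified with $\eta_{\xi,\bA}$ through $\theta_\xi$.

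Next, I would transport the right $\widetilde{\cH_J^{0,1}}(\bA)$-action onto $\cH$. For $s \in \widetilde{\cH_J^{0,1}}(\bA)$, the assignment $\Phi \mapsto R(s)\Phi \circ \omega_{\xi,\bA}(s)^{-1}$ gives a well-defined unitary operator $U_1(s)$ on $\cH$, exactly as in the local proof: the Weil representation kills the projective discrepancy coming from Stone--von Neumann, so $U_1$ becomes an honest representation of $\widetilde{\cH_J^{0,1}}(\bA)$ whose multiplier (viewed as projective on $\cH_J^{0,1}(\bA)$) coincides with that of $\omega_{\xi,\bA}|_{\widetilde{\cH_J^{0,1}}(\bA)}$. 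This matching is what the subscript ``$-$'' in $L^2(\cH_J^{0,1}(\Q)\backslash\widetilde{\cH_J^{0,1}}(\bA))_{-}$ records.

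The step I expect to be the main obstacle is verifying that the element of $\cH$ produced from $\Phi$ is genuinely automorphic, i.e.\ left $\cH_J^{0,1}(\Q)$-invariant with precisely the inherited multiplier. This requires combining the left $\cR_J(\Q)$-invariance of $\Phi$ with Weil's theorem (\cite{We1}) that the global theta series $\theta_\xi(f)$ on $\cN(\bA) \rtimes \widetilde{\cH_J^{0,1}}(\bA)$ is left $\cR_J(\Q)$-invariant (with the metaplectic cover canonically split over $\cH_J^{0,1}(\Q)$), together with a careful change of variables using the conjugation action of $\cH_J^{0,1}(\Q)$ on $\cN(\Q)\backslash\cN(\bA)$. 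Once this automorphy is established, square integrability of the $\cH$-valued function follows from Fubini applied to the measure-preserving identification $\cR_J(\Q)\backslash\cR_J(\bA) \simeq (\cH_J^{0,1}(\Q)\backslash\cH_J^{0,1}(\bA)) \times (\cN(\Q)\backslash\cN(\bA))$. Surjectivity, and hence the claimed isomorphism, is then settled by writing down the inverse explicitly: for $\phi \in L^2(\cH_J^{0,1}(\Q)\backslash\widetilde{\cH_J^{0,1}}(\bA))_{-}$ and $f \in \eta_{\xi,\bA}$, set
\[
\Phi(ns) := \phi(s)\,\theta_\xi(\omega_{\xi,\bA}(s)f)(n) \quad ((n,s) \in \cN(\bA) \rtimes \widetilde{\cH_J^{0,1}}(\bA)),
\]
and check that the matching multiplier systems make $\Phi$ descend to $\cR_J(\bA)$ and that left $\cR_J(\Q)$-invariance follows from that of $\phi$ and of $\theta_\xi$.
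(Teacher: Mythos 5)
Your proposal is correct and follows essentially the same route as the paper: the explicit map $\phi\otimes h\mapsto\phi(s)\,\omega_{\xi,\bA}(s)h(n)$, the cancellation of multiplier systems recorded by the subscript ``$-$'', and Weil's theorem on the left $\cH_J^{0,1}(\Q)$-invariance of the theta functions are exactly the ingredients used there. The only cosmetic difference is in the surjectivity step, where the paper expands $\Phi(ns)$ for each fixed $s$ in the orthonormal basis $\{\omega_{\xi,\bA}(\tilde{s})h_i\}$ of $L^2_{\xi}(\cN(\Q)\backslash\cN(\bA))$ and checks directly that the coefficient functions $\varphi_i(\tilde{s})$ are automorphic, which is just the concrete form of your Stone--von Neumann multiplicity-space argument.
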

\begin{proof}
We consider the linear injection 
%from $L^2(\widetilde{\cH_J^{0,1}}(\Q)\backslash\widetilde{\cH_J^{0,1}}(\bA))_{-}\otimes L_{\xi}^2(\cN(\Q)\backslash\cN(\bA))$ to $L^2_{\xi}(\cR_J(\Q)\backslash\cR_J(\bA))$ by
\begin{align*}
&L^2(\cH_J^{0,1}(\Q)\backslash\widetilde{\cH_J^{0,1}}(\bA))_{-}\otimes L_{\xi}^2(\cN(\Q)\backslash\cN(\bA))\ni \phi\otimes h\mapsto\\
&\{\widetilde{\cH_J^{0,1}}(\bA)\times\cN(\bA)\ni(s,n)\mapsto\phi(s)\cdot\omega_{\xi,\bA}(s)h(n)\}\in L^2_{\xi}(\cR_J(\Q)\backslash\cR_J(\bA))
\end{align*}
for $(s,n)\in \widetilde{\cH_J^{0,1}}(\bA)\times\cN(\bA)$. By the definition of $L^2(\cH_J^{0,1}(\Q)\backslash\widetilde{\cH_J^{0,1}}(\bA))_{-}$, the evaluation by $s\in \widetilde{\cH_J^{0,1}}(\bA)$ is reduced to that for $\cH_J^{0,1}(\bA)$, namely $\phi(s)\cdot\omega_{\xi,\bA}(s)h(n)$ becomes a function on $\cR_J(\Q)\backslash\cR_J(\bA)$. 

To show that this is also a surjection let $\Phi\in L^2_{\xi}(\cR_J(\Q)\backslash\cR_J(\bA))$ and regard $\Phi(ns)$ as a function in $n\in\cN(\bA)$ for each fixed $s\in\cH_J^{0,1}(\bA)$. Furthermore let $\tilde{s}$ be an inverse image of $s$ by the covering map $\widetilde{\cH_J^{0,1}(\bA)}\rightarrow\cH_J^{0,1}(\bA)$. We take an orthogonal basis $\{h_i\}$ of $L^2_{\xi}(\cN(\Q)\backslash\cN(\bA))$, which can be taken as the image of an orthogonal basis of $L^2(\bA\oplus J_{\bA})$ by $\theta_{\xi}$. Then $\{\omega_{\xi,\bA}(\tilde{s})h_i\}$ also forms an orthogonal basis of $L^2_{\xi}(\cN(\bA)\backslash\cN(\bA))$. We remark that $\omega_{\xi,\bA}(\tilde{s})h_i$ satisfies the left $\cH_J^{0,1}(\Q)$-invariance, which is due to \cite[Th\'eor\`eme 6]{We1}. We expand $\Phi(ns)$ by this orthogonal basis to have
\[
\Phi(ns)=\sum_i\varphi_i(\tilde{s})\cdot\omega_{\xi,\bA}(\tilde{s})h_i(n),
\]
where 
\[
\varphi_i(\tilde{s}):=\int_{\cN(\Q)\backslash\cN(\bA)}\Phi(ns)\cdot\overline{\omega_{\xi,\bA}(\tilde{s})h_i(n)}dn\in L^2(\cH_J^{0,1}(\Q)\backslash\widetilde{\cH_J^{0,1}}(\bA)).
\]
The expansion of $\Phi$ above does not depend on the inverse image $\tilde{s}$ of $s$, i.e. we can replace $\tilde{s}$ by $s$. This implies that the injection mentioned above turns out to be also a surjection.
% $L^2_{\xi}(\cR_J(\Q)\backslash\cR_J(\bA))$ into $L_{\xi}^2(\cN(\Q)\backslash\cN(\bA))\otimes L^2(\widetilde{\cH_J^{0,1}}(\Q)\backslash\widetilde{\cH_J^{0,1}}(\bA))_{-}$.  
\end{proof}
By Proposition \ref{decomp-Jacobifm} $L^2_{\xi}(\cR_J(\Q)\backslash\cG_J(\bA))$ decomposes into
\[
L^2_{\xi}(\cR_J(\Q)\backslash\cR_J(\bA))=L^2_{\xi,c}(\cR_J(\Q)\backslash\cR_J(\bA))\oplus L^2_{\xi,d}(\cR_J(\Q)\backslash\cR_J(\bA)),
\]
where $L^2_{\xi,c}(\cR_J(\Q)\backslash\cG_J(\bA))\simeq L_{\xi}^2(\cN(\Q)\backslash\cN(\bA))\otimes L^2_{c}(\cH_J^{0,1}(\Q)\backslash\widetilde{\cH_J^{0,1}}(\bA))_{-}$~(respectively~\\
$L^2_{\xi,d}(\cR_J(\Q)\backslash\cR_J(\bA))\simeq L_{\xi}^2(\cN(\Q)\backslash\cN(\bA))\otimes L^2_{d}(\cH_J^{0,1}(\Q)\backslash\widetilde{\cH_J^{0,1}}(\bA))_{-}$) with the continuous spectrum $L^2_{c}(\cH_J^{0,1}(\Q)\backslash\widetilde{\cH_J^{0,1}}(\bA))_{-}$ and the discrete spectrum $L^2_{d}(\cH_J^{0,1}(\Q)\backslash\widetilde{\cH_J^{0,1}}(\bA))_{-}$. For the spectral decomposition of $L^2(\cH_J^{0,1}(\Q)\backslash\widetilde{\cH_J^{0,1}}(\bA))$ we cite well known references \cite{Mo-Wa} and \cite{La}.
\begin{defn}
We call $L^2_{\xi,c}(\cR_J(\Q)\backslash\cR_J(\bA))$~(respectively~$L^2_{\xi,d}(\cR_J(\Q)\backslash\cR_J(\bA))$) the continuous spectrum of $L^2_{\xi}(\cR_J(\Q)\backslash\cR_J(\bA))$~
(respectively~discrete spectrum of $L^2_{\xi}(\cR_J(\Q)\backslash\cR_J(\bA))$).
\end{defn}
\begin{thm}\label{adelicFJ-exp-QDS}
Suppose that $F$ be a cusp form generating a quaternionic discrete series representation at the archimedean place. Let $\xi\in\Q\setminus\{0\}$.
%\begin{enumerate}
%\item We have 
%\[
%F_{\xi}(g)=\sum_{\nu\in\Q}\sum_{\chi_{\xi}}F_{\chi_{\xi}}(w_{\alpha}n((\nu,0,0,0),0)g),
%\]
%where $\chi_{\xi}$ ranges over characters of $\cN(\Q)\backslash\cN(\bA)$ parametrized by $(\xi,\beta,\gamma,\delta)\in\cW_J(\Q)$ of rank four satisfying the negativity with respect to Freudenthal's quartic form~(cf.~\cite[Corollary 1.2.3]{P1}).
%\item 
Suppose that $F$ generates a $K$-type $(\tau, V)$ of the quaternionic discrete series as a $K$-module. For each fixed $g\in\cG(\bA)$, as a function in $h\in\cR_J(\bA)$, we have 
\[
(F_{\xi}(hg),w)\in L^2_{\xi,c}(\cR_J(\Q)\backslash\cR_J(\bA))
\]
for $w\in V$ with the $K$-invariant inner product $(*,*)$ of $V$.
%\end{enumerate}
%Furthermore assume that $F$ is right invariant by a maximal open compact subgroup at non-archimedean places. 
%Suppose that $\cG$ is of type $G_2$. We then have
%\[
%F_{\xi}(g)=\sum_{\nu\in\Q}\sum_{\chi_{\xi}}F_{\chi_{\xi}}(w_{\alpha}n((\nu,0,0,0),0)g),
%\]
%where $\chi_{\xi}$ ranges over characters of $\cN(\Q)\backslash\cN(\bA)$ parametrized by $(\xi,\beta,0,\delta)\in W_J(\Q)$ of rank four satisfying the negativity with respect to the Freudenthal's quartic form.
\end{thm}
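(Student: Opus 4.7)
The plan is to argue by contradiction using Theorem \ref{FJ-model}. Assume that, for some $g\in\cG(\bA)$ and $w\in V$, the function $h\mapsto (F_\xi(hg), w)$ on $\cR_J(\Q)\backslash\cR_J(\bA)$ has nonzero projection onto the discrete spectrum $L^2_{\xi,d}(\cR_J(\Q)\backslash\cR_J(\bA))$. I intend to convert this nonvanishing into a nonzero $(\g,K)$-intertwiner from $\pi_n$ into a moderate growth induced representation from $R_J$, directly contradicting Theorem \ref{FJ-model}.

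First I would verify the input to the spectral decomposition: for each fixed $g\in\cG(\bA)$ and $w\in V$, the function $h\mapsto (F_\xi(hg), w)$ really belongs to $L^2_\xi(\cR_J(\Q)\backslash\cR_J(\bA))$. Left $\cR_J(\Q)$-invariance is furnished by Theorem \ref{adelic-FJ-exp}; the central character along the center of $\cN(\bA)$ is $\psi_\xi$ by the definition of $F_\xi$; and square-integrability on the quotient follows from the cuspidality of $F$, which provides rapid decrease of $F_\xi$ along the non-compact directions of $\cH_J^{0,1}(\Q)\backslash\cH_J^{0,1}(\bA)$ via an Iwasawa argument inside $\cG$. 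Proposition \ref{decomp-Jacobifm} then splits the function into its continuous and discrete components.

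Under the contradiction hypothesis, the discrete component is nonzero, so there exists an irreducible unitary subrepresentation $(\rho, V_\rho)\subset L^2_{\xi,d}(\cR_J(\Q)\backslash\cR_J(\bA))$ of $\cR_J(\bA)$ for which the $\rho$-isotypic projection $P_\rho(F_\xi(\cdot g), w)$ is not identically zero. By Proposition \ref{UnitaryDual-Jacobi}, the archimedean component $\rho_\infty$ is irreducible and of the form $U_1\otimes\Omega_\xi$. Following the matrix-coefficient construction used in the proof of Theorem \ref{FJ-model}, I would build a $(\g,K)$-map
\[
\Phi:\pi_n\longrightarrow C^\infty\text{-}\mathrm{Ind}_{R_J}^G \rho_\infty
\]
by pairing the projection against a suitable vector in the dual of $V_{\rho_\infty}$ and restricting to the archimedean place, identifying $\pi_n$ inside the archimedean representation generated by $F$ via Proposition \ref{module_QMF}. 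Cuspidality of $F$ forces $\Phi$ to land in the moderate growth subspace $C^\infty_{\rm mod}\text{-}\mathrm{Ind}_{R_J}^G\rho_\infty$, while nonvanishing of the $\rho$-isotypic projection guarantees $\Phi\not\equiv 0$. This contradicts Theorem \ref{FJ-model}, so the discrete component must vanish.

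The main obstacle will be making the construction of $\Phi$ fully rigorous: one must verify that the $\rho$-isotypic projection commutes with right translation by $\cG(\bA)$ so that restricting to the archimedean place yields an honest $(\g,K)$-intertwiner, that moderate growth on the ambient group is inherited by the projection onto the induced space, and that the multiplier system of $\rho_\infty|_{H_J^{0,1}}$ matches the setup of Theorem \ref{FJ-model}. Once these bookkeeping items are handled, the desired conclusion $(F_\xi(\cdot g), w)\in L^2_{\xi,c}(\cR_J(\Q)\backslash\cR_J(\bA))$ follows immediately.
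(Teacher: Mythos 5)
Your proposal is correct and follows essentially the same route as the paper: both argue by contradiction, decompose the discrete spectrum $L^2_{\xi,d}(\cR_J(\Q)\backslash\cR_J(\bA))$ into irreducible unitary representations of the form $\omega_{\xi,\bA}\boxtimes\sigma$, pass to the archimedean component to produce a nonzero $(\g,K)$-intertwiner from $\pi_n$ into the moderate growth induction from $R_J$, and contradict Theorem \ref{FJ-model}. The bookkeeping points you flag (unitarity of the archimedean factor, matching multiplier systems, the matrix-coefficient construction from the proof of Theorem \ref{FJ-model}) are exactly the points the paper's own proof addresses.
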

\begin{proof}
%The first assertion is deduced from Theorem \ref{adelic-FJ-exp} and the result on the generalized Whittaker functions \cite[Theorem 1.2.1]{P1}. 
To verify the assertion suppose that $(F_{\xi}(hg),w)$ has non-zero contribution from $L^2_{\xi,d}(\cR_J(\Q)\backslash\cR_J(\bA))$. Then the restriction of $F_{\xi}$ to the archimedean component gives rise to the intertwining operator from a quaternionic discrete series representation to a $C^{\infty}$-induction from an irreducible admissible representation of the real Jacobi group $R_J$ as in Theorem \ref{FJ-model}.  For this we note that $L^2_{\xi,d}(\cR_J(\Q)\backslash\cR_J(\bA))$ decomposes discretely into a sum of irreducible unitary representations of $\cR_J(\bA)$ given by outer tensor products $\omega_{\xi,\bA}\boxtimes \sigma$ with irreducible unitary representations $\sigma$ of $\widetilde{\cH_J^{0,1}}(\bA)$ since  $L^2_d(\cH_J^{0,1}(\Q)\backslash\widetilde{\cH_J^{0,1}}(\bA))$ decomposes into a discrete sum of irreducible unitary representations. In addition to this, we further remark that the representations $\omega_{\xi,\bA}\boxtimes \sigma$ above admit decompositions into pure tensor products of irreducible admissible representations over local fields, for which note that the representations at the  archimedean place should be unitary since automorphic forms in $\sigma$ are verified to be square-integrable in the non-adelic formulation. However, what we have supposed contradicts to the non-existence of the Fourier-Jacobi model for quaternionic discrete series of $G$ in Theorem \ref{FJ-model}. 
\end{proof}

\section{Application:~cusp forms constructed by Pollack}\label{Application}
We first provide a general fact on theta correspondences for automorphic representations. 
\begin{prop}\label{TheteCorresp}
Let $({\cal G},{\cal H})$ be a pair of reductive algebraic groups over a number field unramified at every non-archimedean place~(see the definition of ``unramified'' for \cite[Section 7]{H1}) and suppose that they form a dual pair, for which we assume that ${\cal H}$ is an orthogonal group of even degree when $({\cal G},{\cal H})$ is a symplectic-orthogonal dual pair. 
Let $F$ be a cusp form on the adelization of ${\cal G}$ and let $F$ generate an irreducible cuspidal representation, say $\pi_F$. We denote the theta lift of $\pi_F$ by $\theta(\pi_F)$. 

Suppose that $F$ is unramified at every non-archimedean place and that, at every archimedean place $v$, $\theta(F)$ generates an irreducible $K_v$-representation as a $K_v$-module, with a maximal compact subgroup $K_v$ of the completion of ${\cal H}$ at $v$. 
Then, if $\theta(F)$ is cuspidal and unramified at every non-archimedean place, $\theta(F)$ generates an irreducible cuspidal representation for ${\cal H}$. 
\end{prop}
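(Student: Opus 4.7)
The plan is to work entirely inside the cuspidal spectrum and exploit its discrete decomposition. Let $V$ be the $(\mathfrak{h}_{\infty},K_{\infty})\times\cH(\bA_f)$-submodule of the space of cusp forms on $\cH(k)\backslash\cH(\bA)$ generated by $\theta(F)$. Since the cuspidal spectrum decomposes discretely as a direct sum of irreducible cuspidal automorphic representations occurring with finite multiplicities, $V$ splits as a finite direct sum $V=\bigoplus_{i=1}^{n}V_i$ of irreducible cuspidal automorphic representations of $\cH(\bA)$; correspondingly $\theta(F)=\sum_{i=1}^{n}f_i$ with $f_i\in V_i$.

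The crux is then a short archimedean $K$-type argument. Fix any archimedean place $v_0$. The decomposition $V=\bigoplus V_i$ is in particular a decomposition as $K_{v_0}$-modules, and so the $K_{v_0}$-module generated by $\theta(F)$ under right translation equals the internal direct sum $\bigoplus_{i=1}^{n}\langle K_{v_0}\cdot f_i\rangle$, where $\langle K_{v_0}\cdot f_i\rangle$ denotes the $K_{v_0}$-submodule of $V_i$ generated by $f_i$. The hypothesis that this $K_{v_0}$-module is irreducible forces all but one summand to vanish, say $f_i=0$ for $i\ne i_0$. Hence $\theta(F)=f_{i_0}\in V_{i_0}$, and since $\theta(F)$ generates $V$, we conclude $V=V_{i_0}$, which is the desired irreducible cuspidal representation of $\cH(\bA)$.

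The remaining hypotheses play a supporting role. The unramified assumptions on $F$ and on $\theta(F)$ at every non-archimedean place ensure that $\theta(F)$ carries a non-zero spherical vector at each finite place, so the finite-part $\cH(\bA_f)$-action preserves admissibility of $V$ and each $V_{i,v}$ at finite $v$ is spherical (in fact pinned down by $\pi_{F,v}$ through the unramified Howe correspondence). Cuspidality of $\theta(F)$ places $V$ inside the cuspidal spectrum where the decomposition above is available. Only one archimedean place is actually needed for the irreducibility step; the hypothesis gives it at every such place.

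The main point requiring care, which I expect to be the chief technical obstacle, is to justify rigorously that $V$ splits as a finite \emph{internal} direct sum of irreducible automorphic representations in a way that is simultaneously compatible with the $K_{v_0}$-action on smooth vectors, rather than merely having those irreducibles as subquotients. This follows from complete reducibility of the cuspidal $L^2$-spectrum together with finite multiplicity, applied to the finitely generated admissible $(\mathfrak{h}_{\infty},K_{\infty})\times\cH(\bA_f)$-module generated by $\theta(F)$; but one should be explicit about the passage from the unitary $L^2$-decomposition to the algebraic decomposition on smooth vectors, since it is on the latter that the $K_{v_0}$-irreducibility hypothesis is naturally formulated.
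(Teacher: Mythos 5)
There is a genuine gap at the central step of your argument. Writing $\theta(F)=\sum_{i=1}^{n}f_i$ with $f_i\in V_i$, you assert that the $K_{v_0}$-module generated by $\theta(F)$ equals the \emph{internal direct sum} $\bigoplus_{i}\langle K_{v_0}\cdot f_i\rangle$, so that its irreducibility forces all but one $f_i$ to vanish. This is false. The $K_{v_0}$-module generated by $\sum_i f_i$ is the image of the map $T\mapsto (Tf_1,\dots,Tf_n)$ from the group algebra of $K_{v_0}$ into $\bigoplus_i V_i$, i.e.\ the joint cyclic module; this is contained in, but in general strictly smaller than, $\bigoplus_i\langle K_{v_0}\cdot f_i\rangle$. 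In particular, if $V_1\not\cong V_2$ as automorphic representations but $f_1$ and $f_2$ generate isomorphic irreducible $K_{v_0}$-modules and correspond to each other under such an isomorphism, then $\langle K_{v_0}\cdot(f_1+f_2)\rangle$ is a diagonal copy of that single $K_{v_0}$-type and is irreducible, while $\theta(F)$ generates the reducible representation $V_1\oplus V_2$. The archimedean $K$-type hypothesis alone therefore cannot separate the cuspidal constituents, and your argument only settles the case where this degeneracy is absent.

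This is exactly why the unramified hypotheses, which you describe as playing only a ``supporting role,'' are in fact the heart of the matter. The paper begins with the same discrete decomposition with finite multiplicities, but then invokes local Howe duality --- \cite[Theorem 7.1]{H1} for unramified representations at the non-archimedean places and \cite[Theorem A1]{H2} at the archimedean places --- to conclude that every local component of $\theta(\pi_F)$ is a finite multiple of a \emph{single} irreducible representation, unramified at each finite place. Consequently the spherical vector $\theta(F)_v$ is a Hecke eigenvector at every finite place, with eigenvalues common to all the $V_i$, and the global irreducibility is then obtained from the criterion of \cite[Theorem 3.1]{NPS}, which is designed precisely to combine Hecke-eigenform data at the finite places with the archimedean irreducibility hypothesis. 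To close the gap in your proof you would need this finite-place input (or some equivalent isotypy statement) to exclude non-isomorphic constituents $V_i$ sharing the $K_{v_0}$-type of $\theta(F)$; the $K_{v_0}$-argument by itself does not do so.
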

\begin{proof}
We first recall that the cuspidal spectrum decomposes discretely with finite multiplicities. This implies that the cuspidal representation $\theta(\pi_F)$ generated by $\theta(F)$ is reducible and a finite sum of irreducible cuspidal representations of ${\cal H}$, from which the same property~(reducibility and the finiteness of the sum) holds for every local component of $\theta(\pi_F)$. 

Now review a general theory of local theta correspondences by Howe \cite{H1} and \cite{H2}.  
The well known conjecture by Howe says that a local theta lift from an irreducible admissible representation has a unique irreducible subquotient, which is proved by  \cite[Theorem 7.1]{H1}~(respectively~\cite[Theorem A1]{H2}) for unramified representations over non-archimedean fields~(respectively~the archimedean case). In view of this we see that every local component of $\theta(\pi_F)$ is a finite copy of one irreducible admissible representation, which is unramified at each non-archimedean place by \cite[Theorem 7.1]{H1}. 
%By the non-archimedean theta correspondence \cite[Theorem 7.1]{H1}
%~(see also \cite[Theorem 10.2]{H1}) 
%a non-archemedean theta lift from an irreducible unramified representation has an irreducible unramified subquotient. 
%and their unramified vectors correspond to each other. 
%We thereby see that local components of $\theta(\pi_F)$ for all non-archimedean places are finite copies of a single irreducible unramified representation. 
From this we deduce that $\theta(F)$ should be a Hecke-eigen vector unramified at every non-archimedean place. Here note that an unramified vector of an irreducible uniramified representation is a Hecke-eigen vector as is well known. 
On the other hand, by the assumption at archimedean places for $\theta(F)$, $\theta(F)$ should be inside one irreducible representation. As a result of \cite[Theorem 3.1]{NPS} $\theta(F)$ generates an irreducible cuspidal representation. 
\end{proof}
We apply the similar argument of the proof above to Pollack's theta lift~(cf.~\cite[Theorem 4.1.1]{P3})  to the adele group $SO(4,4)(\bA)$ of $SO(4,4)$ defined by the quadratic space over $\Q$ unimodular at every finite places, for which we remark that Pollack deals with theta lifts to $SO(4,8k+4)$ for $k\ge 0$. However, let us note that the representation type of $\theta(F)$ at the archemedean place is not determined in \cite{P3}. For the case of $k=0$, which is related to the exceptional group $G_2$, we have the following:
\begin{prop}
For a holomorphic Siegel cusp form $F$ of degree two with respect to the Siegel modular group $Sp(4,\Z)$, let $\theta(F)$ be the theta lift from $F$ to the adele group $SO(4,4)(\bA)$ given by Pollack above. 
%more precisely, it is the restriction of a theta lift to $O(4,4)$ to $SO(4,4)$). 
%Here note that $SO(4,4k+4)$ is defined over $\Q$.
\begin{enumerate}
\item Suppose that $F$ is a Hecke eigenform. Then $\theta(F)$ is a Hecke-eigen cusp form and generates a quaternionic discrete series representation if $\theta(F)\not\equiv 0$. 
In fact, $\theta(F)$ generates an irreducible cuspidal representation of the adele group $SO(4,4)(\bA)$.
\item Suppose that $F$ has the non-zero Fourier coefficient indexed by 
$\begin{pmatrix}
1 & 0\\
0 & 1
\end{pmatrix}$. 
Let us consider the restriction of $\theta(F)$ to the adele group of the exceptional group $G_2$ as in Pollack \cite{P3}. This is a non-zero cusp form on the adele group $G_2(\bA)$ of $G_2$ generating a quaternionic discrete series representation at the archimedean place, which obviously implies the non-vanishing of $\theta(F)$ itself. 
\end{enumerate}
\end{prop}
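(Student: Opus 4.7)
\emph{Part 1.} I would apply Proposition \ref{TheteCorresp} to the dual pair $(Sp(4),SO(4,4))$ over $\Q$, defined by the unimodular split quadratic form. Since $F$ is a Hecke eigen Siegel cusp form of level one, it generates an irreducible cuspidal representation of $Sp(4,\bA)$ and is unramified at every finite place. Pollack's construction \cite[Theorem 4.1.1]{P3} provides that $\theta(F)$ is a cusp form, and the unramifiedness of the quadratic form at every finite place transports the unramified Hecke data, so $\theta(F)$ is unramified at every finite place. The non-routine hypothesis is archimedean: one must verify that $\theta(F)$ generates an irreducible $K_{\infty}$-module at the real place, which amounts to identifying the archimedean local theta lift of the holomorphic Siegel discrete series generated by $F$ as a specific quaternionic discrete series of $SO(4,4)(\R)$, with $\theta(F)$ sitting in its minimal $K_{\infty}$-type. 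Granting this archimedean input, Proposition \ref{TheteCorresp} yields that $\theta(F)$ generates an irreducible cuspidal representation of $SO(4,4)(\bA)$, whence $\theta(F)$ is Hecke-eigen and its archimedean component is a quaternionic discrete series.

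\emph{Part 2.} For the restriction $\theta(F)|_{G_2(\bA)}$ I would argue in three steps. First, non-vanishing: by Pollack's explicit Fourier coefficient formula in \cite[Theorem 4.1.1]{P3}, a specific rank-four Fourier coefficient of $\theta(F)|_{G_2(\bA)}$ is a non-zero multiple of the Fourier coefficient of $F$ at $\begin{pmatrix} 1 & 0 \\ 0 & 1 \end{pmatrix}$, which is non-zero by hypothesis; hence the restriction does not vanish and $\theta(F)$ itself is non-zero. Second, cuspidality on $G_2$ follows from cuspidality on $SO(4,4)$: every $\Q$-rational unipotent radical of a proper parabolic subgroup of $G_2$ is contained in the unipotent radical of a proper $\Q$-parabolic of $SO(4,4)$ under Pollack's embedding, so vanishing of constant terms of $\theta(F)$ on $SO(4,4)$ forces vanishing of constant terms on $G_2$. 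Third, the archimedean type: from part 1 the weight of $\theta(F)$ on $SO(4,4)(\R)$ is the minimal $K$-type of a quaternionic discrete series, and its $K_{G_2}$-branching contains the minimal $K_{G_2}$-type of a quaternionic discrete series of $G_2$; moreover, the Dirac--Schmid annihilation for $SO(4,4)$ restricts to the Dirac--Schmid annihilation for $G_2$ on the corresponding isotypic component. Proposition \ref{module_QMF} then identifies the $(\g,K_{G_2})$-module generated by $\theta(F)|_{G_2(\bA)}$ with a quaternionic discrete series of $G_2$.

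The main obstacle is the local archimedean step in part 1: explicit identification of the local theta lift of the holomorphic Siegel discrete series as a particular quaternionic discrete series of $SO(4,4)(\R)$, and the verification that Pollack's theta kernel places $\theta(F)$ in the minimal $K_{\infty}$-type rather than in some larger or off-type subspace. Once this archimedean weight bookkeeping is pinned down, the $K_{G_2}$-branching needed in part 2 and the applications of Propositions \ref{TheteCorresp} and \ref{module_QMF} are essentially formal, and the remainder of the argument is a combination of standard cuspidality preservation under embedding and Pollack's explicit Fourier coefficient computation.
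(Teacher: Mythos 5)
Your overall strategy matches the paper's (Proposition \ref{TheteCorresp} for irreducibility, Pollack's construction for cuspidality, the archimedean results of Section 2 for the discrete-series statement), but there are three genuine gaps. First, you apply Proposition \ref{TheteCorresp} to the pair $(Sp(4),SO(4,4))$, whereas the dual pair is $(Sp(4),O(4,4))$ and the proposition is stated for the full even orthogonal group. The paper first obtains irreducibility of the lift as a representation of $O(4,4)(\bA)$, then handles the descent: the local restriction to $SO(4,4)(\Q_v)$ may split into an irreducible representation and its outer conjugate, both unramified with the same Hecke eigenvalues, so $\theta(F)$ remains a Hecke eigenform, and irreducibility over $SO(4,4)(\bA)$ is then recovered from \cite[Theorem 3.1]{NPS}. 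This $O$-versus-$SO$ step is missing from your argument. Second, you leave the archimedean input as an unresolved ``main obstacle,'' to be granted. The paper does not compute the archimedean local theta lift at all: it cites \cite[Corollaries 3.3.7 and 4.2.3]{P3}, which already establish that $\theta(F)$ and $\theta(F)|_{G_2(\bA)}$ are cuspidal \emph{quaternionic} modular forms (i.e.\ satisfy the $\tau_n$-equivariance and $\cD\cdot\theta(F)=0$), and then applies Proposition \ref{module_QMF} to conclude that they generate quaternionic discrete series. So the step you flag as the hard point is exactly what Pollack's corollaries plus Proposition \ref{module_QMF} dispose of, and your proposal does not close it.

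Third, your cuspidality argument for the restriction to $G_2$ is backwards. If $N_{G_2}\subset N_{SO(4,4)}$, then by Fubini the vanishing of the constant term along the \emph{smaller} group $N_{G_2}$ implies the vanishing along the larger one, not the converse; vanishing of $\int_{N_{SO(4,4)}(\Q)\backslash N_{SO(4,4)}(\bA)}$ does not force vanishing of $\int_{N_{G_2}(\Q)\backslash N_{G_2}(\bA)}$. Cuspidality of $\theta(F)|_{G_2(\bA)}$ requires a separate argument, which the paper takes from \cite[Corollary 4.2.3]{P3}; likewise the non-vanishing under the hypothesis on the Fourier coefficient at the identity matrix is exactly \cite[Corollary 4.2.4]{P3}, consistent with your step (a).
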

\begin{proof}
In Pollack \cite[Corollary 3.3.7]{P3} and \cite[Corollary 4.2.3]{P3} it is already shown that $\theta(F)$ and $\theta(F)|_{G_2(\bA)}$ are cuspidal quaternionic form on $SO(4,4)(\bA)$ and $G_2(\bA)$ respectively. Proposition \ref{module_QMF} then tells us that these cusp forms generate quaternionic discrete series representations. 

Furthermore, from the unramified data to construct $\theta(F)$ as in \cite[Theorem 4.1.1]{P3}, we see that $\theta(F)$ is unramified at every finite place. Let us note that $\theta(F)$ can be veiwed as the restriction of an automorphic form on $O(4,4)(\bA)$ unramified at every finite place.
We note that a Hecke-eigen holomorphic Siegel cusp form of full level generates an irreducible cuspidal represnetation~(cf.~\cite[Corolloary 3.2]{NPS}). As we have seen in Proposition \ref{TheteCorresp}, the cuspidal representation generated by the global theta lift $\theta(F)$ to the adele group $O(4,4)(\bA)$ is irreducible, and we see that every local component is therefore irreducible. The restriction of the local component to $SO(4,4)(\Q_v)$ for each place $v$ is a sum of an irreducible admissible representation and its conjugate by the non-trivial element of the $O(4,4)(\Q_v)/SO(4,4)(\Q_v)$, both of which are irreducible unramified representations with the same Hecke eigenvalue. From the unramified-ness of $\theta(F)$ mentioned above we see that $\theta(F)$ is a Hecke eigenform at every non-archimedean place since an unramified vector of an irreducible unramified representation is a Hecke-eigen vector. 

%Let us next determine the archimedean representation $\theta(F)$ generates. 
%In \cite[Corollary 3.3.7]{P3} $\theta(F)$ is verified to be a quaternionic modular form. 
%Let $W$ be a generalized Whittaker function with non-zero contribution to the Fourier expansion of $\theta(F)$, whose existence is ensured by the assumption $\theta(F)\not\equiv 0$ and Theorem \ref{adelic-FJ-exp} on the Fourier-Jacobi expansion. 
%The Fourier transformation of $\theta(F)$ at the archimedean place with respect to a suitable character of the real Heisenberg group $N$ induces a non-trivial intertwining operator from the archimedean component of the cuspidal representation generated by $\theta(F)$ into $\pi(W)$~(see Propositon \ref{WhittakerModule} for this notation). 
Therefore, reviewing \cite[Theorem 3.1]{NPS}, we see that $\theta(F)$ generates an irreducible cuspidal representation of $SO(4,4)(\bA)$.
%~(if we are able to show Proposition 3). 
%The non-triviality of the intertwining operator verifies that the $K_{\infty}$-module should be inside the irreducible factor isomorphic to the quaternionic discrete series. As a result the $(\g,K)$-module generated by $\theta(F)$ should be isomorphic to a quaternionic discrete series representation. Otherwise, $\theta(F)$ has to be inside the kernel of the intertwining operator, which contradicts to the non-triviality. The argument so far shows that $\theta(F)$ generates an irreducible cuspidal representation in view of \cite[Theorem 3.1]{NPS}.

We are left with the assertion on the non-vanishing of $\theta(F)|_{G_2(\bA)}$ under the assumtion of the 2nd assertion. This is an immediate consequence of \cite[Corollary 4.2.4]{P3}). 
%unramified at every finite place~(check this)
%and that its Fourier expansion has a non-zero term whose archimedean component is a generalized Whittaker function for a character of the Heisenberg group~(cf.~\cite[Corollary 4.2.4]{P3}). 
%Here $G_2(\bA)$ denotes the adelization of $G_2$. With the discrete decomposability of the cuspidal spectrum and the non-zero intertwining operator defined by the Fourier transform, we verify that the cuspidal representation $\theta(F)|_{G_2(\bA)}$ generates has a quaternionic discrete series as its archimedean component since the condition of the $K$-type of $\theta(F)|_{G_2(\bA)}$ implies that it should sit inside one irreducible admissible representation and the non-zero intertwining operator therefore induces an isomorphism. 
We are therefore done. 
\end{proof}

\appendix
\section{The non-adelic Fourier-Jacobi expansion of cusp forms generating quaternionic discrete series}\label{FJ-exp-QDS}
To discuss the Fourier-Jacobi expansion in the non-adelic setting 
we suppose that we are given an arithmetic subgroup $\Gamma$ of $G$, contained in $\cG(\Q)$.
%$\cG$ is equipped with the $\Z$-structure given by a Chevalley group over $\Z$. We now regard the group $G=\cG(\R)$ as the group of real points for a Chevalley group over $\Z$, which is a natural choice of a $\Z$-structure of $G$. 
%In what follows, let $\Gamma$ denote a arithmetic subgroup of $G$ defined by a fixed $\Z$-structure of $\cG$~(such as the Chevalley group) and every Lie group is a real one. 
%Such $G$ has the arithmetic subgroup of level one in $G$ defined by the group scheme structure of the Chevalley group. Such a group for the case of $G_2$ is given by Gan-Gross-Savin \cite{G-G-S}.
%
%In general, the Chevalley group is generated by unipotent elements and the simple reflections arising from an algebraic relation of unipotent elements over $\Z$~ (cf.~\cite[Lemma 8.1.4]{Sp}). For the case of $G_2$ we remark that the arithmetic subgroup $\Gamma$ is generated by $N(\Z)\rtimes SL_2(\Z)~(=(N\rtimes SL_2(\R))\cap\Gamma)$ and $w_{\alpha}$ and that $w_{\beta}$ belongs to $SL_2(\Z)$, which means the $SL_2(\Z)$-part of $N(\Z)\rtimes SL_2(\Z)$. For $w_{\alpha}$ and $w_{\beta}$ see Section \ref{FJ-exp}.

Before the statement of the theorem below we note that the set of unitary characters of $N$, which factor through $N/N_0\simeq\R\oplus J\oplus J^{\vee}\oplus \R$, can be viewed as the quadratic space equipped with the symplectic form $\langle*,*\rangle$~(cf.~Section \ref{Cubicforms-Groups}). Recall that each element of $\R\oplus J\oplus J^{\vee}\oplus \R$ has been written as $(a,b,c,d)$, where $a,d\in\R,~b\in J,~c\in J^{\vee}$. Characters of $N/N_0$ parametrized by $w'\in\R\oplus J\oplus J^{\vee}\oplus \R$ are given by
\[
N/N_0\simeq \R\oplus J\oplus J^{\vee}\oplus \R\ni w\mapsto\exp(2\pi\sqrt{-1}\langle w',w\rangle).
\]
To state the theorem we introduce $N_{\Gamma}:=N\cap\Gamma$ and let $L_{\Gamma}^*$ be the dual lattice of $L_{\Gamma}:=\{a\in\Q\mid (0;a)\in N_{\Gamma}\}$, given by $L_{\Gamma}^*=\{a'\in\Q\mid a'a\in\Z~\forall a\in L_{\Gamma}\}$. In addition to this, we make the following assumption on $\Gamma$.
\begin{asm}
The reflection $w_{\alpha}$ with respect to $\alpha$ belongs to $\Gamma$, and $H_J^{0,1}\cap\Gamma$ stabilizes $N_{\Gamma}$.
\end{asm}
\noindent
For example, the level one subgroup of $G_2$ in Gan-Gross-Savin \cite{G-G-S} satisfies this assumption.

The theorem we are going to state is about the Fourier-Jacobi expansion of a cusp form $F$ given by
\[
F(g)=\sum_{\xi\in L_{\Gamma}^*}F_{\xi}(g),
\]
where $F_{\xi}$ denotes the Fourier transformation of $F$ with respect to the central character of $N$ indexed by $\xi\in L_{\Gamma}^*$. 

Let us think of $F_{\xi}$ in detail for $\xi\in L_{\Gamma}^*\setminus\{0\}$. 
From \cite[Theorem 5.1,~Sections 5,~6]{Co-G-1} we now recall that $\Hom_N(\eta_{\xi},L^2(N_{\Gamma}\backslash N))$ is finite dimensional and thus 
\[
L^2_{\xi}(N_{\Gamma}\backslash N)\simeq\Hom_N(\eta_{\xi},L^2_{\xi}(N_{\Gamma}\backslash N))\otimes V_{\xi}\simeq\oplus_{i=1}^{m(\xi)}\theta_i(V_{\xi}),
\]
where $\{\theta_i\mid 1\le i\le m(\xi)\}$ denotes a basis of $\Hom_N(\eta_{\xi},L^2_{\xi}(N_{\Gamma}\backslash N))$. We have a more detailed description of $\{\theta_i\mid 1\le i\le m(\xi)\}$. By applying \cite[Theorem 5.1,~Sections 5,~6]{Co-G-1}~(see also \cite[Section 6.2, Propositions 6.6,~6.7]{Na-1}) to our situation we have the following: 
\begin{prop}\label{AutomFm-HeisenbergGp}
Let $\xi\in L_{\Gamma}^*\setminus\{0\}$. We put $\Lambda_{\Gamma}:=\{\lambda=(a,b,c,d)\in W_J\mid (\lambda;t_{\lambda})\in N_{\Gamma}~\exists t_{\lambda}\in\Q\}$ and $\hat{\Lambda}_{\Gamma,\xi}:=\{\nu\in W_J(\Q)\mid \xi\langle\nu,\lambda\rangle\in\Z~\forall\lambda\in \Lambda_{\Gamma}\}$, the latter of which is the dual lattice of $\Lambda_{\Gamma}$ defined by the bilinear form $\xi\langle*,*\rangle$. With the projection ${\rm Pr}:W_J\ni (a,b,c,d)\mapsto (a,b,0,0)\in W_J^0:=\{(a,b,0,0)\mid (a,b)\in\R\oplus J\}$ we introduce
\[
\Lambda^0_{\Gamma}:={\rm Pr}(\Lambda_{\Gamma}),
%\Lambda^{00}_{\Gamma}:=\Lambda_{\Gamma}\cap W_J^0,
\quad\hat{\Lambda}_{\Gamma,\xi}^0:={\rm Pr}(\hat{\Lambda}_{\Gamma,\xi}).
\]
\begin{enumerate}
\item We have $m(\xi)=\sharp(\hat{\Lambda}_{\Gamma,\xi}^0/\Lambda_{\Gamma}^0)$.
\item For each representative $\nu$ of $\hat{\Lambda}_{\Gamma,\xi}^0/\Lambda_{\Gamma}^0$ we define $\Theta_{\xi}^{\nu}\in\Hom_N(\eta_{\xi},L^2_{\xi}(N_{\Gamma}\backslash N))$ by
\[
\Theta_{\xi}^{\nu}(h)(n(w,t)):=\sum_{\gamma\in N_m\cap\Gamma\backslash N_{\Gamma}}h(n(\nu,0)\gamma n(w,t))\quad(h\in V_{\xi}\simeq L^2(\R\oplus J)),
\]
where recall that the representation space $V_{\xi}$ of $\eta_{\xi}$ can be regarded as $L^2(\R\oplus J)$.

The $\{\Theta_{\xi}^{\nu}\mid\nu\in \hat{\Lambda}_{\Gamma,\xi}^0/\Lambda_{\Gamma}^0\}$ forms a basis of $\Hom_N(\eta_{\xi},L^2_{\xi}(N_{\Gamma}\backslash N))$.
\end{enumerate}
\end{prop}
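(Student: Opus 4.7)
\medskip

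\noindent\textbf{Proof proposal for Proposition \ref{AutomFm-HeisenbergGp}.}

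The plan is to reduce the assertion to a direct application of the general results of Corwin--Greenleaf~\cite{Co-G-1} on square-integrable functions on a nilmanifold associated with a two-step nilpotent Lie group, adapted to the present Heisenberg group $N$ with lattice $N_{\Gamma}$. First I would record the basic setup: $N$ has center $Z(N)=\{n(0,t)\mid t\in\R\}$ with quotient $N/Z(N)\simeq W_J$, the subgroup $N_m$ is a maximal abelian subgroup whose Lie algebra is a real polarization for the linear functional on $\mathrm{Lie}(N)$ vanishing outside the center and taking the value $\xi$ there, and the Schr\"odinger representation $\eta_\xi=L^2\text{-}\mathrm{Ind}_{N_m}^N\psi_\xi$ is (up to unitary equivalence) the unique irreducible unitary representation of $N$ with central character $\psi_\xi$, by Stone--von Neumann~(cf.~Section~\ref{Rep-JacobiGp}). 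This uniqueness is what allows one to identify $\Hom_N(\eta_\xi,L^2_\xi(N_\Gamma\backslash N))$ with the multiplicity space, and gives the first isomorphism stated before the proposition.

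Next, for part (1), I would invoke the Howe/Corwin--Greenleaf multiplicity formula. The abelian quotient $N/Z(N)$ inherits the symplectic form $\xi\langle *,*\rangle$ from $\xi$, and $\Lambda_\Gamma\subset W_J$ is exactly the image of $N_\Gamma$ in $N/Z(N)$; the dual lattice $\hat\Lambda_{\Gamma,\xi}$ with respect to $\xi\langle *,*\rangle$ is the set of rational vectors whose associated character of $N/Z(N)$ is trivial on $N_\Gamma Z(N)/Z(N)$. Projecting along $\mathrm{Pr}$ onto the Lagrangian complement $W_J^0$ of $N_m/Z(N)$ in $W_J$, the theorem of~\cite[Theorem~5.1]{Co-G-1} identifies the multiplicity of $\eta_\xi$ with the cardinality of the finite abelian group $\hat\Lambda_{\Gamma,\xi}^0/\Lambda_\Gamma^0$. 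In more concrete terms, a choice of coset representative $\nu$ corresponds to a point of a discrete Heisenberg orbit on the realization $V_\xi\simeq L^2(\R\oplus J)$, and distinct cosets yield orbits that are pairwise disjoint, whence the count.

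For part (2), I would verify that the explicit series
\[
\Theta_\xi^\nu(h)(n(w,t))=\sum_{\gamma\in N_m\cap\Gamma\backslash N_\Gamma}h(n(\nu,0)\gamma\, n(w,t)),
\]
gives a well-defined intertwiner from $(\eta_\xi,V_\xi)$ into $L^2_\xi(N_\Gamma\backslash N)$ for each $\nu\in\hat\Lambda_{\Gamma,\xi}^0$ representing a class modulo $\Lambda_\Gamma^0$. The convergence of the series on the Schwartz vectors of $V_\xi$ is standard since $h$ is rapidly decreasing on $\R\oplus J\simeq N_m\backslash N$; the $L^2$-boundedness then follows by a Parseval computation on $N_\Gamma\backslash N$. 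Left $N_\Gamma$-invariance is where the lattice condition on $\nu$ enters: for $\gamma_0\in N_\Gamma$ one rewrites $n(\nu,0)\gamma\gamma_0=n(\nu,0)\gamma_0(\gamma_0^{-1}\gamma\gamma_0)$ and absorbs the cocycle coming from the commutator $[\nu,\gamma_0]$ via the identity $\xi\langle\nu,\mathrm{Pr}(\gamma_0)\rangle\in\Z$, so that the relevant phase is trivial; the intertwining property with $\eta_\xi$ is then immediate from the definition. Linear independence of $\{\Theta_\xi^\nu\}_\nu$ follows because for a Schwartz $h$ supported in a small neighborhood of $0\in\R\oplus J$, the translates by distinct $\nu$ have disjoint supports.

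The hard part, and the place where the argument is most delicate, is the completeness assertion that the $\{\Theta_\xi^\nu\}$ span the whole intertwiner space. Given parts of the above, this is forced by the dimension count from part~(1): we produce $\sharp(\hat\Lambda_{\Gamma,\xi}^0/\Lambda_\Gamma^0)$ linearly independent intertwiners, and by the Corwin--Greenleaf multiplicity formula the space has exactly this dimension, so the $\Theta_\xi^\nu$ must form a basis. Alternatively, one could prove completeness directly by expanding an arbitrary $\phi\in L^2_\xi(N_\Gamma\backslash N)$ in a Fourier series along $N_m/(N_m\cap\Gamma)$ and identifying the coefficients with the evaluations $\Theta_\xi^\nu(h)(e)$ for a cleverly chosen $h$, but the dimension-count route is cleaner and is what I would follow, mirroring~\cite[Propositions 6.6, 6.7]{Na-1}.
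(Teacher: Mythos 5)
Your proposal is correct and follows essentially the same route as the paper, which derives the proposition directly from the Corwin--Greenleaf multiplicity theory (\cite[Theorem 5.1, Sections 5, 6]{Co-G-1}) together with the explicit theta-type intertwiners as in \cite[Propositions 6.6, 6.7]{Na-1}; the paper in fact gives no further argument beyond these citations, so your write-up merely fills in the details of the intended proof. The only point to tighten is that the lattice condition $\xi\langle\nu,\lambda\rangle\in\Z$ is first needed to make the summand $h(n(\nu,0)\gamma\,n(w,t))$ well defined on the cosets $N_m\cap\Gamma\backslash N_{\Gamma}$ (absorbing the central cocycle $n(0,\langle\nu,\cdot\rangle)$ through $\psi_{\xi}$), after which left $N_{\Gamma}$-invariance follows by re-indexing the sum.
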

With an orthogonal basis $\{h_j^{\xi}\}$ of $V_{\xi}$ we express $F_{\xi}$ as 
\[
F_{\xi}(ug)=\sum_{i=1}^{m(\xi)}\sum_{j}\theta_i(h_j^{\xi})(u)c'_{ij}(g)\quad((u,g)\in N\times G),
\]
with $\bV_n$-valued functions $\{c'_{ij}\}$ on $G$. 
For an arbitrary fixed $s\in H_J^{0,1}$ we put 
\[
%c_{ij}(sg):=(\sum_{j'}\theta(h_{j'}^{\xi})c'_{ij'}(sg),\theta(\Omega_{\xi}(s)h_j^{\xi}))_{\xi},
c_{ij}(sg):=(F_{\xi}(*sg),\theta(\Omega_{\xi}(s)h_j^{\xi})(*))_{\xi},
\] viewed as a function on $G$ 
%with $j'$ running over the index set of the basis $\{h_{j'}^{\xi}\}$
, where $(*,*)_{\xi}$ denotes the canonical $\bV_n$-valued paring for $(\theta(V_{\xi})\otimes\bV_n)\times \theta(V_{\xi})\simeq(V_{\xi}\otimes\bV_n)\times V_{\xi}$ induced by the inner product of $\theta(V_{\xi})\simeq V_{\xi}$~(the inner product respects the variable $*$ for $N$). We then have another expansion
\begin{align*}
F_{\xi}(usg)=\sum_{i=1}^{m(\xi)}\sum_{j}\theta_i(\Omega_{\xi}(s)h_j^{\xi})(u)c_{ij}(sg)\quad((u,s,g)\in N\times H_J^{0,1} \times G),
\end{align*}
where we remark that $\{\Omega_{\xi}(s)h_j^{\xi}\}$ also forms an orthogonal basis of $V_{\xi}$~(for the notation $\Omega_{\xi}$ see Section \ref{Rep-JacobiGp}). 
%%%%%%The following paragraph is an old comment-out.%%%%%%
%Furthermore we note that we can make the separation of variables for $c_{ij}$ as $c_{ij}(sa_w)=\sum_{k} f_{ij,k}(s)C_{(j,k)}(a_w)v_k^{2n}$ based on the notion of the Fourier-Jacobi type spherical functions. For this note that we use the notation $C_{(j,k)}$ following that notion of the spherical functions, which is determined by its restriction to $\{a_w\mid w>0\}$. This idea is justified by the following lemma:

We now denote by $\widetilde{H_J^{0,1}}$ the non-split two fold cover if $\Omega_{\xi}|_{H_j^{0,1}}$ has non-trivial multiplier. Otherwise we put $\widetilde{H_J^{0,1}}:=H_J^{0,1}$. For a subgroup $S$ of $H_J^{0,1}$, let $\tilde{S}$ be the pull-back of $S$ by the covering map $\widetilde{H_J^{0,1}}\rightarrow H_J^{0,1}$.
\begin{lem}\label{Sq-int_corff-fct}
With any fixed $g\in G$ the function $c_{ij}(sg)$s above in $s\in \widetilde{H_J^{0,1}}$ belong to $L^2(\widetilde{\Gamma_0}\backslash\widetilde{H_J^{0,1}})\otimes {\mathbb V}_{n}$ as functions in $s\in\widetilde{H_J^{0,1}}$ with a subgroup $\Gamma_0$ of finite index in $H_J^{0,1}\cap\Gamma$.
\end{lem}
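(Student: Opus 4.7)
The plan is to deduce the $L^2$-property from the cuspidality of $F$ by combining a Plancherel identity on $N_\Gamma\backslash N$ with the unitarity of the Weil representation and the finite covolume of $\Gamma_0$ in $H_J^{0,1}$.

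The first step will be to establish the left $\widetilde{\Gamma_0}$-invariance of $s\mapsto c_{ij}(sg)$ for a sufficiently small finite-index subgroup $\Gamma_0$ of $H_J^{0,1}\cap\Gamma$. Since the center of $N$ is fixed pointwise under conjugation by $H_J^{0,1}$, the Fourier transform $F_{\xi}$ along the center inherits the $\Gamma$-automorphy as left-invariance under $H_J^{0,1}\cap\Gamma$. On the theta side, the covariance of $\Theta_\xi^\nu$ under $\Omega_\xi(\tilde\gamma_0)$ (a permutation of the basis $\{\Theta_\xi^\nu\}$, valid up to the Weil multiplier, where $\tilde\gamma_0$ is a lift of $\gamma_0\in H_J^{0,1}\cap\Gamma$) together with the uniqueness of the expansion
\[
F_\xi(usg)=\sum_{i,j}\theta_i(\Omega_\xi(s)h_j^\xi)(u)\,c_{ij}(sg)
\]
forces the desired transformation law on $c_{ij}$. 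Passing to a $\Gamma_0$ small enough that the permutation $\nu\mapsto\gamma_0\cdot\nu$ acts trivially and the residual $\{\pm1\}$-multiplier is absorbed by working on the metaplectic cover, one gets the invariance.

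The second step is square-integrability. Applying Plancherel to $F_\xi(\cdot\,sg)\in L^2_\xi(N_\Gamma\backslash N)$, and using that $\{\theta_i(\Omega_\xi(s)h_j^\xi)\}$ is an orthogonal system in $L^2_\xi(N_\Gamma\backslash N)$ with $s$-independent norms $M_{ij}>0$ (by unitarity of $\Omega_\xi(s)$ on $V_\xi$ and the fact that each $\theta_i$ is a partial isometry up to a scalar), one obtains
\[
\int_{N_\Gamma\backslash N}\|F_\xi(usg)\|_{\tau_n}^2\,du=\sum_{i,j}M_{ij}\|c_{ij}(sg)\|_{\tau_n}^2.
\]
Summing over $\xi\neq 0$ and invoking Parseval along the central one-parameter subgroup bounds the left-hand side by $\int_{N_\Gamma\backslash N}\|F(usg)\|_{\tau_n}^2\,du$. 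Integrating against a fundamental domain $\mathcal{F}$ for $\Gamma_0\backslash H_J^{0,1}$, which has finite measure because $\Gamma_0$ is arithmetic, yields
\[
\int_{\mathcal{F}}\sum_{i,j}M_{ij}\|c_{ij}(sg)\|_{\tau_n}^2\,ds\le \int_{\mathcal{F}}\int_{N_\Gamma\backslash N}\|F(usg)\|_{\tau_n}^2\,du\,ds,
\]
and the right-hand side is finite because $\|F\|_{\tau_n}$ is bounded on $\Gamma\backslash G$ (cusp forms are rapidly decreasing on Siegel sets) while $(N_\Gamma\backslash N)\times\mathcal{F}$ is of finite measure.

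The main technical obstacle will be matching the multiplier picked up by $\Omega_\xi(\tilde\gamma_0)$ (non-trivial, for example, when $G$ is of type $F_4$) with the multiplier system implicit in the target space $L^2(\widetilde{\Gamma_0}\backslash\widetilde{H_J^{0,1}})$: one has to verify that after shrinking $\Gamma_0$ the remaining $\{\pm1\}$-ambiguity lifts consistently to the metaplectic cover so that $c_{ij}(\cdot\,g)$ defines a genuine section there, rather than a multivalued function on $H_J^{0,1}$. This is a careful but essentially standard compatibility check which parallels the adelic treatment in Proposition \ref{decomp-Jacobifm}.
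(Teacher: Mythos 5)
Your proposal is correct and follows essentially the same route as the paper: the paper establishes the left $\widetilde{\Gamma_0}$-invariance of $c_{ij}$ by letting $\widetilde{H_J^{0,1}\cap\Gamma}$ act on $\Hom_N(\eta_{\xi},L^2_{\xi}(N_{\Gamma}\backslash N))$ through the Weil representation (Shintani's finite-image representation $r_{\xi}$), taking $\Gamma_0$ to be the kernel (of finite index), and using the automorphy of $F_{\xi}$ together with $\eta_{\xi}(u)\Omega_{\xi}(s)=\Omega_{\xi}(s)\eta_{\xi}(s^{-1}us)$ and the uniqueness of the expansion, which is exactly your first step phrased via the (finite) action on the theta basis. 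Your second step, deducing square-integrability from Parseval on $N_{\Gamma}\backslash N$, the boundedness of the cusp form, and the finite covolume of $\Gamma_0$, is a correct and explicit filling-in of the $L^2$-assertion that the paper leaves implicit.
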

\begin{proof}
Following the argument by Shintani \cite[Proposition 1.1]{Sh}, the Weil representation $\omega_{\xi}$ of $Mp(W_J)$ induces a finite dimensional  representation of $\widetilde{H_J^{0,1}\cap\Gamma}$ on $\Hom_N(\eta_{\xi},L^2_{\xi}(N_{\Gamma}\backslash N)$ by
\[
(r_{\xi}(\gamma)\theta)(h):=\theta(\Omega_{\xi}(\gamma)\cdot h)\quad(\gamma\in \widetilde{H_J^{0,1}\cap\Gamma})
\]
for $(\theta,h)\in \Hom_N(\eta_{\xi},L^2_{\xi}(N_{\Gamma}\backslash N))\times V_{\xi}$, where we note that the assumption on $H_J^{0,1}\cap\Gamma$ is necessary for the representation $r_{\xi}$ to be well defined. 
This representation has a compact image siting inside a compact unitary group. 
We note that $\{\theta_i\}$ are regarded as theta series and can be chosen so that they correspond to the theta series as in $\cite[Proposition 1.1]{Sh}$. 
The discrete group $\Gamma_0$ is taken so that $\widetilde{\Gamma_0}$ is the kernel of the representation $r_{\xi}$ of $\widetilde{H_J^{0,1}\cap\Gamma}$ on  
$\Hom_N(\eta_{\xi},L^2_{\xi}(N_{\Gamma}\backslash N))$. 
We see $[H_J^{0,1}\cap\Gamma,\Gamma_0]<\infty$ since the representation $r_{\xi}$ of $\widetilde{H_J^{0,1}\cap\Gamma}$ has a compact image as is mentioned above. 

From the left $H_J^{0,1}\cap\Gamma$ invariance of $F_{\xi}$ we verify
\begin{align*}
F_{\xi}(usg)=F_{\xi}((\gamma u\gamma^{-1})(\gamma s)g)&=\sum_{i=1}^{m(\xi)}\sum_{j}c_{ij}((\gamma s)g)\theta_i(\Omega_{\xi}(\gamma s)h_j)(\gamma u\gamma^{-1})\\
&=\sum_{i=1}^{m(\xi)}\sum_{j}c_{ij}((\gamma s)g)(r_{\xi}(\gamma)\cdot \theta_i)(\Omega_{\xi}(s)h_j)(u)~(\forall\gamma\in H_J^{0,1}\cap\Gamma)
\end{align*}
by the same reasoning as in \cite[Lemma 5.9]{Na-2}. 
Here, as in the proof \cite[Lemma 5.9]{Na-2}, we use the fact that $\theta_i$ exchanges $\eta_{\xi}$ with the right regular representation of $N$ on $L^2_{\xi}(N_{\Gamma}\backslash N)$, together with the fundamental property of the Weil representation as follows:
\[
\eta_{\xi}(u)\Omega_{\xi}(s)=\Omega_{\xi}(s)\eta_{\xi}(s^{-1}us).
\]
%Now let us note that it is enough to consider the restriction of $c_{ij}(sg)$ to $\{a_w\mid w>0\}$ in $g$.  
We therefore see that
\[
\sum_{i=1}^{m(\xi)}c_{ij}(\gamma s)(r_{\xi}(\gamma)\cdot \theta_i)=\sum_{i=1}^{m(\xi)}c_{ij}(s)\theta_i.
\]
For $\gamma\in \Gamma_0$ this implies the left $\Gamma_0$-invariance of $c_{ij}$. 
This invariance can be extended to $\widetilde{\Gamma_0}$. 
\end{proof}
As is well known, the $L^2$-space $L^2(\widetilde{\Gamma_0}\backslash\widetilde{H_J^{0,1}})$ decomposes into the orthogonal sum of the discrete spectrum $L^2_{ds}(\widetilde{\Gamma_0}\backslash\widetilde{H_J^{0,1}})$ and the continuous spectrum $L^2_{ct}(\widetilde{\Gamma_0}\backslash\widetilde{H_J^{0,1}})$. The argument of the proof for the lemma above implies that, for $\xi\in L_{\Gamma}^*\setminus\{0\}$, $F_{\xi}$ admits a decomposition $F_{\xi}(g)=F_{\xi,ct}(g)+F_{\xi,ds}(g)~(g\in G)$, where, for any fixed $g\in G$,  
\[
F_{\xi,ct}(rg)\in L^2_{ct}(\widetilde{\Gamma_0}\backslash\widetilde{H_J^{0,1}})\otimes\oplus_{i=1}^{m(\xi)}\theta_i(V_{\xi}),\quad F_{\xi,ds}(rg),\in L^2_{ds}(\widetilde{\Gamma_0}\backslash\widetilde{H_J^{0,1}})\otimes\oplus_{i=1}^{m(\xi)}\theta_i(V_{\xi})
\]
as a function in $r\in R_J$. We then arrive at the following:
\begin{defn}
Let $F_{\xi}$, $F_{\xi,ct}$ and $F_{\xi,ds}$ be as above for $\xi\in L_{\Gamma}^*\setminus\{0\}$. We call $F_{\xi,ds}$ and $F_{\xi,ct}$ the discrete part and the continuous part respectively.
\end{defn}
To state the theorem below we further introduce 
\[
\hat{\Lambda}_{\Gamma,\xi}^{(a)}:={\rm Pr}^{(a)}(\hat{\Lambda}_{\Gamma,\xi})
\]
with ${\rm Pr}^{(a)}:W_J\rightarrow W_J^{(a)}:=\{(a,0,0,0)\in W_J\mid a\in\R\}$.
\begin{thm}\label{main-theorem}
Let $F(g)=\sum_{\xi\in L_{\Gamma}^*}F_{\xi}(g)$ be the Fourier-Jacobi expansion of a cusp form $F$ on $G$ with respect to $\Gamma$~(satisfying the assumption above) generating quaternionic discrete series representation. 
\begin{enumerate}
\item For $\xi\in L_{\Gamma}^*\setminus\{0\}$ we have $F_{\xi,ds}\equiv 0$, namely 
\[
F_{\xi}(g)\equiv F_{\xi,ct}(g)
\]
for $g\in G$. 
\item For a character $\chi$ of $N\cap\Gamma\backslash N$ let $F_{\chi}$ denote  the Fourier transformation of $F$ by $\chi$. Then $F_{\xi}(g)\equiv F_{\xi,ct}(g)$ admits an expansion
\[
F_{\xi}(g)=\sum_{\nu\in\hat{\Lambda}^{(a)}_{\Gamma,\xi}}\sum_{\chi_{\xi}}F_{\chi_{\xi}}(w_{\alpha}n(\nu,0)g)=\sum_{\nu'\in\hat{\Lambda}^0_{\Gamma,\xi}/\Lambda_{\Gamma}^0}\sum_{\chi'_{\xi}}\Theta_{\chi'_{\xi}}^{\nu'}(g),
\]
with
\[
\Theta_{\chi'_{\xi}}^{\nu'}(g):=\sum_{\gamma\in N_m\cap\Gamma\backslash N_{\Gamma}}F_{\chi'_{\xi}}(w_{\alpha}n(\nu',0)\gamma g) 
\]
where $\chi_{\xi}$~(respectively~$\chi'_{\xi}$) runs over characters of $N_{\Gamma}\backslash N$ parametrized by $(\xi,b',c',d')\in\hat{\Lambda}_{\Gamma,\xi}$~(respectively $(\xi,0,c',d')\in\hat{\Lambda}_{\Gamma,\xi}$) of rank four satisfying the negativity with respect to the Freudenthal's quartic form~(see Pollack \cite[Theorem 1.2.3]{P1}).
\end{enumerate}
\end{thm}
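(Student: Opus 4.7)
}
The whole theorem is the non-adelic counterpart of Theorem \ref{adelicFJ-exp-QDS} together with Theorem \ref{adelic-FJ-exp} for cusp forms generating quaternionic discrete series, so my plan is to imitate those two adelic arguments while keeping track of the lattices $\hat{\Lambda}_{\Gamma,\xi}$, $\Lambda_{\Gamma}^{0}$ and the splitting $F_{\xi}=F_{\xi,ct}+F_{\xi,ds}$ introduced above.

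For part (1), I would argue by contradiction: suppose $F_{\xi,ds}\not\equiv0$. By Lemma~\ref{Sq-int_corff-fct} and its proof, for fixed $g\in G$ the function $r\mapsto F_{\xi,ds}(rg)$ lies in $L^{2}_{ds}(\widetilde{\Gamma_{0}}\backslash\widetilde{H_{J}^{0,1}})\otimes\oplus_{i}\theta_{i}(V_{\xi})$, i.e.\ in the $R_{J}$-representation $L^{2}_{\xi,ds}:=L^{2}_{ds}(\widetilde{\Gamma_{0}}\backslash\widetilde{H_{J}^{0,1}})\otimes L^{2}_{\xi}(N_{\Gamma}\backslash N)$ which, by the discreteness of the spectrum on $\widetilde{H_{J}^{0,1}}$ and Proposition~\ref{UnitaryDual-Jacobi}, decomposes into a Hilbert direct sum of irreducible unitary representations of $R_{J}$ of the form $U_{1}\otimes\Omega_{\xi}$ with the central character indexed by $\xi$. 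Picking one such irreducible constituent $\rho$ on which $F_{\xi,ds}$ has non-zero projection, and using that $F$ is a cusp form (hence of moderate growth), the map
\[
V_{\pi_{n}}\ni v\mapsto\bigl(g\mapsto\text{projection of }(\pi_{n}(g)v)\text{ onto the }\rho\text{-isotypic component}\bigr)
\]
defines a non-zero element of $\Hom_{(\g,K)}(\pi_{n},C^{\infty}_{\rm mod}\text{-}{\rm Ind}_{R_{J}}^{G}\rho)$, in complete analogy with the adelic argument in the proof of Theorem~\ref{adelicFJ-exp-QDS}. But by Theorem~\ref{FJ-model} this space vanishes, contradicting $F_{\xi,ds}\not\equiv0$. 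Hence $F_{\xi}=F_{\xi,ct}$.

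For part (2), once $F_{\xi}=F_{\xi,ct}$, I follow exactly the lattice-theoretic translation of the adelic calculation in Section~\ref{FJ-exp-general}. First expand $F_{\xi}$ along $N_{m}\cap\Gamma\backslash N_{m}$ into $\sum_{\psi_{\xi}}F_{\psi_{\xi}}$, where $\psi_{\xi}$ runs over characters of $N_{m}\cap\Gamma\backslash N_{m}$ with fixed restriction $\psi(\xi\,\cdot)$ on the center. The non-adelic analogue of Lemma~\ref{adelic_Ch-Lem}(1) parametrises such $\psi_{\xi}$ as conjugates of $\psi_{\xi,0}$ by $n(\nu,0)$ with $\nu$ ranging over representatives of $\hat{\Lambda}_{\Gamma,\xi}^{0}/\Lambda_{\Gamma}^{0}$ shifted by the $W_{J}^{(a)}$-component (producing the factor $\nu\in\hat{\Lambda}_{\Gamma,\xi}^{(a)}$ outside, and the quotient representatives inside the theta-like sum $\Theta^{\nu'}_{\chi'_{\xi}}$). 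Conjugating by $w_{\alpha}$ (which belongs to $\Gamma$ by assumption and exchanges the center of $\cN$ with the last entry of $\cW_{J}$, by Lemma~\ref{adelic_Ch-Lem}(2)) turns the $N_{m}$-characters $\psi_{\xi}$ into genuine characters of the full $N_{\Gamma}\backslash N$, yielding the first equality; grouping the resulting summation by $N_{m}\cap\Gamma\backslash N_{\Gamma}$-cosets produces the theta expression $\Theta^{\nu'}_{\chi'_{\xi}}$ and gives the second equality.

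Finally, the restriction that $\chi_{\xi}$ (equivalently $\chi'_{\xi}$) be of rank four with negative Freudenthal quartic is inserted using Pollack's cuspidality criterion \cite[Corollary~1.2.3]{P1}: since the $w_{\alpha}$-translation reduces the analysis of each $F_{\chi_{\xi}}$ to the classification of those characters that actually appear in the Fourier expansion of a cuspidal quaternionic modular form, exactly the same vanishing of terms as in \cite[Corollary~1.2.3,~Proposition~11.1.1]{P1} applies here. I anticipate the main obstacle to be the bookkeeping in part (1) — concretely, verifying that the archimedean restriction of the projection of $F_{\xi,ds}$ onto a single irreducible constituent of $L^{2}_{\xi,ds}$ really produces a \emph{non-zero} $(\g,K)$-intertwining operator into a $C^{\infty}$-induction of moderate growth from a genuinely irreducible unitary $\rho$ of $R_{J}$, since one has to handle the finite multiplicities and the two-fold cover $\widetilde{H_{J}^{0,1}}$ carefully. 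Once this reduction is established, the application of Theorem~\ref{FJ-model} is immediate, and part (2) is essentially a rewriting of what has been done adelically.
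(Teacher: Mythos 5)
Your proposal is correct and follows essentially the same route as the paper: part (1) is the non-adelic analogue of Theorem \ref{adelicFJ-exp-QDS}, reducing a putative discrete-spectrum contribution to a non-zero moderate-growth Fourier-Jacobi model that is killed by Theorem \ref{FJ-model}, and part (2) is the lattice-theoretic translation of Theorem \ref{adelic-FJ-exp} via the non-adelic version of Lemma \ref{adelic_Ch-Lem} together with Pollack's rank-four/negativity criterion. The only notable difference is that the paper's appendix proof of part (1) works explicitly with the coefficient functions $a_{ij,k}^{(l)}(g)=\langle c_{ij,k}(*g),u_l\rangle$ and derives the contradiction directly from Theorem \ref{GenWhittakerFct-gen} (verifying the separation-of-variables hypothesis and the moderate growth in $w$ by hand) before invoking Theorem \ref{FJ-model} --- precisely the bookkeeping step you yourself flagged as the main obstacle.
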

%\begin{rem}
%We have two remarks on the expansion of $F_{\xi}$ above.
%\begin{enumerate}
%\item We have to take into account the fact that $F_{\xi}$ is orthogonal to the discrete spectrum. This fact imposes some further conditions on $(\nu,\chi_{\xi})$ and $(\nu',\chi'_{\xi})$, e.g. $c'=0$ should hold for $\chi_{\xi}$ when $\nu=0$ etc. Since we have to consider $\Gamma_0$-cusps associated with various parabolic subgroups of $H_J^{0,1}$~(not single in general) they would make the description of the conditions on $(\nu,\chi_{\xi})$ and $(\nu',\chi'_{\xi})$ more complicated. 
%(Even such conditions cannot deny $F_{\xi}\equiv 0$ for any $\xi$s.) 
%\item In the expansion of $F_{\xi}$ in the second assertion of the theorem the notation $\Theta_{\chi'_{\xi}}^{\nu'}$ follows that of Proposition \ref{AutomFm-HeisenbergGp}. Clearly the expansion of $F_{\xi}$ in terms of $\Theta_{\chi'_{\xi}}^{\nu'}$s is based on Proposition \ref{AutomFm-HeisenbergGp}.
%\item According to \cite[Theorem 4.3]{Wa1} square integrable automorphic forms generating tempered representations are cuspidal. This is thus valid for automorphic forms generating discrete series representations. 
%\end{enumerate}
%\end{rem}
\begin{proof}
1.~To see the first assertion we note that $L^2_{ds}(\widetilde{\Gamma_0}\backslash \widetilde{H_J^{0,1}})$ decomposes into a discrete sum of irreducible unitary representations of $\widetilde{H_J^{0,1}}$ with finite multiplicities. In view of Lemma \ref{Sq-int_corff-fct}, $\sum_{j}\theta_{i}(\Omega_{\xi}(s)h_j^{\xi})c_{ij}(sg)$ can be viewed as $V_{\xi}\boxtimes L^2(\widetilde{\Gamma_0}\backslash\widetilde{H_J^{0,1}})\boxtimes \bV_n$-valued function in $g\in G$ satisfying the left $R_J$-equivariance by $\rho_{\sigma_0,\xi}$ and right $K$-equivariance by $\tau_n$, where $\rho_{\sigma_0,\xi}:=\sigma_0\otimes\Omega_{\xi}$ with the right regular representation $\sigma_0$ of $\widetilde{H_J^{0,1}}$ on $L^2(\widetilde{\Gamma_0}\backslash\widetilde{H_J^{0,1}})$. So it is determined by its restriction to the connected torus subgroup $A_P\simeq\{w\in\R^{\times}\mid w>0\}$ in the center of $H_J$ .
%which means that $\sum_{j}c_{ij}(sg)\theta_{i}(\Omega_{\xi}(s)h_j^{\xi})$ satisfies the separation of variables in the sense of Section \ref{Pf-Prop2.4}. 
With a basis $\{v_{k}^{2n}\mid -n\le k\le n\}$~(e.g. that given in Section \ref{GWF-STrep}) of the representation space $\bV_n$ of $\tau_n$, let $c_{ij,k}$ be the coefficient function of $c_{ij}$ for $v_{k}^{2n}$ and suppose that $c_{ij,k}\in L^2_{ds}(\widetilde{\Gamma_0}\backslash\widetilde{H_J^{0,1}})$. Given any irreducible unitary representation $(\pi,V_{\pi})$ with the representation space $V_{\pi}$ in $L^2_{ds}(\widetilde{\Gamma_0}\backslash\widetilde{H_J^{0,1}})$, let $\{u_l\}\subset L^2_{ds}(\widetilde{\Gamma_0}\backslash\widetilde{H_J^{0,1}})$ be an orthogonal basis of $V_{\pi}$. We can let $a_{ij,k}^{(l)}(g):=\langle c_{ij,k}(*g),u_l\rangle$ with the Petersson inner product $\langle *,*\rangle$ of $L^2(\widetilde{\Gamma_0}\backslash\widetilde{H_J^{0,1}})$. Now let $h_g=n_gs_g$ be the $R_J$-factor of $g\in G$ with $n_g\in N$ and $s_g\in H_J^{0,1}$. We then see that, with this $a_{ij,k}^{(l)}$, we can write 
\[
\sum_{k}a_{ij,k}^{(l)}(g)v_k^{2n}=\int_{\Gamma_J^0\backslash R_J}\overline{u_l(s)\theta_i(\Omega_{\xi}(s)h_j^{\xi})(nsn_gs^{-1})}F(nsg)dh~(h=ns, (n,s)\in N\times H_J^{0,1}),
\]
where $\Gamma_J^0:=N_{\Gamma}\rtimes\Gamma_0$ and $dh$ denotes a right invariant measure of $\Gamma_J^0\backslash R_J$. For this note that $nsn_g=n(sn_gs^{-1})s$. 
The $\bV_n$-valued functions on $G$ above are annihilated by the Dirac-Schmid operator since so is $F$. Therefore, in view of Proposition \ref{module_QMF}, they generate a quaternionic discrete series as $F$ does.
%which is viewed as $V_{\pi}^*\boxtimes V_{\xi}\boxtimes V_n$
%$c_{ij}$ in the form of the separation of variables. 
 
Furthermore let us note that $\theta_i(f)(xn)$ with $f\in V_{\xi}$ means the right translate of $\theta_i(f)(x)$ by $n\in N$, where $x$ denotes a variable for $N$. 
If there is an irreducible unitary representation $\pi$ occuring in $L^2_{ds}(\widetilde{\Gamma_0}\backslash\widetilde{H_J^{0,1}})$ with $u_l\in V_{\pi}$ such that $a_{ij,k}^{(l)}\not\equiv 0$, we have a non-zero element in the image of $\Hom_{(\g,K)}(\pi_n,C^{\infty}_{\eta_{\xi}}(N\backslash G))\rightarrow\Hom_K(\pi_n,C^{\infty}_{\rho_{\pi,\xi}}(N\backslash G))$~(cf.~Section \ref{GWF-definition}) by 
\[
F\mapsto \theta_i(h_j^{\xi})(xn_g)(\sum_{k}a_{ij,k}^{(l)}(g)v_k^{2n}),
\]
where we regard $\theta_i(\Omega_{\xi}(s_g)h_j^{\xi})(xn_g)$~(respectively~$F$) as an element in $V_{\xi}\simeq \theta_i(V_{\xi})$~(respectively~$\pi_n$ viewed as a subrepresentation of cusp forms on $G$). 
We can verify that the image of $F$ is viewed as a well-defined $V_{\xi}$-valued function as we did for 
\[
(\Phi(\pi_n(g)v),\rho_{\sigma,\xi}(h_g)u_0)_{\rho_{\sigma,\xi}}(\rho_{\sigma,\xi}(n_g)u_0,u_{00})_{\sigma}
\]
in the proof of Theorem \ref{FJ-model}. 
This satisfies the separation of variables in the sense of Section \ref{Pf-Prop2.4} and the image of the associated intertwining operator is of moderate growth with respect to $w$. However, this contradicts to Theorem \ref{GenWhittakerFct-gen}. Therefore $c_{ij}\equiv 0$ for any $i,j$. Taking this fact into account,  Theorem \ref{FJ-model} implies that the contribution of $L^2_{ds}(\widetilde{\Gamma_0}\backslash\widetilde{H_J^{0,1}})$ to the Fourier-Jacobi expansion is zero, which leads to the assertion.\\[5pt]
%We decompose $L^2(R_J(\Z)\backslash R_J)$ into the sum of the discrete spectrum $L_d$ and the continuus spectrum $L^2_c$. By definition $L^2_d$ is the closed subspace of $L^2(R_J(\Z)\backslash R_J)$ given by the discrete sum of irreducible unitary representations of $R_J$, and $L^2_c$ is the orthogonal compliment of $L^2_d$ in $L^2(R_J(\Z)\backslash R_J)$. According to this decomposition we have $\sum_{\xi\not=0}F_{\xi}=F_d+F_c$ with $F_d\in L^2_d$ and $F_c\in L^2_c$, and decomposes each $F_{\xi}$ into $F_{\xi,d}+F_{\xi,c}$ with $F_{\xi,d}\in L_d$ and $F_{\xi,c}\in L_c$.
%We first see $F_{\xi,d}\equiv 0$, which implies $F_d\equiv 0$.\\[5pt]
2.~%Different from the situation in the first assertion, there is no guarantee that a unitary representation of $H_J^{0,1}$ can be embedded into a closed subspace of  $L^2_{ct}(\Gamma_0\backslash H_J^{0,1})$. In fact, the representation generated by $a_{ij,k}^{(f)}(g):=\langle c_{ij,k}(*g),f\rangle$ as above can be embedded into a closed subspace of $L^2(\Gamma_0\backslash H_J^{0,1})$ but we cannot carry out such a reasoning for $L^2_{ct}(\Gamma_0\backslash H_J^{0,1})$. 
We are left with considering the contribution by $L^2_{ct}(\widetilde{\Gamma_0}\backslash\widetilde{H_J^{0,1}})$ to the Fourier-Jacobi expansion. 
We remark that the argument of the proof of the first assertion has used the generalized Whittaker functions attached to the Schr\"odinger representation $\eta_{\xi}$ in order to deny the contribution of $L^2_{ds}(\widetilde{\Gamma_0}\backslash\widetilde{H_J^{0,1}})$ to the Fourier-Jacobi expansion. However, irreducible unitary representations cannot occur in $L^2_{ct}(\widetilde{\Gamma_0}\backslash\widetilde{H_J^{0,1}})$ as its subrepresentations. The idea verifying the first assertion cannot therefore explain how $L^2_{ct}(\widetilde{\Gamma_0}\backslash\widetilde{H_J^{0,1}})$ contribute to the Fourier-Jacobi expansion with such generalized Whitaker functions. 

For the proof of the second assertion, instead, we have an idea to express $F_{\xi}~(=F_{\xi,ct})$ in terms of the generalized Whittaker functions attached to characters of $N$. We begin with expanding $F_{\xi}$ along $N_m\cap\Gamma\backslash N_m$ to get
\[
F_{\xi}(g)=\sum_{\psi}F_{\psi}(g),\quad F_{\psi}(g):=\int_{N_m\cap\Gamma\backslash N_m}F(xg)\psi(x)^{-1}dx,
\]
where $\psi$ runs over characters of $N_m\cap\Gamma\backslash N_m$ such that $\psi|_{N_0}=\psi_{\xi}|_{N_0}$ with the center $N_0$ of $N$. 

To proceed further we need the following lemma, which is the non-adelic version of Lemma \ref{adelic_Ch-Lem}:
\begin{lem}\label{non-adelic_Ch-Lem}
\begin{enumerate}
\item Any character $\psi$ above is of the form
\[
N_m\ni (w,t)\mapsto\psi_{\xi}((\nu+\mu;0)(w;t)(-(\nu+\mu);0))=\exp(2\pi\sqrt{-1}\xi(\langle\nu+\mu,w\rangle+t))
\]
with $\nu\in\hat{\Lambda}^0_{\Gamma,\xi}/\Lambda^0_{\Gamma}$ and $\mu\in\Lambda^0_{\Gamma}$. 
\item We have $w_{\alpha}N_mw_{\alpha}^{-1}=N_m$.
\end{enumerate}
\end{lem}.
\noindent
In view of this lemma and the bijection $\Lambda_{\Gamma}^0\simeq N_m\cap\Gamma\backslash N_{\Gamma}$ we have
\[
F_{\xi}(g)=\sum_{\nu\in\hat{\Lambda}^0_{\Gamma,\xi}/\Lambda_{\Gamma}^0}\sum_{\gamma\in N_m\cap\Gamma\backslash N_{\Gamma}}F_{\psi_{\xi}}(n(\nu,0)\gamma g),
\]
also have
\[
F_{\xi}(g)=\sum_{\nu\in\hat{\Lambda}_{\Gamma,\xi}^{(a)}}\sum_{\psi_{\xi}^{(b)}}F_{\psi_{\xi}^{(b)}}(n(\nu,0)g).
\]
Here $\psi_{\xi}^{(b)}$ runs over characters of $N_m$ given by $\exp(2\pi\sqrt{-1}\xi(\langle\nu,w\rangle+t))$ with $\nu\in {\rm Pr}^{(b)}(\hat{\Lambda}_{\Gamma,\xi})$, where 
${\rm Pr}^{(b)}:W_J\ni (a',b',c',d')\mapsto (0,b',0,0)\in W_J^{(b)}:=\{(0,b,0,0)\in W_J\mid b\in J\}$.

As the next step 
we should now note that the adjoint action by $w_{\alpha}$ on the Lie algebra of $G$~(induced by $w_{12}$) gives rise to exchanging of the center $N_0$ of $N$ and the last entry of the vector part $W_J$ of $N$, which correspond  to $E_{13}$ and $E_{23}$ respectively in the ${\mathfrak sl}_3$-factor of the $\Z/3$-grading of the Lie algebra. For a further detail on this see the proof of the second assertion of Lemma \ref{adelic_Ch-Lem}. We can thus consider the expansion of $F_{\psi_{\xi}}(w_{\alpha}*)$ by characters of $N$ since this is invariant with respect to the left translation by the center $N_0$. 
In order that the $w_{\alpha}$-conjugate of a character parametrized by $w'=(a',b',c',d')$ has a contribution to $F_{\xi}(w_{\alpha}*)$, we need $a'\not=0$, which corresponds to the non-triviality of the $w_{\alpha}$-twisted central character of $N$ for $F_{\xi}$ via the symplectic form $\langle*,*\rangle$ on $\R\oplus J\oplus J^{\vee}\oplus\R\simeq N/N_0$. 

We are now going to verify that
\[
F_{\psi_{\xi}}(w_{\alpha}g)=\sum_{\chi'_{\xi}}F_{\chi'_{\xi}}(g),\quad\sum_{\psi_{\xi}^{(b)}}F_{\psi_{\xi}^{(b)}}(w_{\alpha}g)=\sum_{\chi_{\xi}}F_{\chi_{\xi}}(g)
\]
where, in view of the cuspidality of $F$, $\chi'_{\xi}$~(respectively~$\chi_{\xi}$) runs over characters of $N_{\Gamma}\backslash N$ parametrized by
$(\xi,0,c',d')$~(respectively~$(\xi,b',c',d'))$)$\in\hat{\Lambda}_{\Gamma,\xi}$ of rank four and satisfying the negativity with respect to the Freudenthal's quartic form as Pollack \cite[Theorem 1.2.1]{P1} shows. 
%Such a character can have a non-zero $a'$~(cf.~\cite[Proposition 10.0.1]{P1}). 
To verify this let us note $w_{\alpha}N_mw_{\alpha}^{-1}=N_m$ by Lemma \ref{non-adelic_Ch-Lem} part 2. We then see
\[
F_{\psi_{\xi}}(w_{\alpha}g)=F_{\psi_{\xi,\alpha}}(g),~F_{\psi_{\xi}^{(b)}}(w_{\alpha}g)=F_{\psi_{\xi,\alpha}^{(b)}}(g)
\]
with $\psi_{\xi,\alpha}(u)=\psi_{\xi}(w_{\alpha}uw_{\alpha}^{-1})$ and $\psi_{\xi,\alpha}^{(b)}(u)=\psi_{\xi}^{(b)}(w_{\alpha}uw_{\alpha}^{-1})$ for $u\in N_m$, and further note that $\psi_{\xi,\alpha}$ and $\psi_{\xi,\alpha}^{(b)}$ are trivial on the center $N_0$. 
We then show that 
\[
F_{\psi_{\xi}}(w_{\alpha}g)=\sum_{\chi'_{\xi}}F_{\chi'_{\xi}}(g),\quad\sum_{\psi_{\xi}^{(b)}}F_{\psi_{\xi}^{(b)}}(w_{\alpha}g)=\sum_{\chi_{\xi}}F_{\chi_{\xi}}(g)
\]
with $\chi_{xi}$ and $\chi'_{\xi}$ as in the assertion by considering the expansion of $F_{\psi_{\xi}}$ and $F_{\psi_{\xi}^{(b)}}$ along $\R\oplus J\simeq N_m\backslash N$. 
We have consequently verified the second assertion. 
\end{proof}
%\begin{rem}\label{FJ-exp_WeylGp}
%The Fourier-Jacobi expansion 
%\[
%F(g)=F_0(g)+F_0^{(a)}(w_{\alpha}g)\quad(g\in G)
%\]
%is compatible with the invariance of $F$ by the Weyl group of $G$. To check this 
%let $F_0^{(a)\perp}$~(respectively~$F_{00}^{(a)\perp}$ and $F_0^{(a)\wedge}$) denote the subsum of $F_0$ contributed by characters of $N$ with $a'=0,~d'\not=0$~(respectively,~$a'=d'=0$ and $a'd'\not=0$). We then  decompose $F_0$ and $F_0^{(a)}$ into
%\[
%F_0(g)=F_0^{(a)}(g)+F_0^{(a)\perp}(g)+F_{00}^{(a)\perp}(g),\quad F_0^{(a)}(g)=F_0^{(a)\perp}(w_{\beta}g)+F_0^{(a)\wedge}(g).
%\]
%Then we see
%\[
%(F_0^{(a)\perp}+F_{00}^{(a)\perp})(w_{\alpha}g)=(F_0^{(a)\perp}+F_{00}^{(a)\perp})(g),\quad F_0^{(a)\wedge}(w_{\beta}g)=F_0^{(a)\wedge}(g),\quad F_{00}^{(a)\perp}
%(w_{\beta}g)=F_{00}^{(a)\perp}(g),
%\]
%where the above property of $F_0^{(a)\perp}+F_{00}^{(a)\perp}$ follows from the condition that it is characterized by the triviality on the $w_{\alpha}$-invariant subgroup $\{(0,0,0,d;t)\in N\mid d,t\in\R\}$ of $N$~(the center of another parabolic subgroup) while those of $F_0^{(a)\wedge}$ and $F_{00}^{(a)\perp}$ are obviously verified. 
%In addition to this, the Weyl group of the automorphism group of $J$ mentioned before the theorem stabilizes $J\oplus J^{\vee}$. As a result we see the required invariance by the Weyl group.  
%\end{rem}

Hiro-aki Narita\\
Department of Mathematics\\
Faculty of Science and Engineering\\
Waseda University\\
3-4-1 Okubo, Shinjuku-ku, Tokyo 169-8555\\
JAPAN\\
E-mail:~hnarita@waseda.jp
\end{document}